\newtheorem{theorem}{Theorem}[subsection]
\newtheorem{lemma}[theorem]{Lemma}
\newtheorem{proposition}[theorem]{Proposition}
\newtheorem{corollary}[theorem]{Corollary}
\newtheorem{conjecture}[theorem]{Conjecture}
\newtheorem*{claim*}{Claim}
\theoremstyle{definition} \newtheorem{definition}[theorem]{Definition}
\newtheorem{example}[theorem]{Example}
\newtheorem{convention}[theorem]{Convention}
\theoremstyle{remark} \newtheorem{remark}[theorem]{Remark}
\newtheoremstyle{dotless}{}{}{\itshape}{}{\bfseries}{}{ }{}
\theoremstyle{dotless}
\newtheorem*{theorem*}{Bordered pairing formula:}
\newcommand{\R}{\mathbb{R}} 
\newcommand{\Z}{\mathbb{Z}}
\newcommand{\A}{\mathcal{A}}
\newcommand{\B}{\mathcal{B}}
\newcommand{\I}{\mathcal{I}}
\newcommand{\D}{\widehat{D}}
\newcommand{\DD}{\widehat{DD}}
\DeclareMathOperator{\id}{id}
\DeclareMathOperator{\Hom}{Hom}
\begin{document}

\title{On bordered theories for Khovanov homology}
\author{Andrew Manion}
\address{Department of Mathematics\\ UCLA\\ 520 Portola Plaza\\ Los Angeles, CA 90095}
\email{manion@math.ucla.edu}
\thanks{This work was supported by the National Science Foundation GRFP under Award No.
DGE 1148900 and the National Science Foundation MSPRF under Award No. 1502686.}

\begin{abstract}
We describe how to formulate Khovanov's functor-valued invariant of tangles in the language of bordered Heegaard Floer homology. We then give an alternate construction of Lawrence Roberts' Type D and Type A structures in Khovanov homology, and his algebra $\mathcal{B}\Gamma_n$, in terms of Khovanov's theory of modules over the ring $H^n$. We reprove invariance and pairing properties of Roberts' bordered modules in this language. Along the way, we obtain an explicit generators-and-relations description of $H^n$ which may be of independent interest.
\end{abstract}

\maketitle

\section{Introduction}

We consider two tangle theories for Khovanov homology which are inspired by the bordered Heegaard Floer homology of Lipshitz, Ozsv{\'a}th, and Thurston \cite{LOTBorderedOrig}. The first theory is a reformulation of Khovanov's functor-valued invariant \cite{KhovFunctor} in the bordered language. The second theory was introduced by Lawrence Roberts in \cite{RtypeD} and \cite{RtypeA}.

These bordered Khovanov theories share the same basic structure. Each assigns a differential bigraded algebra $\mathcal{B}$ to a collection of $2n$ points on the line $\{0\} \times \R$ in the plane $\R \times \R$. To a tangle diagram $T_1$ in $\R_{\geq 0} \times \R$ with $2n$ endpoints on $\{0\} \times \R$, these theories assign a Type D structure $\D_{T_1}$ over $\mathcal{B}$. (The definitions of Type D structures, and other elements of ``bordered algebra,'' will be given in Section~\ref{BorderedAlgebraSection}.) 

To a tangle diagram $T_2$ in $\R_{\leq 0} \times \R$ with $2n$ endpoints on $\{0\} \times \R$, bordered theories assign a Type A structure (i.e. an $\mathcal{A}_{\infty}$ module) $\widehat{A}_{T_2}$ over $\B$. There is a natural pairing operation between Type D and Type A structures over $\B$, called the box tensor product and denoted $\boxtimes$ (or $\boxtimes_{\B}$ when $\B$ is unclear). If $T_2 T_1$ denotes the link diagram obtained by concatenating $T_2$ and $T_1$ horizontally, bordered theories compute the Khovanov complex $CKh(T_2 T_1)$ as follows:
\begin{theorem*}
\[
CKh(T_2 T_1) \cong \widehat{A}_{T_2} \boxtimes_{\B} \D_{T_1}.
\]
\end{theorem*}

In Section~\ref{HnAsBorderedSection}, we will obtain a bordered theory with the above structure by taking $\mathcal{B}$ to be Khovanov's arc algebra $H^n$ from \cite{KhovFunctor}, viewed as a differential bigraded algebra with the differential and one of the two gradings identically equal to zero. The Type D and Type A structures $\D_{T_1}$ and $\widehat{A}_{T_2}$ will be referred to as $\D(T_1)$ and $\widehat{A}(T_2)$ in this setting. Both come from Khovanov's tangle invariants $[T_i]^{Kh}$, which are chain complexes of projective graded $H^n$ modules up to homotopy equivalence. 
\begin{theorem}\label{IntroKhPairingThm}
\[ 
CKh(T_2 T_1) \cong \widehat{A}(T_2) \boxtimes_{H^n} \D(T_1),
\]
after multiplying the intrinsic gradings on $\widehat{A}(T_2) \boxtimes_{H^n} \D(T_1)$ by $-1$.
\end{theorem}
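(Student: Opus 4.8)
The plan is to reduce Theorem~\ref{IntroKhPairingThm} to Khovanov's gluing theorem for the arc algebra $H^n$ in \cite{KhovFunctor}, by checking that in the present setting the box tensor product $\widehat{A}(T_2)\boxtimes_{H^n}\D(T_1)$ coincides, on the nose as a chain complex, with the ordinary tensor product $[T_2]^{Kh}\otimes_{H^n}[T_1]^{Kh}$.

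First I would make the dictionary between the bordered objects and Khovanov's complexes explicit. Since $H^n$ carries the zero differential and its ring of idempotents $\I$ is spanned by the minimal idempotents $e_{\underline{a}}$ indexed by crossingless matchings, with $1_{H^n}=\sum_{\underline{a}}e_{\underline{a}}$, a Type D structure over $H^n$ with finitely many generators is precisely a finite complex of finitely generated projective graded $H^n$-modules: a generator in idempotent $e_{\underline{a}}$ records an indecomposable projective summand $\mathcal{F}(\underline{a})=H^n e_{\underline{a}}$ (with a grading shift), and the structure map $\delta^1$ records the differential, the relation $(\delta^1)^2=0$ being equivalent to $d^2=0$. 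Under this dictionary $\D(T_1)$ is exactly the complex $[T_1]^{Kh}$. Dually, $\widehat{A}(T_2)$ is the (right) dg $H^n$-module underlying $[T_2]^{Kh}$, viewed as an $A_\infty$-module with $m_1$ its internal differential, $m_2$ its action, and $m_i=0$ for $i\geq 3$; this is strictly unital, and both it and $\D(T_1)$ are bounded (the latter because $[T_1]^{Kh}$ is finite), so $\widehat{A}(T_2)\boxtimes_{H^n}\D(T_1)$ is defined.

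Because $m_i=0$ for $i\geq 3$, every term of the box differential built from two or more copies of $\delta^1$ vanishes, and we are left with $\partial^{\boxtimes}=m_1\otimes\id+(m_2\otimes\id)(\id\otimes\delta^1)$; unwinding the identification of $H^n\otimes_{\I}X$ with the projective module it represents, this is exactly the differential of the tensor product complex $[T_2]^{Kh}\otimes_{H^n}[T_1]^{Kh}$, so $\widehat{A}(T_2)\boxtimes_{H^n}\D(T_1)\cong[T_2]^{Kh}\otimes_{H^n}[T_1]^{Kh}$. Now I would invoke Khovanov: the cube of resolutions of the diagram $T_2T_1$ is the product of the cubes of $T_2$ and $T_1$, and at each vertex one glues a crossingless tangle $\underline{b}$ in $\R_{\leq 0}\times\R$ to a crossingless tangle $\underline{a}$ in $\R_{\geq 0}\times\R$; by the defining property of $H^n$ and the flat-tangle modules, $\mathcal{F}(\underline{b})\otimes_{H^n}\mathcal{F}(\underline{a})$ is canonically the Khovanov chain group $A^{\otimes c}$ of the crossingless link $\underline{b}\,\underline{a}$. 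Assembling these identifications over the cube, and matching the edge maps (on both sides the elementary merge/split cobordism maps), gives a chain isomorphism $[T_2]^{Kh}\otimes_{H^n}[T_1]^{Kh}\cong CKh(T_2T_1)$, which is the relevant gluing proposition of \cite{KhovFunctor}. Composing the two isomorphisms proves the theorem.

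The only real obstacle I foresee is bookkeeping: reconciling the Koszul signs in the box tensor differential with the edge signs in Khovanov's cube, and tracking the grading shifts through the arc-algebra presentation of the $[T_i]^{Kh}$. In particular, the quantum (``intrinsic'') grading that the bordered formalism puts on $\widehat{A}(T_2)\boxtimes_{H^n}\D(T_1)$ comes out as the negative of Khovanov's quantum grading on $CKh(T_2T_1)$, owing to the sign with which $H^n$ and its modules are graded in the bordered conventions; this is the source of the ``multiply the intrinsic gradings by $-1$'' clause. None of this is conceptually hard once the dictionary above is fixed, but it must be done carefully, and it leans on the explicit generators-and-relations description of $H^n$ obtained earlier.
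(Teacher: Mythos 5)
Your proposal is correct and follows essentially the same route as the paper: identify $\D(T_1)$ with $[T_1]^{Kh}$ via Proposition~\ref{TypeDIsChainCx}/Proposition~\ref{TypeDRewrite}, take $\widehat{A}(T_2) = [T_2]^{Kh}$, observe that with $m_i = 0$ for $i \geq 3$ the box tensor product collapses to the ordinary tensor product over $H^n$ (this is exactly Proposition~\ref{SimpleXboxForModules}), and then invoke Khovanov's gluing theorem $CKh(T_2T_1) \cong [T_2]^{Kh}\otimes_{H^n}[T_1]^{Kh}$, with the grading reversal attributed to Khovanov's choice of making $H^n$ positively graded (Remark~\ref{FirstGradingRevRem}). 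One small inaccuracy at the end: this theorem does not lean on the generators-and-relations description of $H^n$ (which appears only later, in Section~\ref{LinQuadratSection}); Khovanov's gluing result and the definitional identifications of Section~\ref{HnAsBorderedSection} suffice.
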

\noindent A more precise version of Theorem~\ref{IntroKhPairingThm} is given in Proposition~\ref{TypeATypeDPairing} below.

Roberts \cite{RtypeD} \cite{RtypeA} has a different construction of a bordered theory for Khovanov homology, including a differential bigraded algebra $\B \Gamma_n$ as well as Type D and Type A structures for tangles. The goal of Section~\ref{RobertsFromKhovanovSection} and Section~\ref{ModuleSection} is to construct Roberts' theory using Khovanov's theory. 

We take the first step toward this goal in Section~\ref{LinQuadratSection}. In Section~\ref{LinQuadratAlgSection}, we discuss quadratic and linear-quadratic algebras following Polishchuk-Positselski \cite{PP}. In Section~\ref{HnAsLinQuadratSection}, we show that $H^n$ may be viewed as a linear-quadratic algebra:
\begin{theorem}[Proposition~\ref{HnIsLinQuad}]\label{IntroLinQuadThm}
With the set of generators specified at the beginning of Section~\ref{HnAsLinQuadratSection}, $H^n$ is a linear-quadratic algebra. 
\end{theorem}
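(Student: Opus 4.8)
The plan is to verify directly that Khovanov's arc algebra $H^n$, presented on the explicit generating set fixed at the start of Section~\ref{HnAsLinQuadratSection}, satisfies the defining condition of a linear-quadratic algebra in the sense of Polishchuk-Positselski: writing $H^n = T(V)/(R)$ for the chosen space of generators $V$ (sitting in degrees $0$ and $1$, say) and a space of relations $R \subseteq V \oplus (V \otimes V)$, one must check the \emph{linear-quadratic} compatibility condition, namely that $R \cap (V \otimes V) $ generates a quadratic algebra whose own relations lift consistently — equivalently, that the projection of $R$ to $V\otimes V$ has the property that $(R \otimes V + V \otimes R) \cap (V \otimes V \otimes V) $ maps to the degree-$\leq 2$ part correctly, so that no new linear relations in degree $3$ are forced. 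Concretely this is the condition that $R$ has no ``hidden'' lower-order consequences obstructing a well-defined filtered PBW-type structure.

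First I would recall the explicit generators: these are the minimal idempotents $e_a$ indexed by crossingless matchings $a$ of $2n$ points, together with the elementary ``saddle'' or ``surgery'' cobordism morphisms between matchings $a$ and $b$ that differ by a single resolution change, and the loop-decorating dotted cobordisms (the elements coming from a single circle in $W(\bar a b)$ carrying a degree shift). I would write down the obvious relations among these: the idempotent relations $e_a e_b = \delta_{ab} e_a$ and $\sum_a e_a = 1$; the relations expressing that a dot squared on a circle is zero (from $x^2 = 0$ in $A = \Q[x]/x^2$); the relations saying two disjoint saddles commute; and — crucially — the \emph{linear} relations, which arise when composing two saddles produces a cobordism that either is isotopic to the identity-with-a-dot (hence equals a generator times a dot, a purely quadratic-to-linear phenomenon after accounting for degree) or when a composite of saddles through an intermediate matching factors through a circle and thereby equals a dotted generator. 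These dot-producing composites are exactly what make the algebra linear-quadratic rather than purely quadratic.

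Next I would organize the relations as $R = R_1 \oplus R_{1,2} \oplus R_2$ where $R_2 \subseteq V\otimes V$ are the genuinely quadratic relations, $R_{1,2}$ the mixed ones, and then identify the associated quadratic algebra $q H^n$ obtained by taking the leading (quadratic) parts. The combinatorial heart is to show the overlap/diamond condition: for every triple of matchings connected by a chain of three elementary surgeries, the two ways of composing saddles agree in $H^n$ after reducing using the quadratic relations, and the discrepancy lies in the span of (generator)$\cdot$(dot) terms that are themselves already relations — this is a finite check on the local pictures of how three resolution changes interact (either the three changes are disjoint, or they share circles and one gets the standard neck-cutting / dot-migration identities of the cobordism category underlying $H^n$). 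I expect the main obstacle to be precisely this: bookkeeping all the local configurations of how elementary saddles compose and verifying that every cubic syzygy among the relations either is a consequence of the quadratic relations alone or produces only the allowed linear correction terms — in Polishchuk-Positselski's language, verifying that the relations satisfy the condition guaranteeing $H^n$ is a one-sided (say, right) PBW deformation of its quadratic part, with no obstruction in the relevant $\operatorname{Ext}$-group. Once the local cobordism identities are tabulated, the verification reduces to a direct, if lengthy, case analysis, and I would present it by reducing to the case $n$ small enough that all local pictures appear, then noting the general case is a disjoint-union/locality argument.
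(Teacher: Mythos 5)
You have the right general framework: present $H^n = T(V)/J$ over the idempotent ring $\I_n$, where $V$ is spanned by the bridge generators $h_\gamma$ and the arc (dot) generators $h_\alpha$; identify the relation space $J_2 = J \cap (V \oplus (V\otimes V))$; and aim to show $J \subset T(V)\cdot J_2\cdot T(V)$. (One minor misidentification: the generating module $V$ sits in intrinsic degrees $1$ and $2$, not $0$ and $1$; the idempotents $e_a$ are not in $V$ but form the coefficient ring $\I_n$.) But the central step --- showing that an arbitrary element of $J$ reduces to zero modulo $T(V)\cdot J_2\cdot T(V)$ --- is where your proposal and the paper's proof diverge, and your route has a genuine gap.

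You propose to verify a PBW-deformation / overlap criterion in the spirit of Polishchuk--Positselski or Braverman--Gaitsgory: check the cubic syzygies, conclude there is no obstruction, and invoke a PBW-type theorem. That criterion only yields the desired conclusion when the associated quadratic algebra $(H^n)^{(0)}$ is \emph{Koszul}, and neither you nor the paper establishes Koszulness of $(H^n)^{(0)}$; moreover Koszulness is not used anywhere in the paper. Without it, a finite diamond/overlap check in low degree says nothing about whether relations in $J$ of large word-length are generated by $J_2$. The consistency condition you are thinking of (that the map $\varphi: I \to V$ satisfies $\varphi(\varphi^{12}-\varphi^{23})=0$) is proved in the paper but is only a \emph{necessary} condition, not a sufficient one. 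The paper's actual engine is a direct reduction algorithm: commute all the $h_\alpha$ to the right using item~\ref{BridgeCircleRels}, reduce the $h_\gamma$-part of each monomial to a geodesic in the Hasse diagram $G_n$ of the noncrossing partition lattice $NC_n$ (using items~\ref{TwoBridgeRels} and \ref{LinQuadRels}), and then invoke the combinatorial Lemma~\ref{NonCrossPartLemma}: for any two vertices $p,q$ of $G_n$, the graph $G_{p,q}$ whose vertices are geodesics from $p$ to $q$, with an edge whenever two geodesics differ in a single interior vertex, is connected. That connectivity statement is what lets one pass from any minimal-length $h_\gamma$-monomial to any other, one quadratic relation at a time, and it is inherently global: two geodesics between distant vertices of $NC_n$ can disagree at every internal step, and there is no disjoint-union/locality decomposition. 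Your plan to ``reduce to small $n$'' and propagate by locality therefore does not close the argument. Supplying Lemma~\ref{NonCrossPartLemma} (or a genuinely equivalent combinatorial statement about the noncrossing partition lattice) is the indispensable core of the proof, and it is absent from your proposal.
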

This theorem allows us to write $H^n$ as the quotient of the tensor algebra on the specified generators by an ideal generated by certain explicitly-given relations, which are listed in items \ref{TwoBridgeRels}-\ref{LinQuadRels} of the proof of Proposition~\ref{HnIsLinQuad}. See Corollary~\ref{HnDescriptionCorr} for a more precise statement.

A combinatorial lemma about noncrossing partitions, Lemma~\ref{NonCrossPartLemma}, is needed to prove Theorem~\ref{IntroLinQuadThm}. While Theorem~\ref{IntroLinQuadThm} is not necessary for the remainder of the paper, Lemma~\ref{NonCrossPartLemma} is important for Section~\ref{RobertsFromKhovanovSection}. Proofs of Lemma~\ref{NonCrossPartLemma} were found by D{\"o}m{\"o}t{\"o}r P{\'a}lv{\"o}lgyi \cite{Palvolgyi} and independently by Aaron Potechin \cite{Potechin}. This lemma, and Theorem~\ref{IntroLinQuadThm}, may be of interest to readers independently of the other constructions in this paper. 

In Section~\ref{QLQDualitySection}, we consider a notion from \cite{PP} of quadratic duality for linear-quadratic algebras. In Section~\ref{TypeDDBimodSect}, we discuss a bordered-algebra version of this duality using Type DD bimodules. Generalized Koszul duality between two algebras $\B$ and $\B'$ in bordered Floer homology is defined (see \cite{LOTMorphism}) by the existence of a quasi-invertible rank-one Type DD bimodule over $\B$ and $\B'$. The algebras used in Lipshitz, Ozsv{\'a}th, and Thurston's construction have interesting Koszul self-duality properties. However, it seems that no such properties hold for $H^n$. Viewing $H^n$ as a linear-quadratic algebra, we will see in Section~\ref{DualOfHnSect} that its quadratic dual is infinite-dimensional, whereas $H^n$ is always finitely generated over $\Z$.

While it would be interesting to ask whether the duality between $H^n$ and this infinite-dimensional algebra is a (generalized) Koszul duality, it would also be interesting to explore related theories in which everything stays finite-dimensional. We will pursue the latter goal here.

In Section~\ref{RobertsFromKhovanovSection}, we will outline an alternate construction, based on $H^n$, of Roberts' algebra $\B \Gamma_n$. We define an algebra $\B = \B_R(H^n)$, and we show in Proposition~\ref{BrHnLinQuad} that the algebra $\B$ is linear-quadratic. The proof is very similar to the proof of Proposition~\ref{HnIsLinQuad} asserting that $H^n$ is linear-quadratic, and it also uses Lemma~\ref{NonCrossPartLemma} in an essential way. We deduce that $\B$ is isomorphic to the subalgebra $\B_R \Gamma_n$ of $\B \Gamma_n$ generated by right-pointing generators $\overrightarrow{e}$.

The quadratic dual $\B^!$ of $\B$ is closely related to the subalgebra $\B_L \Gamma_n$ of $\B \Gamma_n$ generated by left-pointing generators $\overleftarrow{e}$. In more detail, a mirroring operation $m$ is defined on certain algebras in Definition~\ref{AlgebraMirroringDef}. We will see in Proposition~\ref{KhovRobertsLSQuotient} that $\B_L \Gamma_n$ is a quotient of the mirror $m(\B^!)$ of $\B^!$ by certain additional relations, listed in that proposition. As Remark~\ref{MagicFGRemark} points out, $\B^!$ is finitely generated for idempotent reasons.

In Section~\ref{FullAlgebraSection}, we define a product algebra $\B \astrosun m(\B^!)$ of $\B$ and $m(\B^!)$. We get a description of Roberts' full algebra $\B \Gamma_n$ as a quotient: 
\begin{theorem}[Corollary~\ref{FullAlgQuotientCorr}]\label{IntroBGnQuotientThm}
$\B \Gamma_n$ is isomorphic to the quotient of $\B \astrosun m(\B^!)$ by the extra relations on $m(\B^!)$ listed in Proposition~\ref{KhovRobertsLSQuotient}.
\end{theorem}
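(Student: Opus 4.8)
The plan is to prove Corollary~\ref{FullAlgQuotientCorr} by comparing generators-and-relations presentations of the two algebras. On the Roberts side, recall from \cite{RtypeD} that $\B\Gamma_n$ has a presentation over its ring of idempotents whose generators are the right-pointing generators $\overrightarrow{e}$, the left-pointing generators $\overleftarrow{e}$, and whose relations can be organized into three groups: those involving only right-pointing generators (call them $(R)$), those involving only left-pointing generators ($(L)$), and the remaining mixed relations $(M)$ involving both. By the results of Section~\ref{RobertsFromKhovanovSection} the subalgebra $\B_R\Gamma_n$ generated by the $\overrightarrow{e}$ is isomorphic to $\B = \B_R(H^n)$, with $(R)$ a defining set of relations; and by Proposition~\ref{KhovRobertsLSQuotient} the subalgebra $\B_L\Gamma_n$ generated by the $\overleftarrow{e}$ is isomorphic to $m(\B^!)$ modulo the extra relations listed there, so $(L)$ is a defining set of relations for that quotient.

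Next I would unwind the definition of the product algebra from Section~\ref{FullAlgebraSection}: $\B \astrosun m(\B^!)$ comes equipped with a presentation over the common idempotent ring whose generators are those of $\B$ together with those of $m(\B^!)$ and whose relations consist of the relations of $\B$, the relations of $m(\B^!)$, and a prescribed family of mixed relations built into the $\astrosun$-construction. Reading this off is essentially formal, but one must check that the idempotent ring of $\B$ and that of $m(\B^!)$ are correctly identified under the mirror $m$ (Definition~\ref{AlgebraMirroringDef}) and the quadratic-dual map of Section~\ref{QLQDualitySection}, and that the bigradings assigned to the generators of $m(\B^!)$ by this chain of constructions are exactly those Roberts assigns to the $\overleftarrow{e}$. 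The heart of the argument is then to verify that, under the identification $\B \cong \B_R\Gamma_n$ and the natural surjection onto the left-pointing subalgebra, the prescribed mixed relations of $\B\astrosun m(\B^!)$ coincide with Roberts' mixed relations $(M)$. This is a finite check: one goes through the list of mixed relations in \cite{RtypeD} one at a time and matches each against the relations encoded in $\astrosun$, using the diagrammatic descriptions of the generators and their idempotents.

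Granting this, the conclusion is bookkeeping. Quotienting $\B \astrosun m(\B^!)$ by the extra relations of Proposition~\ref{KhovRobertsLSQuotient} produces an algebra presented over the idempotent ring by the $\overrightarrow{e}$ and $\overleftarrow{e}$ with relation set $(R) \cup \bigl(\text{relations of } m(\B^!) \text{ together with the extra relations}\bigr) \cup (M) = (R) \cup (L) \cup (M)$, which is precisely Roberts' presentation of $\B\Gamma_n$. Hence the two algebras are isomorphic, and the isomorphism is grading-preserving by the compatibility of gradings noted above. The finite generation of $\B^!$ recorded in Remark~\ref{MagicFGRemark} (and hence of $m(\B^!)$) ensures that every piece of the comparison stays manageable.

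The main obstacle, and the point requiring the most care, is the assertion that Roberts' relations for $\B\Gamma_n$ really do partition into the three clean groups $(R)$, $(L)$, $(M)$ with \emph{no further relations} --- equivalently, that the evident surjection from the quotient of $\B\astrosun m(\B^!)$ onto $\B\Gamma_n$ is injective. This is a straightening/PBW-type statement: one needs that $\B\Gamma_n$, as a module over its idempotent ring, is spanned by products $r\cdot\ell$ of a right-pointing word $r$ with a left-pointing word $\ell$, with the only relations among such products being consequences of the listed moves. I would extract this from Roberts' explicit diagrammatic basis of $\B\Gamma_n$ in \cite{RtypeD}, matching that basis idempotent component by idempotent component against the monomials that survive in the quotient of $\B\astrosun m(\B^!)$; this comparison supplies the injectivity and completes the proof.
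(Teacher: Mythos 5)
Your proposal is essentially correct and follows the same approach as the paper. The paper in fact treats Corollary~\ref{FullAlgQuotientCorr} as nearly immediate from the construction: $J_{extra}$ was \emph{defined} in Definition~\ref{JExtraDef} to reproduce Roberts' relations involving one left- and one right-pointing generator, the relations of $\B$ inside $J_{full}$ were already matched to Roberts' right-pointing relations in Proposition~\ref{KhovRobertsAgreeRSAlg}, and the relations of $m(\B^!)$ together with the extras of Proposition~\ref{KhovRobertsLSQuotient} were already matched to Roberts' left-pointing relations. With all three pieces in place, the ideal defining the quotient of $\B\astrosun m(\B^!)$ by the extras is identical, under the identification of generators, to the ideal defining Roberts' $\B\Gamma_n$, so the corollary is a comparison of presentations.

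One clarification on the point you flagged as the main obstacle: since both $\B\Gamma_n$ and the quotient of $\B\astrosun m(\B^!)$ are \emph{defined} by presentations over the same idempotent ring $\I_\beta$ with the same generating set, there is no separate injectivity argument to run once the relation sets are seen to generate the same ideal — a match of presentations gives an isomorphism outright. The only thing that must be checked (and what you are really worried about) is that every defining relation in Roberts' list is quadratic in the $\overrightarrow{e}$ and $\overleftarrow{e}$, so that it lands in one of the three groups $(R)$, $(L)$, $(M)$; this is a direct read of the presentation in \cite{RtypeD} and is the content packaged inside the proofs of Propositions~\ref{KhovRobertsAgreeRSAlg}, \ref{KhovRobertsLSQuotient}, and Definition~\ref{JExtraDef}. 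The diagrammatic basis comparison you propose as a final safety net would work, but it is more machinery than the argument requires.
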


The duality properties of $\B \Gamma_n$ and $\B \astrosun m(\B^!)$ seem more promising than those of $H^n$. In Proposition~\ref{FullAlgDDBimodProp} we define a rank-one Type DD bimodule over $\B \astrosun m(\B^!)$ and its mirror version $m(\B \astrosun m(\B^!))$. Conjecture~\ref{BiggerAlgConjecture} predicts that this DD bimodule is quasi-invertible and thus yields a Koszul duality. By taking quotients of the Type DD algebra outputs, we can obtain a related rank-one DD bimodule over $\B \Gamma_n$ and its mirror version $m(\B \Gamma_n)$. Thus, we could also ask if Conjecture~\ref{BiggerAlgConjecture} is true with $\B \astrosun m(\B^!)$ replaced by $\B \Gamma_n$. A proof of either conjecture would establish that, with regard to Koszul duality, Roberts' bordered theory (or the version over $\B \astrosun m(\B^!)$) has closer formal parallels with bordered Floer homology than Khovanov's $H^n$ theory does.

In Section~\ref{ModuleSection}, we show how to obtain Type A and Type D structures over $\B \astrosun m(\B^!)$ from chain complexes of graded projective $H^n$-modules satisfying certain algebraic conditions: 
\begin{theorem}\label{IntroTypeADSummaryThm}
The following constructions are well-defined:
\begin{itemize}
\item Let $M$ be a chain complex of projective graded right $H^n$-modules satisfying the algebraic condition $C_{module}$ of Definition~\ref{CModuleAlgCondition}. To $M$ we may associate a Type A structure $\widehat{A}(M)$ over $\B \astrosun m(\B^!)$.
\item Let $N$ be a chain complex of projective graded left $H^n$-modules satisfying the condition $C_{module}$ for complexes of left modules; see the discussion preceding Definition~\ref{XBoxWithDDDef}. To $N$ we may associate a Type D structure $\widehat{D}(N)$ over $\B \astrosun m(\B^!)$.
\end{itemize}
\end{theorem}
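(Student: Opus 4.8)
The plan is to give the two constructions explicitly and then verify, first, that the output satisfies the defining equation of a Type D (resp.\ Type A) structure over $\B \astrosun m(\B^!)$, and second, that it is independent of the choices made, up to the appropriate notion of equivalence. The two bullet points are formally dual: passing from left to right modules and from $\B \astrosun m(\B^!)$ to its mirror $m(\B \astrosun m(\B^!))$ interchanges them, so I would treat the Type D case in detail and record the modifications needed for the Type A case. The structural fact that makes everything go is that the ring of idempotents of $\B \astrosun m(\B^!)$ is indexed by crossingless matchings of $2n$ points, in exact parallel with the indecomposable projective graded $H^n$-modules; this lets a bounded complex of such projectives be ``spread out'' along generators whose idempotents are dictated by the associated crossingless matchings.

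For $\widehat{D}(N)$: using projectivity, I would first present $N$, after a choice, in terms of the indecomposable projectives, so that the underlying $\kk$-module $X$ of $\widehat{D}(N)$ has a basis indexed by the (grading-shifted) summands of the terms of $N$. The map $\delta^1 \colon X \to (\B \astrosun m(\B^!)) \otimes X$ then has a ``$\B$-part'' and an ``$m(\B^!)$-part''. The $\B$-part, landing in $\B = \B_R(H^n) \cong \B_R \Gamma_n$, is read off from the differential of $N$ together with the internal $H^n$-module structure: each component of $d_N$ is a morphism of projective $H^n$-modules, hence recorded by an algebra element, which is reinterpreted inside $\B$ through the generators-and-relations description underlying Proposition~\ref{BrHnLinQuad} and Corollary~\ref{FullAlgQuotientCorr}. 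The $m(\B^!)$-part is built from the extra generators of $\B \astrosun m(\B^!)$; here the hypothesis that $N$ satisfies the left-module version of the condition $C_{module}$ of Definition~\ref{CModuleAlgCondition} (adapted as in the discussion before Definition~\ref{XBoxWithDDDef}) is precisely what makes this part well-defined, and it also restricts which mixed terms can occur. I expect that $\widehat{D}(N)$ is in fact produced by the box-tensor-with-Type-DD-bimodule construction of Definition~\ref{XBoxWithDDDef}, applied to a Type D structure over $H^n$ obtained from $N$ and a suitable rank-one Type DD bimodule relating $H^n$ and $\B \astrosun m(\B^!)$ (compare Proposition~\ref{FullAlgDDBimodProp}), which would make the structure equation formal once that bimodule is known to satisfy its own structure equation. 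Either way, checking the Type D equation directly (schematically $(\mu \otimes \id_X) \circ (\id \otimes \delta^1) \circ \delta^1 = 0$, plus the algebra-differential term if present) comes down to a decomposition, according to the $\astrosun$-decomposition of $\B \astrosun m(\B^!)$ from Section~\ref{FullAlgebraSection}, into a part lying entirely in $\B$, which vanishes because $d_N^2 = 0$ and the relations of $\B$ respect composition of $H^n$-module morphisms; a part lying entirely in $m(\B^!)$, which vanishes by the extra relations of Proposition~\ref{KhovRobertsLSQuotient} together with $C_{module}$; and a set of mixed terms, which vanish by the defining relations of the product $\astrosun$ together, again, with $C_{module}$.

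The Type A structure $\widehat{A}(M)$ is produced symmetrically: its underlying $\kk$-module has generators indexed by summands of the terms of $M$, its differential $m_1$ comes from $d_M$, the operations $m_{k+1} \colon \widehat{A}(M) \otimes (\B \astrosun m(\B^!))^{\otimes k} \to \widehat{A}(M)$ come from the right $H^n$-module structure on the indecomposable projectives (for $\B$-inputs) and from the extra generators together with $C_{module}$ (for $m(\B^!)$-inputs), and one verifies the $\mathcal{A}_{\infty}$ relations, strict unitality, and idempotent compatibility by the same $\astrosun$-decomposition. No finiteness obstruction arises at this stage, since $H^n$, $\B$, and $\B \Gamma_n$ are finitely generated over $\Z$ and $\B^!$ is finitely generated for idempotent reasons (Remark~\ref{MagicFGRemark}). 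Finally, to see that $\widehat{D}(N)$ and $\widehat{A}(M)$ are independent of the chosen presentation by projectives, I would note that two presentations differ by stabilizations (adding and removing acyclic complexes of free modules) and by conjugation, and check that each such move preserves $C_{module}$ and induces an isomorphism, or at worst a homotopy equivalence, of the resulting structures.

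The step I expect to be the main obstacle is the vanishing of the $m(\B^!)$-part and of the mixed terms in the structure equations. This is where one must match, term by term, the extra relations that cut $\B \Gamma_n$ out of $\B \astrosun m(\B^!)$ (Theorem~\ref{IntroBGnQuotientThm}) against exactly what $C_{module}$ imposes on the differential of a complex of $H^n$-modules; this bookkeeping should re-use the noncrossing-partition combinatorics of Lemma~\ref{NonCrossPartLemma}, as in the proofs that $H^n$ and $\B_R(H^n)$ are linear-quadratic (Propositions~\ref{HnIsLinQuad} and~\ref{BrHnLinQuad}). Getting signs and grading shifts to line up across the $\astrosun$-product, and confirming that $C_{module}$ is not merely sufficient but is forced once one insists that the construction close up, is the delicate part; by contrast, the stabilization-invariance argument and the entire Type A side, while lengthy, should be routine once the Type D equation is in hand.
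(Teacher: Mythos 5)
Your outline has a genuine structural error in the Type D construction, which is the part you propose to treat in detail.

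You suggest building $\widehat{D}(N)$ by starting with a Type D structure over $H^n$ obtained from $N$ and box-tensoring against a rank-one Type DD bimodule relating $H^n$ and $\B \astrosun m(\B^!)$. No such bimodule is available in the framework of this paper: the rank-one DD formalism of Definition~\ref{GeneralDDBimodDef} requires both algebras to share a single idempotent ring, but $H^n$ lives over $\I_n$ (of rank $C_n$) while $\B \astrosun m(\B^!)$ lives over $\I_{\beta}$ (of rank equal to $\rk_{\Z} H^n$). The paper's route is the other way around: Definition~\ref{TypeDOverBomBDef} sets
$\widehat{D}(N) := m\bigl(\widehat{A}(m(N)) \boxtimes {}^{\B \astrosun m(\B)^!}K^{m(\B \astrosun m(\B)^!)^{op}}\bigr)$,
where the input to the box tensor is a Type A structure over $\B \astrosun m(\B^!)$ (constructed first, directly, from the mirrored complex $m(N)$) and the DD bimodule is the one from Proposition~\ref{FullAlgDDBimodProp}, which lies over $\B \astrosun m(\B^!)$ and its mirror, both over $\I_{\beta}$. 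Definition~\ref{XBoxWithDDDef} takes a Type A structure over the left algebra and a DD bimodule; it does not take a Type D structure over $H^n$. So the dependency is reversed relative to what you propose: Type A must be treated first because Type D is defined through it.

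Two smaller misstatements propagate from this. First, in the explicit formula for $\delta$ on $\widehat{D}(N)$ (Proposition~\ref{ConcreteTypeDAction}) it is the $\B$-outputs that carry the differential data of $N$ via the coefficients $\tilde{c'}_{i,j;h'}$, and the $m(\B^!)$-outputs are a purely formal term $\sum (-1)^{\deg_h x'_i}\, m(b^*_{*;m(h),m(h'')}) \otimes (h'' \cdot x'_i)$ supplied by the DD bimodule; so $C_{module}$ is the constraint that makes the $\B$-outputs land in $\beta_{mult}$ (through its role on the $m(\B^!)$-action of $\widehat{A}(m(N))$), not the constraint that makes the $m(\B^!)$-part well-defined. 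Second, the verification that the linear-quadratic relations of $\B \astrosun m(\B^!)$ act as zero on $\widehat{A}(M)$ (Propositions~\ref{mBBangActionWellDef}, \ref{FullMTwoActionWellDef}) does not reuse Lemma~\ref{NonCrossPartLemma}; it reuses the $d_M^2 = 0$ identities catalogued in Proposition~\ref{HnProjModStructure}, degree by degree, together with a direct check against the $J_{extra}$ relations of Definition~\ref{JExtraDef}. (Relatedly, the idempotent ring $\I_{\beta}$ of $\B \astrosun m(\B^!)$ is indexed by $\beta = \{(W(a)b,\sigma)\}$, not by crossingless matchings — an important distinction since $\I_n \neq \I_{\beta}$.) Finally, note that the paper's Theorem~\ref{IntroTypeADSummaryThm} asserts only that the constructions satisfy the structure relations; the presentation-independence you flag is deferred to the homotopy-invariance results of Sections~\ref{TypeAEquivSect}--\ref{TypeDEquivSect} and is not part of this statement.
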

\noindent Theorem~\ref{IntroTypeADSummaryThm} is a summary of Definition~\ref{FullTypeAStructureDefn}, Proposition~\ref{FullMTwoActionWellDef}, and Definition~\ref{TypeDOverBomBDef}.

The chain complexes $[T_i]^{Kh}$ associated to tangles by Khovanov satisfy $C_{module}$, so Theorem~\ref{IntroTypeADSummaryThm} gives us Type A and Type D structures $\widehat{A}([T_2]^{Kh})$ and $\D([T_1]^{Kh})$ over $\B \astrosun m(\B^!)$. By Proposition~\ref{TangleTypeADescends}, the extra relations of Theorem~\ref{IntroBGnQuotientThm} act as zero on the Type A structure $\widehat{A}([T_2]^{Kh})$, so we get a Type A structure over the quotient algebra $\B \Gamma_n$. We may also take quotients of the algebra outputs of the Type D structure $\D([T_1]^{Kh})$ to get a Type D structure over $\B \Gamma_n$. 

\begin{theorem}[Proposition~\ref{RobertsKhovanovTypeA} and Proposition~\ref{RobertsKhovanovTypeD}]
The Type A structure $\widehat{A}([T_2]^{Kh})$ over $\B \Gamma_n$, and the Type D structure $\D([T_1]^{Kh})$ over $\B \Gamma_n$, are isomorphic to the Type A and D structures Roberts associates to $T_2$ and $T_1$ in \cite{RtypeA} and \cite{RtypeD}. 
\end{theorem}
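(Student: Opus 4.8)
The plan is to prove the Type A statement (Proposition~\ref{RobertsKhovanovTypeA}) and the Type D statement (Proposition~\ref{RobertsKhovanovTypeD}) in tandem, exploiting that the two constructions are formally dual, and to do so by a direct identification of generators and structure maps rather than by an indirect pairing argument. The first move is to make both sides completely explicit. On the Khovanov side the input is the complex $[T_i]^{Kh}$: at each vertex $v$ of the cube of resolutions of $T_i$ one has a crossingless diagram $a_v$ (platform arcs together with some closed circles), and the underlying module there is $(H^n)_{a_v}\otimes V^{\otimes c(v)}$ (resp. its left-module analogue), which decomposes over crossingless matchings $b$ and circle labellings into a standard $\Z$-basis. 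Fixing this basis gives a generating set for $\widehat{A}([T_2]^{Kh})$ and $\D([T_1]^{Kh})$, on which Definition~\ref{FullTypeAStructureDefn}, Proposition~\ref{FullMTwoActionWellDef} and Definition~\ref{TypeDOverBomBDef} spell out the $\B\astrosun m(\B^!)$-action and the differential. On Roberts' side the generators of his Type A and Type D structures for $T_i$ are likewise indexed by resolutions together with Khovanov-type labellings of the circles in the associated crossingless diagram, with $\B\Gamma_n$ acting by his explicit diagrammatic recipes. Step one, then, is to write down the evident bijection between these two generating sets.

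Step two is to check that the bijection respects the idempotent structure: the idempotent of $\B\Gamma_n$ fixing a generator is read off from the crossingless matching attached to it on both sides, so this is bookkeeping, using the presentation of $\B\Gamma_n$ from Theorem~\ref{IntroBGnQuotientThm} to compare idempotents. Step three, the bulk of the argument, is to show the bijection intertwines the full algebra action and the differential, organized by type of generator of $\B\Gamma_n = \B\astrosun m(\B^!)/(\text{extra relations})$: (i) the right-pointing generators $\ora e$, whose action comes from the ordinary $H^n$-module structure of $[T_i]^{Kh}$ and which Roberts also builds from elementary merge/split cobordism maps, so that the two rules agree essentially by inspection; (ii) the left-pointing generators $\ola e$, whose action on the Khovanov side is produced through the condition $C_{module}$ of Definition~\ref{CModuleAlgCondition} (dual bases and trace-type pairings on $H^n$); and (iii) the differential, which on both sides is assembled from the saddle cobordism maps between adjacent resolutions. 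For the Type A structure one must in addition match the higher $\A_\infty$ operations $m_k$ with Roberts' higher products.

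The main obstacle will be (ii): showing that the left-pointing part of the action delivered by the $C_{module}$ machinery reproduces Roberts' combinatorial rule on the nose. Roberts specifies the action of $\ola e$ by a local picture — how the arcs carried by the generator interact with the circles of a resolution, merging or splitting them and permuting labels — whereas the present construction routes that action through $m(\B^!)$ and the dual-basis formulas built into $C_{module}$. Reconciling the two amounts to proving a dictionary lemma that translates the dual-basis/trace formulas into cobordism-map pictures, argued by a finite case analysis over the elementary presentations of an algebra generator and over the possible local configurations of a resolution; here the generators-and-relations description of $H^n$ from Theorem~\ref{IntroLinQuadThm} (via Lemma~\ref{NonCrossPartLemma}) and the presentation of $\B = \B_R\Gamma_n$ from Proposition~\ref{BrHnLinQuad} carry the weight. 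Once this dictionary is in place it remains to match gradings — both the homological grading and the quantum/intrinsic grading, the latter after the sign flip recorded in Theorem~\ref{IntroKhPairingThm} — which is routine, since both grading conventions are pinned down by their values on elementary tangles; the bijection is then an isomorphism of Type A (resp.\ Type D) structures, completing the proof.
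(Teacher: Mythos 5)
Your overall strategy --- direct identification of generators, idempotents, and structure maps --- is the same one the paper uses. But several of the specifics are off in ways that would derail the execution.

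First, there are no higher $\mathcal A_\infty$ operations to match. Both $\widehat A([T]^{Kh})$ and Roberts' Type A structure are \emph{strict} differential bigraded modules, i.e.\ Type A structures with $m_k=0$ for $k>2$ (see Example~\ref{OrdinaryModTypeAEx} and the surrounding remark). Higher $\mathcal A_\infty$ data only enters later, for the \emph{morphisms} in Section~\ref{TypeAEquivSect}. Planning to ``match higher products'' suggests a misreading of the objects you are comparing.

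Second, and more seriously, you mischaracterize how the left-pointing action arises on the Khovanov side. It is not produced by ``dual bases and trace-type pairings'' on $H^n$. The condition $C_{module}$ simply says the Khovanov differential expands in the basis with coefficients $c_{i,j}$ and $\tilde c_{i,j;h'}$ supported on multiplicative generators $h'\in\beta_{mult}$; Definition~\ref{FirstTypeAStrDef} then \emph{defines} the action of $m(b^*_\gamma)$ and $m(b^*_C)$ directly by these $\tilde c$-coefficients (equations (\ref{TopSimpleMultEqn}) and (\ref{BottomSimpleMultEqn})). The proof of Proposition~\ref{RobertsKhovanovTypeA} is a straight coefficient-matching: Example~\ref{HnCCoeffsEx} computes the $\tilde c_{i,j;h'}$ in terms of crossing changes and sign counts $(-1)^{\#_1(i,j)}$, and these visibly coincide with Roberts' local rules for $\ola e_{\gamma}$ and $\ola e_{C}$. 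There is no need for a separate ``dictionary lemma,'' and in particular the linear-quadratic presentation of $H^n$ (Theorem~\ref{IntroLinQuadThm}) and Lemma~\ref{NonCrossPartLemma} do not reappear in this comparison --- they were used earlier to set up $\B\astrosun m(\B)^!$ and its well-definedness, not to identify the module action with Roberts'.

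Third, you treat the Type D statement as formally dual to the Type A statement, but the paper's $\D(N)$ is not built directly from $N$. It is defined (Definition~\ref{TypeDOverBomBDef}) as $m\bigl(\widehat A(m(N))\boxtimes {^{\B\astrosun m(\B)^!}}K^{m(\B\astrosun m(\B)^!)^{op}}\bigr)$, a mirror of a box tensor product with a rank-one DD bimodule. A proof of Proposition~\ref{RobertsKhovanovTypeD} therefore has to first unwind this composite into an explicit formula for $\delta$ --- this is exactly what Proposition~\ref{ConcreteTypeDAction} does --- before the coefficient comparison with Roberts' $\delta$ can proceed. Your outline skips this unwinding entirely, so as written it would not reach the point where a comparison is even possible.

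In short: your high-level plan (bijection on generators, then check idempotents, action, differential) is right, but you should drop the phantom higher-products step, replace the ``dual basis/trace pairing'' picture of the left-pointing action with the concrete $\tilde c_{i,j;h'}$-coefficient description, and insert the necessary unwinding of Definition~\ref{TypeDOverBomBDef} before attempting the Type D comparison.
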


By Proposition~\ref{XBoxQuotientDescend}, the pairing $\boxtimes$ is the same over $\B \astrosun m(\B^!)$ and its quotient $\B \Gamma_n$. We show that the pairing of the bordered modules over $\B \astrosun m(\B^!)$ agrees with the tensor product of the original chain complexes over $H^n$:
\begin{theorem}[Proposition~\ref{TensorXBoxAgree}]
Given $M$ and $N$ as in Theorem~\ref{IntroTypeADSummaryThm}, we have
\[
\widehat{A}(M) \boxtimes_{\B \astrosun m(\B^!)} \D(N) \cong M \otimes_{H^n} N,
\]
after multiplying the intrinsic gradings on $M \otimes_{H^n} N$ by $-1$.
\end{theorem}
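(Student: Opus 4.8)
The plan is to unwind both sides of the claimed isomorphism in terms of the underlying $H^n$-data and match them directly, rather than to argue abstractly. First I would recall the definitions: $\widehat{A}(M)$ is built from $M$ by the construction of Definition~\ref{FullTypeAStructureDefn}, whose $\A_\infty$ operations $m_{k+1}$ encode the $H^n$-action on $M$ together with the auxiliary data coming from the mirror factor $m(\B^!)$; likewise $\D(N)$ is the Type D structure of Definition~\ref{TypeDOverBomBDef}, whose structure map $\delta^1$ records the $H^n$-action on $N$ and the dual auxiliary data. The box tensor product $\widehat{A}(M)\boxtimes\D(N)$ is, as a group, $M\otimes_{\B\astrosun m(\B^!)}\D(N)$ with differential assembled from the $m_{k+1}$ and iterated $\delta^1$'s. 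So the first step is to write out this differential explicitly and observe that the summands split into two kinds: those that only see the $H^n$-part of the algebra (the genuine arc-algebra idempotents and right/left-pointing generators that correspond to elements of $H^n$), and those that see the extra generators of $m(\B^!)$.

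The second step is to show that all terms of the second kind cancel or vanish. This is where Proposition~\ref{XBoxQuotientDescend} and the structure of $\B\astrosun m(\B^!)$ enter: the product algebra is arranged so that its two "halves" interact only through the shared idempotent ring, and the condition $C_{module}$ on $M$ and $N$ (Definition~\ref{CModuleAlgCondition} and its left-module analogue) is precisely what guarantees that when an operation on the $A$-side feeds a generator of $m(\B^!)$ into the $D$-side, the corresponding $\delta^1$-term is forced to produce a cancelling partner — the same mechanism that makes $C_{module}$-complexes descend in Theorem~\ref{IntroTypeADSummaryThm}. I would make this precise by induction on the number of $m(\B^!)$-generators appearing in a sequence of box-tensor operations, showing the total contribution telescopes to zero, so that only the $H^n$-terms survive.

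The third step is then the identification of what remains with $M\otimes_{H^n}N$. Once the auxiliary terms are gone, the residual differential on $\widehat{A}(M)\boxtimes\D(N)$ is exactly $d_M\otimes\id\pm\id\otimes d_N$ together with the terms that move an $H^n$-element across the tensor symbol — i.e., the differential on the ordinary tensor product of the chain complexes $M$ and $N$ over $H^n$, where the balancing over $H^n$ is forced because $M\otimes_{\B\astrosun m(\B^!)}(-)$ already identifies along the common idempotents and the surviving algebra elements are genuine $H^n$-elements. Finally I would track the bigradings through the box tensor product construction (Section~\ref{BorderedAlgebraSection}), noting as in Theorem~\ref{IntroKhPairingThm} and the statement here that the intrinsic grading on the bordered side is the negative of the homological/quantum grading on $M\otimes_{H^n}N$, which accounts for the stated sign on the gradings.

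The main obstacle I expect is the second step: controlling the cancellation of all the $m(\B^!)$-contributions to the box differential. Box tensor products with Type D structures over composite algebras can a priori involve arbitrarily long sequences of $\delta^1$-applications interleaved with higher $m_{k+1}$'s, and one must verify that $C_{module}$ really does kill every such contribution rather than just the length-one ones. Handling this cleanly will likely require the same bookkeeping used to prove Proposition~\ref{FullMTwoActionWellDef} (that the $m_2$-action is well defined) and the descent statements in Proposition~\ref{TangleTypeADescends}/Proposition~\ref{XBoxQuotientDescend}, so I would set up the argument to reuse those lemmas as black boxes wherever possible.
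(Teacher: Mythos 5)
There is a genuine gap in the proposal, and in fact the key step you outline is backwards. Let me name the problems concretely.

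First, a technical slip: the underlying group of the box tensor product is $\widehat{A}(M)\otimes_{\I_\beta}\D(N)$, where $\I_\beta$ is the idempotent ring, not $M\otimes_{\B\astrosun m(\B^!)}\D(N)$. (See Definition~\ref{XBoxWithSigns}.) This matters because it is precisely the identification over $\I_\beta$ — i.e., matching the shared element $h\in\beta$ on the two sides — that makes $\widehat{A}(M)\boxtimes\D(N)$ have the same $\Z$-basis $\{x_i\cdot h\cdot x'_j\}$ as $M\otimes_{H^n}N$. Tensoring over the full algebra would give a different (and smaller) group.

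Second, and more fundamentally: your strategy in step two is to show the terms of the box differential that involve the generators of $m(\B^!)$ all cancel, leaving only terms ``that see the $H^n$-part.'' This is the opposite of what happens. Look at Proposition~\ref{ConcreteTypeDAction}: $\delta$ on $\D(N)$ has three kinds of output terms. The terms with algebra output $m(b^*_{*;m(h),m(h'')})$ are not spurious; they are exactly the mechanism that produces, after being fed through the algebra action $m_2$ on $\widehat{A}(M)$, the summand of $\partial^\otimes(x_i\cdot h\cdot x'_j)$ coming from the ``nonlocal'' part $\sum_{k,h'}\tilde{c}_{i,k;h'}\,x_k\cdot h'$ of $d_M(x_i)$. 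If you arranged for those terms to cancel, the resulting differential would be missing the entire contribution of $d_M$ that moves algebra elements across the tensor symbol, and the isomorphism would fail. The paper's proof has no cancellation step; every type of term in $\partial^\boxtimes$ matches a type of term in $\partial^\otimes$, in a four-to-four correspondence.

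Third, your main worry — ``arbitrarily long sequences of $\delta^1$-applications interleaved with higher $m_{k+1}$'s'' — is a non-issue here. By construction (Definition~\ref{FullTypeAStructureDefn}, cf.\ Example~\ref{OrdinaryModTypeAEx}), $\widehat{A}(M)$ is an honest dg module, with $m_k=0$ for $k>2$. So in Definition~\ref{XBoxWithSigns} the sum over $n$ terminates at $n=2$, and only a single application of $\delta$ appears in the box differential. There is no telescoping or induction to carry out.

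The correct route, and the one the paper takes, is direct term-matching: expand $\partial^\otimes$ as in the discussion preceding Proposition~\ref{HnProjModStructure} to get Equation~\ref{PartialTensorExpansion}, expand $\partial^\boxtimes = (-1)^{\deg_h(h\cdot x'_j)}m_1\otimes\id + (m_2\otimes\id)\circ(\id\otimes\delta)$ using Proposition~\ref{ConcreteTypeDAction}, and observe that both differentials produce the same four sums of basis elements with the same coefficients. The grading check is then where the factor of $\tfrac12$ in Definition~\ref{WeirdHomSpaceGradingDef} earns its keep: the grading of $h$ is counted twice (once in $\widehat{A}(M)$, once in $\D(N)$) with weight $-\tfrac12$ on the bordered side, versus once with weight $1$ on $M\otimes_{H^n}N$, which is what the overall sign flip on intrinsic gradings absorbs.
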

Thus, we get an alternate proof that the pairing of Roberts' Type D and Type A structures computes Khovanov homology. 

Finally, in Section~\ref{TypeAEquivSect} and Section~\ref{TypeDEquivSect} we show that the homotopy types of $\widehat{A}([T_2]^{Kh})$ and $\D([T_1]^{Kh})$, as Type A and Type D structures over $\B \astrosun m(\B^!)$, are invariants of the tangles underlying the diagrams $T_1$ and $T_2$:
\begin{theorem}[Corollary~\ref{BomBHtpyEquivCorr} and Corollary~\ref{TypeDHtpyCorrFinal}]
Performing a Reidemeister move on $T_2$ or $T_1$ yields a homotopy equivalence between the corresponding Type A structures $\widehat{A}([T_2]^{Kh})$ or Type D structures $\D([T_1]^{Kh})$ over $\B \astrosun m(\B^!)$.
\end{theorem}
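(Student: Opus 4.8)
The plan is to reduce this to Khovanov's Reidemeister invariance of the complexes $[T_i]^{Kh}$ by showing that the assignments $M \mapsto \widehat{A}(M)$ and $N \mapsto \D(N)$ of Theorem~\ref{IntroTypeADSummaryThm} are suitably functorial. Recall from \cite{KhovFunctor} that a Reidemeister move on $T_2$ (resp.\ $T_1$) produces a chain homotopy equivalence $f \colon [T_2]^{Kh} \to [T_2']^{Kh}$ (resp.\ $[T_1]^{Kh} \to [T_1']^{Kh}$) with homotopy inverse $g$ and chain homotopies $H, H'$ witnessing $gf \simeq \id$ and $fg \simeq \id$; all of these are morphisms of complexes of projective graded $H^n$-modules, hence degreewise maps of $H^n$-modules. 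So it suffices to prove that $\widehat{A}(-)$ and $\D(-)$ are functorial with respect to a class of chain maps broad enough to contain Khovanov's data, that they carry the corresponding chain homotopies to $\A_\infty$-homotopies, and hence that they send homotopy equivalences of complexes to homotopy equivalences of Type A, resp.\ Type D, structures over $\B \astrosun m(\B^!)$. Applying this to $f, g, H, H'$ and tracking the overall $(-1)$-rescaling of intrinsic gradings then yields the statement.

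For Type A (Section~\ref{TypeAEquivSect}), the steps are as follows. From Definition~\ref{FullTypeAStructureDefn} and Proposition~\ref{FullMTwoActionWellDef} one isolates which data on a complex $M$ actually enters the construction of $\widehat{A}(M)$: the $H^n$-module structure supplies the action of the factor $\B = \B_R(H^n)$, while the action of the factor $m(\B^!)$ is the part genuinely governed by condition $C_{module}$ of Definition~\ref{CModuleAlgCondition}. One then shows that a chain map $f \colon M \to M'$ of the relevant kind is compatible with the $C_{module}$ data and induces an $\A_\infty$-morphism $\widehat{A}(f) \colon \widehat{A}(M) \to \widehat{A}(M')$ over $\B \astrosun m(\B^!)$, with $\widehat{A}(f)_1$ built from $f$ and with whatever higher components $\widehat{A}(f)_k$, $k \ge 2$, the $\A_\infty$-morphism relations require; that this respects identities and composition; and that a chain homotopy between two such maps induces an $\A_\infty$-homotopy between the corresponding morphisms. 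The Type D case (Section~\ref{TypeDEquivSect}) is formally dual, using Definition~\ref{TypeDOverBomBDef} and the left-module version of $C_{module}$. Planar isotopies of the diagrams require nothing extra, since $[T_i]^{Kh}$ is already a planar-isotopy invariant.

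The main obstacle is precisely the interaction between the $m(\B^!)$-part of the action and the chain maps and homotopies in play: since $\widehat{A}(M)$ is \emph{not} determined by $M$ as a bare complex of $H^n$-modules --- the $C_{module}$ data enters essentially --- one has to verify that the elementary cobordism maps composing Khovanov's Reidemeister equivalences interact correctly with that data, and that the resulting maps satisfy the full $\A_\infty$-morphism and $\A_\infty$-homotopy relations over $\B \astrosun m(\B^!)$. This is where the explicit structure of $\B \astrosun m(\B^!)$ (built from $\B$, the quadratic dual $\B^!$, and the mirroring operation $m$) must be used, and it is the technical core of Sections~\ref{TypeAEquivSect} and \ref{TypeDEquivSect}. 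If functoriality in this strength should fail, the fallback is to check the finite list of Reidemeister moves individually, gluing Khovanov's local chain-homotopy data into the unchanged portion of the tangle and verifying compatibility with the bordered structure maps locally --- a longer but self-contained route.
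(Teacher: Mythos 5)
Your overall strategy matches the paper's: Khovanov's Reidemeister chain homotopy equivalences of $H^n$-module complexes, suitably constrained, induce $\mathcal{A}_\infty$-homotopy equivalences of the corresponding bordered structures. Two points deserve sharpening, though, since they are where your plan as stated would run into trouble if executed literally.

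First, the paper does \emph{not} establish functoriality of $M \mapsto \widehat{A}(M)$ for the full class of chain maps satisfying $C_{morphism}$. Composition of the induced $\mathcal{A}_\infty$-morphisms (Proposition~\ref{AInftyCompProp}) is only proved when at least one of the two maps satisfies the more restrictive condition $\tilde{C}_{morphism}$ (vanishing of all the $\tilde{f}_{i,j;h'}$), and the induced homotopies require the analogous restriction $\tilde{C}_{homotopy}$. Rather than aiming for general functoriality, the paper packages Khovanov's Reidemeister equivalences as Gaussian eliminations (Propositions~\ref{SubcxHtpyEquiv1} and \ref{SubcxHtpyEquiv2}) and checks directly that the resulting $f$, $g$, $\psi$ fall into the required restricted classes. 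So your ``fallback'' route --- checking the finitely many Reidemeister moves locally --- is in fact closer to what the paper actually does than the general functoriality you first propose.

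Second, the Type D case is not handled by a ``formally dual'' argument. Recall $\D(N)$ is defined (Definition~\ref{TypeDOverBomBDef}) as $m\bigl(\widehat{A}(m(N)) \boxtimes {}^{\B \astrosun m(\B)^!}K^{m(\B \astrosun m(\B)^!)^{op}}\bigr)$, not by a construction parallel to $\widehat{A}$. Consequently the paper deduces Type D invariance from Type A invariance by showing that box-tensoring with $\id_{DD}$ carries $\mathcal{A}_\infty$-morphisms (with $F_n = 0$ for $n > 2$) and homotopies (with $H_n = 0$ for $n > 1$) to Type D morphisms and homotopies (Propositions~\ref{FboxIdIsTypeDMorph}--\ref{TypeDValidHtpyProp}, Corollary~\ref{TypeDGeneralHtpyCorr}). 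If you instead tried to set up a directly dual argument for $\D(-)$, you would be redoing work that the $\boxtimes K$ machinery gives you for free; worse, you would have to re-verify all the sign-sensitive $\mathcal{A}_\infty$ identities rather than inheriting them from the Type A side.

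With those two adjustments --- restrict to the Gaussian-elimination maps and verify they satisfy $\tilde{C}_{morphism}$/$\tilde{C}_{homotopy}$, and derive Type D from Type A via $\boxtimes K$ --- your plan coincides with the paper's proof.
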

With the help of Proposition~\ref{HomotopyEqDescendProp}, we also obtain an alternate proof that Roberts' Type A and Type D structures over $\B \Gamma_n$ are homotopy-invariant under Reidemeister moves.

\subsection*{Acknowledgments}
I would like to thank Zolt{\'a}n Szab{\'o}, Lawrence Roberts, Victor Reiner, and Robert Lipshitz for useful comments and discussions during the writing of this paper. I would especially like to thank D{\"o}m{\"o}t{\"o}r P{\'a}lv{\"o}lgyi and Aaron Potechin for independently finding proofs of Lemma~\ref{NonCrossPartLemma}.

\section{Some bordered algebra}\label{BorderedAlgebraSection}

The standard reference for the algebra of bordered Floer homology is Lipshitz, Ozsv{\'a}th, and Thurston \cite{LOTBimodules}. We will use only a subset of the full algebraic machinery; however, we will work with coefficients in $\Z$ rather than $\Z/2\Z$. For this sign lift, we will follow the conventions of Roberts in \cite{RtypeD} and \cite{RtypeA}.

\subsection{Differential graded algebras and modules}\label{DiffGrAlgModSect}

\begin{convention}\label{FGConvention}
Unless otherwise specified, all algebras and modules discussed in this paper will be assumed to be finitely generated over $\Z$.
\end{convention}

The following is the notion of differential graded algebra which will be most useful for us; we will not need to use more general $\A_{\infty}$ algebras. In this paper, the coefficient ring $R$ is always a direct product of finitely many copies of $\Z$.

\begin{definition}\label{DgAlgDef} A differential bigraded algebra, or dg algebra, is a bigraded unital associative algebra $\B$ over a coefficient ring $R$ (with $R$ a finite direct product of copies of $\Z$), equipped with an $R$-bilinear differential $\mu_1$ which is homogeneous of degree $(0,+1)$ with respect to the bigrading. The two gradings on a dg algebra will be called the intrinsic and homological gradings (in that order). Thus, the differential should preserve the intrinsic grading and increase the homological grading by 1. 

The differential must satisfy the following Leibniz rule:
\[
\mu_1(xy) = (-1)^{\deg_h y} (\mu_1(x))y + x(\mu_1(y)),
\]
where $\deg_h$ denotes the homological degree, for elements $x$ and $y$ of $\B$ which are homogeneous with respect to the homological grading. The coefficient ring $R$ is required to coincide with the summand $\B_{0,0}$ of $\B$ in bigrading $(0,0)$.
\end{definition}

\begin{definition} Suppose $R = \Z^{\times k}$. The elements $e_1 = (1,0,\ldots,0), \ldots, e_k = (0,\ldots,0,1)$ will be called the minimal, or elementary, idempotents of $\B$. The coefficient ring $R$ will also be referred to as the idempotent ring of $\B$. For each elementary idempotent $e_i$, there is a left $R$-module $Re_i \simeq \Z$.
\end{definition}

\begin{remark} The usual convention in bordered Floer homology is to have the differential decrease the homological grading by 1; we have chosen to reverse this convention since the differentials in Khovanov homology increase homological grading by 1. 
\end{remark}

\begin{remark} Bordered Floer homology requires more general gradings by a (possibly nonabelian) group $G$ and a distinguished element $\lambda$ in the center of $G$; we use here only the special case where $G$ is the abelian group $\Z^2$ and $\lambda$ is $(0,1)$.
\end{remark}

When dealing with bigraded algebras or modules, we will use the following degree shift convention: if $X = \oplus_{i,j} X_{i,j}$ is any type of bigraded object, then $X[m,n]$ is the same type of bigraded object, and the summand of $X[m,n]$ in bigrading $(i,j)$ is $X_{i-m,j-n}$.

Since we are working over $\Z$, the following notation will also be useful, following Roberts \cite{RtypeD} and \cite{RtypeA}. If $X$ is any type of bigraded object, then $\left| \id \right|: X \to X$ is defined by multiplication by $(-1)^{\deg_h}$, where $\deg_h$ denotes the homological degree. Similarly, $\left|\id\right|^j: X \to X$ is defined by multiplication by $(-1)^{j \deg_h}$, and $\left|\id\right|^{j \otimes k}$ is the $k$-fold tensor product of $\left|\id\right|^j$. In this notation, if $\mu_2$ denotes the multiplication on a dg algebra $\B$, then the Leibniz rule for the differential $\mu_1$ on $\B$ can be written as 
\[
\mu_1 \circ \mu_2 = \mu_2(\mu_1 \otimes \left|\id\right|) + \mu_2(\id \otimes \mu_1).
\]
\begin{definition} A left differential bigraded module, or left dg module, over a dg algebra $\B$, is a bigraded left $\B$-module $M$ equipped with a differential $d$ of bidegree $(0,+1)$, such that the Leibniz rule
\[
d \circ m = m \circ (\mu_1 \otimes \left|\id\right|) + m \circ (\id \otimes d)
\]
is satisfied, where $m: \B \otimes_R M \to M$ is the action of $\B$ on $M$ and $\mu_1$ is the differential on $\B$.
\end{definition}
Similarly,
\begin{definition} A right differential bigraded module, or right dg module, over a dg algebra $\B$, is a bigraded right $\B$-module $M$ equipped with a differential $d$ of bidegree $(0,+1)$ such that the Leibniz rule
\[
d \circ m = m \circ (d \otimes \left|\id\right|) + m \circ (\id \otimes \mu_1)
\]
is satisfied, where $m: M \otimes_R \B \to M$ is the action of $\B$ on $M$ and $\mu_1$ is the differential on $\B$.
\end{definition}

If $M$ is a right dg module and $M'$ is a left dg module over $\B$, then we can take the tensor product of $M$ and $M'$ over $\B$ to produce a chain complex of graded abelian groups, or equivalently a differential bigraded $\Z$-module:

\begin{definition} Let $M$ be a right dg module and $M'$ be a left dg module over $\B$. The differential on the tensor product $M \otimes_{\B} M'$ is defined to be
\[
d_{M \otimes_{\B} M'} := d_M \otimes \left|\id_{M'}\right| + \id_M \otimes d_{M'}.
\]
\end{definition}

\subsection{Type D structures}

\begin{definition} Let $\B$ be a differential bigraded algebra over $R$ as in Definition~\ref{DgAlgDef}. Let $\mu_1$ and $\mu_2$ denote the differential and multiplication on $\B$, respectively.

A \emph{Type D structure} over $\B$ is, firstly, a bigraded left $R$-module $\D$ which is isomorphic to a finite direct sum of $R$-modules $Re_{i_{\alpha}}[j_{\alpha},k_{\alpha}]$, where the $e_{i_{\alpha}}$ are elementary idempotents of $\B$ (all in bigrading $(0,0)$) and $[j_{\alpha},k_{\alpha}]$ is a grading shift. The module $\D$ should be equipped with a bigrading-preserving $R$-linear map
\[
\delta: \D \to (\B \otimes_R \D)[0,-1],
\]
such that
\[
(\mu_1 \otimes \left|\id\right|) \circ \delta + (\mu_2 \otimes \id) \circ (\id \otimes \delta) \circ \delta = 0.
\]
\end{definition}

\begin{remark} The condition that $\D = \oplus_{\alpha} Re_{i_{\alpha}}[j_{\alpha},k_{\alpha}]$ would be unnecessary if $R$ were a direct product of copies of $\Z/2\Z$, rather than $\Z$. But over $\Z$, we want to exclude cases like $\B = R = \Z$, $\D = \Z/2\Z$, $\delta = 0$ from being valid Type D structures. The reason for this restriction is that we want Proposition~\ref{TensorIsProjModule} below, which is true over $\Z/2\Z$, to hold over $\Z$ as well.
\end{remark}

\begin{proposition}\label{TensorIsProjModule} If $(\D, \delta)$ is a Type D structure over $\B$, then $\B \otimes_R \D$ is a projective left dg $\B$-module when equipped with the differential
\[
d := \mu_1 \otimes \left|\id\right| + (\mu_2 \otimes \id) \circ (\id \otimes \delta),
\]
where $\mu_1$ and $\mu_2$ denote the differential and multiplication on $\B$, respectively.
\end{proposition}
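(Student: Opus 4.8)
The plan is to verify in turn the four properties involved: that $\B \otimes_R \D$ is projective as a left $\B$-module, that $d$ has bidegree $(0,+1)$, that $d^2 = 0$, and that $d$ satisfies the left dg module Leibniz rule. For projectivity, I would use the structural hypothesis on $\D$: writing $\D \cong \bigoplus_\alpha Re_{i_\alpha}[j_\alpha,k_\alpha]$ as a bigraded left $R$-module gives $\B \otimes_R \D \cong \bigoplus_\alpha \B e_{i_\alpha}[j_\alpha,k_\alpha]$ as a left $\B$-module, with $\B$ acting by left multiplication on the first factor. Since the minimal idempotents are orthogonal and sum to $1 \in R = \B_{0,0}$, each $\B e_{i_\alpha}$ is a direct summand of the free left module $\B$, so $\B \otimes_R \D$ is a direct sum of grading shifts of summands of $\B$, hence projective. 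For the bidegree claim, $\mu_1 \otimes \left|\id\right|$ has bidegree $(0,+1)$ since $\mu_1$ does and $\left|\id\right|$ is bigrading-preserving; and, forgetting the shift on its target, $\delta$ raises homological degree by $1$ and preserves intrinsic degree, so $(\mu_2 \otimes \id)\circ(\id\otimes\delta)$ also has bidegree $(0,+1)$.

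The heart of the argument is $d^2 = 0$. Setting $g := (\mu_2 \otimes \id)\circ(\id\otimes\delta)$, so that $d = (\mu_1 \otimes \left|\id\right|) + g$, I would expand
\[
d^2 = (\mu_1 \otimes \left|\id\right|)^2 + (\mu_1 \otimes \left|\id\right|)\,g + g\,(\mu_1 \otimes \left|\id\right|) + g^2 .
\]
The first term equals $\mu_1^2 \otimes \id = 0$. For $g^2$, associativity of $\mu_2$ rewrites it as $(\mu_2\otimes\id)\circ\bigl(\id\otimes[(\mu_2\otimes\id)(\id\otimes\delta)\delta]\bigr)$, at which point the Type D structure relation replaces $(\mu_2\otimes\id)(\id\otimes\delta)\delta$ by $-(\mu_1\otimes\left|\id\right|)\delta$. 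Evaluating the remaining three terms on an element $b \otimes x$ with $\delta(x) = \sum_i y_i \otimes x_i$ and using the Leibniz rule $\mu_1(by_i) = (-1)^{\deg_h y_i}\mu_1(b)y_i + b\,\mu_1(y_i)$ on $\B$, the term $(\mu_1 \otimes \left|\id\right|)g$ breaks into a piece that cancels $g^2$ and a piece that cancels $g\,(\mu_1 \otimes \left|\id\right|)$. For the latter cancellation one needs that $\delta$ raises homological degree by $1$, i.e. $\deg_h y_i + \deg_h x_i = \deg_h x + 1$, so that $(-1)^{\deg_h x} = -(-1)^{\deg_h y_i + \deg_h x_i}$. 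Hence $d^2 = 0$.

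Finally, for the Leibniz rule, the action of $\B$ on $\B \otimes_R \D$ is $m = \mu_2 \otimes \id$, and I would check $d\circ m = m\circ(\mu_1 \otimes \left|\id\right|) + m\circ(\id\otimes d)$ by evaluating both sides on $a\otimes b\otimes x$. Expanding the left side with the Leibniz rule for $\mu_1$ on $\B$ and associativity of $\mu_2$, the two summands of $\mu_1(ab) = (-1)^{\deg_h b}\mu_1(a)b + a\mu_1(b)$ produce exactly $m\circ(\mu_1\otimes\left|\id\right|)$ and the $\mu_1$-part of $m\circ(\id\otimes d)$, while the $g$-contribution to $d\circ m$ matches the $g$-part of $m\circ(\id\otimes d)$ by associativity of $\mu_2$. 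This exhausts the required checks.

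I expect the only nontrivial step to be the sign bookkeeping in $d^2 = 0$: recognizing that the cancellation between $(\mu_1 \otimes \left|\id\right|)g$ and $g\,(\mu_1 \otimes \left|\id\right|)$ relies precisely on the homological degree shift built into the definition of a Type D structure, together with the $\left|\id\right|$ conventions. The remaining computations are routine consequences of the dg algebra axioms, the Type D structure equation, and the orthogonality of the elementary idempotents.
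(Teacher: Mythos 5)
Your proof is correct and covers all of the required pieces: projectivity via the idempotent decomposition, the bidegree of $d$, the square-zero property, and the Leibniz rule. The only real difference from the paper's proof is organizational. The paper first establishes the Leibniz rule for $d$ by direct expansion, and then \emph{uses} that rule to get $d^2 = 0$: writing a generator as $a \otimes x = m(a, 1 \otimes x)$, it expands $d(a \otimes x) = (-1)^{\deg_h x}m(\mu_1(a), 1 \otimes x) + m(a, \delta(x))$, applies $d$ again, observes that the two cross terms cancel by the sign coming from $\left|\id\right|$, and is left with $m(a, d\delta(x))$, which vanishes by the Type D relation. You instead compute $d^2$ directly by expanding $\bigl((\mu_1 \otimes \left|\id\right|) + g\bigr)^2$ into four terms and cancelling them pairwise, appealing to $\mu_1^2 = 0$, the dg algebra Leibniz rule on $\B$, the Type D relation, and the fact that $\delta$ raises homological degree by one. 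Both arguments reduce to the same two ingredients (the Type D relation and the homological degree shift built into $\delta$); the paper's ordering lets the Leibniz rule absorb some of the sign bookkeeping, whereas your four-term expansion is more explicit but equally valid.
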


\begin{proof} 
First, since $\D$ (as an $R$-module) is a direct sum of $R$-modules $R e_{i_{\alpha}}[j_{\alpha},k_{\alpha}]$, $\B \otimes_R \D$ is a direct sum of $\B$-modules $\B e_{i_{\alpha}}[j_{\alpha},k_{\alpha}]$. These are each projective because they are summands of $\B$: if $R = \Z^{\times k}$, we have $\B = \oplus_{i=1}^k \B e_i$ as left $\B$-modules. Thus, $\B \otimes_R \D$ is a projective $\B$-module.

Before showing that $d^2 = 0$, we check that $d$ satisfies the Leibniz rule. The action of the algebra $\B$ on $\B \otimes_R \D$ is given by the map
\[
m := \mu_2 \otimes \id: \B \otimes_R (\B \otimes_R \D) = (\B \otimes_R \B) \otimes_R \D \to \B \otimes_R \D.
\]

We want to show that $d \circ m = m \circ (\mu_1 \otimes \left|\id\right|) + m \circ (\id \otimes d),$ as maps from $\B \otimes_R \B \otimes_R \D$ to $\B \otimes_R \D$. We can write out the left side:
\begin{align*}
d \circ m &= (\mu_1 \otimes \left|\id\right| + (\mu_2 \otimes \id) \circ (\id \otimes \delta)) \circ (\mu_2 \otimes \id) \\
&= (\mu_1 \circ \mu_2) \otimes \left|\id\right| + (\mu_2 \otimes \id) \circ (\id \otimes \delta) \circ (\mu_2 \otimes \id) \\
&= (\mu_2 \circ (\mu_1 \otimes \left|\id\right|)) \otimes \left|\id\right| + (\mu_2 \circ (\id \otimes \mu_1)) \otimes \left|\id\right| \\
& \qquad + (\mu_2 \otimes \id) \circ (\id \otimes \delta) \circ (\mu_2 \otimes \id) \\
&= (\mu_2 \otimes \id) \circ (\mu_1 \otimes \left|\id\right| \otimes \left|\id\right|) + (\mu_2 \otimes \id) \circ (\id \otimes \mu_1 \otimes \left|\id\right|) \\ 
& \qquad + (\mu_2 \otimes \id) \circ (\id \otimes \delta) \circ (\mu_2 \otimes \id). \\
\end{align*}
Meanwhile, the right side is
\begin{align*}
m \circ (\mu_1 \otimes \left|\id\right|) &+ m \circ (\id \otimes d) = (\mu_2 \otimes \id) \circ (\mu_1 \otimes \left|\id\right| \otimes \left|\id\right|) + (\mu_2 \otimes \id) \circ (\id \otimes d) \\
&= (\mu_2 \otimes \id) \circ (\mu_1 \otimes \left|\id\right| \otimes \left|\id\right|) \\
& \qquad + (\mu_2 \otimes \id) \circ (\id \otimes (\mu_1 \otimes \left|\id\right| + (\mu_2 \otimes \id) \circ (\id \otimes \delta))) \\
&= (\mu_2 \otimes \id) \circ (\mu_1 \otimes \left|\id\right| \otimes \left|\id\right|)  + (\mu_2 \otimes \id) \circ (\id \otimes \mu_1 \otimes \left|\id\right|) \\
& \qquad + (\mu_2 \otimes \id) \circ (\id \otimes ((\mu_2 \otimes \id) \circ (\id \otimes \delta))). \\
\end{align*}

The first two terms on the left side cancel with those on the right side, and we only need show that
\[
(\mu_2 \otimes \id) \circ (\id \otimes \delta) \circ (\mu_2 \otimes \id) = (\mu_2 \otimes \id) \circ (\id \otimes ((\mu_2 \otimes \id) \circ (\id \otimes \delta))).
\]
This identity follows since
\begin{align*} (\mu_2 \otimes \id) &\circ (\id \otimes ((\mu_2 \otimes \id) \circ (\id \otimes \delta))) \\
&= (\mu_2 \otimes \id) \circ (\id \otimes \mu_2 \otimes \id) \circ (\id \otimes \id \otimes \delta) \\ 
&= (\mu_2 \otimes \id) \circ (\mu_2 \otimes \id \otimes \id) \circ (\id \otimes \id \otimes \delta) \\ 
&= (\mu_2 \otimes \id) \circ (\id \otimes \delta) \circ (\mu_2 \otimes \id).
\end{align*}

Now suppose $a \otimes x$ is a generator of $\B \otimes_R \D$; we want to show $d^2(a \otimes x) = 0$. We may write $a \otimes x$ as $m(a, 1 \otimes x)$ and apply the Leibniz rule: $d(a \otimes x) = (-1)^{\deg_h x} m(\mu_1(a), 1 \otimes x) + m(a, \delta(x))$, so
\begin{align*}
d^2(a \otimes x) &= (-1)^{\deg_h x} d(m(\mu_1(a), 1 \otimes x)) + d(m(a, \delta(x))) \\
&= (-1)^{\deg_h x} m(\mu_1(a),\delta(x)) + (-1)^{\deg_h x + 1} m(\mu_1(a), \delta(x)) \\
& \qquad + m(a, d(\delta(x))).
\end{align*}
The first two terms cancel each other, so it suffices to show that $d(\delta(x)) = 0$. Writing out $d$, this equation amounts to 
\[
(\mu_1 \otimes \left|\id\right|) \circ \delta + (\mu_2 \otimes \id) \circ (\id \otimes \delta) \circ \delta = 0.
\]
This is exactly the Type D structure relation.

\end{proof}

The following propositions will be useful in the description of Khovanov's functor-valued invariant as a bordered theory:

\begin{proposition}\label{DgModIsChainCx} Let $\B$ be a dg algebra over $R$. Suppose that $\B$ is concentrated in homological degree $0$ (it may have nontrivial intrinsic gradings). Then a dg module over $\B$ is the same as a chain complex of singly-graded $\B$-modules, with $\B$-linear grading-preserving differential maps.
\end{proposition}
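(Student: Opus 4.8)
The plan is to unwind both sides of the claimed equivalence into explicit data and to exhibit a dictionary between them; the proposition is essentially a definitional unpacking, once one makes the right initial observation. That observation is that when $\B$ is concentrated in homological degree $0$, its differential $\mu_1$ is forced to vanish: $\mu_1$ is homogeneous of bidegree $(0,+1)$, and $\B$ has no elements in homological degree $1$, so $\mu_1 = 0$. Feeding this into the Leibniz rule for a dg module $M$ over $\B$,
\[
d \circ m = m \circ (\mu_1 \otimes \left|\id\right|) + m \circ (\id \otimes d),
\]
the rule collapses to $d \circ m = m \circ (\id \otimes d)$; that is, the differential $d$ on $M$ is simply $\B$-linear.

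Next I would exploit the bigrading to produce the chain complex. Because the action $m \colon \B \otimes_R M \to M$ has bidegree $(0,0)$ and $\B$ lives entirely in homological degree $0$, the $\B$-action on $M$ preserves the homological grading. Hence, writing $M = \bigoplus_{i,j} M_{i,j}$, each homological slice $M^{(j)} := \bigoplus_i M_{i,j}$ is a $\B$-submodule, graded only by the intrinsic grading, and $M = \bigoplus_j M^{(j)}$ as graded $\B$-modules. The differential $d$, having bidegree $(0,+1)$, restricts to intrinsic-grading-preserving maps $d^{(j)} \colon M^{(j)} \to M^{(j+1)}$, which are $\B$-linear by the first paragraph, and $d \circ d = 0$ says exactly $d^{(j+1)} \circ d^{(j)} = 0$. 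This is precisely a chain complex of singly-graded $\B$-modules with grading-preserving $\B$-linear differentials.

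For the converse, given such a chain complex $(C^j, d^j)_{j \in \Z}$, I would set $M := \bigoplus_j C^j$, placing the intrinsic-degree-$i$ part of $C^j$ in bigrading $(i,j)$, with $\B$-action the direct sum of the actions on the $C^j$ and differential $d := \bigoplus_j d^j$. One checks directly that $M$ is a bigraded $\B$-module, that $d$ is homogeneous of bidegree $(0,+1)$ and squares to zero, and that the Leibniz rule holds because $\mu_1 = 0$ and $d$ is $\B$-linear. The two constructions are visibly mutually inverse, and they carry morphisms to morphisms on each side (a $\B$-linear chain map of bidegree $(0,0)$ corresponds to a degreewise $\B$-linear chain map), so the two notions genuinely coincide. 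The finite-generation Convention~\ref{FGConvention} is respected automatically, since a dg module that is finitely generated over $\Z$ has only finitely many nonzero homological slices, each finitely generated over $\Z$, and conversely. I do not expect a real obstacle here; the only points requiring care are verifying $\mu_1 = 0$ and that the $\B$-action preserves the homological grading, after which the argument is pure bookkeeping.
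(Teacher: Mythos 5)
Your proof is correct and follows the same approach as the paper: observe that $\mu_1 = 0$ forces $d$ to be $\B$-linear via the Leibniz rule, decompose $M$ into homological slices preserved by the $\B$-action, and note the two constructions are mutually inverse. You supply a bit more detail (morphisms, finite generation) than the paper's terse version, but the substance is identical.
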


\begin{proof}
Since $\B$ is concentrated in homological degree $0$, the differential on $\B$ must be zero. Let $M$ be a dg module over $\B$, with summand $M_{j,k}$ in bigrading $(j,k)$. Then, for each homological grading $k$, the summand $\oplus_j M_{j,k}$ of $M$ is preserved by $\B$; it is a singly-graded $\B$-module. Define a chain complex with chain module $C_k = \oplus_j M_{j,k}$. The differential $C_k \to C_{k+1}$ is the differential on $M$; it is $\B$-linear by the Leibniz rule, since $\B$ has no differential.

In the other direction, taking direct sums over chain modules yields a map from chain complexes to dg modules. These operations are inverse to each other.
\end{proof}

\begin{proposition}\label{TypeDIsChainCx} Let $\B$ be a dg algebra over $R$. Suppose that $\B$ is concentrated in homological degree $0$, and that all intrinsic gradings of $\B$ are nonnegative. Then a Type D structure over $\B$ is the same as a chain complex of singly-graded projective left $\B$-modules, with $\B$-linear grading-preserving differential maps.
\end{proposition}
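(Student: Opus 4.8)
The plan is to upgrade Proposition~\ref{DgModIsChainCx} to Type D structures by analyzing what the Type D structure map $\delta$ becomes when $\B$ is concentrated in homological degree $0$ with nonnegative intrinsic gradings. First I would record that, by Proposition~\ref{TensorIsProjModule}, the functor $(\D,\delta) \mapsto (\B \otimes_R \D, d)$ sends any Type D structure to a projective left dg $\B$-module; since $\B$ is concentrated in homological degree $0$, Proposition~\ref{DgModIsChainCx} identifies this dg module with a chain complex of singly-graded $\B$-modules, and each chain group $\B \otimes_R \D_k$ (where $\D_k$ is the homological-degree-$k$ part of $\D$) is projective because $\D_k$ is a sum of shifted elementary idempotent modules, so $\B \otimes_R \D_k$ is a summand of a sum of shifts of $\B$. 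This produces a functor from Type D structures over $\B$ to chain complexes of singly-graded projective left $\B$-modules.

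Next I would construct the inverse. Given a chain complex $(C_\bullet, \partial)$ of singly-graded projective left $\B$-modules with grading-preserving $\B$-linear differentials $\partial\colon C_k \to C_{k+1}$, I need to produce $(\D,\delta)$. The key point is that a projective $\B$-module which is finitely generated and singly-graded, when $\B$ has its degree-$(0,0)$ part equal to $R = \Z^{\times k}$ and all intrinsic gradings nonnegative, must be isomorphic to a finite direct sum $\bigoplus_\alpha \B e_{i_\alpha}[j_\alpha]$ of shifted summands of $\B$ (this is where nonnegativity of the intrinsic gradings is used, to get a well-defined notion of "lowest-degree generators" and to conclude the module is free over the semisimple-like quotient in a controlled way — I expect this to be the main obstacle and will need to argue it carefully, perhaps by reducing modulo the augmentation ideal $\B_{>0}$, which is a two-sided ideal since degrees are nonnegative, and lifting an $R$-basis). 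Writing $C_k \cong \B \otimes_R \D_k$ for $\D_k := \bigoplus_\alpha R e_{i_\alpha}[j_\alpha]$ and setting $\D := \bigoplus_k \D_k[0,-k]$, the differential $\partial$ is a $\B$-linear map $\B \otimes_R \D_k \to \B \otimes_R \D_{k+1}$, hence determined by its restriction to $1 \otimes \D_k$, i.e.\ by an $R$-linear map $\D_k \to \B \otimes_R \D_{k+1}$; assembling these gives $\delta\colon \D \to \B \otimes_R \D$, which has the correct bidegree $(0,-1)$ because $\partial$ raises homological degree by $1$ and the shift conventions absorb this.

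Finally I would check that $\delta$ satisfies the Type D structure relation and that the two constructions are mutually inverse. For the relation: under the identification $C_k = \B \otimes_R \D_k$, the differential $d$ built from $\delta$ as in Proposition~\ref{TensorIsProjModule} is exactly $\partial$ (here $|\id|$ plays no role on the $\B$-factor since $\B$ sits in homological degree $0$, and $\mu_1 = 0$), so $\partial^2 = 0$ unwinds, via the Leibniz-rule computation already carried out in the proof of Proposition~\ref{TensorIsProjModule}, to $(\mu_2 \otimes \id)\circ(\id \otimes \delta)\circ\delta = 0$; adding the vanishing term $(\mu_1 \otimes |\id|)\circ\delta = 0$ gives the Type D relation. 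That the constructions are inverse follows because both directions are, on underlying modules, the equivalence of Proposition~\ref{DgModIsChainCx} together with the dictionary "$\B$-linear map out of $\B \otimes_R \D_k$ $\leftrightarrow$ $R$-linear map out of $\D_k$," and these are inverse by construction. I would remark that grading-preservation of the $\partial$'s corresponds precisely to $\delta$ being bigrading-preserving into $(\B \otimes_R \D)[0,-1]$, completing the equivalence.
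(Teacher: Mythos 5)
Your proposal is correct and follows essentially the same route as the paper: forward direction via Propositions~\ref{TensorIsProjModule} and~\ref{DgModIsChainCx}, reverse direction by decomposing each $C_k$ as $\bigoplus_\alpha \B e_{i_\alpha}[j_\alpha]$, assembling $\D := \bigoplus R e_{i_\alpha}[j_\alpha,k]$, and defining $\delta$ by restricting the differential to $1 \otimes_R \D$. The only difference is that where you sketch a graded-Nakayama argument (reduce modulo $\B_{>0}$ and lift an $R$-basis) for the structure of finitely generated graded projectives, the paper simply cites Lemma~1 of Section~2.5 of Khovanov~\cite{KhovFunctor}, which establishes exactly that fact.
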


\begin{proof} Given a Type D structure $\D$ over $\B$, Proposition~\ref{TensorIsProjModule} shows that $\B \otimes_R \D$ is a dg module over $\B$, or equivalently a chain complex of graded left $\B$-modules by Proposition~\ref{DgModIsChainCx}. In fact, each term of the chain complex is projective, since it is a sum of modules $\B e_{i_{\alpha}}[j_{\alpha},k_{\alpha}]$.

Conversely, suppose $\cdots \to C_k \to C_{k+1} \to \cdots$ is a chain complex of graded projective left $\B$-modules. Since each $C_k$ is assumed to be finitely generated, it may be written as a direct sum of indecomposable graded projective left $R$-modules $C_{k,\alpha}$. By Lemma 1 of Section 2.5 of Khovanov \cite{KhovFunctor}, each $C_{k,\alpha}$ is isomorphic to $\B e_{i_{k,\alpha}}[j_{k,\alpha}]$ for some elementary idempotent $e_{i_{k,\alpha}}$ and grading shift $j_{k,\alpha}$. Define $\D$ as a bigraded $R$-module to be the direct sum, over all $k$ and $\alpha$, of $R e_{i_{k,\alpha}} [j_{k,\alpha},k]$.

We may identify $\oplus_k C_k$ with $\B \otimes_R \D$, since $\oplus_k C_k = \oplus_{k,\alpha} \B e_{i_{k,\alpha}}[j_{k,\alpha}]$ and $\D = \oplus_{k,\alpha} R e_{i_{k,\alpha}}[j_{k,\alpha}, k]$. Let $d$ denote the differential on the dg module $\oplus_k C_k$. Then the Type D operation $\delta: \D \to \B \otimes_R \D$ is obtained by restricting $d$ to $\D \cong 1 \otimes_R \D \subset \B \otimes_R \D$. It has the correct grading properties because $d$ does. 

Since $d$ satisfies the Leibniz rule, we may write $d = \mu_1 \otimes \left|\id\right| + (\mu_2 \otimes \id) \circ (\id \otimes \delta)$. Thus, the Type D relations for $\delta$ are equivalent to $d \circ \delta = 0$, which holds because $\delta$ is a restriction of $d$.
\end{proof}

\subsection{Type A structures and pairing}

\begin{definition} Let $\B$ be a dg algebra over $R$ as in Definition~\ref{DgAlgDef}. Let $\mu_1$ and $\mu_2$ denote the differential and multiplication on $\B$, respectively.

A \emph{Type A structure} $\widehat{A}$ over $\B$, synonymous with $\A_{\infty}$-\emph{module over } $\B$, is a bigraded right $R$-module $\widehat{A}$, finitely generated over $R$ as usual by Convention~\ref{FGConvention}, together with $R$-linear bigrading-preserving maps $m_i: \widehat{A} \otimes_R \B^{\otimes(i-1)} \to \widehat{A} [0,i-2]$, $i \in \Z_{\geq 1}$,  satisfying 
\begin{align*}
&\sum_{i+j = n+1} (-1)^{j(i+1)} m_i \circ (m_j \otimes \left|\id\right|^{j \otimes (i-1)}) \\
& \qquad + (-1)^{n+1} \sum_{k > 0} m_n \circ ( \id^{\otimes k} \otimes \mu_1 \otimes \left|\id\right|^{\otimes n - k - 1} ) \\
& \qquad + \sum_{k > 0} (-1)^k m_{n-1} \circ (\id^{\otimes k} \otimes \mu_2 \otimes \id^{\otimes (n-k-2)}) \\
&= 0,
\end{align*}
for every $n \geq 1$. The Type A structure $\widehat{A}$ is called \emph{strictly unital} if $m_2(-,1) = \id_{\widehat{A}}$ and $m_n = 0$ for $n > 2$ when any of the algebra inputs to $m_n$ is $1$.

\end{definition}

\begin{example}\label{OrdinaryModTypeAEx} If $M$ is a (right) dg $\B$-module, then $M$ is a strictly unital Type A structure over $\B$ with $m_i = 0$ for $i \neq 1, 2$. If $M$ is an ordinary bigraded module over $\B$, with no differential, then $M$ is a strictly unital Type A structure with $m_i = 0$ for $i \neq 2$.
\end{example}

\begin{remark} We will only need to work with Type A structures which come from dg modules as in Example~\ref{OrdinaryModTypeAEx}. Thus, all our Type A structures will be strictly unital, so we will omit mention of this condition in what follows. However, although our Type A structures will have no nontrivial higher action terms, we will eventually need to work with $\mathcal{A}_{\infty}$ \emph{morphisms} between these Type A structures. We will need to consider morphisms which do have nontrivial higher $\mathcal{A}_{\infty}$ terms; see Section~\ref{TypeAEquivSect}.
\end{remark}

Given a Type D structure $(\D,\delta)$ and a Type A structure $(\widehat{A}, m_n: n \geq 1)$ over $\B$, the natural way to pair them is known as the box tensor product. It yields a differential bigraded abelian group $\widehat{A} \boxtimes \D$; see Lipshitz-Ozsv{\'a}th-Thurston \cite{LOTBimodules} for more details and algebraic properties of $\boxtimes$ over $\Z/2\Z$, and see Roberts \cite{RtypeA} for a definition over $\Z$ with the sign conventions we will use.

To define $\boxtimes$ in our setting, the following notation will be useful:
\begin{definition} Let $(\D, \delta)$ be a Type D structure over $\B$. The map $\delta^k: \D \to \B^k \otimes_R \D$ is 
\[
\delta^k = (\id \otimes \cdots \otimes \id \otimes \delta) \circ \cdots \circ (\id \otimes \delta) \circ \delta,
\]
where $\delta$ is applied $k$ times. In particular, $\delta = \delta^1$.
\end{definition}

\begin{definition}\label{XBoxWithSigns} $\widehat{A} \boxtimes \D$, as a bigraded abelian group, is the tensor product $\widehat{A} \otimes_{R} \D$. The differential on $\widehat{A} \boxtimes \D$ is 
\[
\partial^{\boxtimes} = \sum_{n=1}^{\infty}(m_n \otimes \left|\id\right|^n) \circ (\id \otimes \delta^{n-1}).
\]
By Convention~\ref{FGConvention}, only finitely many terms of the sum are nonzero.
\end{definition}

\begin{proposition} The operator $\partial^{\boxtimes}$, as defined in Definition~\ref{XBoxWithSigns}, satisfies 
\[
(\partial^{\boxtimes})^2 = 0.
\]
\end{proposition}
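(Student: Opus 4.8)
The plan is to run the standard box-tensor computation, with the signs handled as in Roberts \cite{RtypeA}. I first expand $(\partial^\boxtimes)^2$ into a double sum, recognize the result as the collection of ``first sums'' appearing in the $\A_\infty$ relations for $\widehat A$, substitute those relations to rewrite everything in terms of the $\mu_1$- and $\mu_2$-sums, and finally observe that the $\mu_1$- and $\mu_2$-contributions cancel in pairs by means of the Type D structure relation.

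In detail, I would first record the elementary identity $(\id_{\B^{\otimes j}} \otimes \delta^k) \circ \delta^j = \delta^{j+k}$, immediate from the definition of $\delta^k$ as an iterate of $\delta$. Then
\[
(\partial^\boxtimes)^2 = \sum_{m,n \geq 1} (m_m \otimes \left|\id\right|^m) \circ (\id \otimes \delta^{m-1}) \circ (m_n \otimes \left|\id\right|^n) \circ (\id \otimes \delta^{n-1}).
\]
In the $(m,n)$-summand, $m_n$ consumes the first $n-1$ algebra tensorands output by $\delta^{n-1}$, after which the surviving $\D$-tensorand is acted on by $\delta^{m-1}$, whose $m-1$ outputs (together with the output of $m_n$) feed into $m_m$. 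Since $m_n$ leaves the $\D$-tensorand untouched, that tensorand is processed by $\delta^{n-1}$ followed by $\delta^{m-1}$, which is $\delta^{m+n-2}$ by the identity above; the $\left|\id\right|$-operators contribute only signs. So, writing $N = m+n-1$, the $(m,n)$-summand equals --- up to sign and $\left|\id\right|$-operators --- the $(i,j) = (m,n)$ term of the first sum in the level-$N$ $\A_\infty$ relation, composed on the right with $\id \otimes \delta^{N-1}$. By the $\A_\infty$ relations, that first sum equals $-\big((\mu_1\text{-sum})_N + (\mu_2\text{-sum})_N\big)$, so
\[
(\partial^\boxtimes)^2 = - \sum_{N \geq 1} \big( (\mu_1\text{-sum})_N + (\mu_2\text{-sum})_N \big) \circ (\id \otimes \delta^{N-1}).
\]

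It remains to see that this vanishes. The composite $(\mu_1\text{-sum})_N \circ (\id \otimes \delta^{N-1})$ applies $\mu_1$ to the $k$-th of the $N-1$ algebra outputs of $\delta^{N-1}$ and feeds the result into $m_N$, summed over $k$. Applying the Type D relation $(\mu_1 \otimes \left|\id\right|)\circ\delta = -(\mu_2\otimes\id)\circ(\id\otimes\delta)\circ\delta$ at the place where that output is produced rewrites the $k$-th term, up to sign, as the term that merges the $k$-th and $(k{+}1)$-st of the $N$ algebra outputs of $\delta^N$ with $\mu_2$ and feeds the result into $m_N$ --- that is, as the $k$-th term of $(\mu_2\text{-sum})_{N+1} \circ (\id \otimes \delta^N)$. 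Reindexing $N \mapsto N+1$ in the $\mu_1$-contributions, the $\mu_1$- and $\mu_2$-contributions to $(\partial^\boxtimes)^2$ pair off term by term, and --- provided the signs come out opposite --- cancel, so $(\partial^\boxtimes)^2 = 0$.

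The main obstacle is precisely that last proviso: one must verify that all the signs fit together, namely the $\left|\id\right|$-operators commuted past algebra elements, $\delta$'s, and $m_i$'s, the factors $(-1)^{j(i+1)}$, $(-1)^{n+1}$, and $(-1)^k$ in the $\A_\infty$ relations, and the sign in the Type D relation. Over $\Z/2\Z$ this is the classical computation of Lipshitz--Ozsv{\'a}th--Thurston \cite{LOTBimodules}; over $\Z$ I would track, tensorand by tensorand, the total homological degree carried past it, following the conventions of \cite{RtypeA}, so that the pairing in the previous paragraph becomes an exact cancellation. No part of the argument is conceptually deep, but essentially all of the work lies in this sign bookkeeping.
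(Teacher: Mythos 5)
Your proposal reproduces the three-step structure of the paper's proof exactly: commute $\delta^{m-1}$ past $m_n$ to recognize $(\partial^\boxtimes)^2$ as the ``first sums'' of the $\A_\infty$ relations composed with $\id\otimes\delta^{N-1}$, substitute the $\A_\infty$ relations, then pair the $\mu_1$-contributions at level $N$ against the $\mu_2$-contributions at level $N+1$ via the Type D structure relation. This is the right decomposition, and it is precisely the one the paper uses.

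However, you defer the sign verification entirely --- ``provided the signs come out opposite,'' ``I would track, tensorand by tensorand'' --- and for this proposition that verification \emph{is} the proof, not a detail of it. The paper works over $\Z$ rather than $\Z/2\Z$, and the sign operators $\left|\id\right|^n$ are embedded in Definition~\ref{XBoxWithSigns} precisely so that the $(-1)^{j(i+1)}$ factor produced by sliding $\delta^{i-1}$ past $m_j$ (since $\delta^{i-1}$ raises homological degree by $i-1$) matches the $(-1)^{j(i+1)}$ coefficient in the $\A_\infty$ relations, and so that the $(-1)^{k+1}$ picked up by commuting $\left|\id_{\D}\right|$ past the tail of the $\delta$-chain, combined with the minus sign in the Type D relation, matches the $(-1)^k$ in the $\mu_2$-sum after reindexing $n\mapsto n+1$. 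Nearly the entire length of the paper's proof is spent pinning down these factors. A proof that ends with ``provided the signs work out'' establishes the plausibility of the definition, not the truth of the proposition; you have a correct plan, but it stops exactly where the substance of the argument begins.
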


\begin{proof} 
In this proof, when referring to identity operators, we will use subscripts to explicitly indicate which identity operators we mean.

First, note that 
\begin{align*}
&(\id_{\widehat{A}} \otimes \delta^{i-1}) \circ (m_j \otimes \left|\id_{\D}\right|^j) \\
&= (-1)^{j(i+1)} (m_j \otimes \left|\id_{\B^{i-1} \otimes \D}\right|^j) \circ (\id_{\widehat{A} \otimes \B^{j-1}} \otimes \delta^{i-1}),
\end{align*}
as maps from $\widehat{A} \otimes_R \B^{j-1} \otimes_R \D$ to $\widehat{A} \otimes_R \B^{i-1} \otimes_R \D$. This identity is immediate over $\Z/2\Z$, and we need only verify that the signs are right. On the right side of the equality, we have $\left|\id_{\B^{i-1} \otimes \D}\right|^j$ which is computed from the homological degree of an output of $\delta^{i-1}$. Since $\delta$ increases homological degree by $1$, $\delta^{i-1}$ increases homological degree by $i-1$. Thus, compared with the left side, the right side has an extra factor of $(-1)^{j(i-1)} = (-1)^{j(i+1)}$.

Thus,
\begin{align*}
(\partial^{\boxtimes})^2 &= \sum_{n \geq 1} \sum_{i+j = n+1} (m_i \otimes \left|\id_{\D}\right|^i) \circ (\id_{\widehat{A}} \otimes \delta^{i-1}) \circ (m_j \otimes \left|\id_{\D}\right|^j) \circ (\id_{\widehat{A}} \otimes \delta^{j-1}) \\
&= \sum_{n \geq 1} \sum_{i + j = n+1} (-1)^{j(i+1)} (m_i \otimes \left|\id_{\D}\right|^i) \circ (m_j \otimes \left|\id_{\B^{i-1}}\right|^{j} \otimes \left|\id_{\D}\right|^j) \\
& \qquad \circ (\id_{\widehat{A}} \otimes \id_{\B^{j-1}} \otimes \delta^{i-1}) \circ (\id_{\widehat{A}} \otimes \delta^{j-1}) \\
&= \sum_{n \geq 1} \sum_{i + j = n+1} (-1)^{j(i+1)} ((m_i \circ (m_j \otimes \left|\id_{\B^{i-1}}\right|^j)) \otimes \left|\id_{\D}\right|^{n+1}) \\
& \qquad \circ (\id_{\widehat{A}} \otimes  \delta^{n-1}) \\
&= \sum_{n \geq 1} \bigg( (-1)^n \sum_{k =1}^{n-1} m_n \circ (\id_{\widehat{A}} \otimes \id_{\B^{k-1}} \otimes \mu_1 \otimes \left|\id_{\B^{n-k-1}}\right|) \\
& \qquad - \sum_{k=1}^{n-2} (-1)^k m_{n-1} \circ (\id_{\widehat{A}} \otimes \id_{\B^{k-1}} \otimes \mu_2 \otimes \id_{\B^{n-k-2}}) \bigg) \otimes \left|\id_{\D}\right|^{n+1}  \\
&\circ (\id_{\widehat{A}} \otimes \delta^{n-1}), \\
\end{align*}
where the Type A relations for $\widehat{A}$ were used in the final equality. It remains to show that the derivative terms
\[
\sum_{n \geq 1} \bigg( (-1)^n \sum_{k =1}^{n-1} m_n \circ (\id_{\widehat{A}} \otimes \id_{\B^{k-1}} \otimes \mu_1 \otimes \left|\id_{\B^{n-k-1}}\right|) \bigg) \otimes \left|\id_{\D}\right|^{n+1} \circ (\id_{\widehat{A}} \otimes \delta^{n-1})
\]
are equal to the multiplication terms
\[
\sum_{n \geq 1} \bigg(\sum_{k=1}^{n-2} (-1)^k m_{n-1} \circ (\id_{\widehat{A}} \otimes \id_{\B^{k-1}} \otimes \mu_2 \otimes \id_{\B^{n-k-2}}) \bigg) \otimes \left|\id_{\D}\right|^{n+1} \circ (\id_{\widehat{A}} \otimes \delta^{n-1}).
\]
For a fixed $n \geq 1$ and $1 \leq k \leq n-1$, we claim that the derivative term is equal to
\begin{align*}
(-1)^{k+1} (m_n \otimes \left|\id_{\D}\right|^n) &\circ \bigg( \id_{\widehat{A}} \otimes \bigg( (\id_{\B^{n-2}} \otimes \delta) \circ \cdots \\
& \qquad \circ (\id_{\B^{k-1}} \otimes \mu_1 \otimes \left|\id_{\D}\right|) \circ (\id_{\B^{k-1}} \otimes \delta) \circ \\
& \qquad \cdots \circ (\id_{\B} \otimes \delta) \circ \delta \bigg) \bigg). \\
\end{align*}
To see that this formula holds, note that when $k = n-1$, the sign in front of the above formula should be $(-1)^n$, by inspection, and each time $k$ is decreased by 1, the sign should flip because $\left|\id_{\D}\right|$ occurs after one fewer instance of $\delta$. Now,
\begin{align*}
(\id_{\B^{k-1}} &\otimes \mu_1 \otimes \left|\id_{\D}\right|) \circ (\id_{\B^{k-1}} \otimes \delta) \\
&= \id_{\B^{k-1}} \otimes ((\mu_1 \otimes \left|\id_{\D}\right|) \circ \delta) \\
&= -\id_{\B^{k-1}} \otimes ((\mu_2 \otimes \id_{\D}) \circ (\id_{\B} \otimes \delta) \circ \delta), \\
\end{align*}
by the Type D relations for $\D$. Thus, the sum of the derivative terms is
\begin{align*}
&\sum_{n \geq 1} \sum_{k = 1}^{n-1} (-1)^k (m_n \otimes \left|\id_{\D}\right|^n) \circ \bigg( \id_{\widehat{A}} \otimes \bigg( (\id_{\B^{n-2}} \otimes \delta) \circ \cdots \\
&\circ (\id_{\B^k} \otimes \delta) \circ (\id_{\B^{k-1}} \otimes \mu_2 \otimes \id_{\D}) \circ (\id_{\B^k} \otimes \delta) \circ (\id_{\B^{k-1}} \otimes \delta) \circ \cdots \circ \delta \bigg) \bigg) \\
&= \sum_{n \geq 1} \sum_{k = 1}^{n-1} (-1)^k (m_n \otimes \left|\id_{\D}\right|^n) \circ (\id_{\widehat{A} \otimes \B^{k-1}} \otimes \mu_2 \otimes \id_{\B^{n-k-1} \otimes \D}) \circ (\id_{\widehat{A}} \otimes \delta^n). \\
\end{align*}
Since the $n = 1$ multiplication term is zero, and $\left|\id_{\D}\right|^{n+1} = \left|\id_{\D}\right|^{n-1}$, the sum of the multiplication terms is 
\begin{align*}
&\sum_{n \geq 2} \sum_{k = 1}^{n-2} (-1)^k (m_{n-1} \otimes \left|\id_{\D}\right|^{n-1}) \circ (\id_{\widehat{A} \otimes \B^{k-1}} \otimes \mu_2 \otimes \id_{\B^{n-k-2} \otimes \D}) \circ (\id_{\widehat{A}} \otimes \delta^{n-1}) \\
&= \sum_{n \geq 1} \sum_{k = 1}^{n-1} (-1)^k (m_n \otimes \left|\id_{\D}\right|^n) \circ (\id_{\widehat{A} \otimes \B^{k-1}} \otimes \mu_2 \otimes \id_{\B^{n-k-1} \otimes \D}) \circ (\id_{\widehat{A}} \otimes \delta^n). \\
\end{align*}
These sums agree, proving that $(\partial^{\boxtimes})^2 = 0$.
\end{proof}

\begin{proposition}[Example 2.27 of Lipshitz-Ozsv{\'a}th-Thurston \cite{LOTBorderedOrig}]\label{SimpleXboxForModules} Let $\B$ be a dg algebra over $R$ as in Definition~\ref{DgAlgDef}. Let $\D$ be a Type D structure over $\B$, and let $\widehat{A}$ be a right dg module over $\B$. Then $\widehat{A} \boxtimes \D$ and $\widehat{A} \otimes_{\B} (\B \otimes_R \D)$ are isomorphic as differential bigraded abelian groups.
\end{proposition}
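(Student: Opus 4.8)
The plan is to exhibit one natural isomorphism of bigraded abelian groups and then check that it is a chain map for the two differentials. As $R$-modules there is a canonical isomorphism
\[
\phi \colon \widehat{A} \otimes_{\B} (\B \otimes_R \D) \longrightarrow \widehat{A} \otimes_R \D = \widehat{A} \boxtimes \D, \qquad a \otimes (b \otimes x) \longmapsto m_2(a,b) \otimes x,
\]
with inverse $a \otimes x \mapsto a \otimes (1 \otimes x)$: it is the composite of the associativity isomorphism $\widehat{A} \otimes_{\B} (\B \otimes_R \D) \cong (\widehat{A} \otimes_{\B} \B) \otimes_R \D$ with the unit isomorphism $\widehat{A} \otimes_{\B} \B \cong \widehat{A}$, tensored with $\id_{\D}$. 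It preserves both gradings since $m_2$ does, and since the left $\B$-action on $\B \otimes_R \D$ is $\mu_2 \otimes \id$, every element of the source can be written in the form $\sum_j a_j \otimes (1 \otimes x_j)$, so it is enough to understand $\phi$ and the differentials on such elements.

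Next I would write out the two differentials. By Proposition~\ref{TensorIsProjModule} the left dg module $\B \otimes_R \D$ has differential $d = \mu_1 \otimes \left|\id\right| + (\mu_2 \otimes \id) \circ (\id \otimes \delta)$, so the differential on $\widehat{A} \otimes_{\B} (\B \otimes_R \D)$ is $m_1 \otimes \left|\id\right| + \id_{\widehat{A}} \otimes d$; and since $\widehat{A}$ is an ordinary right dg module (so $m_i = 0$ for $i \geq 3$) we have $\partial^{\boxtimes} = (m_1 \otimes \left|\id\right|) + (m_2 \otimes \left|\id\right|^2) \circ (\id_{\widehat{A}} \otimes \delta)$. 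Evaluating on $a \otimes (1 \otimes x)$, using $\mu_1(1) = 0$, $\mu_2(1,b) = b$, and $\deg_h(1) = 0$, one finds $d(1 \otimes x) = \delta(x)$ and
\[
\bigl(m_1 \otimes \left|\id\right| + \id_{\widehat{A}} \otimes d\bigr)\bigl(a \otimes (1 \otimes x)\bigr) = (-1)^{\deg_h x}\, m_1(a) \otimes (1 \otimes x) + a \otimes \delta(x).
\]
Applying $\phi$, using strict unitality $m_2(-,1) = \id_{\widehat{A}}$ and $\left|\id\right|^2 = \id$, both $\phi \circ \partial$ and $\partial^{\boxtimes} \circ \phi$ send $a \otimes (1 \otimes x)$ to $(-1)^{\deg_h x} m_1(a) \otimes x + \sum_i m_2(a, b_i) \otimes x_i$, where $\delta(x) = \sum_i b_i \otimes x_i$; this shows $\phi$ is an isomorphism of differential bigraded abelian groups.

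The only delicate point is the sign bookkeeping. In particular one must confirm that the factor $\left|\id\right|$ in $m_1 \otimes \left|\id\right|$ on $\widehat{A} \otimes_{\B}(\B \otimes_R \D)$, which is computed from the homological degree of an element of $\B \otimes_R \D$, matches, after restriction to $1 \otimes \D$, the factor $\left|\id\right|$ in the $n=1$ term of $\partial^{\boxtimes}$ — this works out precisely because $1$ has homological degree $0$ — and that the $\left|\id\right|^2$ in the $n=2$ term of $\partial^{\boxtimes}$ contributes no sign. I expect this to be the main (and essentially only) obstacle; the remainder is a routine unwinding of the definitions of $\boxtimes$, of the dg-module tensor product, and of the dg-module structure on $\B \otimes_R \D$ from Proposition~\ref{TensorIsProjModule}.
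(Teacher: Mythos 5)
Your proof is correct and takes essentially the same approach as the paper: both establish the canonical isomorphism of bigraded abelian groups $\widehat{A} \otimes_{\B} (\B \otimes_R \D) \cong \widehat{A} \otimes_R \D$ and then verify that the two differentials agree by unwinding the definitions, with the sign check resting on the fact that $1 \in \B$ and the generators of $\D$ live in homological degree compatible with the $\left|\id\right|$ factors. The only cosmetic difference is that you evaluate on elements of the form $a \otimes (1 \otimes x)$ (using strict unitality explicitly), whereas the paper rewrites the operator formulas and regroups; these are the same computation.
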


For completeness, and since we are working over $\Z$, we will give a proof of this proposition:
\begin{proof}[Proof of Proposition~\ref{SimpleXboxForModules}]
As bigraded abelian groups, 
\[
\widehat{A} \otimes_{\B} (\B \otimes_R \D) \cong (\widehat{A} \otimes_{\B} \B) \otimes_R \D \cong \widehat{A} \otimes_R \D.
\]
Thus, $\widehat{A} \otimes_{\B} (\B \otimes_R \D)$ and $\widehat{A} \boxtimes \D$ have the same underlying group; we must verify that the differentials agree.

The differential on $\widehat{A} \otimes_{\B} (\B \otimes_R \D)$ may be written as
\begin{align*}
&d_{\widehat{A}} \otimes (\left|\id_{\B}\right| \otimes \left|\id_{\D}\right|) + \id_{\widehat{A}} \otimes (\mu_1 \otimes \left|\id_{\D}\right| + (\mu_2 \otimes \id_{\D}) \circ (\id_{\B} \otimes \delta)) \\
&= d_{\widehat{A}} \otimes (\left|\id_{\B}\right| \otimes \left|\id_{\D}\right|) + \id_{\widehat{A}} \otimes (\mu_1 \otimes \left|\id_{\D}\right|) \\
& \qquad + (\id_{\widehat{A}} \otimes (\mu_2 \otimes \id_{\D})) \circ (\id_{\widehat{A}} \otimes (\id_{\B} \otimes \delta)).
\end{align*}
Regrouping the parentheses, we get
\[
(d_{\widehat{A}} \otimes \left|\id_{\B}\right|) \otimes \left|\id_{\D}\right| + (\id_{\widehat{A}} \otimes \mu_1) \otimes \left|\id_{\D}\right| + ((\id_{\widehat{A}} \otimes \mu_2) \otimes \id_{\D}) \circ ((\id_{\widehat{A}} \otimes \id_{\B}) \otimes \delta).
\]
Identifying $\widehat{A}$ with $\widehat{A} \otimes_{\B} \B$, the differential on $\widehat{A}$ becomes $d_{\widehat{A}} \otimes \left|\id_{\B}\right| + \id_{\widehat{A}} \otimes \mu_1$. Similarly, the algebra multiplication $m: \widehat{A} \otimes \B \to \widehat{A}$ becomes 
\[
\id_{\widehat{A}} \otimes \mu_2: (\widehat{A} \otimes_{\B} \B) \otimes \B \to (\widehat{A} \otimes_{\B} \B).
\]
Thus, we can identify the above formula for the differential on $\widehat{A} \otimes_{\B} (\B \otimes_R \D) \cong \widehat{A} \otimes_R \D$ with
\[
d_{\widehat{A}} \otimes \left|\id_{\D}\right| + (m \otimes \id_{\D}) \circ (\id_{\widehat{A}} \otimes \delta).
\]
This is also the differential on $\widehat{A} \boxtimes \D$.
\end{proof}

\section{Khovanov's functor-valued invariant as a bordered theory}\label{HnAsBorderedSection}

We will assume some familiarity with Khovanov's paper \cite{KhovFunctor}. Here we will briefly introduce some useful conventions and notation. 

Khovanov's arc algebra $H^n$ has one grading. We will view $H^n$ as a differential bigraded algebra, concentrated in homological degree 0 and with no differential. The usual grading on $H^n$ becomes the intrinsic component of the bigrading. The intrinsic gradings of $H^n$ are nonnegative. Thus, both Proposition~\ref{DgModIsChainCx} and Proposition~\ref{TypeDIsChainCx} apply to $H^n$ as a dg algebra. 

The component of $H^n$ in degree 0 (or, with our conventions, in bidegree $(0,0)$) will be denoted $\I_n$, and referred to as the idempotent ring of $H^n$. It is isomorphic to $\Z^{\times (C_n)}$, where $C_n$ is the $n^{th}$ Catalan number. The elementary idempotents of $H^n$ are the idempotents $1_a$ described by Khovanov in Section 2.4 of \cite{KhovFunctor}. The index $a$ runs over elements of the set $B^n$ of crossingless matchings of $2n$ points; since this set will be important later, we recall its definition here.

\begin{definition} Let $P$ be a set of $2n$ distinct points on the line $\{0\} \times \R \subset \R \times \R$. A crossingless matching $a$ of $P$ is a partition of $P$ into $n$ pairs of points, such that there exists an embedding of $n$ arcs $[0,1]^{\sqcup n}$ disjointly into $\R_{\geq 0} \times \R$ with each arc connecting a pair of points matched in $a$. The set of crossingless matchings of $2n$ points will be denoted $B^n$ (different choices of $P$ yield canonical bijections between the relevant sets $B^n$).
\end{definition}

\begin{remark}\label{CLMatchNCPart} The set $B^n$ is also in bijection with the set $NC_n$ of noncrossing partitions of $n$ points. A noncrossing partition $a$ of a set $P$ of $n$ points on $\{0\} \times \R$ is defined to be any partition of $P$ into $k$ disjoint subsets, such that there exists an embedding of $k$ acyclic graphs disjointly into $\R_{\geq 0} \times \R$, with each graph bounding one of the $k$ subsets of $a$.

To go from a crossingless matching $a$ of $2n$ points $p_1, \ldots, p_{2n}$ to a noncrossing partition $a'$ of $n$ points $q_1, \ldots, q_n$, checkerboard-color the half-plane $\R_{\geq 0}$ with respect to some embedding of arcs representing $a$, such that the unbounded region of the half-plane is colored white. Put the point $q_i$ on the line $\{0\} \times \R$ between the points $p_{2i-1}$ and $p_{2i}$. In the noncrossing partition $a'$, two points $q_i$ and $q_j$ are placed in the same subset if they can be connected in $\R_{\geq 0} \times \R$ by a path through the black region of the checkerboard coloring. The skeleton of the black region provides the planar graphs which verify that $a'$ is a noncrossing partition.

On the other hand, given a noncrossing partition $a'$ of $n$ points, one can pick an embedding of graphs representing $a'$, and fatten each graph to obtain a planar surface. The boundary of this surface is a crossingless matching of $2n$ points. These two constructions are inverse to each other.
\end{remark}

Let $T$ be an oriented tangle diagram in the half-plane, with $2n$ endpoints, and assume we have chosen an ordering of the crossings of $T$. Khovanov's construction assigns a bounded chain complex of finitely-generated projective graded $H^n$-modules, with $H^n$-linear differential maps, to $T$. We will use the notation $[T]^{Kh}$ to refer to this complex; we will often view $[T]^{Kh}$ as a dg $H^n$-module using Proposition~\ref{DgModIsChainCx}. If $T$ lies in $\R_{\geq 0} \times \R$, then $[T]^{Kh}$ is a left dg module; if $T$ lies in $\R_{\leq 0} \times \R$, then $[T]^{Kh}$ is a right dg module.

If $T_1$ is an oriented tangle diagram in $\R_{\geq 0} \times \R$, Proposition~\ref{TypeDIsChainCx} gives us a Type D structure over $H^n$, which we will call $\D(T_1)$, such that
\[
[T_1]^{Kh} \cong H^n \otimes_{\I_n} \D(T_1).
\]
In Section~\ref{KhovTypeDSect} below we will discuss $\D(T_1)$ explicitly.

If $T_2$ is an oriented tangle diagram in $\R_{\leq 0} \times \R$, we will simply take the Type A structure $\widehat{A}(T_2)$ of $T_2$ to be the right dg module $[T_2]^{Kh}$. Suppose $T_2$ and $T_1$ have consistent orientations; put them together to obtain an oriented link diagram $L$. Order the crossings of $L$ so that those of $T_1$ come before those of $T_2$, and let $CKh(L)$ be the Khovanov complex of $L$. Khovanov shows in \cite{KhovFunctor} that
\[
CKh(L) \cong [T_2]^{Kh} \otimes_{H^n} [T_1]^{Kh},
\]
after multiplying the intrinsic or $q$-gradings on $[T_2]^{Kh} \otimes_{H^n} [T_1]^{Kh}$ by $-1$. By Proposition~\ref{SimpleXboxForModules}, we have
\[
CKh(L) \cong \widehat{A}(T_2) \boxtimes \D(T_1),
\]
as in bordered Floer homology, after applying the same intrinsic-grading reversal to $\widehat{A}(T_2) \boxtimes \D(T_1)$. We will summarize this discussion more formally below in Proposition~\ref{TypeATypeDPairing}.

\begin{remark}\label{FirstGradingRevRem}
The reversal of the gradings here comes from Khovanov's choice, in pages 672 and 673 of \cite{KhovFunctor}, to make $H^n$ positively rather than negatively graded. It is only a convention; one could define the basic generators of $H^n$ to live in degrees $-1$ and $-2$, rather than $1$ and $2$, and then no grading reversal would be necessary.
\end{remark}

\begin{remark}\label{OrderingIsoIndep} Up to isomorphism, the bigraded chain complex $CKh(L)$ does not depend on the ordering of the crossings. Indeed, suppose we reverse the ordering of two adjacent crossings $i$ and $i+1$. Then an isomorphism 
\[
F: (CKh(L),\textrm{ first ordering}) \to (CKh(L), \textrm{ second ordering})
\]
can be defined, on the summand of $CKh(L)$ corresponding to a vertex $\rho$ of the cube of resolutions, to be $(-1)^{f(\rho)} \cdot \id$, where $f(\rho) := 1$ if $\rho$ resolves crossings $i$ and $i+1$ both as $1$, rather than $0$, and $f(\rho) := 0$ otherwise.

The same argument applies unchanged to the tangle complexes $[T]^{Kh}$: the isomorphism type of $[T]^{Kh}$ does not depend on the ordering of the crossings.
\end{remark}

\begin{remark}
Khovanov avoids having to choose an ordering of the crossings by using the skew-commutative cubes formalism. We will not do this here, but we will usually suppress mention of the choice of ordering of the crossings.
\end{remark}

\subsection{Type D structures}\label{KhovTypeDSect}

In this section we unpack Proposition~\ref{TypeDIsChainCx} to give a concrete definition of $\D(T_1)$. First, we recall some properties of $H^n$ and Khovanov's dg module $[T]^{Kh}$.

The algebra $H^n$ has an additive basis $\beta$, over $\Z$, consisting of elements which we will denote $((W(a)b), \sigma)$. Here, $a$ and $b$ are elements of $B^n$, the set of crossingless matchings of $2n$ points, and the operation $W$ mirrors the matching from the right to the left plane. The horizontal concatenation $W(a)b$ is a collection of disjoint circles in $\R^2$. The remaining data $\sigma$ consists of a choice of sign, $+$ or $-$, on each of these circles.

Certain of the basis elements $((W(a)b),\sigma)$ form a natural set of multiplicative generators for $H^n$. These generators come in two forms: the first are elements $h_{\gamma} = (W(a)a', \textrm{all plus})$, where $a \in B^n$, the element $a' \in B^n$ is obtained from $a$ by surgering one pair of arcs along a bridge $\gamma$, and all circles of $W(a)a'$ are labeled $+$. The other generators are elements $h_{\alpha} = (W(a)a, \textrm{minus on } W(\alpha)\alpha)$, where $a \in B^n$ and all circles of $W(a)a$ are labeled $+$ except one circle, $W(\alpha)\alpha$ for some arc $\alpha$ of $a$, which is labeled $-$. Each generator $h_{\gamma}$ and $h_{\alpha}$ has a unique left idempotent and right idempotent in $\I_n$. We will denote the set of multiplicative generators $\{h_{\gamma},h_{\alpha}\}$ as $\beta_{mult}$; it is a subset of $\beta$.

Now, let $T$ be an oriented tangle diagram in $\R_{\geq 0} \times \R$. To specify a generator $x_i$ of $[T]^{Kh}$, we first specify a resolution $\rho_i$ of all crossings of $T$; we can view $\rho_i$ as a function from the set of crossings to the two-element set $\{0,1\}$. If $T_{\rho_i}$ denotes the diagram $T$ with the crossings resolved according to $\rho_i$, then $T_{\rho_i}$ consists of a crossingless matching of $2n$ points together with some free circles contained in $\R_{> 0} \times \R$. The remaining data needed to specify $x_i$ is a choice of $+$ or $-$ on each free circle. Then $[T]^{Kh}$ has a $\Z$-basis consisting of elements $h \cdot x_i$, where the right idempotent of $h$ agrees with the matching obtained from $T_{\rho_i}$ by discarding the free circles.

\begin{definition}[Concrete definition of $\D(T_1)$ as an $\I_n$-module]\label{ConcreteDDefIdems}
Let $T_1$ be an oriented tangle diagram in $\R_{\geq 0} \times \R$, with $n_+$ positive crossings and $n_-$ negative crossings, and let $0 \leq r \leq n_+ + n_-$. Define $\D(T_1)$ to be generated as an (intrinsically) graded abelian group, in homological degree $r - n_-$, by the generators $1 \cdot x_i$ of $([T_1]^{Kh})_{r-n_-}$, where $([T_1]^{Kh})_{r-n_-}$ is the chain space of $[T_1]^{Kh}$ in degree $r - n_-$. These generators have the same crossingless matching on the left and right sides of $\{0\} \times \R$, and all circles touching the boundary line have a $+$ sign. With this definition, $\D(T_1)$ is an $\I_n$-submodule of $[T_1]^{Kh}$.
\end{definition}

\begin{proposition}\label{TypeDRewrite} As $H^n$-modules, 
\[
[T_1]^{Kh} \cong H^n \otimes_{\I_n} \D(T_1),
\]
with $\D(T_1)$ as defined in Definition~\ref{ConcreteDDefIdems}.
\end{proposition}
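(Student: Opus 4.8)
The plan is to exhibit the tautological multiplication map and check that it is an isomorphism by comparing $\Z$-bases, one homological degree at a time. First I would write down the map: since $\D(T_1)$ is an $\I_n$-submodule of $[T_1]^{Kh}$ by Definition~\ref{ConcreteDDefIdems}, and left multiplication by elements of $H^n$ on $[T_1]^{Kh}$ is $\I_n$-balanced, there is a well-defined $H^n$-module homomorphism
\[
\Phi \colon H^n \otimes_{\I_n} \D(T_1) \longrightarrow [T_1]^{Kh}, \qquad h \otimes y \longmapsto h \cdot y .
\]
Each generator $1\cdot x_i$ of $\D(T_1)$ sits in $[T_1]^{Kh}$ in a definite bidegree (homological degree $r - n_-$ when its resolution $\rho_i$ has $r$ ones, and the intrinsic degree of $x_i$), and multiplication by a homogeneous $h$ shifts bidegrees identically on both sides, so $\Phi$ is bidegree-preserving. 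It then remains to prove that $\Phi$ is bijective.

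Next I would fix $r$ and work in homological degree $r - n_-$, recalling from the discussion just before Definition~\ref{ConcreteDDefIdems} that $([T_1]^{Kh})_{r-n_-}$ has a $\Z$-basis consisting of the elements $h\cdot x_i$, where $x_i$ runs over the generators lying over resolutions $\rho_i$ with exactly $r$ ones (a generator being a choice of sign on each free circle of $T_{\rho_i}$) and, for each fixed $x_i$, $h$ runs over the basis elements of $H^n$ whose right idempotent is the crossingless matching $b(\rho_i)$ underlying $T_{\rho_i}$. Grouping this basis according to $i$ yields a decomposition
\[
([T_1]^{Kh})_{r-n_-} \;=\; \bigoplus_i H^n \cdot x_i
\]
as left $H^n$-modules, and for each $i$ the map $H^n 1_{b(\rho_i)} \to H^n\cdot x_i$, $g\mapsto g\cdot x_i$, carries the $\Z$-basis $\{h : h\,1_{b(\rho_i)} = h\}$ of $H^n 1_{b(\rho_i)}$ bijectively onto the $\Z$-basis $\{h\cdot x_i\}$ of $H^n\cdot x_i$, so it is an isomorphism of $H^n$-modules sending $1_{b(\rho_i)}$ to $x_i$.

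Then on the other side, by Definition~\ref{ConcreteDDefIdems} the homological degree $r - n_-$ part of $\D(T_1)$ is the $\I_n$-span of the same generators $x_i$; as only the idempotent $1_{b(\rho_i)}$ acts nontrivially on $x_i$, we get $\I_n\cdot x_i = \Z\, x_i \cong \I_n 1_{b(\rho_i)}$ and $\D(T_1)_{r - n_-} = \bigoplus_i \I_n\cdot x_i$. Hence
\[
\big(H^n\otimes_{\I_n}\D(T_1)\big)_{r - n_-} \;\cong\; \bigoplus_i H^n \otimes_{\I_n} \I_n\cdot x_i \;\cong\; \bigoplus_i H^n 1_{b(\rho_i)},
\]
and on the summand $H^n\otimes_{\I_n}\I_n\cdot x_i$ the map $\Phi$ reads $h\otimes \lambda x_i \mapsto \lambda\, h\, x_i$, which is exactly the isomorphism $g\mapsto g\cdot x_i$ onto $H^n\cdot x_i$ from the previous step. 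Since $\Phi$ respects these direct-sum decompositions in every homological degree, it is an isomorphism of bigraded $H^n$-modules, which is the claim. (Equivalently, one can present the whole argument as the instance of Proposition~\ref{TypeDIsChainCx} applied to $[T_1]^{Kh}$, whose proof already produces some Type D structure $\D'$ with $[T_1]^{Kh}\cong H^n\otimes_{\I_n}\D'$; the point is then to identify the ``bottom'' $\I_n$-module $\D'$ of the projective decomposition with $\D(T_1)$, which is again the basis computation above.)

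The only step I expect to be genuinely nonroutine is the explicit $\Z$-basis description of the chain modules of $[T_1]^{Kh}$ imported from Khovanov's construction, together with the fact (implicit here and underlying Proposition~\ref{TypeDIsChainCx}, via Lemma 1 of Section 2.5 of \cite{KhovFunctor}) that these modules are projective and decompose into summands $H^n 1_b[j]$. Once that input is in hand, the remainder is formal bookkeeping with the idempotent ring $\I_n$, and no delicate sign or grading issue arises because $\Phi$ is defined directly by multiplication and is visibly bidegree-preserving.
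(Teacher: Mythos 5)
Your proposal is correct and takes essentially the same approach as the paper; where the paper disposes of the claim in one line (``This follows from Definition~\ref{ConcreteDDefIdems}''), you have simply written out the implicit direct-sum decomposition $[T_1]^{Kh} = \oplus_i H^n\cdot x_i$, the identification $H^n\cdot x_i \cong H^n 1_{b(\rho_i)}$, and the matching decomposition of $H^n\otimes_{\I_n}\D(T_1)$ in each homological degree, which is exactly the content the paper is treating as immediate from the definition.
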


\begin{proof}
This follows from Definition~\ref{ConcreteDDefIdems}.
\end{proof}

In the following, let $\iota := \iota_{\D(T_1)}$ (the inclusion of $\D(T_1)$ into $[T_1]^{Kh}$) and let $d$ be the differential on $[T_1]^{Kh}$. Let $\mu$ denote the multiplication on $H^n$.

\begin{definition}[Concrete definition of Type D operation on $\D(T_1)$]\label{ConcreteDeltaDef}
The Type D differential $\delta$ on $\D(T_1)$ is defined by restricting the differential $d$ to the $\I_n$-submodule $\D(T_1)$ of $[T_1]^{Kh}$:
\[
\delta := \D(T_1) \xrightarrow{\iota} [T_1]^{Kh} \xrightarrow{d} [T_1]^{Kh} \cong H^n \otimes_{\I_n} \D(T_1).
\] 
It is a $\I_n$-linear map because $\iota$ and $d$ are.
\end{definition}

\begin{lemma} Under the identification $[T_1]^{Kh} \cong H^n \otimes_{\I_n} \D(T_1)$ from Proposition~\ref{TypeDRewrite}, we have 
\[
d = (\mu_2 \otimes \id) \circ (\id \otimes \delta),
\]
where $\mu_2: H^n \otimes H^n \to H^n$ is the algebra multiplication.
\end{lemma}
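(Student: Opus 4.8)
The plan is to unwind the definitions and check that the Leibniz rule for the dg module $[T_1]^{Kh}$ forces the differential $d$ to have no ``$\mu_1$-component,'' leaving only the term involving $\delta$. First I would recall that $H^n$ is concentrated in homological degree $0$, so by the discussion preceding Proposition~\ref{DgModIsChainCx} its differential $\mu_1$ is identically zero. Under the identification $[T_1]^{Kh} \cong H^n \otimes_{\I_n} \D(T_1)$ of Proposition~\ref{TypeDRewrite}, the module is generated over $H^n$ by the elements $1 \otimes x_i$ with $x_i$ a generator of $\D(T_1)$, since $\D(T_1)$ is an $\I_n$-submodule and $[T_1]^{Kh}$ is the induced module. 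So it suffices to check the asserted formula on elements of the form $a \otimes x = m(a, 1 \otimes x)$, where $m = \mu_2 \otimes \id$ is the $H^n$-action and $a \in H^n$.

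Next I would apply the Leibniz rule for the right(or left)-dg-module structure on $[T_1]^{Kh}$: $d \circ m = m \circ (\mu_1 \otimes \left|\id\right|) + m \circ (\id \otimes d)$. Since $\mu_1 = 0$, the first term vanishes, giving $d(a \otimes x) = m(a, d(1 \otimes x))$. But $d(1 \otimes x) = \delta(x)$ by the very definition of $\delta$ in Definition~\ref{ConcreteDeltaDef} (restriction of $d$ to $\D(T_1) \cong 1 \otimes_{\I_n} \D(T_1)$). Therefore $d(a \otimes x) = m(a, \delta(x)) = (\mu_2 \otimes \id)(a \otimes \delta(x)) = \big((\mu_2 \otimes \id) \circ (\id \otimes \delta)\big)(a \otimes x)$, which is exactly the claimed identity. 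Since both sides are additive and every element of $H^n \otimes_{\I_n} \D(T_1)$ is a $\Z$-linear combination of elements of the form $a \otimes x$, this proves the lemma.

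I expect no real obstacle here: the statement is essentially a bookkeeping consequence of $\mu_1 = 0$ together with the Leibniz rule, and is the module-level analogue of the computation already carried out in the proof of Proposition~\ref{TensorIsProjModule} (where the general dg-module differential on $\B \otimes_R \D$ is written $\mu_1 \otimes \left|\id\right| + (\mu_2 \otimes \id) \circ (\id \otimes \delta)$, and the first summand drops out precisely when $\mu_1 = 0$). The only minor point to be careful about is the sign convention $\left|\id\right|$ in the Leibniz rule, but since $\mu_1 = 0$ that sign-bearing term is killed outright, so no sign subtleties survive. One should also note, for cleanliness, that $\delta$ lands in $(H^n \otimes_{\I_n} \D(T_1))[0,-1]$ with the correct grading, which is immediate because $d$ has bidegree $(0,+1)$ and $\delta$ is a restriction of $d$ followed by the grading identification; but this is already recorded in Definition~\ref{ConcreteDeltaDef} and is not needed for the displayed formula itself.
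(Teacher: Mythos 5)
Your proof is correct and follows essentially the same approach as the paper: express a generator as $a \cdot (1 \otimes x)$, apply the Leibniz rule, observe that the $\mu_1$-term vanishes since $H^n$ has no differential, and use that $\delta$ is by definition the restriction of $d$ to $\D(T_1) \cong 1 \otimes \D(T_1)$. The only difference is that you spell out the grading and sign conventions a bit more explicitly, but the argument is the same.
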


\begin{proof} 
Let $h \cdot \iota(x)$ denote a generator of $[T_1]^{Kh}$. Then, by the Leibniz property for $[T_1]^{Kh}$, we have $d(h \cdot \iota(x)) = h \cdot d \iota(x)$, since $H^n$ has no differential. But since $d \iota(x) = \delta(x)$, we can conclude that
\[
d(h \cdot \iota(x)) = (\mu_2 \otimes \id) \circ (\id \otimes \delta) (h \cdot \iota(x)).
\]
\end{proof}

\begin{proposition} $(\D(T_1), \delta)$ satisfies the Type D relations:
\[
(\mu_1 \otimes \left|\id\right|) \circ \delta + (\mu_2 \otimes \id) \circ (\id \otimes \delta) \circ \delta = 0.
\]
\end{proposition}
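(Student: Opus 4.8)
The plan is to reduce the Type D relation to the identity $d^2 = 0$ on the chain complex $[T_1]^{Kh}$, using the two lemmas just proved. First I would note that, since $H^n$ is concentrated in homological degree $0$, its differential $\mu_1$ vanishes identically; hence the term $(\mu_1 \otimes \left|\id\right|) \circ \delta$ is zero, and it remains only to show that
\[
(\mu_2 \otimes \id) \circ (\id \otimes \delta) \circ \delta = 0
\]
as a map $\D(T_1) \to H^n \otimes_{\I_n} \D(T_1)$.

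Next I would invoke the preceding lemma, which identifies the differential $d$ on $[T_1]^{Kh} \cong H^n \otimes_{\I_n} \D(T_1)$ with $(\mu_2 \otimes \id) \circ (\id \otimes \delta)$. By Definition~\ref{ConcreteDeltaDef}, $\delta$ is exactly the composite $\D(T_1) \xrightarrow{\iota} [T_1]^{Kh} \xrightarrow{d} [T_1]^{Kh}$ read through this identification, i.e. $\delta = d \circ \iota$. Substituting, the left-hand side becomes
\[
(\mu_2 \otimes \id) \circ (\id \otimes \delta) \circ \delta = d \circ \delta = d \circ d \circ \iota = (d^2) \circ \iota.
\]
Since $[T_1]^{Kh}$ is a chain complex, $d^2 = 0$, and therefore $(d^2) \circ \iota = 0$, which is precisely the desired relation.

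The argument is essentially bookkeeping, so I do not expect a genuine obstacle; the only point requiring care is keeping the identification $[T_1]^{Kh} \cong H^n \otimes_{\I_n} \D(T_1)$ consistent across the two lemmas, so that "$d$" on the left factor and "$(\mu_2 \otimes \id) \circ (\id \otimes \delta)$" really refer to the same map, and noting that one does not need injectivity of $\iota$ for this computation, only that $d^2 = 0$ holds on all of $[T_1]^{Kh}$.
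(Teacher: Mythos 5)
Your proof is correct and follows essentially the same route as the paper: observe that $\mu_1 = 0$ on $H^n$, use the preceding lemma to identify $(\mu_2 \otimes \id) \circ (\id \otimes \delta)$ with $d$, and then reduce the remaining term to $d^2 \circ \iota = 0$.
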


\begin{proof} There is no differential on $H^n$, so the $\mu_1$ term is zero. For the other term, if $x$ is a generator of $\D(T_1)$, then
\begin{align*}
((\mu_2 \otimes \id) \circ (\id \otimes \delta) \circ \delta)(x) &= (d) \circ (d \circ \iota) (x) \\
&= 0,
\end{align*}
since $d^2 = 0$ on $[T_1]^{Kh}$.
\end{proof}

One can check that the Type D structure $\D(T_1)$ defined in Definition~\ref{ConcreteDDefIdems} and Definition~\ref{ConcreteDeltaDef} agrees with the Type D structure obtained from $[T]^{Kh}$ by using Proposition~\ref{TypeDIsChainCx}, as discussed above.

\subsection{Type A structures and pairing}
Let $T_2$ be an oriented tangle diagram in $\R_{\leq 0} \times \R$. Since dg modules are special cases of Type A structures, Proposition~\ref{DgModIsChainCx} tells us that the right dg module $[T_2]^{Kh}$ is a valid example of a Type A structure over $H^n$. We will define $\widehat{A}(T_2)$ to be $[T_2]^{Kh}$.

\begin{proposition}\label{TypeATypeDPairing}
Let $T_1$ and $T_2$ be oriented tangle diagrams in $\R_{\geq 0} \times \R$ and $\R_{\leq 0} \times \R$ respectively, with orderings chosen of the crossings of $T_1$ and $T_2$. Assume that $T_1$ and $T_2$ have consistent orientations, so that their horizontal concatenation is an oriented link diagram $L$ in $\R^2$. Order the crossings of $L$ such that those of $T_1$ come before those of $T_2$. Then
\[
CKh(L) \cong \widehat{A}(T_2) \boxtimes \D(T_1),
\]
after multiplying the intrinsic gradings on $\widehat{A}(T_2) \boxtimes \D(T_1)$ by $-1$.
\end{proposition}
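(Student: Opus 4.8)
The plan is to chain together the results already established in the excerpt, so that Proposition~\ref{TypeATypeDPairing} becomes essentially a concatenation of isomorphisms of differential bigraded abelian groups. First I would recall Khovanov's pairing theorem from \cite{KhovFunctor}, which states that $CKh(L) \cong [T_2]^{Kh} \otimes_{H^n} [T_1]^{Kh}$ after reversing the intrinsic ($q$-)grading on the right-hand side; this requires that the crossings of $L$ be ordered with those of $T_1$ first, which is exactly the hypothesis imposed. Since we have \emph{defined} $\widehat{A}(T_2) := [T_2]^{Kh}$ (as a right dg $H^n$-module, hence a strictly unital Type A structure by Example~\ref{OrdinaryModTypeAEx} and Proposition~\ref{DgModIsChainCx}) and we have Proposition~\ref{TypeDRewrite} giving $[T_1]^{Kh} \cong H^n \otimes_{\I_n} \D(T_1)$ as dg $H^n$-modules, the tensor product $[T_2]^{Kh} \otimes_{H^n} [T_1]^{Kh}$ is identified with $\widehat{A}(T_2) \otimes_{H^n} (H^n \otimes_{\I_n} \D(T_1))$.

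Next I would invoke Proposition~\ref{SimpleXboxForModules} with $\B = H^n$, $\D = \D(T_1)$, and $\widehat{A} = \widehat{A}(T_2) = [T_2]^{Kh}$: that proposition says precisely that
\[
\widehat{A}(T_2) \otimes_{H^n} (H^n \otimes_{\I_n} \D(T_1)) \cong \widehat{A}(T_2) \boxtimes \D(T_1)
\]
as differential bigraded abelian groups. Here I should double-check that the differential on $[T_1]^{Kh}$ matches the module differential $d = \mu_1 \otimes |\id| + (\mu_2 \otimes \id) \circ (\id \otimes \delta)$ used in Proposition~\ref{TensorIsProjModule}; but this is exactly the content of the Lemma following Definition~\ref{ConcreteDeltaDef} (since $\mu_1 = 0$ on $H^n$), so no new work is needed. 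Combining the two isomorphisms, and noting that each is an isomorphism of the full differential bigraded structure (not merely of underlying groups or of homology), yields $CKh(L) \cong \widehat{A}(T_2) \boxtimes \D(T_1)$ once the intrinsic grading on the right is multiplied by $-1$, since the only grading reversal entered at the very first step via Khovanov's theorem and nothing afterward touches the gradings.

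The main point requiring care — the only place where there is anything to verify rather than cite — is grading bookkeeping: I need to confirm that the homological grading is preserved throughout (Khovanov's $\otimes_{H^n}$, the identification of Proposition~\ref{TypeDRewrite}, and the $\boxtimes$ of Definition~\ref{XBoxWithSigns} all respect homological degree, with the sign twists $|\id|$ tracking homological parity rather than shifting degree), and that the lone intrinsic-grading reversal from \cite{KhovFunctor} is the only such reversal, consistent with Remark~\ref{FirstGradingRevRem}. I would also remark, citing Remark~\ref{OrderingIsoIndep}, that the isomorphism type of $CKh(L)$ is independent of the chosen crossing orderings, so the particular ordering convention in the statement is only a convenience for invoking Khovanov's theorem cleanly. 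I do not expect any genuine obstacle here; the work was done in setting up Propositions~\ref{TensorIsProjModule}, \ref{DgModIsChainCx}, \ref{TypeDIsChainCx}, and \ref{SimpleXboxForModules}, and this proposition is the harvest.
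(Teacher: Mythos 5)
Your proposal is correct and takes essentially the same route as the paper: both chain Khovanov's pairing theorem $CKh(L) \cong [T_2]^{Kh} \otimes_{H^n} [T_1]^{Kh}$ (with the grading reversal from Remark~\ref{FirstGradingRevRem}) through Proposition~\ref{TypeDRewrite} and then Proposition~\ref{SimpleXboxForModules}. The additional bookkeeping you flag around homological gradings and the compatibility of differentials via the lemma after Definition~\ref{ConcreteDeltaDef} is worthwhile but not a departure from the paper's argument.
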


\begin{proof}
Since, up to a grading reversal, $CKh(L) \cong [T_2]^{Kh} \otimes_{H^n} [T_1]^{Kh}$, which is the same as $\widehat{A}(T_2) \otimes_{H^n} (H^n \otimes_{\I_n} \D(T_1))$, this proposition follows from Proposition~\ref{SimpleXboxForModules}, Remark~\ref{FirstGradingRevRem}, and Khovanov's results from \cite{KhovFunctor}.
\end{proof}

\section{Quadratic and linear-quadratic algebras and duality}\label{LinQuadratSection}

\subsection{Quadratic and linear-quadratic algebras}\label{LinQuadratAlgSection}
We now consider a method of describing algebras using explicit generators and relations. It will be important for the following sections where we relate the bordered Khovanov theory discussed above with Roberts' constructions in \cite{RtypeD} and \cite{RtypeA}. The definitions and basic properties of quadratic and linear-quadratic algebras here all follow Polishchuk-Positselski \cite{PP}, with some minor modifications.

Let $\B$ be a unital associative algebra over a ring $R$, where $R \cong \Z e_1 \times \cdots \times \Z e_k$ as above. We will not assume $\B$ is graded; however, we will assume $\B$ comes equipped with an \emph{augmentation}, i.e. an algebra homomorphism from $\B$ to the coefficient ring $R$. The algebras of interest to us have a grading of some form, and $R$ is the degree-zero summand. Such an algebra has a natural augmentation given by projection onto this summand.

Suppose $b_1, \ldots, b_m$ is a set of multiplicative generators of $\B$, each in the kernel of the augmentation map. We may assume that for each $b_i$, there is a unique idempotent $e_j$ such that $e_j b_i = b_i$, and $e_j' b_i = 0$ for $j' \neq j$. Indeed, if $e_j b_i = 0$ for all $j$, then $b_i = 0$ and is irrelevant as a generator, and if $e_{j_a} b_i$ were nonzero for multiple indices $a$, we could replace $b_i$ in the list of generators with all of the nonzero $e_{j_a} b_i$. So we may assume $e_j b_i \neq 0$ for exactly one $j$, and then $b_i = 1 b_i = (\sum_{j'} e_{j'}) b_i = e_j b_i$. The idempotent $e_j$ will be called the left idempotent of $b_i$ and denoted $e_L(b_i)$.

Similarly, we may further assume that for each $b_i$, there exists a unique right idempotent $e_R(b_i)$ such that $b_i e_R(b_i) = b_i$ and $b_i e_j = 0$ for $e_j \neq e_R(b_i)$.

Let $V$ be the free $\Z$-module spanned by $\{b_1,\ldots,b_m\}$. The assumptions above equip $V$ with left and right module structures over $R$. The statement that the $b_i$ generate $\B$ multiplicatively means that $\B$ is isomorphic to $T(V) / J$, where 
\[
T(V) = \oplus_{n \geq 0} T^n(V) = R \oplus V \oplus (V \otimes_R V) \oplus (V \otimes_R V \otimes_R V) \oplus \cdots,
\] 
and $J$ is the kernel of the natural map $T(V) \to \B$ sending a string of generators to their product in $\B$. We will assume that the augmentation map, applied to any element of $J$, gives zero. As above, we may assume that each generator of the ideal $J$ has unique left and right idempotents.

\begin{definition} The augmented algebra $\B$, with its choice of generators, is a quadratic algebra if the ideal of relations $J \subset T(V)$ is generated multiplicatively by its intersection with $T^2(V) = V \otimes_R V$. In other words,
\[
J = T(V) \cdot I \cdot T(V),
\]
where $I = J \cap (V \otimes_R V)$. Note that $J$ always contains the right side of this equality, so $\B$ is a quadratic algebra if $J \subset T(V) \cdot I \cdot T(V)$.
\end{definition}

\begin{remark} If $\B$ is a quadratic algebra, then $\B$ obtains a grading by word-length in the generators $\{b_1,\ldots,b_k\}$.
\end{remark}

\begin{remark}\label{lexorder}
Let $\B$ be a quadratic algebra. At various points it will be helpful to work with the generators and relations of $\B$ more explicitly. Following Chapter 4.1 of \cite{PP}, choose an ordering of the multiplicative generators $\{b_1,\ldots,b_k\}$; we may assume that $b_i < b_j$ when $i < j$. Use this order to put a lexicographic ordering on monomials in these generators: the leftmost factor in a product is defined to be the most significant part.

Let $Q$ denote the set of quadratic monomials in the $b_i$. Then $Q$ can be naturally partitioned into two subsets $Q_1$ and $Q_2$: $Q_1$ consists of the monomials which cannot be written as sums of lesser monomials with respect to the lexicographic order, and $Q_2$ consists of the monomials which can. If $b_i b_j \in Q_2$ then 
\[
b_i b_j = \sum_{(i',j') < (i,j)} c_{i,j;i',j'} b_{i'} b_{j'},
\]
and the coefficients $c_{i,j;i',j'}$ are uniquely determined if we require that $c_{i,j;i',j'} = 0$ for $b_{i'} b_{j'}$ in $Q_2$. By Lemma 1.1 in Chapter 4 of \cite{PP}, a set of generators for the quadratic relation ideal $I = J \cap T^2(V)$ of $\B$ is obtained by taking 
\[
I_{i,j} = b_i b_j - \sum_{(i',j') < (i,j)} c_{i,j;i',j'} b_{i'} b_{j'}.
\]
for all $(i,j)$ such that $b_i b_j$ is in $Q_2$.
\end{remark}

\begin{definition} The augmented algebra $\B$, with its choice of generators, is a linear-quadratic algebra if the ideal of relations $J \subset T(V)$ is generated multiplicatively by its intersection with $T^1(V) \oplus T^2(V)$. In other words, writing $J_2 := J \cap (V \oplus (V \otimes_R V))$, $\B$ is linear-quadratic if
\[
J = T(V) \cdot J_2 \cdot T(V),
\]
or equivalently
\[
J \subset T(V) \cdot J_2 \cdot T(V).
\]
We will furthermore assume that $J \cap V = 0$, so that there are no linear redundancies among the chosen generators.
\end{definition}

\begin{remark}\label{FiltrationRemark}
If $\B$ is a linear-quadratic algebra, we get a word-length filtration on $\B$ rather than a grading. An element of $\B$ has filtration level $\leq k$ if it is a sum of products of word-length $\leq k$ in the generators $b_i$. 
\end{remark}

\begin{definition}\label{BzeroDef}
Let $\B$ be a linear-quadratic algebra, so that $\B \cong T(V) / J$ with $J \subset T(V) \cdot J_2 \cdot T(V)$. The quadratic algebra $\B^{(0)}$ is defined as
\[
\B^{(0)} = T(V) / (T(V) \cdot I \cdot T(V)),
\]
where $I \subset T^2(V)$ is defined as the image of $J_2 \subset (V \oplus T^2(V))$ under the projection $(V \oplus T^2(V)) \to T^2(V)$ onto the second summand.
\end{definition}
Every generator $r$ of $I$ is the image of some generator $v \oplus r$ of $J_2$, where $v \in V$. Furthermore, if $v \oplus r$ and $v' \oplus r$ were both in $J_2$ with $v \neq v'$, then $(v - v') \oplus 0$ would be a nonzero element of $J_2 \cap V$, contradicting the assumption that $J_2 \cap V = 0$. Thus, the following definition makes sense:
\begin{definition}\label{VarphiDef}
The function $\varphi: I \to V$ is defined by sending a generator $r \in I$ to the unique element $\varphi(r)$ of $V$ such that $\varphi(r) \oplus r$ is in $J_2$. 
\end{definition}

\begin{proposition}
The map $\varphi$ respects the left and right $R$-actions on $I$ and $V$.
\end{proposition}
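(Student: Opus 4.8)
The plan is to realize $\varphi$ as a composite of morphisms of $R$-bimodules, so that $R$-bilinearity becomes automatic. First I would observe that $J_2 = J \cap (V \oplus T^2(V))$ is an $R$-sub-bimodule of $T(V)$: the ideal $J$ is closed under left and right multiplication by $R$ (in particular by the elementary idempotents $e_i$), and $V \oplus T^2(V) = V \oplus (V \otimes_R V)$ is visibly an $R$-sub-bimodule of $T(V)$, so their intersection is one as well. Consequently the projection $\pi \colon V \oplus T^2(V) \to T^2(V)$ onto the second summand, which is a map of $R$-bimodules, carries $J_2$ onto $I = \pi(J_2)$, which is therefore an $R$-sub-bimodule of $T^2(V)$.

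Next I would check that the restriction $\pi|_{J_2} \colon J_2 \to I$ is an isomorphism of $R$-bimodules. Surjectivity is immediate from the definition of $I$. For injectivity, the kernel of $\pi|_{J_2}$ is $J_2 \cap (V \oplus 0) = J \cap V$, which is $0$ by the standing assumption on linear-quadratic algebras that there are no linear redundancies among the chosen generators. Hence $\pi|_{J_2}$ is bijective, and since the inverse of a bimodule isomorphism is again a bimodule homomorphism, $(\pi|_{J_2})^{-1} \colon I \to J_2$ respects the left and right $R$-actions.

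Finally I would identify $\varphi$ explicitly. Let $\psi \colon V \oplus T^2(V) \to V$ be the projection onto the first summand, again a map of $R$-bimodules. For $r \in I$, the element $(\pi|_{J_2})^{-1}(r)$ is the unique element of $J_2$ mapping to $r$ under $\pi$, i.e.\ the unique element of the form $v \oplus r$ with $v \in V$; by Definition~\ref{VarphiDef} this element is $\varphi(r) \oplus r$, and applying $\psi$ returns $\varphi(r)$. Thus $\varphi = \psi \circ (\pi|_{J_2})^{-1}$ as maps $I \to V$, and being a composite of $R$-bimodule homomorphisms, $\varphi$ respects the left and right $R$-actions.

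I do not anticipate a genuine obstacle; the only point requiring care is that the hypothesis $J \cap V = 0$ is exactly what makes $\pi|_{J_2}$ injective — equivalently, what makes $\varphi$ well-defined in the first place — so this assumption must be invoked. The argument also shows, as a byproduct, that $\varphi$ is defined on all of $I$ rather than merely on a chosen generating set, which is presumably the form in which it will be used later.
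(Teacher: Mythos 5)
Your proof is correct, but it takes a cleaner, more structural route than the paper. The paper argues element-by-element: for $r\in I$ with left idempotent $e$, it multiplies $\varphi(r)\oplus r\in J_2$ by $e$ and by each $e'\neq e$, then uses the uniqueness of the $V$-component together with $J_2\cap V=0$ to conclude $e\varphi(r)=\varphi(r)$ and $e'\varphi(r)=0$. You instead observe that $J_2 = J\cap(V\oplus T^2(V))$ is itself an $R$-sub-bimodule, that the projection $\pi\colon V\oplus T^2(V)\to T^2(V)$ restricts to an $R$-bimodule isomorphism $J_2\xrightarrow{\;\sim\;}I$ (injective precisely because $J\cap V=0$), and that $\varphi=\psi\circ(\pi|_{J_2})^{-1}$ where $\psi$ is projection onto $V$; $R$-bilinearity is then automatic. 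Both proofs hinge on the same hypothesis $J\cap V=0$, but your version packages the content into one statement (that $\pi|_{J_2}$ is a bimodule isomorphism) rather than re-deriving it idempotent by idempotent, and as you note it makes manifest that $\varphi$ is a well-defined map on all of $I$, not merely on generators. The paper's version has the minor advantage of staying entirely in the language of idempotents, which it uses throughout, but either proof is complete.
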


\begin{proof}
Suppose $e$ is the left idempotent of $r$ (which exists without loss of generality). Then $e(\varphi(r) \oplus r)$ is in $J_2$, and $e(\varphi(r) \oplus r) = e\varphi(r) \oplus er = e\varphi(r) \oplus r$, so by the uniqueness above, $\varphi(r) = e \varphi(r)$. If $e'$ is any idempotent not equal to $e$, then $e'(\varphi(r) \oplus r)$ is still in $J_2$, but now this expression equals $e'\varphi(r) \oplus 0$. Since $J_2 \cap V = 0$, we must have $e' \varphi(r) = 0$ for $e' \neq e$. Thus, $\varphi$ respects the left $R$-action on $I$ and $V$. The right action is analogous.
\end{proof}

Let $\varphi^{12}$ denote $\varphi \otimes \id_V: I \otimes_R V \to V \otimes_R V$, and let $\varphi^{23}$ denote $\id_V \otimes \varphi: V \otimes_R I \to V \otimes_R V$. 
\begin{proposition}[Chapter 5, Proposition 1.1 of \cite{PP}]
The map $\varphi^{12} - \varphi^{23}: (V \otimes_R I) \cap (I \otimes_R V) \to (V \otimes_R V)$ has image contained in $I$, and
\[
\varphi \circ (\varphi^{12} - \varphi^{23}) = 0.
\]
\end{proposition}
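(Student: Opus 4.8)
The plan is to prove this by relating the maps $\varphi^{12}$ and $\varphi^{23}$ to the original ideal $J$ of the linear-quadratic algebra $\B$, using the assumption that $J$ is generated in word-lengths $\leq 2$. Recall that for a generator $r \in I$, the element $\varphi(r) \oplus r \in V \oplus T^2(V)$ lies in $J_2 \subset J$. Thus, in $T(V)$, the element $r - \varphi(r)$ (thinking of $\varphi(r)$ as the degree-$1$ part and $r$ as the degree-$2$ part, with a sign convention making $\varphi(r) + (-r)$ or $r - \varphi(r)$ the relevant combination) represents an element of $J$. Concretely, I would work in the filtered algebra $T(V)$ and track, for each $u \in (V \otimes_R I) \cap (I \otimes_R V)$, two expressions in $J$: one obtained by ``resolving'' the right-hand $I$-factor using $\varphi^{23}$, and one obtained by resolving the left-hand $I$-factor using $\varphi^{12}$.

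First I would set up notation: let $u \in (V \otimes_R I) \cap (I \otimes_R V) \subseteq V \otimes_R V \otimes_R V = T^3(V)$. Because $u \in V \otimes_R I$, the tensor $u$ minus $\id_V \otimes \varphi$ applied to it, i.e. $u - \varphi^{23}(u) \in V \otimes_R V \oplus V \otimes_R V \otimes_R V$ (again up to the sign convention), is a linear combination $\sum_\alpha v_\alpha \otimes (r_\alpha - \varphi(r_\alpha))$ of products $v_\alpha \cdot (\text{element of } J_2)$, hence lies in $T(V) \cdot J_2 \subseteq J$. Similarly, since $u \in I \otimes_R V$, the element $u - \varphi^{12}(u) \in T^2(V) \oplus T^3(V)$ is a combination of products $(\text{element of } J_2) \cdot w_\beta$, hence lies in $J_2 \cdot T(V) \subseteq J$. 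Subtracting, $\varphi^{12}(u) - \varphi^{23}(u) \in J$, and this element has word-length $\leq 2$ — in fact it lies in $V \otimes_R V = T^2(V)$ because both $\varphi^{12}(u)$ and $\varphi^{23}(u)$ are obtained from a degree-$3$ tensor by applying $\varphi$ to one factor, landing in $T^2(V)$, and the cross terms of word-length $3$ cancel since both corrections resolve the same $u$. Therefore $\varphi^{12}(u) - \varphi^{23}(u) \in J \cap T^2(V) = I$, which is the first claim. (I should double-check here that $J \cap T^2(V) = I$ and not something larger; this uses that $J$ is linear-quadratic together with the standard fact — Lemma 1.1 of Chapter 4 / the discussion around $J_2$ in \cite{PP} — that the image of $J_2$ in $T^2(V)$ is exactly $I$ and that no genuinely new quadratic relations appear.)

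For the second identity $\varphi \circ (\varphi^{12} - \varphi^{23}) = 0$: apply $\varphi$ to the element $s := \varphi^{12}(u) - \varphi^{23}(u) \in I$. By definition $\varphi(s) \oplus s \in J_2$. On the other hand, I claim $s \oplus 0 \in J$ already, i.e. $s$ itself, viewed purely in degree $2$ with no linear correction, lies in $J$ — this is because $s = (\varphi^{12}(u) - \varphi^{23}(u))$ equals the degree-$2$ truncation of $(u - \varphi^{23}(u)) - (u - \varphi^{12}(u))$, and I will argue that the word-length-$3$ parts of these two elements of $J$ also cancel exactly (they are both literally equal to $u$ up to sign), so the full element $s$, with zero linear part, is in $J$. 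Then $\varphi(s) \oplus s \in J_2 \subseteq J$ and $0 \oplus s \in J$ force $\varphi(s) \oplus 0 \in J$, hence $\varphi(s) \in J \cap V = 0$ by the standing assumption that there are no linear redundancies. This gives $\varphi(s) = 0$ as desired.

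The main obstacle I anticipate is bookkeeping the sign and grading conventions carefully: $\varphi$ is defined via the element $\varphi(r) \oplus r$ of $J \subseteq T(V)$, and one must be consistent about whether the relevant element of $J$ is $\varphi(r) + r$, $r - \varphi(r)$, or $\varphi(r) - r$, and correspondingly whether $\varphi^{12} - \varphi^{23}$ (rather than $\varphi^{12} + \varphi^{23}$) is the combination for which the word-length-$3$ terms cancel. The cancellation of the degree-$3$ parts is the crux: one needs that resolving the left $I$-factor and resolving the right $I$-factor of the \emph{same} element $u$ produces, in top degree, the \emph{same} tensor $u$, so that the difference kills it; this is exactly where the hypothesis $u \in (V \otimes_R I) \cap (I \otimes_R V)$ is used, and where a sign error would break the argument. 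Everything else is a routine unwinding of the definitions of $\varphi$, $J_2$, $I$, and the linear-quadratic condition $J = T(V) \cdot J_2 \cdot T(V)$; I would cite \cite{PP} (Chapter 5, Proposition 1.1) for the statement and reproduce this filtered-algebra argument for completeness over $\Z$.
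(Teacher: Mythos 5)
Your proof is correct and follows essentially the same approach as the paper: apply $\varphi \oplus \iota$ to each $I$-factor to get two elements of $J$ with the same degree-$3$ part, subtract to cancel that part and land in $J \cap T^2(V)$, and then use that this difference lies in $J_2$ with zero linear part to force its $\varphi$-image into $J \cap V = 0$. One small caveat you flagged for yourself: the needed containment is $J \cap T^2(V) \subseteq I$ (which does hold, since any such element is in $J_2$ and equals its own projection to $T^2(V)$), but the reverse inclusion generally fails, as $I$ is the image of $J_2$ under a projection rather than $J_2 \cap T^2(V)$ itself.
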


\begin{proof}
The definition of $\varphi$ implies that the image of the map $\varphi \oplus \iota: I \to (V \oplus (V \otimes_R V))$ is contained in $J_2$, where $\iota$ denotes the inclusion map of $I$ into $V \otimes_R V$. Thus, the map
\[
(\varphi \oplus \iota) \otimes \id_V: I \otimes_R V \to (V \oplus (V \otimes_R V)) \otimes_R V = (V \otimes_R V) \oplus (V \otimes_R V \otimes_R V)
\]
has image contained in $J$. On the other hand, $(\varphi \oplus \iota) \otimes \id_V$ is equal to the map
\[
\varphi^{12} \oplus (\iota \otimes \id_V): I \otimes_R V \to (V \otimes_R V) \oplus (V \otimes_R V \otimes_R V).
\]
Thus, $\varphi^{12} \oplus (\iota \otimes \id_V)$ has image contained in $J$. By the same reasoning, $\varphi^{23} \oplus (\iota \otimes \id_V)$ has image contained in $J$ as well.

If $x$ is an element of $(V \otimes_R I) \cap (I \otimes_R V) \subset V^{\otimes 3}$, then we can apply $\varphi^{12} \oplus (\iota \otimes \id_V)$ and $\varphi^{23} \oplus (\iota \otimes \id_V)$ to $x$, producing two elements $\varphi^{12}(x) \oplus x$ and $\varphi^{23}(x) \oplus x$ of $J$. Subtracting, the $x$ terms cancel and $\varphi^{12}(x) - \varphi^{23}(x)$ is also in $J$.

Since $\varphi^{12}(x) - \varphi^{23}(x)$ is an element of both $J$ and $V \otimes_R V$, it is also an element of $J_2 = J \cap (V \oplus (V \otimes_R V))$. The corresponding element of $I$ is the same element $\varphi^{12}(x) - \varphi^{23}(x)$.

Hence we can conclude that $\varphi^{12} - \varphi^{23}$ has image contained in $I$, so it makes sense to postcompose this map with $\varphi$. Furthermore, the image of $\varphi^{12} - \varphi^{23}$ is contained not just in $I$ but in $J_2$, and so $\varphi(\varphi^{12} - \varphi^{23}) = 0$.
\end{proof}

\subsection{Khovanov's arc algebra as a linear-quadratic algebra}\label{HnAsLinQuadratSection}
In this section we present a proof of Lemma~\ref{NonCrossPartLemma}, found by P{\'a}lv{\"o}lgyi~\cite{Palvolgyi} and independently by Potechin~\cite{Potechin}. Besides being important for the constructions in Section~\ref{RobertsFromKhovanovSection} and Section~\ref{ModuleSection}, it will yield an explicit generators-and-relations description of $H^n$ in Corollary~\ref{HnDescriptionCorr}. This description is not necessary, strictly speaking, for Section~\ref{RobertsFromKhovanovSection} and Section~\ref{ModuleSection}, but it may be of interest independently. 

Let $V$ be the free $\Z$-module spanned by the generators $h_{\gamma}$ and $h_{\alpha}$ of $\beta_{mult}$ as defined in Section~\ref{KhovTypeDSect}; the idempotent ring $R = \I_n$ of $H^n$ has both left and right actions on $V$. We may write $H^n$ as $T(V) / J$ for some ideal $J$ of $T(V)$.
\begin{proposition}\label{HnIsLinQuad} With the generators $\{h_{\gamma}, h_{\alpha}\}$ and the augmentation coming from its grading, $H^n$ is a linear-quadratic algebra.
\end{proposition}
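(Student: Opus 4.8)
The plan is to verify directly that the ideal $J$ of relations among the generators $\{h_\gamma, h_\alpha\}$ is generated by elements of word-length $\le 2$, i.e. that $J = T(V) \cdot J_2 \cdot T(V)$ where $J_2 = J \cap (V \oplus T^2(V))$, together with the condition $J \cap V = 0$. The last condition is the easy part: distinct multiplicative generators of $H^n$ of the types $h_\gamma$ and $h_\alpha$ are linearly independent basis elements of $\beta$, so there are no linear relations. The substance is the claim that every relation is a consequence of quadratic and linear-quadratic ones. My approach is to first enumerate, up to idempotents, the quadratic monomials $h \cdot h'$ in the generators and work out the multiplication $\mu_2(h, h')$ explicitly using Khovanov's surgery/TQFT description of multiplication in $H^n$. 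Each such product $h h'$ either (a) equals another generator, or a basis element expressible linearly in lower monomials, giving a quadratic relation $h h' - (\text{lower})$; (b) equals a non-generator basis element $((W(a)b),\sigma)$, which one must then re-express as a product of generators — this is where the linear terms appear, and where I expect to need to rewrite a basis element as a product of $h_\gamma$'s (merging labeled-$+$ circles by bridge surgeries) and $h_\alpha$'s (introducing the $-$ labels) in a controlled order.

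The key step is a spanning/surjectivity argument: show that the quotient $T(V)/(T(V)\cdot J_2 \cdot T(V))$ is spanned over $\Z$ by (images of) a set in bijection with the basis $\beta$ of $H^n$. Since the natural map to $H^n$ is already surjective and $H^n$ is free over $\Z$ on $\beta$, any spanning set of that cardinality maps isomorphically, forcing $J = T(V) \cdot J_2 \cdot T(V)$. Concretely, I would fix a normal form for words: for a word $w$ in the generators with fixed left and right idempotents, I claim that modulo $J_2$ it can be rewritten as a single basis element $((W(a)b),\sigma)$, by induction on word-length. The inductive step consumes a two-letter suffix $h h'$: if $h h'$ is in "normal position" it stays, otherwise a quadratic or linear-quadratic relation replaces it with a shorter-or-normalized expression. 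The bookkeeping must confirm that these rewrites terminate and produce a well-defined basis element independent of choices — for this I would argue that any two normal forms for the same word-class compute the same element of $H^n$ (since $H^n = T(V)/J$ and $J_2 \subset J$), hence are equal as basis elements.

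The main obstacle — and the reason Lemma~\ref{NonCrossPartLemma} on noncrossing partitions is invoked — is controlling the combinatorics of how circles in $W(a)b$ arise as images of bridge surgeries. When we multiply a chain of generators $h_{\gamma_1} h_{\gamma_2} \cdots$, the resulting circle pattern depends on the order and the nesting structure of the bridges, and to know that the $h_\gamma$'s and $h_\alpha$'s together with quadratic relations suffice to produce \emph{every} basis element exactly once, one needs that every crossingless matching $b$ can be reached from $a$ by a sequence of bridge surgeries whose intermediate matchings and circle-merging pattern are combinatorially constrained in the way described by noncrossing partitions. I expect the cleanest route is: (i) prove the word-length$\le 2$ relations generate all relations among products of $h_\gamma$'s (the "all plus" subalgebra), using Lemma~\ref{NonCrossPartLemma} to organize bridge sequences; (ii) handle the interaction of $h_\alpha$'s (sign changes) with $h_\gamma$'s via the quadratic relations recording how a $-$ label commutes past a surgery and how two $-$ labels on the same circle annihilate (the TQFT relation $x^2 = 0$); (iii) assemble these into the general normalization. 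Steps (ii) and (iii) are largely routine TQFT calculations once (i) is in hand; step (i) is the heart, and it is essentially a restatement of the noncrossing-partition lemma.
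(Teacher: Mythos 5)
Your plan is correct and hits the same technical core as the paper's proof: reduce arbitrary words modulo the degree-$\leq 2$ relations to a normal form by first pushing the $h_\alpha$'s past the $h_\gamma$'s, then using Lemma~\ref{NonCrossPartLemma} to move between geodesic paths in the Hasse diagram of $NC_n$ (and the relation $h_\gamma h_{\gamma^\dagger} = h_{\alpha_1}+h_{\alpha_2}$ to shorten non-geodesic ones), and finally sorting out the sign labels. The high-level framing differs slightly: you cast it as a spanning/cardinality argument (the quotient $T(V)/(T(V)J_2T(V))$ is spanned by normal forms in bijection with $\beta$, hence the surjection onto the free module $H^n$ is forced to be an isomorphism), whereas the paper takes an arbitrary $r \in J$ and shows directly that the normalization procedure reduces it to zero modulo $T(V)J_2T(V)$, with a coefficient-counting step at the end. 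These are two equivalent packagings of the same rewriting argument; the spanning-set version is marginally cleaner conceptually, while the paper's version avoids having to make the normal form canonical (it only needs to make all the $m_{\gamma,i}$ factors agree, again via Lemma~\ref{NonCrossPartLemma}). One small caution about your step (i): it is a bit more than a restatement of Lemma~\ref{NonCrossPartLemma} — that lemma only handles the geodesic-to-geodesic moves, while reducing a non-geodesic word to geodesic length genuinely requires the linear-quadratic relation $h_\gamma h_{\gamma^\dagger} - h_{\alpha_1} - h_{\alpha_2}$, which re-introduces $h_\alpha$'s, so step (i) cannot be done entirely inside the "all-plus" subalgebra.
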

This proposition will be proved using Lemma~\ref{NonCrossPartLemma}. We begin with some background. 

Recall from Remark~\ref{CLMatchNCPart} that the elementary idempotents of $H^n$ are in bijection with the set $NC_n$ of noncrossing partitions of $n$ points. In fact, $NC_n$ has a natural partial ordering: suppose $p$ and $q$ are elements of $NC_n$. Then $p \leq q$ if $p$ is a refinement of $q$. As a poset, $NC_n$ is a lattice: any two noncrossing partitions have a unique least upper bound and a unique greatest lower bound, although we will not make use of this property.

The dual of a partially ordered set is defined by reversing the order relations. It is a standard fact that the poset $NC_n$ is self-dual:
\begin{proposition}\label{SelfDualityProp} $NC_n$ is order-isomorphic to the poset obtained by reversing all the order relations on $NC_n$.
\end{proposition}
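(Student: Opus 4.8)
The plan is to exhibit an explicit order-reversing bijection $NC_n \to NC_n$, the classical \emph{Kreweras complement}. Fix the $n$ points $q_1,\dots,q_n$ on $\{0\}\times\R$ and interleave $n$ auxiliary points $\bar q_1,\dots,\bar q_n$, with $\bar q_i$ between $q_i$ and $q_{i+1}$ (cyclically, so $\bar q_n$ sits ``around the back''); think of all $2n$ points arranged on a circle in the order $q_1,\bar q_1,q_2,\bar q_2,\dots,q_n,\bar q_n$. Given a noncrossing partition $p$ of the $q_i$'s, define its complement $K(p)$ to be the coarsest noncrossing partition of the $\bar q_i$'s that, together with $p$, is still noncrossing on the full set of $2n$ circularly-arranged points. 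Then use the canonical identification of the poset of noncrossing partitions of $\{\bar q_1,\dots,\bar q_n\}$ with $NC_n$.

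First I would carefully set up the noncrossing condition for partitions on a circle (or equivalently, using Remark~\ref{CLMatchNCPart}, in terms of fattened planar graphs in a half-plane with the $2n$ interleaved points on the boundary), and check that $K(p)$ is well-defined, i.e. that there is indeed a unique coarsest noncrossing partition of the auxiliary points compatible with $p$. This amounts to the standard fact that, for fixed $p$, the family of noncrossing partitions $r$ of the $\bar q_i$'s with $p\cup r$ noncrossing is closed under taking joins in $NC_n$; since $NC_n$ is a lattice this family has a top element. Next I would verify that $K$ reverses order: if $p \le p'$ (so $p$ refines $p'$), then any partition compatible with $p'$ is a fortiori compatible with $p$, hence $K(p') \le K(p)$. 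Finally I would show $K$ is a bijection by producing an inverse — the ``backward'' Kreweras complement, defined the same way but with the roles/cyclic shift reversed — and checking $K$ is injective using a counting argument (both $NC_n$ and its auxiliary copy have $C_n$ elements, so an order-reversing injection on a finite poset is automatically a bijection, but one should confirm injectivity, e.g. from the fact that $p$ can be recovered as the backward complement of $K(p)$).

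Alternatively, if one prefers to avoid the combinatorial construction, the proposition is a well-known theorem (Kreweras 1972) and one could simply cite it; but since the crossingless-matching picture is already in play via Remark~\ref{CLMatchNCPart}, the geometric description above is clean and self-contained. A third route, worth mentioning, is to use the checkerboard/black-region bijection of Remark~\ref{CLMatchNCPart}: a noncrossing partition corresponds to the black region of a checkerboard coloring of a crossingless matching, and ``swapping black and white'' (together with a shift of basepoint) realizes the complement; refinement of partitions corresponds to inclusion of black regions, which is reversed under color-swap. Any of these makes the order-reversal transparent.

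The main obstacle is the well-definedness of $K(p)$: one must argue rigorously that ``the coarsest noncrossing partition of the auxiliary points compatible with $p$'' exists and is unique, which is where the lattice structure of $NC_n$ (acknowledged in the excerpt but deliberately not proved there) does real work, or else requires a direct geometric argument that merging two blocks of the auxiliary partition preserves compatibility with $p$ as long as each separately was compatible. Everything after that — order-reversal and bijectivity — is essentially formal. I expect the write-up to spend most of its length pinning down this well-definedness and the construction of the inverse complement.
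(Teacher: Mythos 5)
Your proposal is correct, and your ``third route'' — the checkerboard/black-region color swap via Remark~\ref{CLMatchNCPart} — is in fact exactly what the paper does. The paper's proof takes a noncrossing partition $p$, picks a representing system of thickened planar graphs, colors their interiors black, swaps black and white, rotates the disk one step so the unbounded region is again white, and reads off the skeleton of the new black region as $\phi(p)$. That is precisely your color-swap description, and it is the geometric incarnation of the Kreweras complement.

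The comparison is worth making explicit. Your primary route (the interleaved-circle Kreweras complement, defining $K(p)$ as the coarsest noncrossing partition of the auxiliary points compatible with $p$) is the standard one in the combinatorics literature, but as you correctly identify it carries a genuine burden: you must prove that the ``coarsest compatible partition'' exists, which either invokes the lattice structure of $NC_n$ (stated in the paper but deliberately not proved) or requires a direct argument that the set of compatible partitions is closed under joins. The paper's color-swap route sidesteps this entirely: the complement is produced directly by the geometric operation (thicken, recolor, rotate, take skeleton), so existence and uniqueness are automatic, and order-reversal reduces to the observation that refinement of partitions corresponds to shrinking black regions, which color-swap turns into growing black regions. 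This is why the paper's version is both shorter and more self-contained than what your primary route would require. Your instinct that ``most of the write-up would be spent pinning down well-definedness'' is exactly the cost the paper avoids by choosing the other route.

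One small correction to your framing: you suggest the Kreweras complement and the color-swap bijection are alternatives among several; in this half-plane/crossingless-matching setup they are the same map, just described combinatorially versus geometrically. Also, bijectivity in the paper's version is immediate (the operation is visibly an involution up to the one-step rotation, and two applications of $\phi$ give a rotation by two steps, which is a poset automorphism), whereas your counting argument for injectivity, while valid, is a detour.
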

\begin{proof}
We want to define a bijection $\phi: NC_n \to NC_n$ such that $p < q$ if and only if $\phi(p) > \phi(q)$. Let $p$ be a noncrossing partition. Pick an embedding of acyclic graphs in the half-plane representing $p$; as in Remark~\ref{CLMatchNCPart}, thicken these graphs to get planar surfaces embedded in the half-plane. Color the interiors of these surfaces black. Then the half-plane is divided into black and white regions; interchange these. We now have a checkerboard coloring of the half-plane in which the unbounded region is colored black. Identify the half-plane with the disk, rotate the $n$ points and the regions one step, and identify the disk back with the half plane so that the unbounded region is now colored white. The skeleton of the new black region represents the noncrossing partition $\phi(p)$. One may verify that $\phi$, defined in this way, reverses the order relations.
\end{proof}

Associated to the partial order on $NC_n$ is a Hasse diagram $G_n$, which is a directed graph whose vertices are the elements of $NC_n$ and which has an edge from $p$ to $q$ precisely when $p < q$ and there exists no vertex $q'$ with $p < q' < q$.

We will view $G_n$ as an undirected graph, ignoring the orientations on edges. For any two vertices $p,q$ of $G_n$ connected by an edge, there is a generator $h_{\gamma}$ of $H^n$ with left idempotent $p$ and right idempotent $q$. The generator $h_{\gamma^{\dagger}}$ has right idempotent $q$ and left idempotent $p$. All the $h_{\gamma}$ generators of $H^n$ are of this form.

Monomials in the generators $h_{\gamma}$ either correspond to paths in $G_n$, or are zero for idempotent reasons. We will be especially concerned with paths of minimum length:

\begin{definition}\label{GpqDef} Let $p$ and $q$ be vertices of $G_n$. The graph $G_{p,q}$ has one vertex for each minimal-length path, or geodesic, $\alpha$ from $p$ to $q$ in $G_n$. If $\alpha$ and $\beta$ are two vertices of $G_{p,q}$, they are connected by an edge when $\alpha$ and $\beta$ differ in exactly one vertex of $G_n$ (viewing paths in $G_n$ as sequences of vertices of $G_n$).
\end{definition}

The proof of the following lemma was found by D{\"o}m{\"o}t{\"o}r P{\'a}lv{\"o}lgyi and posted as an answer to a question on MathOverflow \cite{Palvolgyi}; independently, another proof was found by Aaron Potechin \cite{Potechin} and shared with the author privately.

\begin{lemma}[\cite{Palvolgyi}, \cite{Potechin}]\label{NonCrossPartLemma} Let $G_n$ denote the Hasse diagram of $NC_n$, viewed as an undirected graph. Let $p,q$ be vertices of $G_n$, and define $G_{p,q}$ as in Definition~\ref{GpqDef}. Then $G_{p,q}$ is a connected graph.

\end{lemma}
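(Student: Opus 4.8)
The plan is to prove connectedness of $G_{p,q}$ by a local-move argument: show that any two geodesics from $p$ to $q$ in $G_n$ can be connected by a sequence of elementary moves, each of which replaces one interior vertex of the path. Since an edge of $G_n$ corresponds to an elementary covering relation in $NC_n$ (surgering a single pair of arcs along a bridge), a geodesic from $p$ to $q$ exists only when $p$ and $q$ are comparable, say $p \leq q$; I would first reduce to this case (if $p$ and $q$ are incomparable there is no directed Hasse path, but $G_n$ is viewed as undirected, so one should check whether geodesics in the undirected sense can turn around — in the lattice $NC_n$ a shortest undirected path between comparable elements is in fact monotone, and this should be argued via the rank function, so I would first establish that $G_{p,q}$ is empty unless $p,q$ are comparable and that all geodesics are then monotone chains in the interval $[p,q]$).

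Having reduced to monotone maximal-length-difference chains in the interval $[p,q] \subseteq NC_n$, the statement becomes: the graph whose vertices are maximal chains of the poset interval $[p,q]$, with edges given by elementary interval-exchange moves (replacing $x < z < y$ with $x < z' < y$ where $z,z'$ are the two elements strictly between $x$ and $y$ in a length-two subinterval), is connected. First I would invoke the lattice property of $NC_n$: every interval $[p,q]$ of $NC_n$ is itself a lattice. Then the key structural fact I would use is that in $NC_n$ every length-two interval is a \emph{diamond} (has exactly two elements strictly between top and bottom) — this is the "diamond property" that makes $NC_n$ what is sometimes called a locally diamond / locally distributive type lattice on small intervals, and it is precisely what licenses the elementary moves in Definition~\ref{GpqDef}. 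In such a poset, connectedness of the maximal-chain graph follows by induction on the length of the interval: take two maximal chains $C$ and $C'$; look at their first elements above $p$, say $a$ and $a'$; use the meet/join in the diamond generated by $a$ and $a'$ together with $p$ to find a chain agreeing with $C$ at the bottom step but reachable from $C'$ by moves near the bottom; then induct on the shorter interval $[a, q]$.

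The main obstacle I expect is not the poset-combinatorics skeleton above but verifying the two geometric inputs about $NC_n$ specifically: (1) that every length-two interval of $NC_n$ is a genuine diamond with exactly two intermediate elements (rather than a chain or a pentagon or a larger Boolean lattice), and (2) the monotonicity/rank claim that undirected geodesics in $G_n$ between comparable vertices cannot decrease rank. For (1) I would argue directly with crossingless matchings: if $a \in B^n$ and $a''$ is obtained from $a$ by two successive bridge-surgeries with $a < a''$ of rank difference two, the two surgered pairs of arcs either involve disjoint sets of four endpoints (in which case the two intermediate matchings are obtained by doing the surgeries in either order, giving exactly two) or they overlap, and a short case-check on the possible local pictures of two interacting bridges shows again exactly two intermediate matchings — there is no configuration giving one or three or more. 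For (2), the rank function on $NC_n$ (number of blocks, or equivalently $n$ minus the number of arcs fused) is such that every edge changes rank by exactly $\pm 1$, and a minimal undirected walk from $p$ to $q$ with $p \leq q$ that ever descended would have length exceeding $\mathrm{rk}(q) - \mathrm{rk}(p)$, hence not be a geodesic; so geodesics are monotone.

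Once these two inputs are in hand, I would assemble the argument as follows: (i) reduce to $p \leq q$ and to monotone chains in $[p,q]$ via the rank function; (ii) note $[p,q]$ is a lattice with all length-two subintervals diamonds; (iii) run the induction on $\mathrm{rk}(q)-\mathrm{rk}(p)$ described above, using at the base step ($\mathrm{rk}(q)-\mathrm{rk}(p) \leq 2$) that $G_{p,q}$ is either a single vertex or a single edge (the diamond), and at the inductive step the move that slides the bottom of one chain to match the other. The bookkeeping in step (iii) — making precise "reachable by moves near the bottom" and checking one never needs to leave $[p,q]$ — is where I would be most careful, but it is standard once the diamond property is available, so I expect the geometric verification of the diamond property for $NC_n$ to be the real heart of the proof.
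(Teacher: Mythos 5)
Your proposed reduction and your proposed key lemma are both false, and each is load-bearing, so the argument as sketched does not work.

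First, the claim that $G_{p,q}$ is empty unless $p$ and $q$ are comparable is wrong. The graph $G_n$ is connected, so geodesics exist between \emph{every} pair of vertices, comparable or not; for incomparable $p,q$ the geodesic simply descends and re-ascends (or vice versa). For a concrete example, take $p = \{12|3\}$ and $q = \{1|23\}$ in $NC_3$: these are incomparable atoms at distance $2$, and $G_{p,q}$ has two vertices, corresponding to the geodesic through $\hat 0 = \{1|2|3\}$ and the geodesic through $\hat 1 = \{123\}$. The lemma must therefore be proved for incomparable pairs too, and your plan never addresses them. (Your monotonicity claim for comparable $p \le q$ is fine — a non-monotone walk is forced to be longer than $\mathrm{rk}(q)-\mathrm{rk}(p)$ — but that observation does not let you discard the incomparable case.)

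Second, and more fatally for the comparable case you do try to handle, the ``diamond property'' you identify as the heart of the argument is false in $NC_n$. The interval $[\hat 0, \hat 1]$ in $NC_3$ has rank $2$ but \emph{three} intermediate elements, namely $\{12|3\}$, $\{13|2\}$, and $\{1|23\}$; and this interval embeds as a length-$2$ interval in every $NC_n$ with $n \ge 3$. So $NC_n$ is not locally a diamond, and the standard-modular-lattice-style induction on chain length that you intend to run has no foothold. (This is one way $NC_n$ differs from the full partition lattice $\Pi_n$, which is geometric.) The paper's actual proof takes an unrelated route: it uses the self-duality of $NC_n$ to reduce to the case where $q$ has a singleton block $\{m\}$, then inducts jointly on $n$ and on the distance from $p$ to $q$, commuting the step that isolates $m$ to the front of each geodesic so that both paths begin with the same edge, and then appealing to the shorter-distance or smaller-$n$ case. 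That argument exploits the geometry of noncrossing partitions rather than any lattice-theoretic exchange property, and it works without assuming $p$ and $q$ are comparable.
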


\begin{proof}
First, note that as partitions of a set of $n$ points, either $q$ contains a singleton part or the dual $\phi(q)$ of $q$ contains a singleton part. In the latter case, we can use Proposition~\ref{SelfDualityProp} to reduce to the former case: if $G_{\phi(p),\phi(q)}$ is connected, then so is $G_{p,q}$. Thus, we may assume without loss of generality that $q$ contains a singleton part, say $\{m\}$ where $m$ is one of the $n$ points on the line.

We will induct on both $n$ and the distance between $p$ and $q$; if this distance is $2$ or less, or if $n \leq 2$, there is nothing to prove.

Consider two minimal-length paths $\alpha$ and $\beta$ from $p$ to $q$ in $G_n$. Since $p$ is a partition of the $n$ points on the line, the point $m$ must be contained in one of the partitioning subsets which make up $p$, say $S$. If $S$ contains only $m$, then for each vertex along either $\alpha$ or $\beta$, the point $m$ must be in a singleton set; otherwise $\alpha$ or $\beta$ would not have minimal length (the length could be reduced by removing the steps that connect and disconnect $m$ from the other points on the line). Hence we may ignore $m$ and view $\alpha$ and $\beta$ as paths in $G_{n-1}$. By induction, $\alpha$ may be modified one vertex at a time to produce $\beta$, and we may reintroduce the singleton point $m$ without issue.

On the other hand, suppose $S$ contains additional points as well as $m$. Then we may modify both $\alpha$ and $\beta$, one vertex at a time, to get paths $\alpha'$ and $\beta'$ from $p$ to $q$ such that the first step of both $\alpha'$ and $\beta'$ separates $m$ from the other points in $S$. To do this, find the first step along $\alpha$ or $\beta$ after which $m$ is an isolated point, and commute this step to the beginning of $\alpha$ or $\beta$ by changing the path one vertex at a time.

Now let $p'$ denote the partition $p$ with the point $m$ isolated from $S$. Both $\alpha'$ and $\beta'$ start by moving from $p$ to $p'$ and then along a minimal-length path (say $\alpha''$ or $\beta''$) from $p'$ to $q$. Since the distance from $p'$ to $q$ is one less than the distance from $p$ to $q$, we may conclude by induction that $\alpha''$ may be modified one vertex at a time to obtain $\beta''$. Thus, the same is true for $\alpha'$ and $\beta'$, and hence for $\alpha$ and $\beta$ as well.
\end{proof}

\begin{proof}[Proof of Proposition~\ref{HnIsLinQuad}]
We want to show that $J \subset T(V) \cdot J_2 \cdot T(V)$. We start by analyzing the intersection $J_2 = J \cap (V \oplus (V \otimes_R V))$:

\begin{enumerate}

\item\label{TwoBridgeRels}
Whenever $\gamma$ and $\eta$ are two bridges which can be drawn without intersection on the same crossingless matching, the element $h_{\gamma} h_{\eta'} - h_{\eta} h_{\gamma'}$ is in $J_2$, for the natural choices of $\eta'$ and $\gamma'$.

\item\label{BridgeCircleRels}
Whenever $\gamma$ is a bridge and $\alpha$ is an arc such that $h_{\gamma}$ and $h_{\alpha}$ have the same left idempotent,  the element $h_{\gamma} h_{\alpha'} - h_{\alpha} h_{\gamma}$ is in $I \cap (V \oplus (V \otimes_R) V)$, for the natural choice of $\alpha'$. In fact, if one of the endpoints of the bridge $\gamma$ lies on the arc $\alpha$, then there will be two natural choices for $\alpha'$; both give elements of $J_2$.

\item\label{TwoCircleRels}
Whenever $\alpha_1$ and $\alpha_2$ are arcs in the same crossingless matching, so that $h_{\alpha}$ and $h_{\alpha'}$ have the same left idempotent, the element $h_{\alpha_1} h_{\alpha_2} - h_{\alpha_2} h_{\alpha_1}$ is in $J_2$. Furthermore, for every arc $\alpha$, the element $h_{\alpha}^2$ is in $J_2$.

\item\label{LinQuadRels}
Finally, if $\gamma$ is any bridge, such that $h_{\gamma}$ has left idempotent $e_L$ and right idempotent $e_R$, then there exists a dual bridge $\gamma^{\dagger}$ such that $h_{\gamma^{\dagger}}$ has left idempotent $e_R$ and right idempotent $e_L$. Surgery on $\gamma^{\dagger}$ reverses the effect of surgery on $\gamma$. The element $h_{\gamma} h_{\gamma^{\dagger}} - h_{\alpha_1} - h_{\alpha_2}$ is in $J_2$, where $\alpha_1$ and $\alpha_2$ are the arcs containing the endpoints of $\gamma$.

\end{enumerate}

To show that $J \subset T(V) \cdot J_2 \cdot T(V)$, it suffices to show that for a general element $r$ of $J$, one may successively add to $r$ elements of the ideal generated by the relation elements listed above, until one obtains zero.

Let $r$ be an arbitrary element of $J$. We may assume without loss of generality that $r$ has a unique left idempotent and right idempotent. Since $J$ is an ideal of the tensor algebra $T(V)$, $r$ may be written as a linear combination of monomials in the generators $h_{\gamma}$ and $h_{\alpha}$. Let
\[
r = \sum_i n_i (h_{i,1} \cdots h_{i,l_i}),
\]
where $n_i \in \Z$ and each $h_{i,j}$ is one of the generators $h_{\gamma}$ or $h_{\alpha}$.

Consider one of the monomial summands $m_i = h_{i,1} \cdots h_{i,l_i}$ of $r$. After adding elements of $T(V) \cdot J_2 \cdot T(V)$ to this monomial, we may assume that all the $h_{\gamma}$ generators among the $h_{i,j}$ come before (i.e. with lower $j$ than) the $h_{\alpha}$ generators. The necessary relation elements come from item \ref{BridgeCircleRels} above. Let $m_i'$ denote the monomial obtained by modifying $m_i$ in this way.

Write $m_i'$ as $m_{\gamma,i} \cdot m_{\alpha,i}$, where $m_{\gamma,i}$ is a product of $h_{\gamma}$ generators and $m_{\alpha,i}$ is a product of $h_{\alpha}$ generators. Let $e_L$ denote the left idempotent of $m_{\gamma,i}$ and let $e_R$ denote the right idempotent of $m_{\gamma,i}$. These are independent of $i$ since the left and right idempotents of the monomials $m'_i$ are independent of $i$, and the left idempotent of each monomial $m_{\alpha,i}$ equals the right idempotent of $m_{\alpha,i}$. Then $e_L$ and $e_R$ are vertices of $G_n$, the undirected Hasse diagram of $NC_n$, and to the monomial $m_{\gamma,i}$ we may associate a path $p_{m_{\gamma,i}}$ in $G_n$ from $e_L$ to $e_R$.

We claim that we may further modify $m_i'$ until $p(m_{\gamma,i})$ is a minimal-length path between $e_L$ and $e_R$. Indeed, suppose $p(m_{\gamma,i})$ is a path of non-minimal length. Write $m_{\gamma,i} = h_{\gamma_1} \cdots h_{\gamma_k}$. Then there exists a minimal index $2 \leq j \leq k$ such that $h_{\gamma_1} \cdots h_{\gamma_{j-1}}$ corresponds to a path of minimal length in $G_n$ but $h_{\gamma_1} \cdots h_{\gamma_j}$ does not.

Let $e_R(h_{\gamma_j})$ denote the right idempotent of $h_{\gamma_j}$. The distance between $e_L$ and $e_R(h_{\gamma_{j-1}})$ in $G_n$ is $j-1$, but the distance between $e_L$ and $e_R(h_{\gamma_j})$ is $j-2$ rather than $j$. Indeed, this distance must be less than $j$, but it cannot be less than $j-2$, or the distance between $e_L$ and $e_R(h_{\gamma_{j-1}})$ would be less than $j-1$. The distance between $e_L$ and $e_R(h_{\gamma_j})$ also cannot be $j-1$, because each edge in $G_n$ connects two noncrossing partitions whose number of parts differs by one modulo two. Hence this distance must be $j-2$.

Thus, there exists some monomial $h_{\gamma'_1} \cdots h_{\gamma'_{j-2}}$ corresponding to a path in $G_n$ from $e_L$ to $e_R(h_{\gamma_j})$. Appending $h_{\gamma_j^{\dagger}}$ to this monomial, we get $h_{\gamma'_1} \cdots h_{\gamma'_{j-2}} \cdot h_{\gamma_j^{\dagger}}$, which corresponds to a path of length $j-1$ in $G_n$ between $e_L$ and $e_R(h_{\gamma_{j-1}})$. By assumption, the distance between $e_L$ and $e_R(h_{\gamma_{j-1}})$ is $j-1$, so $h_{\gamma'_1} \cdots h_{\gamma'_{j-2}} \cdot h_{\gamma_j^{\dagger}}$ corresponds to a minimal-length path in $G_n$.

We now have two minimal-length paths in $G_n$ between $e_L$ and $e_R(h_{\gamma_{j-1}})$, namely $\alpha = p(h_{\gamma_1} \cdots h_{\gamma_{j-1}})$ and $\beta = p(h_{\gamma'_1} \cdots h_{\gamma'_{j-2}} \cdot h_{\gamma_j^{\dagger}})$. By Lemma~\ref{NonCrossPartLemma}, we may modify $\alpha$ one vertex at a time to obtain $\beta$. Such modifications correspond, on the level of monomials, to adding relation terms obtained from item \ref{TwoBridgeRels} above.

Thus, we may modify $m_{\gamma,i}$, which equals $h_{\gamma_1} \cdots h_{\gamma_j-1} \cdot h_{\gamma_j} \cdots h_{\gamma_k}$, by adding terms in $T(V) \cdot J_2 \cdot T(V)$ to obtain $h_{\gamma'_1} \cdots h_{\gamma'_{j-2}} \cdot h_{\gamma_j^{\dagger}} \cdot h_{\gamma_j} \cdots h_{\gamma_k}$. Inside this monomial is $h_{\gamma_j^{\dagger}} \cdot h_{\gamma_j}$, which may be replaced with a sum of $h_{\alpha}$ terms using the relation terms in item \ref{LinQuadRels} above. As before, these $h_{\alpha}$ terms may be commuted to the right side of $m_i'$.

After this modification, we have strictly reduced the length of $m_{\gamma,i}$ in the factorization of $m_i'$ as $m_{\gamma,i} \cdot m_{\alpha,i}$. If the new $m_{\gamma,i}$ still does not represent a minimal-length path $p(m_{\gamma,i})$ in $G_n$, we can repeat the same procedure, and eventually it will terminate.

At this point, we have shown that we can modify our original $r = \sum_i n_i (m_i)$ by adding terms in $T(V) \cdot J_2 \cdot T(V)$, until each $m_i$ is a monomial factorizable as $m_{\gamma,i} \cdot m_{\alpha,i}$ with $m_{\alpha,i}$ a monomial in the generators $h_{\alpha}$ and $m_{\gamma,i}$ a monomial in the generators $h_{\gamma}$ representing a minimum-length path in $G_n$. The starting and ending points of all these paths are the same, namely the left and right idempotents of $h$. Thus, by Lemma~\ref{NonCrossPartLemma}, we may do further modifications until all of the $m_{\gamma_i}$ are the same monomial $m_{\gamma}$, and we have 
\[
r = m_{\gamma} \sum_i n_i (m_{\alpha,i}) \textrm{ modulo } T(V) \cdot J_2 \cdot T(V).
\]
Let $r'$ denote the right side of the above equality; $r'$ is an element of $T(V)$, and we want to show that $r' = 0$ modulo $T(V) \cdot J_2 \cdot T(V)$.

Here we use the fact that $r \in J$, or in other words that $r = 0$ as an element of $H^n$. The same holds for $r'$, since $T(V) \cdot J_2 \cdot T(V)$ is a subset of $J$. The monomial $m_{\gamma}$ represents an element of $H^n$ of the form $(W(a)b, \textrm{all plus})$, where $a$ is the left idempotent of $r$ and $b$ is the right idempotent. The signs are all plus because $m_{\gamma}$ corresponds to a path of minimal length. The summand ${_a}(H^n)_b$ is a free abelian group with a basis element for every assignment of signs $\sigma$ to the circles of $W(a)b$. Thus, saying that $r' = 0$ in $H^n$ means that the coefficient of $r'$ on each of these basis elements is zero. In other words, for each assignment of signs $\sigma$ to the circles of $W(a)b$, the sum of the terms $n_i m_{\gamma} m_{\alpha,i}$ of $r'$ corresponding to $\sigma$ is zero in $H^n$.

We will show that for a fixed $\sigma$, the terms $n_i m_{\gamma} m_{\alpha,i}$ such that $m_{\gamma} m_{\alpha,i}$ equals $(W(a)b,\sigma)$ in $H^n$ actually sum to zero modulo the relation terms from items \ref{BridgeCircleRels} and \ref{TwoCircleRels} above. There may also be some terms $m_{\gamma} m_{\alpha,i}$ which are already zero in $H^n$ and thus which represent no basis element $(W(a)b,\sigma)$ of $H^n$. We will deal with these terms at the end.

Suppose $m_{\gamma} h_{\alpha} = m_{\gamma} h_{\alpha'} = (W(a)b, \sigma)$ in $H^n$, where $m_{\gamma}$ corresponds to a minimal-length path; here $\alpha$ and $\alpha'$ are arcs in $b$ which lie on the same circle in $W(a)b$, and $\sigma$ assigns $-$ to this circle while assigning $+$ to all other circles of $W(a)b$. Then we may use relations from item \ref{BridgeCircleRels} to write both $m_{\gamma} h_{\alpha}$ and $m_{\gamma} h_{\alpha'}$ as $h_{\tilde{\alpha}} m_{\gamma}$, where $\tilde{\alpha}$ is any arc in the left idempotent $a$ of $m_{\gamma}$ which, in $W(a)b$, lies in the same circle as $\alpha$ and $\alpha'$. This is true by induction on the length of $\gamma$.

Now, for a more general sum of terms $m_{\gamma} m_{\alpha,i}$ all representing $(W(a)b, \sigma)$ in $H^n$, we can use the above modifications to replace each of the monomials $m_{\alpha,i}$ with the same monomial $m_{\alpha}$. We do this by picking, for example, $m_{\alpha} = m_{\alpha,1}$, and then for $i \neq 1$, we move each factor of $m_{\alpha,i}$ to the left and back to the right so that it becomes identical to the factor appearing in $m_{\alpha,1}$. After doing this for all $i$, we use relations from item~\ref{TwoCircleRels} to replace each $m_{\alpha,i}$ with $m_{\alpha}$.

For a fixed $\sigma$, let $N$ be the sum of the $n_i$ such that $m_{\gamma} m_{\alpha,i}$ represents $(W(a)b,\sigma)$ in $H^n$. By the above paragraph, the sum of the terms $n_i m_{\gamma} m_{\alpha,i}$ of $r'$ with $m_{\gamma} m_{\alpha,i}$ representing $(W(a)b,\sigma)$ in $H^n$ is equivalent to $N m_{\gamma} m_{\alpha}$ modulo $T(V) \cdot J_2 \cdot T(V)$. We see that $N m_{\gamma} m_{\alpha} = 0$ in $H^n$. But since $m_{\gamma} m_{\alpha}$ is the basis element of ${_a}(H^n)_b$ corresponding to $\sigma$, we can conclude that $N = 0$. Thus, the sum of the terms $n_i m_{\gamma} m_{\alpha,i}$ of $r'$ under consideration is equal to zero modulo $T(V) \cdot J_2 \cdot T(V)$.

Finally, some of the terms $m_{\gamma} m_{\alpha,i}$ may not represent any $(W(a)b,\sigma)$ in $H^n$; this happens if and only if $m_{\gamma} m_{\alpha,i}$ is zero in $H^n$. In this case, by the above logic, we can use relations from items \ref{BridgeCircleRels} and \ref{TwoCircleRels} to rearrange $m_{\gamma} m_{\alpha,i}$ until it has $h_{\alpha}^2$ somewhere, for some generator $h_{\alpha}$. Thus, these terms $m_{\gamma} m_{\alpha,i}$ are in $T(V) \cdot J_2 \cdot T(V)$ by item~\ref{TwoCircleRels} above.

Starting with $r \in J$ above, we have successively modified $r$ using linear-quadratic relations until we obtained zero. Hence $J \subset T(V) \cdot J_2 \cdot T(V)$, and so $H^n$ is a linear-quadratic algebra.
\end{proof}

\begin{remark} In fact, one can show by analyzing the grading possibilities case-by-case that the linear-quadratic relations listed above in \ref{TwoBridgeRels}-\ref{LinQuadRels} are a full set of generators for $J_2$. 
\end{remark}

Thus, we get a description of $H^n$ in terms of generators and relations:
\begin{corollary}\label{HnDescriptionCorr}
Let $V$ denote the free $\Z$-module spanned by the degree-1 generators $h_{\gamma}$ and the degree-2 generators $h_{\alpha}$ of $H^n$, with left and right actions of $R = \I_n \cong \Z^{C_n}$ on $V$ given by multiplication in $H^n$. Then
\[
H^n \cong T(V) / (T(V) \cdot J_2 \cdot T(V)),
\]
where the tensor products in $T(V)$ are over $R$, and $J_2$ is generated by the explicit relations given above in items \ref{TwoBridgeRels}-\ref{LinQuadRels} of the proof of Proposition~\ref{HnIsLinQuad}.
\end{corollary}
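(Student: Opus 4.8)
The plan is to read the corollary off from Proposition~\ref{HnIsLinQuad} together with its proof. Write $H^n = T(V)/J$ as in the statement, with $J$ the kernel of the evaluation map $T(V) \to H^n$, and let $J_2'$ denote the $R$-sub-bimodule of $V \oplus (V\otimes_R V)$ spanned by the four families of relation elements listed in items \ref{TwoBridgeRels}--\ref{LinQuadRels} of the proof of Proposition~\ref{HnIsLinQuad}. The corollary asserts that $H^n \cong T(V)/(T(V)\cdot J_2' \cdot T(V))$, so it suffices to prove that the two-sided ideal generated by $J_2'$ is exactly $J$.

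The inclusion $T(V)\cdot J_2'\cdot T(V) \subseteq J$ is immediate: each of the four families of relations --- the ``distant-surgery'' commutators of item \ref{TwoBridgeRels}, the surgery-past-a-marked-circle relations of item \ref{BridgeCircleRels}, the commuting and squaring relations for the $h_\alpha$ of item \ref{TwoCircleRels}, and the surgery-then-dual-surgery relations of item \ref{LinQuadRels} --- evaluates to $0$ in $H^n$ by direct inspection of Khovanov's multiplication, so $J_2' \subseteq J$ and hence so is the ideal it generates. For the reverse inclusion I would invoke verbatim the reduction carried out in the proof of Proposition~\ref{HnIsLinQuad}: starting from an arbitrary $r \in J$, that argument modifies $r$ by successively subtracting elements of the ideal generated by the listed relations --- using Lemma~\ref{NonCrossPartLemma} to straighten the $h_\gamma$-part of each monomial into a fixed minimal-length path in $G_n$, and a word-length induction to guarantee termination --- until $r$ becomes $0$. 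Hence $r \in T(V)\cdot J_2'\cdot T(V)$, which gives $J \subseteq T(V)\cdot J_2'\cdot T(V)$ and therefore equality. This yields the displayed isomorphism, with the tensor products in $T(V)$ taken over $R = \I_n$ by construction.

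I do not expect a genuine obstacle here, since the substantive work --- that the listed relations suffice to kill every element of $J$ --- is already contained in the proof of Proposition~\ref{HnIsLinQuad}, and the corollary is essentially a repackaging of that proof. The one point requiring a separate (but routine) check, needed only to match the phrasing of the remark preceding the corollary, namely that $J_2' = J \cap (V \oplus (V\otimes_R V))$ with no further relations of word-length $\leq 2$, can be settled by a finite case analysis: for each ordered pair of generators with compatible idempotents one lists the possible intrinsic degrees of the product and compares the rank of the degree-$d$ summand of $T^{\leq 2}(V)$ with that of the corresponding summand of $H^n$, verifying that the listed relations account for the entire discrepancy.
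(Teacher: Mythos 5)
Your proof is correct and is essentially the argument the paper leaves implicit: the corollary is a direct repackaging of Proposition~\ref{HnIsLinQuad}, whose proof already shows that every $r\in J$ can be reduced to zero modulo the ideal generated by the listed relations, giving both inclusions. Your closing observation — that the stronger claim $J_2' = J\cap(V\oplus(V\otimes_R V))$ is not actually needed for the displayed isomorphism but only to match the wording of the preceding remark — is accurate and matches the paper's own separation of the remark from the corollary.
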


\begin{remark}
All of the generators of $J_2$ listed in items \ref{TwoBridgeRels}-\ref{LinQuadRels} of the proof of Proposition~\ref{HnIsLinQuad} are homogeneous with respect to the intrinsic grading on $H^n$. Thus, Corollary~\ref{HnDescriptionCorr} also gives us a description of $H^n$ as a graded algebra. This grading differs from the word-length filtration which $H^n$ acquires as a linear-quadratic algebra by Remark~\ref{FiltrationRemark}, even on the basic multiplicative generators: $h_{\alpha}$ has intrinsic degree $2$ and word-length $1$, while $h_{\gamma}$ has intrinsic degree and word-length both equal to $1$.
\end{remark}

\subsection{Quadratic and linear-quadratic duality}\label{QLQDualitySection}

Next we discuss a type of quadratic duality for linear-quadratic algebras. The dual of a linear-quadratic algebra $\B$ is, in general, a quadratic algebra $\B^!$ with a differential. Even if $\B$ is finitely generated over $\Z$, following Convention~\ref{FGConvention}, the algebra $\B^!$ might be infinitely generated over $\Z$. Accordingly, Convention~\ref{FGConvention} will not be taken to hold for dual algebras $\B^!$ in general. However, $\B^!$ will still be generated multiplicatively by a finite set of elements.

We review the relevant definitions from Polishchuk-Positselski \cite{PP} (chapters 1 and 5). We start with the case of quadratic algebras, and then discuss the modification needed for linear-quadratic algebras.

Let $V^*$ denote $\Hom_{\Z}(V,\Z)$. Since $V$ is a free $\Z$-module, $V^*$ is free of the same rank as $V$. If $b_i$ is a generator of $V$, let $b_i^*$ denote the corresponding generator of $V^*$. We define left and right actions of $R$ on $V$ by declaring that $b^*_i$ has the same left and right idempotents as $b_i$.

\begin{definition}
Let $\B$ be a quadratic algebra, and write $\B = T(V) / J$ as above, with $I = J \cap T^2(V)$. The quadratic dual $\B^!$ of $\B$ is defined to be
\[
B^! := T(V^*) / (T(V^*) \cdot I^{\perp} \cdot T(V^*)),
\]
where $I^{\perp}$ is the submodule of $T^2(V^*) = V^* \otimes_R V^*$ annihilating $I$ via the natural action of $V^* \otimes_R V^*$ on $V \otimes_R V$.
\end{definition}

\begin{remark}\label{lexorder2}
Let $Q_1$, $Q_2$, and $b_i$ be as defined as in Remark~\ref{lexorder} above. We had a relation $I_{i,j}$ in $I$ for every monomial $b_i b_j$ in $Q_2$. If $b_i b_j$ is in $Q_1$ rather than $Q_2$, consider instead the dual monomial $b_i^* b_j^*$ in $\B^!$. We can define a relation in $I^{\perp}$ by
\[
I^!_{i,j} = b_i^* b_j^* + \sum_{(i',j') > (i,j)} c^!_{i,j;i',j'} b_{i'}^* b_{j'}^*
\]
where $c^!_{i,j;i',j'}$ is only nonzero if $b_{i'} b_{j'}$ is in $Q_2$, in which case $c^!_{i,j;i',j'}$ is defined to be the coefficient $c_{i',j';i,j}$ of the $(i'' = i, j'' = j)$ term in the relation 
\[
I_{i',j'} = b_{i'} b_{j'} - \sum_{(i'',j'') < (i',j')} c_{i',j';i'',j''} b_{i''} b_{j''}.
\]
The ideal $I^{\perp}$ is spanned by the relations $I^!_{i,j}$.
\end{remark}

We now extend quadratic duality to linear-quadratic algebras.
\begin{definition}\label{LinQuadDualDef}
Let $\B$ be a linear-quadratic algebra; recall that Definition~\ref{BzeroDef} associates a quadratic algebra $\B^{(0)}$ to $\B$. The quadratic dual $\B^!$ of $\B$ is defined, as an algebra, to be the usual quadratic dual of $\B^{(0)}$. Since $\B^!$ is a quadratic algebra, it has a grading by word-length. We will interpret this grading as the homological grading for a differential $\mu_1$ on $\B^!$. 

We will first define $\mu_1$ on the basis elements of $V^*$ and extend to $\B^!$ by the Leibniz rule. We will use the map $\varphi: I \to V$ defined in Definition~\ref{VarphiDef}. Dualizing $\varphi$, we get $\varphi^*: V^* \to I^*$, where $I^* := \Hom_{\Z}(I,\Z)$. 

We claim that $I^*$ is isomorphic to the degree-$2$ summand of $\B^!$. To see this, write the degree-$2$ summand of $\B^!$ as $T^2(V^*) / I^{\perp} = \Hom_{\Z}(V \otimes V, \Z) / I^{\perp}$. There is a natural map $\Xi$ from $\Hom_{\Z}(V \otimes V, \Z)$ to $I^*$ given by precomposing with the inclusion from $I$ into $V \otimes V$. The map $\Xi$ is surjective because any functional from $I$ to $\Z$ may be extended to a functional from $V \otimes V$ to $\Z$. Indeed, using the conventions of Remark~\ref{lexorder}, the $\Z$-basis $\{I_{i,j}: b_i b_j \in Q_2 \}$ for $I$ may be extended to a $\Z$-basis $\{I_{i,j}: b_i b_j \in Q_2 \} \cup Q_1$ for $V \otimes V$.

The kernel of $\Xi$, by definition, consists of those functionals on $V \otimes V$ which annihilate $I$. Thus, the kernel is the same as $I^{\perp}$. We can conclude that $\Xi$ is an isomorphism and $I^*$ is the degree-2 summand of $\B^!$.

Now, for a degree-1 element of $\B^!$, i.e. an element $v^* \in V^*$ dual to a basis element $v$ of $V$, define $\mu_1(v^*)$ to be $\varphi^*(v^*)$. This is an element of $I^*$, and thus a degree-2 element of $\B^!$.

We may extend $\mu_1$ to a map from $\B^!$ to $\B^!$, homogeneous of degree $+1$, using the Leibniz rule
\[
\mu_1(xy) = (-1)^{\deg x} \mu_1(x) y + x \mu_1(y).
\]
Note that this Leibniz rule differs from the one used in \cite{PP}, to stay consistent with our earlier sign conventions.

\end{definition}

\begin{remark}\label{LinQuadIntrinsicGrading}
Suppose $\B$ is a linear-quadratic algebra with an intrinsic grading whose grading-induced augmentation map is the given one. Suppose further that all the multiplicative generators $b_i$ of $\B$ and the explicit generators $I_{i,j}$ of $I$ from Remark~\ref{lexorder} are homogeneous with respect to the intrinsic grading, and that the map $\varphi: I \to V$ preserves intrinsic degree. For example, $H^n$ satisfies these properties: the generators $h_{\gamma}$ have intrinsic degree $1$ and the generators $h_{\alpha}$ have intrinsic degree $2$. Each term of each relation in items \ref{TwoBridgeRels} and \ref{LinQuadRels} of the proof of Proposition~\ref{HnIsLinQuad} has intrinsic degree $2$. Those in item~\ref{BridgeCircleRels} have degree $3$, and those in item~\ref{TwoCircleRels} have degree $4$. The map $\varphi$ is only nonzero on relations from item \ref{LinQuadRels}, and it sends elements of degree $2$ to elements of degree $2$.

With these assumptions, $V^*$ has a natural intrinsic grading, namely the negative of the grading on $V$ (so that the pairing of $V^*$ with $V$ is grading-preserving). The generators of $I^{\perp}$ are homogeneous with respect to this grading; this can be seen from Remark~\ref{lexorder2}. Thus, $\B^!$ acquires an intrinsic grading. Since $\varphi: I \to V$ preserves intrinsic grading, so does $\varphi^*: V^* \to I^*$, and hence the differential $\mu_1$ on $\B^!$ preserves intrinsic grading. The intrinsic grading on $\B^!$ is different from the homological grading, which $\mu_1$ increases by one. In summary, $\B^!$ may be viewed as a differential bigraded algebra with a $(0,+1)$ differential.
\end{remark}

\subsection{The dual of Khovanov's arc algebra}\label{DualOfHnSect}

Proposition~\ref{HnIsLinQuad}, Definition~\ref{LinQuadDualDef}, and Remark~\ref{LinQuadIntrinsicGrading} together give us a differential bigraded algebra $(H^n)^!$, which we will call the dual of $H^n$.
\begin{example}
When $n = 1$, $H^n =  H^1$ is the algebra $\Z[x]/x^2$ over the idempotent ring $\I_1 = \Z$. The generator $x$ has intrinsic degree $2$. Thus, the dual $(H^1)^!$ is $\Z[x^*]$, where $x^*$ has bidegree $(-2,1)$. The differential on $(H^n)^!$ is zero, and $(H^n)^!$ is not finitely generated over $\Z$.
\end{example}
In general, $(H^n)^!$ is never finitely generated over $\Z$, since arbitrary powers of any generator $h^*_{\alpha}$ will be nonzero in $(H^n)^!$.

\subsection{Type DD bimodules}\label{TypeDDBimodSect}

We may relate the duality discussed in Section~\ref{QLQDualitySection} with the Type DD bimodules encountered in bordered Heegaard Floer homology (see Lipshitz-Ozsv{\'a}th-Thurston \cite{LOTBimodules}, especially Section 8). First, we give a definition of these bimodules over $\Z$; as in Section~\ref{BorderedAlgebraSection}, we do not cover the most general case possible.

Let $\B$ and $\B'$ be differential bigraded algebras over an idempotent ring $R = \Pi_i (\Z e_i)$. The case $\B' = \B^!$ will be important, so we will not assume that $\B'$ is finitely generated over $\Z$.
\begin{definition}\label{GeneralDDBimodDef}
A Type DD bimodule over $\B$ and $\B'$ is, first of all, a bigraded free $\Z$-module $\widehat{DD}$ with left and right actions of $R$, such that $\DD$ admits a $\Z$-basis consisting of grading-homogeneous elements with unique left and right idempotents among the $e_i$. Furthermore, $\DD$ must be equipped with an $R$-bilinear map
\[
\delta: \DD \to \B \otimes_R \DD \otimes_R (\B')^{op},
\]
of degree $(0,+1)$, such that the Type DD structure relations
\begin{align*}
&(\mu_1 \otimes \left|\id\right| \otimes \left|\id\right|) \circ \delta \\
&+ (\id \otimes \left|\id\right| \otimes \mu_1) \circ \delta \\
&+ (\mu_2 \otimes \id \otimes \mu_2) \circ \sigma \circ (\id \otimes \delta \otimes \id) \circ \delta \\
&= 0
\end{align*}
are satisfied, where $\mu_1$ and $\mu_2$ denote the differential and multiplication on $\B$ or $\B'$ as appropriate, and
\[
\sigma: \B \otimes \B \otimes \DD \otimes (\B')^{op} \otimes (\B')^{op} \to \B \otimes \B \otimes \DD \otimes (\B')^{op} \otimes (\B')^{op}
\]
is a sign flip which sends a generator $b_1 \otimes b_2 \otimes x \otimes (b'_3)^{op} \otimes (b'_4)^{op}$ to $(-1)^{(\deg_h b_2)(\deg_h b'_4)}$ times itself.
\end{definition}

\begin{remark}
The odd-seeming sign conventions reflect the fact that, while we write $\DD$ with $\B$ on the left and $(\B')^{op}$ on the right to make the notation more manageable, we really want to think of both $\B$ and $\B'$ being on the left of $\DD$ when fixing sign conventions.
\end{remark}

Of particular interest here are Type DD bimodules with $\DD = R$ as an $R$-bimodule. We will refer to these as \emph{rank-one} DD bimodules, following the notation of Chapter 8 of \cite{LOTMorphism}. For a rank-one DD bimodule, we have $\B \otimes_R \DD \otimes_R (\B')^{op} \cong \B \otimes_R (\B')^{op}$, so we may rewrite the Type DD structure relations as
\begin{align*}
&(\mu_1 \otimes \left|\id\right|) \circ \delta \\
&+ (\id  \otimes \mu_1) \circ \delta \\
&+ (\mu_2 \otimes \mu_2) \circ \sigma \circ (\id \otimes \delta \otimes \id) \circ \delta \\
&= 0,
\end{align*}
where $\sigma$ is now a map from $\B \otimes \B \otimes (\B')^{op} \otimes (\B')^{op}$ to itself.

When $\B$ is a linear-quadratic algebra with an intrinsic grading as in Remark~\ref{LinQuadIntrinsicGrading}, we can construct an associated rank-one DD bimodule over $\B$ and $\B^!$. Setting $\DD = R$, we define $\delta: R \to \B \otimes_R (\B^!)^{op}$ by
\[
\delta(e) = \sum_i b_i \otimes (b_i^*)^{op},
\]
where $e$ is one of the elementary idempotents and the sum runs over those multiplicative generators $b_i$ of $R$ which have left idempotent $e$. (These idempotent conditions will be implicit in what follows.) Note that $\delta$ has degree $(0,+1)$; it preserves the intrinsic grading, since the grading on $\B^!$ was defined to be the negative of that on $\B$, and it increases the homological grading by $1$, since $b_i$ has zero homological grading while $b_i^*$ has homological degree $1$.

\begin{proposition}\label{BBbangDD}
The map $\delta$, as defined above, satisfies the Type DD structure relations.
\end{proposition}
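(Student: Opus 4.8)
The plan is to unwind the rank-one Type DD relation for $\delta$, observe that two of its three terms collapse, and identify what remains with the defining property of the map $\varphi$ from Definition~\ref{VarphiDef}.

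First I would note that, since the linear-quadratic algebra $\B$ carries only an intrinsic grading, we view it as a differential bigraded algebra concentrated in homological degree $0$ with zero differential (exactly as $H^n$ is treated in Section~\ref{HnAsBorderedSection}). Consequently the term $(\mu_1 \otimes |\id|) \circ \delta$ of the rank-one relation, in which $\mu_1$ is the differential on $\B$, vanishes identically. Moreover, applying $(\id \otimes \delta \otimes \id) \circ \delta$ to an elementary idempotent $e$ produces a sum of terms $b_i \otimes b_j \otimes (b_j^*)^{op} \otimes (b_i^*)^{op}$ (with the rank-one $\DD = R$ factor suppressed), in which the element $b_j$ occupying the second slot has homological degree $0$; hence the sign flip $\sigma$, which would multiply by $(-1)^{(\deg_h b_j)(\deg_h b_i^*)}$, acts as the identity. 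So the Type DD relation for $\delta$ reduces to the single identity
\[
\sum_i b_i \otimes (\varphi^*(b_i^*))^{op} \;+\; \sum_{i,j} (b_i b_j) \otimes (b_i^* b_j^*)^{op} \;=\; 0
\]
in $\B \otimes_R (\B^!)^{op}$, where the first sum uses $\mu_1(b_i^*) = \varphi^*(b_i^*)$ from Definition~\ref{LinQuadDualDef}, the second runs over composable pairs with $e_L(b_i) = e$, and $b_i b_j$ resp.\ $b_i^* b_j^*$ denote the products in $\B$ resp.\ $\B^!$. Dropping the harmless $(-)^{op}$, it is enough to prove the corresponding identity in $\B \otimes_R \B^!$.

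Next I would invoke the identification of the degree-$2$ summand of $\B^!$ with $I^*$ from Definition~\ref{LinQuadDualDef} (the isomorphism $\Xi$ with kernel $I^\perp$): both $\varphi^*(b_i^*)$ and $b_i^* b_j^*$ then become functionals on $I$, the former sending $r \mapsto \langle b_i^*, \varphi(r) \rangle$ and the latter sending $r \in I \subseteq V \otimes_R V$ to the coefficient of $b_i \otimes b_j$ in $r$. Since $V$, and hence $V \otimes_R V$ and its subgroup $I$, is finitely generated and free over $\Z$, the natural pairing $I^* \times I \to \Z$ is perfect; splitting $\B \otimes_R I^*$ and $I$ into their idempotent components, one checks that an element of $\B \otimes_R I^*$ is zero as soon as it pairs (in the second tensor factor) to $0 \in \B$ against every element of $I$. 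Pairing the left-hand side of the displayed identity against an arbitrary $r \in I$ gives, in $\B$, the image under the surjection $T(V) \to \B$ of the element $\varphi(r) \oplus r$ of $V \oplus (V \otimes_R V)$. By Definition~\ref{VarphiDef}, $\varphi(r) \oplus r$ lies in $J_2 \subseteq J = \ker(T(V) \to \B)$, so this image is $0$. Hence the identity holds and $\delta$ satisfies the Type DD relation.

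I expect the bookkeeping, rather than any genuine difficulty, to be the main obstacle: keeping the $(-)^{op}$ conventions and all idempotent conditions consistent, and --- most substantively --- making precise the claim that pairing against $I$ detects vanishing in $\B \otimes_R I^*$. The sign analysis that kills the $\mu_1$-on-$\B$ term and trivializes $\sigma$ must also be spelled out carefully. Note that, because $\B$ has zero differential, this quadratic relation needs only the first-order compatibility $\varphi(r) \oplus r \in J_2$ and none of the higher compatibility $\varphi \circ (\varphi^{12} - \varphi^{23}) = 0$; the latter would enter instead in verifying that $\mu_1$ on $\B^!$ squares to zero.
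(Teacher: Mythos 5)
Your proof is correct, and it takes a genuinely different route from the paper's.

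The paper's argument is explicitly combinatorial: it fixes the lexicographic order on quadratic monomials from Remark~\ref{lexorder}, splits the sum $\sum_{i,j} b_i b_j \otimes (b_j^*)^{op}(b_i^*)^{op}$ into $Q_1$- and $Q_2$-pieces, rewrites each piece using the explicit coefficients $c_{i,j;i',j'}$ and $C_{i,j;k}$ of the relations $I_{i,j}$ and $I^!_{i,j}$, and watches the terms cancel against the explicit expansion $\varphi^*(b^*_k) = \sum_{Q_2} C_{i,j;k} b_i^* b_j^*$. Your argument sidesteps the entire $Q_1/Q_2$ machinery by identifying $(\B^!)_2$ with $I^*$ via $\Xi$, recognizing both $\varphi^*(b_i^*)$ and $b_i^* b_j^*$ as functionals on $I$, and pairing the whole displayed expression against an arbitrary $r\in I$. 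That produces precisely the image of $\varphi(r)\oplus r$ under $T(V)\twoheadrightarrow \B$, which is zero since $\varphi(r)\oplus r\in J_2\subset J$; the conclusion follows because the pairing $I^*\times I\to\Z$ is perfect (idempotent-component by idempotent-component, using that each $e''Ie'$ is a finitely generated free $\Z$-module). The conceptual payoff is that you make transparent \emph{why} the relation holds --- it is literally the defining condition $\varphi(r)\oplus r\in J_2$ dualized --- whereas the paper's route has the virtue of matching the explicit presentation of $\B^!$ by generators and relations that it sets up in Remarks~\ref{lexorder} and~\ref{lexorder2}. Your closing observation, that this quadratic relation needs only the first-order compatibility of $\varphi$ and not $\varphi\circ(\varphi^{12}-\varphi^{23})=0$ (which is what makes $\mu_1^2=0$ on $\B^!$), is accurate and is implicit but not remarked upon in the paper. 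The one step you flag as needing care --- that pairing against $I$ detects vanishing in $\B\otimes_R I^*$ --- is indeed the only substantive lemma your route requires; it is routine once one decomposes $\B\otimes_R I^*$ over idempotents and uses a dual basis of $e''I^*e'\cong \Hom_\Z(e''Ie',\Z)$, but it should be spelled out in a final write-up since this is where the finite freeness of $I$ over $\Z$ is actually used.
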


\begin{proof}
First, let $e \in R$ be one of the elementary idempotents. Applying the term $(\mu_2 \otimes \mu_2) \circ \sigma \circ (\id \otimes \delta \otimes \id) \circ \delta$ to $e$, we get
\[
(\mu_2 \otimes \mu_2) \circ \sigma \circ (\id \otimes \delta \otimes \id) \circ \delta (e) = \sum_{i,j} b_i b_j \otimes (b^*_j)^{op} (b^*_i)^{op},
\]
where the sum runs over all pairs of multiplicative generators $b_i,b_j$ of $\B$ with compatible idempotents, such that the left idempotent of $b_i$ is $e$. Note that $\sigma = \id$ here, because the generators $b_j$ all have homological degree zero.

In the notation of Remark~\ref{lexorder}, we may split the above sum as
\begin{equation}\label{DDTermEqn1}
\sum_{b_i b_j \in Q_1} b_i b_j \otimes (b^*_j)^{op} (b^*_i)^{op} + \sum_{b_i b_j \in Q_2} b_i b_j \otimes (b^*_j)^{op} (b^*_i)^{op}.
\end{equation}

If $b_i b_j$ is in $Q_1$, then in $\B^!$, we may write $b^*_i b^*_j$ as $-\sum_{(i',j') > (i,j)} c_{i',j';i,j} b^*_{i'} b^*_{j'}$. Thus,
\begin{equation}\label{DDTermEqn2}
\sum_{b_i b_j \in Q_1} b_i b_j \otimes (b^*_j)^{op} (b^*_i)^{op} = -\sum_{b_i b_j \in Q_1} b_i b_j \otimes \sum_{b_{i'} b_{j'} \in Q_2, (i',j') > (i,j)} c_{i',j';i,j} (b^*_{j'})^{op} (b^*_{i'})^{op}.
\end{equation}

On the other hand, if $b_i b_j$ is in $Q_2$, then we may write $b_i b_j$ as 
\begin{align*}
b_i b_j &= \bigg( \sum_{b_{i'} b_{j'} \in Q_1, (i',j') < (i,j)} c_{i,j;i',j'} b_{i'} b_{j'} \bigg) \\
&- \varphi \bigg( b_i b_j - \sum_{b_{i'} b_{j'} \in Q_1, (i',j') < (i,j)} c_{i,j;i',j'} b_{i'} b_{j'} \bigg). \\
\end{align*}
The expression $\varphi \bigg( b_i b_j - \sum_{b_{i'} b_{j'} \in Q_1, (i',j') < (i,j)} c_{i,j;i',j'} b_{i'} b_{j'} \bigg)$, or $\varphi(I_{i,j})$, denotes some linear combination of the multiplicative generators $b_k$ of $\B$. Define the coefficients $C_{i,j;k}$ by
\begin{equation}\label{cijkcoeffs}
\varphi \bigg( b_i b_j - \sum_{b_{i'} b_{j'} \in Q_1, (i',j') < (i,j)} c_{i,j;i',j'} b_{i'} b_{j'} \bigg) = \sum_k C_{i,j;k} b_k.
\end{equation}
Thus,
\begin{align*}
\sum_{b_i b_j \in Q_2} b_i b_j &\otimes (b^*_j)^{op} (b^*_i)^{op} = \sum_{b_i b_j \in Q_2} \sum_{b_{i'} b_{j'} \in Q_1} c_{i,j;i',j'} b_{i'} b_{j'} \otimes (b^*_j)^{op} (b^*_i)^{op} \\
&- \sum_{b_i b_j \in Q_2}  \bigg( \sum_k C_{i,j;k} b_k \bigg) \otimes (b^*_j)^{op} (b^*_i)^{op}.\\
\end{align*}

On the right side of this equation, the first term cancels with the first term $\sum_{b_i b_j \in Q_1} b_i b_j \otimes (b^*_j)^{op} (b^*_i)^{op}$ of expression~\ref{DDTermEqn1}, by equation~\ref{DDTermEqn2}. Thus, we see that 
\[
(\mu_2 \otimes \mu_2) \circ \sigma \circ (\id \otimes \delta \otimes \id) \circ \delta (e) = - \sum_{b_i b_j \in Q_2}  \bigg( \sum_k C_{i,j;k} b_k \bigg) \otimes (b^*_j)^{op} (b^*_i)^{op}.
\]

Now we consider the terms $(\mu_1 \otimes \left|\id\right|) \circ \delta(e)$ and $(\id  \otimes \mu_1) \circ \delta(e)$. The first of these is zero, because $\B$ has no differential. The second may be written as
\[
(\id  \otimes \mu_1) \circ \delta(e) =  \sum_k b_k \otimes (\varphi^*(b^*_k))^{op}.
\]

To compute $\varphi^*(b^*_k)$ as an element of $I^*$, i.e. a homomorphism from $I$ to $\Z$, use equation~\ref{cijkcoeffs} above: this homomorphism sends the generator $I_{i,j} = b_i b_j - \sum_{b_{i'} b_{j'} \in Q_1, (i',j') < (i,j)} c_{i,j;i',j'} b_{i'} b_{j'}$ of $I$ to the coefficient $C_{i,j;k} \in \Z$.

We want to view $\varphi^*(b^*_k)$ as an element of $\B^!$ of homological degree $2$. To do this, following Definition~\ref{LinQuadDualDef}, we pick any extension of $\varphi^*(b^*_k)$ to a functional from $V \otimes_R V$ to $\Z$, or in other words an element of $V^* \otimes_R V^*$, and then consider this element modulo the ideal $I^{\perp}$. Since $\{I_{i,j} : b_i b_j \in Q_2 \} \cup Q_1$ is a $\Z$-basis for $V \otimes_R V$, we may extend $\varphi^*(b^*_k)$ to $V \otimes_R V$ by defining it to be zero on any $b_{i'} b_{j'}$ in $Q_1$. 

This extended $\varphi^*(b^*_k)$ sends $b_i b_j \in Q_2$ to $C_{i,j;k}$, since it sends $I_{i,j}$ to $C_{i,j;k}$ and sends every $b_{i'} b_{j'} \in Q_1$ to zero. Thus,
\[
\varphi^*(b^*_k) = \sum_{b_i b_j \in Q_2} C_{i,j;k} b^*_i b^*_j.
\]
We conclude that 
\[
(\id  \otimes \mu_1) \circ \delta(e) =  \sum_k b_k \otimes \sum_{b_i b_j \in Q_2} C_{i,j;k} (b^*_j)^{op} (b^*_i)^{op},
\]
canceling the remaining term of $(\mu_2 \otimes \mu_2) \circ \sigma \circ (\id \otimes \delta \otimes \id) \circ \delta (e)$. This computation verifies that the Type DD structure relations for $\delta$ are satisfied.
\end{proof}

We can also reverse the roles of $\B$ and $\B^!$: define $\delta': R \to \B^! \otimes_R (\B)^{op}$ by
\[
\delta'(e) = \sum_i b_i^* \otimes (b_i)^{op},
\]
where again the sum is over all multiplicative generators $b_i$ with left idempotent $e$.

\begin{proposition}\label{BbangBDD}
The map $\delta'$ satisfies the Type DD structure relations.
\end{proposition}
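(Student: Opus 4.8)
The plan is to run the argument of Proposition~\ref{BBbangDD} again with the roles of $\B$ and $\B^!$ interchanged, keeping careful track of which differential term survives. Since for $\delta'$ the algebra $\B^!$ now occupies the left tensor factor of the rank-one bimodule $\DD = R$ while $\B$ occupies the $(\B)^{op}$ factor, the three terms of the Type DD relation applied to an elementary idempotent $e$ become $(\mu_1 \otimes \left|\id\right|)\circ\delta'(e)$, $(\id \otimes \mu_1)\circ\delta'(e)$, and $(\mu_2 \otimes \mu_2)\circ\sigma\circ(\id \otimes \delta' \otimes \id)\circ\delta'(e)$. The second vanishes because $\B$ (playing the role of $\B'$) has no differential. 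For the third term, $(\id \otimes \delta' \otimes \id)\circ\delta'(e)$ produces $\sum_{i,j} b_i^* \otimes b_j^* \otimes (b_j)^{op} \otimes (b_i)^{op}$ (suppressing the trivial $\DD$-factor), the sum being over composable generators of $\B$ with $e_L(b_i) = e$; here $\sigma$ acts as the identity, since it would contribute a factor $(-1)^{(\deg_h b_j^*)(\deg_h b_i)}$ and $\deg_h b_i = 0$. As $(b_j)^{op}(b_i)^{op} = (b_ib_j)^{op}$, the third term equals $\sum_{i,j} b_i^* b_j^* \otimes (b_ib_j)^{op}$. Likewise $\left|\id\right|$ acts trivially on $(b_i)^{op}$, so by Definition~\ref{LinQuadDualDef} the first term is $\sum_i \mu_1(b_i^*) \otimes (b_i)^{op} = \sum_i \varphi^*(b_i^*) \otimes (b_i)^{op}$.

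It then remains to prove
\[
\sum_i \varphi^*(b_i^*) \otimes (b_i)^{op} + \sum_{i,j} b_i^* b_j^* \otimes (b_ib_j)^{op} = 0,
\]
and here I would reuse the bookkeeping from the proof of Proposition~\ref{BBbangDD}. Split the double sum along $Q_1 \sqcup Q_2$ as in Remark~\ref{lexorder}. For $b_ib_j \in Q_1$, rewrite $b_i^*b_j^*$ in $\B^!$ using the relations $I^!_{i,j}$ of Remark~\ref{lexorder2}. For $b_ib_j \in Q_2$, rewrite $(b_ib_j)^{op}$ using the linear-quadratic relation
\[
b_ib_j = \sum_{(i',j')<(i,j)} c_{i,j;i',j'}\, b_{i'}b_{j'} - \varphi(I_{i,j})
\]
in $\B$, with $\varphi(I_{i,j}) = \sum_k C_{i,j;k}\, b_k$ as in equation~\ref{cijkcoeffs}. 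Exactly as in Proposition~\ref{BBbangDD}, the $Q_1$ contribution cancels against the $\sum c_{i,j;i',j'}$-part of the $Q_2$ contribution (these are the same sum indexed in opposite orders, via the identity $c^!_{i,j;i',j'} = c_{i',j';i,j}$), leaving $-\sum_{b_ib_j \in Q_2}\sum_k C_{i,j;k}\, b_i^*b_j^* \otimes (b_k)^{op}$. Finally, the computation in the proof of Proposition~\ref{BBbangDD} gives $\varphi^*(b_k^*) = \sum_{b_ib_j \in Q_2} C_{i,j;k}\, b_i^*b_j^*$, so $\sum_k \varphi^*(b_k^*) \otimes (b_k)^{op}$ is precisely the negative of the leftover term, and the two cancel.

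The computation is entirely parallel to Proposition~\ref{BBbangDD}; the only genuinely new points are (i) that the surviving differential term is now $(\mu_1\otimes\left|\id\right|)\circ\delta'$ rather than $(\id\otimes\mu_1)\circ\delta'$, and (ii) that $\sigma$ and $\left|\id\right|$ still act as the identity because every multiplicative generator of $\B$ sits in homological degree $0$, so no homologically-odd element ever appears in a slot where a sign could be produced. Neither is an obstacle, so the essential content is simply verifying that the sign and $Q_1/Q_2$ bookkeeping closes up the same way after the swap — which it does, for the reason just noted.
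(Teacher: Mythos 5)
Your proposal is correct and carries out exactly the argument the paper has in mind when it says the proof is ``similar enough to the proof of Proposition~\ref{BBbangDD} that we will omit it to save space'': the roles of $\B$ and $\B^!$ are swapped, the surviving differential term is now $(\mu_1\otimes\left|\id\right|)\circ\delta'$ rather than $(\id\otimes\mu_1)\circ\delta'$, and the $Q_1/Q_2$ cancellation via the identity $c^!_{i,j;i',j'}=c_{i',j';i,j}$ and the formula $\varphi^*(b_k^*)=\sum_{b_ib_j\in Q_2}C_{i,j;k}\,b_i^*b_j^*$ closes up just as before. You also correctly observe why $\sigma$ and $\left|\id\right|$ still act trivially (every $b_i$ sits in homological degree $0$), which is the one point where the sign bookkeeping could conceivably have gone differently.
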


\begin{proof} The proof is similar enough to the proof of Proposition~\ref{BBbangDD} that we will omit it to save space.
\end{proof}

\begin{definition}\label{KRk1DD} The Type DD bimodules constructed in Proposition~\ref{BBbangDD} and Proposition~\ref{BbangBDD} will be denoted $^{\B}K^{(\B^!)^{op}}$ and $^{\B^!}K^{\B^{op}}$ respectively. 
\end{definition}

\begin{remark}
In Section 8 of \cite{LOTMorphism}, Lipshitz, Ozsv{\'a}th, and Thurston define a notion of Koszul duality in the language of DD bimodules: two algebras $\B$ and $\B'$ are Koszul dual if there exists a rank-one DD bimodule over $\B$ and $\B'$ which is quasi-invertible (and such that the algebra outputs of the DD operation $\delta$ lie in the kernel of the augmentation maps on $\B$ and $\B'$; this technical condition is satisfied for all the bimodules we consider). We will not define the notion of quasi-invertibility precisely here; see \cite{LOTMorphism}, although Lipshitz-Ozsv{\'a}th-Thurston use $\Z/2\Z$ coefficients.
\end{remark}

By Proposition~\ref{BBbangDD}, we get a Type DD bimodule over $H^n$ and $(H^n)^!$; Proposition~\ref{BbangBDD} gives us a Type DD bimodule over $(H^n)^!$ and $H^n$. It would be interesting to know whether these bimodules are quasi-invertible; if they were, then $(H^n)^!$ could be regarded as the Koszul dual of $H^n$ in this generalized sense. 

However, bordered Floer homology has even stronger duality properties: Theorem 13 of \cite{LOTMorphism} asserts that the bordered surface algebra $\mathcal{A}(\mathcal{Z},i)$ is Koszul dual to both $\mathcal{A}(\mathcal{Z},-i)$ and $\mathcal{A}(\mathcal{Z}_*,i)$, where $\mathcal{Z}$ is a pointed matched circle and $\mathcal{Z}_*$ is another pointed matched circle constructed from $\mathcal{Z}$. This situation contrasts with that of $H^n$, where the quadratic dual algebra is infinitely generated and thus much larger than $H^n$ itself. Below, we will see that Roberts' construction is able to avoid this issue.

\section{Khovanov's algebra and Roberts' algebra}\label{RobertsFromKhovanovSection}

In this section we begin to discuss Roberts' bordered theory for Khovanov homology from \cite{RtypeD} and \cite{RtypeA}. Readers unfamiliar with these papers should still be able to follow our constructions; only those parts claiming that our construction agrees with Roberts' will require familiarity with \cite{RtypeD} and \cite{RtypeA}. Roberts' bordered theory uses a differential bigraded algebra which is denoted $\B\Gamma_n$. This algebra is generated by some right-pointing generators $\overrightarrow{e}$ and left-pointing generators $\overleftarrow{e}$, modulo some explicitly given relations. The differential on $\B\Gamma_n$ is zero on all the right-pointing generators $\overrightarrow{e}$.

We start by defining an algebra $\B_R(H^n)$ using the structure of $H^n$, with its additive basis $\beta = \{(W(a)b, \sigma)\}$ and set of multiplicative generators $\beta_{mult} = \{h_{\gamma}, h_{\alpha}\}$. In Proposition~\ref{KhovRobertsAgreeRSAlg}, we show that $\B_R(H^n)$ is isomorphic to the subalgebra $\B_R\Gamma_n$ of $\B\Gamma_n$ generated by the right-pointing elements.

The subalgebra $\B_R\Gamma_n$ is a linear-quadratic algebra. Its quadratic dual, as defined in Section~\ref{QLQDualitySection}, is closely related to the subalgebra $\B_L\Gamma_n$ of $\B\Gamma_n$ generated by the left-pointing elements. More precisely, in Proposition~\ref{KhovRobertsLSQuotient}, we show that $\B_L \Gamma_n$ is a quotient of $m(\B_R(H^n)^!)$, the mirroring of the quadratic dual of $\B_R (H^n) \cong \B_R \Gamma_n$, by a few explicitly given extra relations. 

Finally, in Section~\ref{FullAlgebraSection}, we take a suitably defined product of $\B_R (H_n)$ and $m(\B_R(H^n))^!$, obtaining an algebra whose quotient by the same extra relations as above is $\B\Gamma_n$.

\subsection{Right side of Roberts' algebra}\label{RightSideSection}

As in Section~\ref{KhovTypeDSect}, let $\beta$ denote the $\Z$-basis $\{(W(a)b, \sigma)\}$ of $H^n$. As at the beginning of Section~\ref{HnAsBorderedSection}, let $\I_n$ denote the idempotent ring of $H^n$. The space $\Hom_{\I_n}(H^n,H^n)$ of left $\I_n$-module maps from $H^n$ to itself is a free $\Z$-module. A $\Z$-basis for $\Hom_{\I_n}(H^n,H^n)$ has generators $e(h_1,h_2)$ for each pair $h_1 \in \beta, h_2 \in \beta$ such that $h_1$ and $h_2$ have the same left idempotent. Here, $e(h_1,h_2)$ is the homomorphism that sends $h_1$ to $h_2$ and sends all other basis elements in $\beta$ to zero. 

Note that $\Hom_{\I_n}(H^n,H^n)$ has the structure of a ring, with multiplication given by composition. We will define a grading on $\Hom_{\I_n}(H^n,H^n)$ which differs from the usual one by a factor of $-\frac{1}{2}$:
\begin{definition}\label{WeirdHomSpaceGradingDef}
Let $e(h_1,h_2)$ be a generator of $\Hom_{\I_n}(H^n,H^n)$. The degree of $e(h_1,h_2)$ is defined to be $\frac{1}{2}(\deg h_1 - \deg h_2)$.
\end{definition}

\begin{remark}
This choice of grading has the advantage that it agrees with Roberts' choice, but it can also be justified on its own grounds. The factor of $-1$ comes from the fact that Khovanov, in \cite{KhovFunctor}, replaces the usual $q$-grading by its negative, to make $H^n$ positively rather than negatively graded. We will see below (in the proof of Proposition~\ref{TensorXBoxAgree} in particular) why the factor of $\frac{1}{2}$ is reasonable. Note that while this grading is now a $\frac{1}{2}\Z$-grading rather than a $\Z$-grading, it will always function as an intrinsic grading rather than a homological grading. Thus, it will have no effect on signs, and we are free to use a $\frac{1}{2}\Z$-grading if desired.
\end{remark}

The degree-0 subring of $\Hom_{\I_n}(H^n,H^n)$ is a direct product of copies of $\Z$, one for each basis element $\beta$ of $H^n$. We will denote this idempotent ring by $\I_{\beta}$; we may view $\Hom_{\I_n}(H^n,H^n)$ as an algebra over $\I_{\beta}$. Note that $\I_{\beta}$ is also the idempotent ring of $\B\Gamma_n$, and hence of $\B_R\Gamma_n$ as well.

\begin{definition}
Let $\B_R(H^n)$ denote the smallest subalgebra of $\Hom_{\I_n}(H^n,H^n)$ containing $\I_{\beta}$ and containing every $e(h_1,h_2)$ such that $h_2$ occurs as a nonzero term in the $\beta$-expansion of $h_1 \cdot h$, for some $h$ in the set of multiplicative generators $\beta_{mult}$. 
\end{definition}

The algebra $\B_R(H^n)$ has an intrinsic grading inherited from the grading on $\Hom_{\I_n}(H^n,H^n)$ defined in Definition~\ref{WeirdHomSpaceGradingDef}. As with $\Hom_{\I_n}(H^n,H^n)$, the degree-0 summand of $\B_R(H^n)$ is the idempotent ring $\I_{\beta}$. The multiplicative generators $e(h_1,h_2)$ such that $h_2$ occurs as a nonzero term in the basis expansion of $h_1 \cdot h_{\gamma}$, for some $\gamma$, have degree $-\frac{1}{2}$, since $h_{\gamma}$ has intrinsic degree $1$. Those such that $h_2$ occurs as a nonzero term in the expansion of some $h_1 \cdot h_{\alpha}$ have degree $-1$, since $h_{\alpha}$ has degree $2$.

A natural set of multiplicative generators for $\B_R(H^n)$ is given in its definition, namely the elements $e(h_1,h_2)$ such that $h_2$ occurs as a nonzero term in the $\beta$-expansion of $h_1 \cdot h$, for some $h$ in the set of multiplicative generators $\beta_{mult}$. If $h = h_{\gamma}$, the corresponding element of $\B_R(H^n)$ will be denoted $b_{\gamma;h_1,h_2}$. If $h = h_{\alpha}$, the corresponding element of $\B_R(H^n)$ will be denoted $b_{C;h_1,h_2}$, where if $h_1 = (W(a)b,\sigma)$, then $C$ is the circle in $W(a)b$ containing $\alpha$. Note that for a fixed $h_1$, all arcs $\alpha'$ which lie on the same circle $C$ as $\alpha$ in $W(a)b$ yield the same generator $b_{C;h_1,h_2}$ of $\B_R(H^n)$.

There are no linear relations among the generators $b_{\gamma;h_1,h_2}$ and $b_{C;h_1,h_2}$. The generators are homogeneous with intrinsic degree $-\frac{1}{2}$ or $-1$, so they are in the kernel of the augmentation map on $\B_R(H^n)$ (which is the projection onto the degree-0 summand). The left idempotent of each generator of the form $b_{\gamma;h_1,h_2}$ and $b_{C;h_1,h_2}$ is $h_1$; the right idempotent is $h_2$.

We will show that $\B_R(H^n)$, with this set of generators, is a linear-quadratic algebra. The proof will closely follow that of Proposition~\ref{HnIsLinQuad}.

Let $V$ be the free $\Z$-module spanned by the generators of $\B_R(H^n)$; as discussed above, the idempotent ring $\I_{\beta}$ has left and right actions on $V$. We may write $\B_R(H^n) = T(V) / J$ for some ideal $J$ of $T(V)$. Let $J_2 := J \cap (T^1(V) \oplus T^2(V))$. We identify a set of generators for $J_2$:

\begin{proposition}\label{BrHnRels}
The ideal $J_2$ of linear-quadratic relations of $\B_R(H^n)$ is generated by the following relations:
\begin{enumerate}

\item\label{BrHnRel1}
Suppose $\gamma$ and $\eta$ are two bridges which can be drawn without intersection on the same crossingless matching $b$; let $\eta'$ denote the bridge on $b_{\gamma}$ corresponding to $\eta$, where $b_{\gamma}$ is the crossingless matching resulting from surgery on $\gamma$. Define $\gamma'$ similarly. For any choice of $(a,\sigma)$, let $h_1 = (W(a)b,\sigma) \in \beta$. If the generators $b_{\gamma;h_1,h_2}$ and $b_{\eta';h_2,h_3}$ exist in $\B_R(H^n)$ for some $h_2,h_3$ in $\beta$, we get a relation
\[
b_{\gamma;h_1,h_2} b_{\eta'; h_2,h_3} - b_{\eta;h_1,h'_2} b_{\gamma'; h'_2,h_3}
\]
in $J_2$, where $h'_2$ is any appropriately chosen element of $\beta$.

\item\label{BrHnRel2}
Suppose $\gamma$ is a bridge on a crossingless matching $b$. For any choice of $(a,\sigma)$, let $h_1 = (W(a)b,\sigma) \in \beta$. Let $C$ be any circle in $W(a)b$. Let $C'$ be any circle in $W(a)b_{\gamma}$ which corresponds to $C$ under surgery on $\gamma$; if the endpoints of $\gamma$ do not both lie on $C$, then $C'$ is unique, and otherwise there are two choices for $C'$. If the generators $b_{\gamma;h_1,h_2}$ and $b_{C';h_2,h_3}$ exist in $\B_R(H^n)$ for some $h_2,h_3$ in $\beta$, we get a relation
\[
b_{\gamma;h_1,h_2} b_{C'; h_2,h_3} - b_{C;h_1,h'_2} b_{\gamma; h'_2,h_3}
\]
in $J_2$, where $h'_2$ is the appropriately chosen element of $\beta$; note that $h'_2$ is uniquely determined by $C$ and $h_1$.

\item\label{BrHnRel3}
For any choice of $(a,b,\sigma)$, let $h_1 = (W(a)b,\sigma) \in \beta$. Let $C_1$ and $C_2$ be two circles in $W(a)b$. If the generators $b_{C_1;h_1,h_2}$ and $b_{C_2,h_2,h_3}$ exist in $\B_R(H^n)$ for some $h_2, h_3$ in $\beta$, we get a relation
\[
b_{C_1;h_1,h_2} b_{C_2;h_2,h_3} - b_{C_2;h_1;h'_2} b_{C_1;h'_2,h_3}
\]
in $J_2$, where $h'_2$ is the appropriately chosen element of $\beta$. As above, $h'_2$ is uniquely determined by $C_2$ and $h_1$.

\item\label{BrHnRel4}
Finally, suppose $\gamma$ is any bridge on a crossingless matching $b$. For any choice of $(a,\sigma)$, let $h_1 = (W(a)b,\sigma) \in \beta$. If the generators $b_{\gamma;h_1,h_2}$ and $b_{\gamma^{\dagger};h_2,h_3}$ exist in $\B_R(H^n)$ for some $h_2,h_3$ in $\beta$, then $h_3$ differs from $h_1$ by switching the sign of one circle of $W(a)b$ from plus to minus. Let $C$ denote this circle. We get a relation
\[
b_{\gamma;h_1,h_2} b_{\gamma^{\dagger};h_2,h_3} - b_{C;h_1,h_3}
\]
in $J_2$.

\end{enumerate}

\end{proposition}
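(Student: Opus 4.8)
The plan is to mimic the proof of Proposition~\ref{HnIsLinQuad} step by step, taking advantage of the very explicit description of $\B_R(H^n)$ inside $\Hom_{\I_n}(H^n,H^n)$. As a $\Z$-module, $\B_R(H^n)$ is spanned by the elementary homomorphisms $e(h,h')$ (for $h,h'\in\beta$ with a common left idempotent) that are reachable through some sequence of the chosen generators; these are $\Z$-linearly independent, the generators themselves are distinct such homomorphisms (so there are no linear relations among them), and the product in $\B_R(H^n)$ of any two composable generators equals $e(h_1,h_3)$, where $h_1$ and $h_3$ are the outer idempotents. Hence, writing $\Phi\colon T(V)\to\B_R(H^n)$ for the quotient map and fixing a pair of left/right idempotents $(h,h')$, every word of word-length at most $2$ with those outer idempotents is sent by $\Phi$ to one and the same (nonzero) element $e(h,h')$.

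First I would check the inclusion of each of the four families in $J_2$. Each listed element visibly lies in $T^1(V)\oplus T^2(V)$ with well-defined idempotents, so it is enough to see that $\Phi$ kills it. For families \ref{BrHnRel1}, \ref{BrHnRel2}, and \ref{BrHnRel3} this is immediate from the paragraph above, once one knows that the auxiliary element $h'_2$ exists and that the second monomial is a genuine word in the generators; this is the content of the corresponding computations in $H^n$ (items \ref{TwoBridgeRels}, \ref{BridgeCircleRels}, \ref{TwoCircleRels} of the proof of Proposition~\ref{HnIsLinQuad}: disjoint bridges surgered in either order give the same result, a bridge surgery commutes with a circle sign change, and two circle sign changes commute). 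For family \ref{BrHnRel4} one invokes the relation $h_\gamma h_{\gamma^{\dagger}}=h_{\alpha_1}+h_{\alpha_2}$ in $H^n$ (item \ref{LinQuadRels}) together with the labelling conventions: the word $b_{\gamma;h_1,h_2}\,b_{\gamma^{\dagger};h_2,h_3}$ is defined precisely when $h_3$ is obtained from $h_1$ by flipping the sign of the circle $C$ carrying the endpoint arcs of $\gamma$, and then its image is the same as that of $b_{C;h_1,h_3}$.

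For the reverse inclusion, the grading of Definition~\ref{WeirdHomSpaceGradingDef} makes every generator homogeneous of degree $-\frac12$ (a $b_\gamma$) or $-1$ (a $b_C$), so $T^1(V)\oplus T^2(V)$ is concentrated in degrees $-\frac12,-1,-\frac32,-2$, and I may take $r\in J_2$ homogeneous with fixed outer idempotents. In each graded-and-bi-idempotent piece the source is free on a finite set of words, all mapped by $\Phi$ to the single element $e(h,h')$, so the local intersection with $J_2$ is exactly the augmentation submodule $\{\sum c_m m:\sum c_m = 0\}$, spanned by the differences $m-m'$ of pairs of words. Degree $-\frac12$ involves only $V$, on which $\Phi$ is injective, so that piece is zero; in degrees $-\frac32$ and $-2$ only quadratic words occur (one bridge and one circle, resp.\ two circles), and one checks directly that every such difference is a combination of family \ref{BrHnRel2}, resp.\ family \ref{BrHnRel3}, relations by commuting a circle sign change past a bridge surgery, resp.\ past another circle sign change, exactly as in items \ref{BridgeCircleRels} and \ref{TwoCircleRels} of the proof of Proposition~\ref{HnIsLinQuad}. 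Degree $-1$ is the crux: the source is $V_{-1/2}\otimes_R V_{-1/2}$ together with the $b_C$'s, there is at most one $b_C$ for a given pair of outer idempotents (since $h_3$ determines $C$), so family \ref{BrHnRel4} lets one replace any such $b_C$ by a length-two word in the $b_\gamma$, and the problem reduces to showing that any two length-two words in the bridge generators with the same outer idempotents differ by a combination of family \ref{BrHnRel1} relations (in the ``there-and-back'' case one uses family \ref{BrHnRel4} again, relating both words to a common $b_C$). This is the main obstacle, and it is where the combinatorics of minimal-length paths in the undirected Hasse diagram $G_n$, and hence Lemma~\ref{NonCrossPartLemma}, come in, precisely as in the proof of Proposition~\ref{HnIsLinQuad}; one must also keep track, using the underlying Frobenius algebra $\Z[x]/x^2$, of how the circle sign data $\sigma$ transforms along each single-vertex modification of a path. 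Iterating these reductions carries an arbitrary $r\in J_2$ to a combination of the four families; a further case-by-case inspection of the intrinsic degrees shows, as in the remark after Proposition~\ref{HnIsLinQuad}, that the four families are in fact a minimal generating set, and this whole argument is the word-length-at-most-$2$ part of the proof (Proposition~\ref{BrHnLinQuad}) that $\B_R(H^n)$ is linear-quadratic.
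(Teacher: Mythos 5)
Your proposal follows the same overall route as the paper's proof: decompose $J_2$ into its intrinsic-degree pieces (here $-1$, $-\tfrac32$, $-2$) and identify each piece directly. Where you go further is in making explicit the key mechanism that the paper leaves in the phrase ``analyzing the possible cases'': because every generator $b_{*;h_1,h_2}$ is literally the elementary homomorphism $e(h_1,h_2)\in\Hom_{\I_n}(H^n,H^n)$ and $e(h_1,h_2)e(h_2,h_3)=e(h_1,h_3)$, the quotient map $\Phi$ sends every degree-homogeneous word with fixed outer idempotents to the same nonzero element, so each graded-and-bi-idempotent piece of $J_2$ is exactly the augmentation submodule (coefficients summing to zero) and hence is spanned by differences of pairs of words. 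That clarification is worth keeping; it is what makes the degree-by-degree inspection routine.

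One small correction of emphasis: you describe the degree $-1$ piece as ``the crux'' and say Lemma~\ref{NonCrossPartLemma} comes in ``precisely as in the proof of Proposition~\ref{HnIsLinQuad}.'' In fact the full strength of Lemma~\ref{NonCrossPartLemma} is not needed here, because $J_2$ only sees words of length at most two: for the bridge-bridge words the two underlying right matchings $b$ and $b''$ are either equal (the ``there-and-back'' case, handled by item~\ref{BrHnRel4}) or at distance exactly two in $G_n$, and in the distance-two case the statement that the graph $G_{p,q}$ of geodesics is connected is vacuous (any two length-two geodesics between the same endpoints differ in exactly their middle vertex, so $G_{p,q}$ is complete). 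What you do still need, and should state, is the geometric fact that two such middle vertices are always reached from $b$ by two \emph{non-intersecting} bridges, so that the ``edge'' between them is witnessed by an item~\ref{BrHnRel1} relation (and that the sign data $\sigma$ transforms consistently, as you note). That is the only nontrivial point; it is exactly the observation that the paper compresses into ``analyzing the possible cases,'' and it is quite separate from the connectivity lemma, which is reserved for the word-length-reduction argument in Proposition~\ref{BrHnLinQuad}.
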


\begin{proof}
Since $\B_R(H^n)$ is an intrinsically graded algebra, if we have a relation in $J_2$, then each of its grading-homogeneous parts must also be in $J_2$. Thus, we may analyze $J_2$ one degree at a time. Since the generators of $\B_R(H^n)$ have intrinsic degree $-\frac{1}{2}$ or $-1$, and we are trying to identify the linear-quadratic relations among them, we may assume these relations have intrinsic degree $-1$, $-\frac{3}{2}$, or $-2$. The case of intrinsic degree $-\frac{1}{2}$ is excluded since any such relation would be a linear dependency among the generators of $\B_R(H^n)$.

The relations of intrinsic degree $-1$ may be sums of quadratic monomials in the degree $-\frac{1}{2}$ generators $b_{\gamma;h_1,h_2}$ of $\B_R(H^n)$ and linear monomials in the degree $-1$ generators $b_{C;h_1,h_2}$. Analyzing the possible cases, we get the relations of items \ref{BrHnRel1} and \ref{BrHnRel4} above.

The relations of intrinsic degree $-\frac{3}{2}$ are sums of quadratic monomials, each involving one degree $-\frac{1}{2}$ generator $b_{\gamma;h_1,h_2}$ and one degree $-1$ generator $b_{C;h_1,h_2}$. These relations are generated by the relations of item \ref{BrHnRel2} above.

Finally, the relations of degree $-2$ are sums of quadratic monomials in the degree $-2$ generators $b_{C;h_1,h_2}$. They are generated by the relations of item \ref{BrHnRel3} above.
\end{proof}

\begin{proposition}\label{BrHnLinQuad}
With $J$ and $J_2$ defined as above, we have
\[
J = T(V) \cdot J_2 \cdot T(V).
\]
Thus, $\B_R(H^n)$ is a linear-quadratic algebra.
\end{proposition}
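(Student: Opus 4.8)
Since $J$ automatically contains $T(V)\cdot J_2\cdot T(V)$, it suffices to prove the reverse inclusion, and the argument is a close adaptation of the proof of Proposition~\ref{HnIsLinQuad}. Fix $r\in J$; without loss of generality $r$ has a single left idempotent and a single right idempotent, and write $r=\sum_i n_i m_i$ with each $m_i$ a monomial in the generators $b_{\gamma;h_1,h_2}$ and $b_{C;h_1,h_2}$. The plan is to reduce $r$ to $0$ by repeatedly adding elements of the ideal generated by the relations of Proposition~\ref{BrHnRels}, which by that proposition generate $J_2$.

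\emph{Step 1 (separating bridge factors from circle factors).} Using the relations of item~\ref{BrHnRel2}, rewrite each $m_i$ modulo $T(V)\cdot J_2\cdot T(V)$ as $m_i'=m_{\gamma,i}\cdot m_{\alpha,i}$, a product of $b_{\gamma;\ast,\ast}$-factors followed by a product of $b_{C;\ast,\ast}$-factors. Discarding the sign data $\sigma$, the bridge part $m_{\gamma,i}$ projects to a path in the undirected Hasse diagram $G_n$ of $NC_n$ joining the two fixed vertices $e_L,e_R$ determined by $r$. \emph{Step 2 (straightening the bridge parts).} Exactly as in Proposition~\ref{HnIsLinQuad}: whenever $m_{\gamma,i}$ is not a geodesic of $G_n$, find its first non-geodesic prefix, use the parity property of $G_n$ (adjacent vertices have opposite part-count parity) to see this prefix overshoots by exactly two steps, insert a dual bridge $\gamma^{\dagger}$ via the relations of item~\ref{BrHnRel4} (cancelling a bridge against its dual and producing $b_{C;\ast,\ast}$-factors, which are commuted back to the right by item~\ref{BrHnRel2}), and then apply Lemma~\ref{NonCrossPartLemma} with the relations of item~\ref{BrHnRel1} to pass between the two resulting geodesics with common endpoints. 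Iterating makes every $m_{\gamma,i}$ a geodesic; a further application of Lemma~\ref{NonCrossPartLemma} with item~\ref{BrHnRel1} makes all the $m_{\gamma,i}$ equal to a single monomial $m_\gamma$, so $r\equiv m_\gamma\sum_i n_i m_{\alpha,i}$ modulo $T(V)\cdot J_2\cdot T(V)$.

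\emph{Step 3 (collapsing the circle parts).} Since $r$, hence $r'=m_\gamma\sum_i n_i m_{\alpha,i}$, has a single right idempotent, all the surviving circle parts $m_{\alpha,i}$ flip the same circles and differ only by reordering; the commutation relations of item~\ref{BrHnRel3} then identify them with a single monomial $m_\alpha$ modulo $T(V)\cdot J_2\cdot T(V)$, giving $r\equiv\bigl(\sum_i n_i\bigr)\,m_\gamma m_\alpha$. But $m_\gamma m_\alpha$ is a basis element $e(h_1,h_2)$ of $\Hom_{\I_n}(H^n,H^n)$, which is free over $\Z$, so the hypothesis $r=0$ in $\B_R(H^n)$ forces $\sum_i n_i=0$ and hence $r\in T(V)\cdot J_2\cdot T(V)$. (The degenerate terms that had to be handled separately in Proposition~\ref{HnIsLinQuad} cause no trouble here, since a product of circle generators with consistent idempotents automatically involves distinct circles and is nonzero.)

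The main obstacle is the same as in Proposition~\ref{HnIsLinQuad}, amplified by the finer idempotents of $\B_R(H^n)$: at each manipulation in Steps~1--2 one must verify that the generators being invoked — the auxiliary $h_2'$ appearing in Proposition~\ref{BrHnRels}, the dual-bridge generator $b_{\gamma^{\dagger};\ast,\ast}$, and the circle generators created by item~\ref{BrHnRel4} — genuinely exist in $\B_R(H^n)$, i.e.\ that the relevant Khovanov products are nonzero. This is precisely where the existence hypotheses recorded in Proposition~\ref{BrHnRels} must be used, and it is the one place where the adaptation from the $H^n$ argument is not purely formal.
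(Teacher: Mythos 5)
Your proof is correct and follows essentially the same route as the paper's: separate bridge factors from circle factors via item~\ref{BrHnRel2}, straighten the bridge part to a geodesic using item~\ref{BrHnRel4}, Lemma~\ref{NonCrossPartLemma}, and item~\ref{BrHnRel1}, collapse the circle parts via item~\ref{BrHnRel3}, and conclude $N=0$ from the embedding of $\B_R(H^n)$ into the free $\Z$-module $\Hom_{\I_n}(H^n,H^n)$. Your parenthetical remark that the degenerate terms of the $H^n$ argument do not arise here is a correct observation, consistent with the paper, which simply does not need that final case.
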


\begin{proof}

We want to show that $J \subset T(V) \cdot J_2 \cdot T(V)$. As in Proposition~\ref{HnIsLinQuad} above, it suffices to show that for a general element $r$ of $J$, one may successively add to $r$ elements of the ideal generated by the relation elements listed in items \ref{BrHnRel1} - \ref{BrHnRel4} of Proposition~\ref{BrHnRels}, until one obtains zero.

Let $r$ be an arbitrary element of $J$. We may assume without loss of generality that $r$ has a unique left idempotent and right idempotent. Since $J$ is an ideal of the tensor algebra $T(V)$, $r$ may be written as a linear combination of monomials in the generators $b_{\gamma;h_1,h_2}$ and $b_{C;h_1,h_2}$. Let
\[
r = \sum_i n_i (b_{i,1} \cdots b_{i,l_i}),
\]
where $n_i \in \Z$ and each $b_{i,j}$ is one of the generators $b_{\gamma;h_1,h_2}$ or $b_{C;h_1,h_2}$.

Consider one of the monomial summands $m_i = b_{i,1} \cdots b_{i,l_i}$ of $r$. After adding elements of $T(V) \cdot J_2 \cdot T(V)$ to this monomial, we may assume that all the $b_{\gamma;h_1,h_2}$ generators among the $b_{i,j}$ come before (i.e. with lower $j$ than) the $b_{C;h_1,h_2}$ generators. The necessary relation elements come from item \ref{BrHnRel2} of Proposition~\ref{BrHnRels}. Let $m_i'$ denote the monomial obtained by modifying $m_i$ in this way.

Write $m_i'$ as $m_{\gamma,i} \cdot m_{C,i}$, where $m_{\gamma,i}$ is a product of $b_{\gamma;h_1,h_2}$ generators and $m_{C,i}$ is a product of $b_{C;h_1,h_2}$ generators. Let $h \in \beta$ be the left idempotent of $m_{\gamma,i}$, and let $h'_i \in \beta$ be the right idempotent of $m_{\gamma,i}$. Note that $h$ does not depend on $i$, since $h$ is the left idempotent of our original relation term $r$.

Viewing $h$ and $h'_i$ as elements of $H^n$, let $e \in \I_n$ denote the right idempotent of $h$. Let $e' \in \I_n$ denote the right idempotent of $h'_i$, which does not depend on $i$ since the monomial $m_{C,i}$ is a product of $b_C$ generators. As in Proposition~\ref{HnIsLinQuad}, $e$ and $e'$ are vertices of $G_n$, the undirected Hasse diagram of $NC_n$. To the monomial $m_{\gamma,i}$, we can associate a path $p(m_{\gamma,i})$ from $e$ to $e'$ in $G_n$.

We claim that we may further modify $m_i'$ such that $p(m_{\gamma,i})$ is a minimal-length path between $e$ and $e'$ as vertices of $G_n$. Indeed, suppose $m_{\gamma,i}$ corresponds to a path of non-minimal length between $e$ and $e'$. Write $m_{\gamma,i} = b_{\gamma_1;h_1,h_2} \cdots b_{\gamma_k;h_k,h_{k+1}}$. Then there exists a minimal index $2 \leq j \leq k$ such that $b_{\gamma_1;h_1,h_2} \cdots b_{\gamma_{j-1};h_{j-1},h_j}$ corresponds to a path $\phi$ of minimal length in $G_n$ but $b_{\gamma_1;h_1,h_2} \cdots b_{\gamma_j;h_j,h_{j+1}}$ does not.

Let $e_R(h_j) \in \I_n$ denote the right idempotent of $h_j$. Then $e_R(h_j)$ is a vertex of $G_n$, and the distance in $G_n$ between $e$ and $e_R(h_j)$ is $j-1$. However, the distance between $e$ and $e_R(h_{j+1})$ is $j-2$ rather than $j$; the argument is the same as in the proof of Proposition~\ref{HnIsLinQuad}. Thus, there exists a path $\tilde{\psi}$ in $G_n$, of length $j-2$, from $e$ to $e_R(h_{j+1})$. Appending $e_R(h_j)$ to the end of the path $\tilde{\psi}$, we get a path $\psi$ in $G_n$, of length $j-1$, between $e$ and $e_R(h_j)$. By assumption, $\psi$ is a minimal-length path.

We now have two minimal-length paths $\phi$ and $\psi$ between $e$ and $e_R(h_j)$. The path $\phi$ corresponds to the monomial $b_{\gamma_1;h_1,h_2} \cdots b_{\gamma_{j-1};h_{j-1},h_j}$. The path $\psi$ corresponds to a monomial $b_{\gamma'_1;h_1,h'_2} \cdots b_{\gamma'_{j-2};h'_{j-2},h'_{j-1}} \cdot b_{\gamma_j^{\dagger};h'_{j-1},h_j}$, and we have $e_R(h'_{j-1}) = e_R(h_{j+1})$. 

By Lemma~\ref{NonCrossPartLemma}, we may modify $\phi$ one vertex at a time to obtain $\psi$. Such modifications can be mirrored on the level of monomials by adding relation terms obtained from item \ref{BrHnRel1} of Proposition~\ref{BrHnRels}. Thus, we may modify $m_{\gamma,i}$, which equals 
\[
b_{\gamma_1;h_1,h_2} \cdots b_{\gamma_{j-1};h_{j-1},h_j} \cdot b_{\gamma_j;h_j,h_{j+1}} \cdots b_{\gamma_k;h_k,h_{k+1}},
\]
by adding terms in $T(V) \cdot J_2 \cdot T(V)$ to obtain 
\[
b_{\gamma'_1;h_1,h'_2} \cdots b_{\gamma'_{j-2};h'_{j-2},h'_{j-1}} \cdot b_{\gamma_j^{\dagger};h'_{j-1},h_j} \cdot b_{\gamma_j;h_j,h_{j+1}} \cdots b_{\gamma_k;h_k,h_{k+1}}.
\]
Inside this monomial is $b_{\gamma_j^{\dagger};h'_{j-1},h_j} \cdot b_{\gamma_j;h_j,h_{j+1}}$, which may be replaced with a $b_{C;h'_{j-1},h_{j+1}}$ term using the relation terms in item \ref{BrHnRel4} of Proposition~\ref{BrHnRels}. As before, this $b_C$ term may be commuted to the right side of $m_i'$.

After this modification, we have strictly reduced the length of $m_{\gamma,i}$ in the factorization of $m_i'$ as $m_{\gamma,i} \cdot m_{C,i}$. If the new $m_{\gamma,i}$ still does not represent a minimal-length path in $G_n$, we can repeat the same procedure, and eventually it will terminate.

At this point, we have shown that we can modify our original $r = \sum_i n_i(m_i)$ by adding terms in $T(V) \cdot J_2 \cdot T(V)$, until each $m_i$ is a monomial factorizable as $m_{\gamma,i} \cdot m_{C,i}$ with $m_{\gamma,i}$ representing a minimum-length path in $G_n$. The starting and ending vertices of all these paths are the same. Thus, by Lemma~\ref{NonCrossPartLemma}, we may do further modifications until all of the $m_{\gamma,i}$ are the same monomial $m_{\gamma}$, and we have 
\[
r = m_{\gamma} \sum_i n_i(m_{C,i}) \textrm{ modulo } T(V) \cdot J_2 \cdot T(V).
\]
Since $r$ was assumed to have unique left and right idempotents in $\I_{\beta}$, the set of circles $C$ involved in each term $m_{C,i}$ of the above expression must be the same. Thus, using relations from item \ref{BrHnRel3} of Proposition~\ref{BrHnRels}, we may rewrite each $m_{C,i}$ as the same monomial $m_C$. Then
\[
r = N \cdot m_{\gamma} m_C \textrm{ modulo } T(V) \cdot J_2 \cdot T(V),
\]
where $N = \sum n_i$.

Finally, we use the fact that $r \in J$, or in other words that $r = 0$ as an element of $\B_R(H^n)$. This condition implies that $N \cdot m_{\gamma} m_C$ must also be in $J$, since it differs from $r$ by an element of $T(V) \cdot J_2 \cdot T(V)$ which is contained in $J$.

Note that $\B_R(H^n)$ is a subalgebra of $\Hom_{\I_n}(H^n,H^n)$; the element $m_{\gamma} m_C$ may be identified with the left $R$-linear map from $H^n$ to $H^n$ which sends $e$ to $e'$ and sends all other elements of $\beta$ to zero, where $e$ and $e'$ here are the left and right idempotents of $m_{\gamma} m_C$. If $N \cdot m_{\gamma} m_C$ is zero in $\B_R(H^n)$, then it is zero in $\Hom_{\I_n}(H^n,H^n)$, implying that $N$ must be zero. 

 In other words, starting with $r \in J$ above, we have shown that $r = 0$ modulo $T(V) \cdot J_2 \cdot T(V)$. Hence $J \subset T(V) \cdot J_2 \cdot T(V)$, and so $B_R(H^n)$ is a linear-quadratic algebra.
\end{proof}

Now we can see that $\B_R(H^n)$ is isomorphic to $\B_R \Gamma_n$. First, we define the latter algebra more precisely:
\begin{definition}\label{BrGammanDef} Let $\B_R \Gamma_n$ be the subalgebra of $B\Gamma_n$ spanned over $\I_{\beta}$ by those generators $\overrightarrow{e}$ with right pointing arrows ($B\Gamma_n$ also has some generators $\overleftarrow{e}$ with left pointing arrows). The subalgebra $\B_R \Gamma_n$ has no differential. It inherits a bigrading from $B\Gamma_n$; the homological grading is identically zero on $\B_R \Gamma_n$.

\end{definition}

\begin{proposition}\label{KhovRobertsAgreeRSAlg} $\B_R(H^n) \cong \B_R \Gamma_n$ as bigraded algebras over $\I_{\beta}$.
\end{proposition}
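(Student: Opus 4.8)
The plan is to compare explicit generators-and-relations presentations of the two algebras. By Proposition~\ref{BrHnLinQuad}, $\B_R(H^n)$ is linear-quadratic, so $\B_R(H^n) \cong T(V)/(T(V) \cdot J_2 \cdot T(V))$, and by Proposition~\ref{BrHnRels} the ideal $J_2$ is generated by the relations listed in items \ref{BrHnRel1}--\ref{BrHnRel4} there. Thus $\B_R(H^n)$ is the $\I_\beta$-algebra on the generators $b_{\gamma;h_1,h_2}$ and $b_{C;h_1,h_2}$ modulo exactly the ideal generated by those four families of relations. On the other side, I would recall Roberts' combinatorial definition of $\B\Gamma_n$ from \cite{RtypeD} and \cite{RtypeA} and, specializing Definition~\ref{BrGammanDef}, read off the induced presentation of $\B_R\Gamma_n$: its idempotent ring is $\I_\beta$, its multiplicative generators are the right-pointing arrows $\overrightarrow{e}$, and its relations are those of $\B\Gamma_n$ involving only right-pointing arrows.

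The first step is to set up a bijection between Roberts' right-pointing generators $\overrightarrow{e}$ and the set $\{b_{\gamma;h_1,h_2}\} \cup \{b_{C;h_1,h_2}\}$. Both families are indexed by the same combinatorial data: a decorated crossingless matching $h_1 = (W(a)b,\sigma)$ together with either a bridge $\gamma$ on $b$ or a circle $C$ of $W(a)b$, plus the allowed target $h_2$. I would check that this bijection carries the left and right idempotents of $\overrightarrow{e}$ to those of the corresponding $b_{\gamma}$ or $b_C$, and that it preserves intrinsic degree: the grading convention of Definition~\ref{WeirdHomSpaceGradingDef} was chosen precisely so as to agree with Roberts', so a generator of $b_\gamma$-type and the matching $\overrightarrow{e}$ both lie in degree $-\tfrac12$, while a generator of $b_C$-type and its $\overrightarrow{e}$ both lie in degree $-1$. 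The homological grading is identically zero on both algebras, so there is nothing further to check on that component of the bigrading.

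The second step is to verify that, under this bijection of generators, Roberts' relations among the right-pointing arrows generate the same ideal as the relations of items \ref{BrHnRel1}--\ref{BrHnRel4} of Proposition~\ref{BrHnRels}. Roberts' right-pointing relations fall into the analogous families: commuting two disjoint bridge moves (matching item~\ref{BrHnRel1}), commuting a bridge move past a circle-decoration change (item~\ref{BrHnRel2}), commuting two circle-decoration changes together with the vanishing of a repeated one (item~\ref{BrHnRel3}), and resolving a bridge followed by its dual into a circle-decoration change (item~\ref{BrHnRel4}). I would go through these case by case, paying particular attention to the sign conventions and to Roberts' conventions for naming the intermediate matchings $h_2'$. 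Once the two presentations are identified, the bijection on generators extends to a bigraded $\I_\beta$-algebra isomorphism $\B_R(H^n) \cong \B_R\Gamma_n$.

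The main obstacle is this last verification: it is the one place where the argument genuinely depends on the internal details of \cite{RtypeD} and \cite{RtypeA}, and it amounts to a careful, bookkeeping-heavy translation of Roberts' combinatorial labels and sign conventions into the notation of Section~\ref{RightSideSection}. A secondary point demanding care is completeness of the relation lists on both sides: our side relies on Proposition~\ref{BrHnRels}, and Roberts' side on the corresponding statement in his presentation that the listed relations generate all relations among right-pointing arrows, so that matching the generating relations genuinely yields an isomorphism rather than merely a surjection in one direction.
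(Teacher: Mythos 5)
Your approach matches the paper's: the proof there likewise identifies the generators $b_{\gamma;h_1,h_2}$, $b_{C;h_1,h_2}$ of $\B_R(H^n)$ with Roberts' right-pointing generators $\overrightarrow{e}$, observes that Roberts' right-pointing relations correspond (with gradings preserved) to items \ref{BrHnRel1}--\ref{BrHnRel4} of Proposition~\ref{BrHnRels}, and then invokes Proposition~\ref{BrHnLinQuad} for the complete presentation of $\B_R(H^n)$. The completeness concern you flag at the end --- that the listed right-pointing relations must generate \emph{all} relations inside the subalgebra $\B_R\Gamma_n$ of $\B\Gamma_n$, not merely be a subset of them --- is a genuine subtlety that the paper's terse proof leaves implicit, so noting it is a sensible refinement rather than a deviation.
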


\begin{proof} An examination of the subset of Roberts' algebra relations in \cite{RtypeD} which involve only right-pointing generators shows that they correspond with the relations listed in Proposition~\ref{BrHnRels} under the (bigrading-preserving) identification of the generators $b$ of $\B_R(H^n)$ with the generators $\overrightarrow{e}$ of $\B_R \Gamma_n$. Thus, this proposition follows from Proposition~\ref{BrHnLinQuad}.
\end{proof}

\subsection{Left side of Roberts' algebra} 

\begin{definition}\label{BlGammanDef} Let $\B_L \Gamma_n$ be the subalgebra of $\B\Gamma_n$ spanned over $\I_{\beta}$ by those generators $\overleftarrow{e}$ with left pointing arrows. The bigrading and differential on $\B\Gamma_n$ give us a bigrading and differential on $\B_L \Gamma_n$.

\end{definition}

We will see that $\B_L \Gamma_n$ may be identified, after a mirroring operation defined in Definition~\ref{AlgebraMirroringDef}, with a quotient of the quadratic dual $(\B_R (H^n))^!$ of $\B_R (H^n)$ by a few explicitly given extra relations. 

First, we analyze the dual algebra $(\B_R (H^n))^!$. As an algebra, it is the quadratic dual of $(\B_R (H^n))^{(0)}$. We may write $\B_R (H^n)$ as $T(V) / J$, where if $J_2 := J \cap (T^1(V) \oplus T^2(V))$ then we have $J = T(V) \cdot J_2 \cdot T(V)$. Let $I$ denote the image of $J_2$ under the projection map $T^1(V) \oplus T^2(V) \to T^2(V)$ onto the second summand. Then
\[
(\B_R (H^n))^{(0)} \cong T(V) / I,
\]
and
\[
(\B_R (H^n))^{!} \cong T(V^*) / I^{\perp}.
\]

The ideal $J_2$ is generated explicitly by the relations listed in Proposition~\ref{BrHnRels}. We may discard the linear parts of these relations to get a set of generators for $I$. These generators have a simple form: if $r$ is a generating relation in $I$, then $r$ is either a single quadratic monomial or a difference of two quadratic monomials.

Define a graph $G$ whose vertices are all quadratic monomials appearing with nonzero coefficient in some relation $r \in I$. Two monomials $v$ and $v'$ are connected by an edge in $G$ if $v - v'$ is in $I$. Then, analyzing the relations in Proposition~\ref{BrHnRels}, $G$ is a disjoint union of isolated points, line segments (two points connected by an edge and disconnected from the rest of $G$), triangles (three points, all connected, and disconnected from the rest of $G$), and tetrahedra (four points, all connected, and disconnected from the rest of $G$).

Indeed, the isolated points in $G$ are quadratic monomials of the form 
\[
b_{\gamma;h_1,h_2} b_{\gamma^{\dagger},h_2,h_3}.
\]
Some of the line segments come from relation terms 
\[
b_{\gamma;h_1,h_2} b_{\eta';h_2,h_3} - b_{\eta;h_1,h'_2} b_{\gamma';h'_2,h_3},
\]
where $\gamma$ and $\eta$ are two right bridges which can be drawn on the same crossingless matching without intersection, such that $\eta \in B_d(L,\gamma)$ in the notation of Proposition 11 of Roberts~\cite{RtypeD}. Roberts' $L$ corresponds to our $W(a)b$. 

Other line segments come from the same relation terms when $\eta \in B_o(L,\gamma)$, in every case except when $\gamma$ splits a plus-labeled circle and $\eta'$ joins the two newly formed circles into a new minus-labeled circle. For notations like $\B_o(L,\gamma)$ and $\B_d(L,\gamma)$, see Proposition 11 of Roberts \cite{RtypeD}. 

Finally, line segments also come from terms $b_{\gamma;h_1,h_2} b_{C;h_2,h_3} - b_{C;h_1,h'_2} b_{\gamma;h'_2,h_3}$ when the circle $C$ is disjoint from the support of $\gamma$, and from relation terms $b_{C_1;h_1,h_2} b_{C_2;h_2,h_3} - b_{C_2;h_1;h'_2} b_{C_1;h'_2,h_3}$ where $C_1$ and $C_2$ are two distinct circles labeled $+$ in $h_1$.

Some triangles in $G$ connect triples 
\[
\{b_{\gamma;h_1,h_2} b_{C;h_2,h_3}, b_{C_1;h_1,h'_2} b_{\gamma;h'_2,h_3}, b_{C_2;h_1,h''_2} b_{\gamma;h''_2,h_3} \}
\]
and 
\[
\{b_{C;h_1,h_2} b_{\gamma;h_2,h_3}, b_{\gamma;h_1,h'_2} b_{C_1;h'_2,h_3}, b_{\gamma;h_1,h''_2} b_{C_2;h''_2,h_3} \}
\]
when the circle $C$ is not disjoint from the support of $\gamma$. The rest of the triangles connect triples 
\[
\{b_{\gamma_1;h_1,h_2} b_{*;h_2,h_3}, b_{\gamma_2;h_1,h'_2} b_{*;h'_2,h_3}, b_{\gamma_3;h_1,h''_2} b_{*;h''_2,h_3} \},
\]
when $\gamma_i \in B_s(L,\gamma_j)$ for $1 \leq i,j \leq 3$.

Finally, the tetrahedra in $G$ arise when we have two bridges $\gamma$ and $\eta$, with $\eta \in B_o(L,\gamma)$, such that $\gamma$ splits a plus-labeled circle and $\eta'$ joins the newly-formed circles into a minus-labeled circle. In such cases, we have four quadratic monomials which are all equal modulo the relation terms in $I$. These can be written as $b_{\gamma;h_1,h_2} b_{\eta';h_2,h_3}$, $b_{\gamma;h_1,h'_2} b_{\eta';h'_2,h_3}$, $b_{\eta;h_1,h''_2} b_{\gamma';h''_2,h_3}$, and $b_{\eta;h_1,h'''_2} b_{\gamma';h'''_2,h_3}$. 

Order the set of generators $b_{\gamma;h_1,h_2}$ and $b_{C;h_1,h_2}$ of $\B_R(H^n)$ such that the generators $b_{\gamma;h_1,h_2}$ come before the generators $b_{C;h_1,h_2}$ in the ordering. Using $G$, the generators of $I$ may be summarized as follows: for every connected component of $G$, there exists a minimal vertex $v$. For all other vertices $v'$ in the same component of $v$, there exists a relation $v' - v$ in $I$, and if $v$ is a singleton, then $v$ is also a relation in $I$. These relations are a set of generators for $I$ as in Remark~\ref{lexorder}.

We may use the reasoning of Remark~\ref{lexorder2} to identify a set of generators for $I^{\perp}$. For any quadratic monomial in the generators $b$ which does not appear as a vertex of $G$, the corresponding monomial in the generators $b^*$ is an element of $I^*$. Isolated points of $G$ do not give generators of $I^*$. For every line segment in $G$ between vertices $v$ and $v'$, let $v^*$ and $(v')^*$ denote the corresponding monomials in the generators $b^*$. Then $v^* + (v')^*$ is an element of $I^*$. For every triangle in $G$ with a minimal vertex $v$ and two non-minimal vertices $v'$ and $v''$, let $v^*, (v')^*,$ and $(v'')^*$ denote the corresponding monomials in the generators $b^*$. Then
\[
v^* + (v')^* + (v'')^*
\]
is an element of $I^{\perp}$. Finally, for every tetrahedron in $G$ with a minimal vertex $v$ and three non-minimal vertices $v'$, $v''$, and $v'''$, let $v^*, (v')^*,$, $(v'')^*$, and $(v''')^*$ denote the corresponding monomials in the generators $b^*$. Then
\[
v^* + (v')^* + (v'')^* + (v''')^*
\]
is an element of $I^{\perp}$. The above-listed elements generate $I^{\perp}$.

We may also compute the action of the map $\varphi$ on the generators of $I$, using the relations from \ref{BrHnRel4} of Proposition~\ref{BrHnRels}. For every generator of $I$ of the form $b_{\gamma;h_1,h_2} b_{\gamma^{\dagger};h_2,h_3} - b_{C;h_1,h_3}$, we have
\[
\varphi(b_{\gamma;h_1,h_2} b_{\gamma^{\dagger};h_2,h_3}) = - b_{C;h_1,h_3}.
\]
The map $\varphi$ sends all other generators of $I$ to zero. Thus, dualizing $\varphi$, we have
\[
\varphi^*(b^*_{C;h_1,h_3}) = - \sum_i b^*_{\gamma_i;h_1,h_{2,i}} b^*_{\gamma_i^{\dagger};h_{2,i},h_3},
\]
where the sum runs over all bridges $\gamma_i$ on the right crossingless matching of $h_1$ which have an endpoint on $C$, as well as all compatible $h_{2,i}$.

Finally, $(\B_R(H^n))^!$ is bigraded; the generators $b^*_{\gamma;h_1,h_2}$ have degree $(\frac{1}{2},1)$ since $b_{\gamma;h_1,h_2}$ has degree $(-\frac{1}{2},0)$, and the generators $b^*_{C;h_1,h_2}$ have degree $(1,1)$ since $b_{C;h_1,h_2}$ has degree $(-1,0)$. Here, the first index denotes the intrinsic degree, and the second index denotes the homological degree (this is the reverse of Roberts' convention). The generators of $\B_R(H^n)$ are all placed in homological degree $0$.

\begin{remark}\label{MagicFGRemark} While the quadratic dual of an algebra which is finitely generated over $\Z$ (like $\B_R \Gamma_n$) may in general be infinitely generated over $\Z$, the algebra $(\B_R(H^n))^!$ is finitely generated over $\Z$. In fact, the relations on the algebra are irrelevant for this property: $T(V^*)$ is already finitely generated over $\Z$, since the structure of the idempotents only allows monomials of a certain length in the generators of $V^*$ to be nonzero.
\end{remark}

We can now relate $(\B_R(H^n))^!$ with $\B_L \Gamma_n$. To do this, we need to define a mirroring operation for modules and bimodules over the idempotent ring $\I_{\beta}$ of $(\B_R(H^n))^!$ and $\B_L \Gamma_n$:

\begin{definition}\label{IBetaGeneralMirrorDef}
Let $X$ be any module or bimodule over the idempotent ring $\I_{\beta}$. The mirror of $X$, denoted $m(X)$, is the module or bimodule whose actions of $\I_{\beta}$ are the actions on $X$, precomposed with the map from $\I_{\beta}$ to $\I_{\beta}$ which mirrors each elementary idempotent across the line $\{0\} \times \R$. In other words, for a left action of $\I_{\beta}$ on $X$, suppose $x \in X$ and let $m(x)$ denotes the corresponding element of $m(X)$. Let $h = (W(a)b, \sigma) \in \I_{\beta}$; then
\[
h \cdot m(x) := m(h) \cdot x,
\]
where $m(h)$ is $(W(b)a,m(\sigma))$ and $m(\sigma)$ is the same labeling of circles as $\sigma$, mirrored across $\{0\} \times \R$. Right actions of $\I_{\beta}$ on $m(X)$ works similarly. It is immediate from the definitions that $m(m(X)) = X$.
\end{definition}

We will have definitions related to Definition~\ref{IBetaGeneralMirrorDef} for $\I_{\beta}$-modules and bimodules with more structure. Here, we are concerned with algebras:

\begin{definition}\label{AlgebraMirroringDef} Let $\B$ be a differential bigraded algebra over the idempotent ring $\I_{\beta}$. The mirror of $\B$, denoted $m(\B)$, is the same differential bigraded ring as $\B$. As an algebra, the left and right actions of $\I_{\beta}$ are mirrored as in Definition~\ref{IBetaGeneralMirrorDef}. The map from $\B$ to $m(\B)$ sending $b \in \B$ to $m(b) \in m(\B)$ is an isomorphism of rings (but not of algebras); its inverse is the analogously-defined map from $m(\B)$ to $m(m(\B)) = \B$. To avoid confusion with other uses of the letter $m$, we will refer to both of these mirroring maps as $mirr$. 

\end{definition}

\begin{remark} The mirroring operation for algebras commutes with quadratic duality: if $\B$ is a linear-quadratic algebra over $\I_{\beta}$, then
\[
m(\B^!) = (m(\B))^!.
\]
Thus, we can write either of these algebras as $m(\B)^!$. Mirroring also commutes with taking the opposite algebra: we have $m(\B^{op}) = (m(\B))^{op}$, so we can write either of these algebras as $m(\B)^{op}$.
\end{remark}

\begin{proposition}\label{KhovRobertsLSQuotient} 
$\B_L \Gamma_n$ is isomorphic to the quotient of $m(\B_R(H^n))^!$ by the following extra relations. Let the graph $G$ be defined as above; for each tetrahedron in $G$, the only relation in $m(\B_R(H^n))^!$ involving the vertices of the tetrahedron is that the sum of all its vertices is zero. The algebra $\B_L \Gamma_n$ imposes extra relations among the vertices of each tetrahedron. Recall that tetrahedra in $G$ arise when we have two bridges $\gamma$ and $\eta$, with $\eta \in B_o(L,\gamma)$, such that $\gamma$ splits a plus-labeled circle and $\eta'$ joins the newly-formed circles into a minus-labeled circle. The vertices of the corresponding tetrahedron are, following the discussion above:
\begin{itemize}
\item $a := m(b^*_{\gamma;m(h_1),m(h_2)}) m(b^*_{\eta';m(h_2),m(h_3)})$,
\item $b := m(b^*_{\gamma;m(h_1),m(h'_2)}) m(b^*_{\eta';m(h'_2),m(h_3)})$, 
\item $c := m(b^*_{\eta;m(h_1),m(h''_2)}) m(b^*_{\gamma';m(h''_2),m(h_3)})$, 
\item $d := m(b^*_{\eta;m(h_1),m(h'''_2)}) m(b^*_{\gamma';m(h'''_2),m(h_3)})$.
\end{itemize}
Whereas the algebra $m(\B_R(H^n))^!$ imposes only the relation $a + b + c + d = 0$, the algebra $\B_L \Gamma_n$ imposes the relations
\begin{itemize}
\item $a + c = 0$;
\item $a + d = 0$;
\item $b + c = 0$; and
\item $b + d = 0$ (this relation also follows from the previous three).
\end{itemize}
From these relations, $a + b + c + d = 0$ may be deduced, as well as relations for the two remaining edges of the tetrahedron:
\begin{itemize}
\item $a - b = 0$ and
\item $c - d  = 0$.
\end{itemize}
\end{proposition}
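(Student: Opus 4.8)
The plan is to match the explicit presentation of $m(\B_R(H^n))^!$ worked out above with Roberts' relations in \cite{RtypeD} among the left-pointing generators $\overleftarrow{e}$, and to isolate the tetrahedra of the graph $G$ as the only place where Roberts imposes more.

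First I would set up the correspondence of generators. Both $\B_L\Gamma_n$ and $m(\B_R(H^n))^!$ are algebras over $\I_{\beta}$, and I would identify the images under the mirroring map of Definition~\ref{AlgebraMirroringDef} of the dual generators $b^*_{\gamma;h_1,h_2}$ and $b^*_{C;h_1,h_2}$ of $(\B_R(H^n))^!$ with Roberts' left-pointing generators attached to a left bridge and to a left circle, respectively. One checks this identification is bigrading-preserving: since $b_{\gamma;h_1,h_2}$ has bidegree $(-\tfrac12,0)$ and $b_{C;h_1,h_2}$ has bidegree $(-1,0)$, the dual generators have bidegrees $(\tfrac12,1)$ and $(1,1)$, matching Roberts' gradings for his left-pointing generators (up to his convention of listing the two gradings in the other order). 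Under this identification the idempotent ring of $\B_L\Gamma_n$ is exactly $\I_\beta$, as recorded after Definition~\ref{WeirdHomSpaceGradingDef}.

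Next I would compare relations. The relations of $m(\B_R(H^n))^!$ are the mirrors of the generators of $I^{\perp}$, which were enumerated above in terms of the connected components of $G$: isolated points give no relation, a segment $\{v,v'\}$ gives $v^* + (v')^* = 0$, a triangle $\{v,v',v''\}$ gives $v^*+(v')^*+(v'')^* = 0$, and a tetrahedron $\{v,v',v'',v'''\}$ gives only $v^*+(v')^*+(v'')^*+(v''')^* = 0$. I would then go through Roberts' relations involving only left-pointing generators and check that, for the configurations producing isolated points, segments, and triangles in $G$ (his cases $\eta\in B_d(L,\gamma)$, the generic part of $\eta\in B_o(L,\gamma)$, the circle--circle and circle--bridge commutation relations, and the $B_s$ relations), his relations coincide with ours after mirroring; and that the one place where Roberts imposes more is exactly the tetrahedron configuration $\eta\in B_o(L,\gamma)$ with $\gamma$ splitting a plus-labeled circle and $\eta'$ recombining the two pieces into a minus-labeled circle, where he imposes $a+c=0$, $a+d=0$, $b+c=0$ in place of $a+b+c+d=0$. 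This shows $\B_L\Gamma_n$ is the quotient of $m(\B_R(H^n))^!$ by the ideal generated, for each tetrahedron, by $a+c$, $a+d$, $b+c$ (the relation $a+b+c+d=0$ being a consequence). Finally, for the differential: $\mu_1$ on $(\B_R(H^n))^!$ vanishes on every $b^*_{\gamma;h_1,h_2}$, since $\varphi$ takes values in $b_C$-type generators, and is given on $b^*_{C;h_1,h_3}$ by the formula $\varphi^*(b^*_{C;h_1,h_3}) = -\sum_i b^*_{\gamma_i;h_1,h_{2,i}} b^*_{\gamma_i^{\dagger};h_{2,i},h_3}$ computed above, matching Roberts' differential on left-pointing circle generators after mirroring; and since each extra tetrahedron relation is a $\Z$-combination of products of two $b^*_\gamma$-generators, the Leibniz rule gives $\mu_1$ of each such relation equal to zero, so $\mu_1$ descends. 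This yields the claimed isomorphism of differential bigraded algebras.

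I expect the relation-matching step to be the main obstacle. It requires translating Roberts' notation ($B_o(L,\gamma)$, $B_d(L,\gamma)$, $B_s(L,\gamma)$, and his grading conventions) into the graph-$G$ language used here, tracking the mirroring operation on both generators and idempotents, and verifying that the signs in $I^{\perp}$ --- namely that every segment, triangle, and tetrahedron relation occurs with all coefficients $+1$ --- are consistent with Roberts' signs. Confirming that the tetrahedron is the \emph{only} source of a discrepancy (rather than, say, some triangle relation also being strengthened) requires going through Roberts' full list of left-pointing relations case by case.
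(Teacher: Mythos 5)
Your proposal follows essentially the same approach as the paper: define the natural map from $m(\B_R(H^n))^!$ to $\B_L\Gamma_n$ on generators, check it is a surjective bigrading-preserving algebra map whose kernel is generated by the extra tetrahedron relations by comparing with Roberts' list of left-pointing relations, and match the differentials via the formula for $\varphi^*$. Your additional observation that $\mu_1$ vanishes on $b^*_{\gamma}$-type generators (since $\varphi$ takes values in $b_C$-type generators), and hence vanishes on every tetrahedron relation term, is a useful explicit justification that the differential descends to the quotient --- the paper handles this more briefly by noting both differentials are defined by the same formula on degree-1 generators and extended by Leibniz.
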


\begin{proof} 

Consider the map from $m(\B_R(H^n))^!$ to $\B_L \Gamma_n$ sending  $m(b^*_{\gamma;m(h_1),m(h_2)})$ to $\overleftarrow{e}_{\gamma;h_1,h_2}$, and sending $m(b^*_{C;m(h_1),m(h_2)})$ to $\overleftarrow{e}_{C;h_1,h_2}$. By examining the subset of Roberts' relations from \cite{RtypeD} involving only left-pointing generators, and comparing with the relations for $m(\B_R(H^n))^!$ above, we see that this is a well-defined surjective bigrading-preserving map whose kernel is generated by the extra relations listed in the statement of this proposition. These extra ``anticommutation'' relations can be found in Roberts' algebra as a subset of the relations (20), case (2), on page 17 of \cite{RtypeD}.

After mirroring, the formula above for $\varphi^*$ agrees with Roberts' definition, in Proposition 25 of \cite{RtypeD}, of the differential on $\B \Gamma_n$ (or equivalently on $\B_L \Gamma_n$, since the differential of any right-pointing generator $\overrightarrow{e}$ of $\B \Gamma_n$ is zero). Since both the differential on $m(\B_R(H^n))^!$ and the differential on $\B_L \Gamma_n$ are defined by the same formula on the degree $1$ generators and extended formally to the full algebras by the Leibniz rule, we can conclude that the differential on $\B_L \Gamma_n$ agrees with the differential on $m(\B_R(H^n))^!$ after quotienting the latter algebra by the extra relations.

\end{proof}

\subsection{The full algebra}\label{FullAlgebraSection}
In the following, $\B$ will denote $\B_R(H^n) \cong \B_R \Gamma_n$ unless otherwise specified.

By Proposition~\ref{BBbangDD} and Proposition~\ref{BbangBDD}, we have rank-one Type DD bimodules which we may refer to as ${^{\B}}K^{(\B^!)^{op}}$ and ${^{\B^!}}K^{\B^{op}}$. Like in Definition~\ref{AlgebraMirroringDef} above, we can extend the mirroring operation of Definition~\ref{IBetaGeneralMirrorDef} to these bimodules:

\begin{definition}\label{DDMirroringDef} Let $\B_1$ and $\B_2$ be differential bigraded algebras over the idempotent ring $\I_{\beta}$, and let $(K,\delta)$ be a Type DD bimodule over $\B_1$ and $\B_2$. The mirrored DD bimodule $(m(K),\delta')$ is defined as follows: as an $(\I_{\beta},\I_{\beta})$-bimodule, $m(K)$ is the mirror of $K$ as defined in Definition~\ref{IBetaGeneralMirrorDef}. As in Definition~\ref{AlgebraMirroringDef}, denote the natural map from $K$ to $m(K)$ or $m(K)$ to $K$ by $mirr$. The DD operation on $m(K)$ is 
\[
\delta' = m(K) \xrightarrow{mirr} K \xrightarrow{\delta} \B_1 \otimes K \otimes (\B_2)^{op} \xrightarrow{mirr \otimes mirr \otimes mirr} m(\B_1) \otimes m(K) \otimes m(\B_2)^{op}.
\]
\end{definition}

Applying Definition~\ref{DDMirroringDef} to the bimodules ${^{\B}}K^{(\B^!)^{op}}$ and ${^{\B^!}}K^{\B^{op}}$, we get rank-one DD bimodules which we will denote ${^{m(\B)}}K^{m(\B^!)^{op}}$ and ${^{m(\B)^!}}K^{m(\B)^{op}}$. 

We will focus on the DD bimodules ${^{\B}}K^{(\B^!)^{op}}$ and ${^{m(\B)^!}}K^{m(\B)^{op}}$. Let $\delta_1$ and $\delta_2$ denote the corresponding maps $\delta_1: \I_{\beta} \to \B \otimes_{\I_{\beta}} (\B^!)^{op}$ and $\delta_2: \I_{\beta} \to m(\B)^! \otimes_{\I_{\beta}} m(\B)^{op}$. 

Our goal will be to identify $\B \Gamma_n$ with some type of product algebra $\B \astrosun m(\B)^!$. The set of multiplicative generators of $\B \astrosun m(\B)^!$ should be the union of the generator sets of $\B$ and $m(\B)^!$; there will be inclusion maps from $\B$ and $m(\B)^!$ into $\B \astrosun m(\B)^!$. Similarly, there will be inclusion maps from $m(\B)$ and $\B^! = m(m(\B))^!$ into $m(\B \astrosun m(\B)^!)$, and thus maps from $m(\B)^{op}$ and $(\B^!)^{op}$ into $m(\B \astrosun m(\B)^!)^{op}$.

The algebra $\B \astrosun m(\B)^!$ will be defined such that, when $\delta_1$ and $\delta_2$ are postcomposed with the above inclusion maps, their sum
\[
\delta_1 + \delta_2: \I_{\beta} \to (\B \astrosun m(\B)^!) \otimes_{\I_{\beta}} (m(\B \astrosun m(\B)^!))^{op}
\]
satisfies the Type DD structure relations.

Let $V_{\B}$ (respectively $V_{m(\B)^!})$ denote the free $\Z$-module spanned by the multiplicative generators of $\B$ (respectively $m(\B)^!$). Then $V_{\B}$ and $V_{m(\B)^!}$ have left and right actions of $\I_{\beta}$, and we may write $\B = T(V_{\B}) / J_{\B}$ and $m(\B)^!$ as $T(V_{m(\B)^!}) / J_{m(\B)^!}$. 

Define $V_{full}$, as a bigraded free $\Z$-module, to be $V_{\B} \oplus V_{m(\B)^!}$. The actions of $\I_{\beta}$ on the summands of $V_{full}$ give $V_{full}$ an $\I$-bimodule structure. 

We will define $\B \astrosun m(\B)^!$ to be $T(V_{full}) / J_{full}$, for some ideal $J_{full}$ of $T(V_{full})$. We will define $J_{full}$ with an explicit set of linear-quadratic generators, which will agree with Roberts' relations involving both left-pointing and right-pointing elements of $\B \Gamma_n$.

We can start by analyzing $T^1(V_{full}) \oplus T^2(V_{full})$, which is equal to 
\begin{align*}
&(V_{\B} \oplus V_{m(\B)^!}) \oplus ((V_{\B} \oplus V_{m(\B)^!}) \otimes (V_{\B} \oplus V_{m(\B)^!})) \\
&= V_{\B} \oplus V_{m(\B)^!} \oplus (V_{\B} \otimes V_{\B}) \oplus (V_{\B} \otimes V_{m(\B)^!}) \oplus (V_{m(\B)^!} \otimes V_{\B}) \oplus (V_{m(\B)^!} \otimes V_{m(\B)^!}).
\end{align*}
Thus, $T^1(V_{full}) \oplus T^2(V_{full})$ is the direct sum of $T^1(V_{\B}) \oplus T^2(V_{\B})$, $T^1(V_{m(\B)^!}) \oplus T^2(V_{m(\B)^!})$, and two more summands $V_{\B} \otimes V_{m(\B)^!}$ and $V_{m(\B)^!} \otimes V_{\B}$.

The ideal $J_{full}$ will be generated multiplicatively by $J_{\B} \cap (T^1(V_{\B}) \oplus T^2(V_{\B}))$, $J_{m(\B)^!} \cap (T^1(V_{m(\B)^!}) \oplus T^2(V_{m(\B)^!}))$, and some extra relations $J_{extra} \subset (V_{\B} \otimes V_{m(\B)^!}) \oplus (V_{m(\B)^!} \otimes V_{\B})$.

\begin{definition}\label{JExtraDef}
$J_{extra} \subset (V_{\B} \otimes V_{m(\B)^!}) \oplus (V_{m(\B)^!} \otimes V_{\B})$ is defined additively by the following relations:
\begin{enumerate}

\item\label{JExtraRels1} For two bridge generators $b_{\gamma;h_1,h_2}$ and $m(b^*_{\eta';m(h_2),m(h_3)})$, the commutation relation
\[
b_{\gamma;h_1,h_2} m(b^*_{\eta';m(h_2),m(h_3)}) - m(b^*_{\eta;m(h_1),m(h'_2)}) b_{\gamma';h'_2,h_3}
\]
is in $J_{extra}$ for any valid choice of $h'_2 \in \beta$, following Roberts \cite{RtypeD}. The bridges $\gamma'$ and $\eta$ are uniquely determined.

\item\label{JExtraRels2} Any generator $b_{C;h_1,h_2}$ of $\B = \B_R(H^n)$ for $n > 1$ can be written as a product of bridge generators $b_{\gamma} b_{\gamma^{\dagger}}$. Thus, by the commutation relations above, $b_{C;h_1,h_2}$ must also commute with bridge generators $m(b^*_{\eta;m(h_2),m(h_3)})$: the relation
\[
b_{C;h_1,h_2} m(b^*_{\eta;m(h_2),m(h_3)}) - m(b^*_{\eta;m(h_1),m(h'_2)}) b_{C;h'_2,h_3}
\]
must be in $J_{extra}$ for any valid choice of $h'_2 \in \beta$.

\item\label{JExtraRels3} For a bridge generator $b_{\gamma;h_1,h_2}$ and a decoration generator $m(b^*_{C;m(h_2),m(h_3)})$ in which the circle $C$ are disjoint from the circles involved in surgery on $\gamma$, we also impose commutation relations:
\[
b_{\gamma;h_1,h_2} m(b^*_{C;m(h_2),m(h_3)}) - m(b^*_{C;m(h_1),m(h'_2)}) b_{\gamma;h'_2,h_3}
\]
must be in $J_{extra}$, for the uniquely determined choice of $h'_2 \in \beta$.

\item\label{JExtraRels4} For two disjoint circles $C$ and $C'$, we again have commutation relations:
\[
b_{C;h_1,h_2} m(b^*_{C';m(h_2),m(h_3)}) - m(b^*_{C';m(h_1),m(h'_2)}) b_{C;h'_2,h_3}
\]
must be in $J_{extra}$, for the uniquely determined choice of $h'_2 \in \beta$.

\item\label{JExtraRels5} 
Finally, when $\gamma$ is a bridge and $C$ is one of the circles involved in surgery on $\gamma$:
\begin{align*}
b_{\gamma;h_1,h_2} m(b^*_{C;m(h_2),m(h_3)}) &- m(b^*_{C';m(h_1),m(h'_2)}) b_{\gamma;h'_2,h_3} \\
&- m(b^*_{C'';m(h_1),m(h''_2)}) b_{\gamma;h''_2,h_3}
\end{align*}
is in $J_{extra}$ when $\gamma$ joins $C'$ and $C''$ to form $C$, and 
\begin{align*}
&b_{\gamma;h_1,h'_2} m(b^*_{C';m(h'_2),m(h_3)}) + b_{\gamma;h_1,h''_2} m(b^*_{C'';m(h''_2),m(h_3)}) \\
&- m(b^*_{C;m(h_1),m(h_2)}) b_{\gamma;h_2,h_3}
\end{align*}
is in $J_{extra}$ when $\gamma$ splits $C$ to form $C'$ and $C''$.
\end{enumerate}
\end{definition}

\begin{definition} The ideal $J_{full}$ is defined by
\begin{align*}
J_{full} := T(V_{full}) \cdot &((J_{\B} \cap (T^1(V_{\B}) \oplus T^2(V_{\B}))) \\
&\oplus (J_{m(\B)^!} \cap (T^1(V_{m(\B)^!}) \oplus T^2(V_{m(\B)^!}))) \\
&\oplus J_{extra}) \cdot T(V_{full}).
\end{align*}
The differential bigraded algebra $\B \astrosun m(\B)^!$ is defined by
\[
\B \astrosun m(\B)^! := T(V_{full}) / J_{full},
\]
with a differential induced from the differential on $m(\B)^!$. The differential of any generator of $\B$ is declared to be zero.
\end{definition}

The relations in $J_{extra}$ are modeled on Roberts' relations for $\B \Gamma_n$ in \cite{RtypeD} involving quadratic monomials with one left-pointing and one right-pointing generator.  Thus,

\begin{corollary}\label{FullAlgQuotientCorr} $\B \Gamma_n$, as a differential bigraded algebra, is the quotient of the algebra $\B \astrosun m(\B)^!$ by the same additional relations as specified in Proposition~\ref{KhovRobertsLSQuotient}. These relations involve only quadratic monomials with two generators of $m(\B)^!$.
\end{corollary}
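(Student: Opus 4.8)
The plan is to exhibit the obvious bigrading-preserving surjection from a tensor algebra onto $\B\Gamma_n$, identify its kernel, and check differential compatibility. Roberts \cite{RtypeD} presents $\B\Gamma_n$ as $T(V_{full})/J_{Roberts}$, where $V_{full}=V_{\B}\oplus V_{m(\B)^!}$ is spanned over $\I_{\beta}$ by the right-pointing generators $\overrightarrow{e}$ (identified with the generators $b_{\gamma;h_1,h_2}$, $b_{C;h_1,h_2}$ of $\B$) and the left-pointing generators $\overleftarrow{e}$ (identified with $m(b^*_{\gamma;m(h_1),m(h_2)})$, $m(b^*_{C;m(h_1),m(h_2)})$ of $m(\B)^!$), using the identification of idempotent rings $\I_{\beta}$ from Section~\ref{RightSideSection}, and where $J_{Roberts}$ is the two-sided ideal generated by Roberts' explicit relation list. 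First I would note that every relation in that list has word length at most two, so $J_{Roberts}$ is generated by $J_{Roberts}\cap(T^1(V_{full})\oplus T^2(V_{full}))$; hence it suffices to compare generating relations.

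Next I would partition Roberts' generating relations according to whether the generators they involve are all right-pointing, all left-pointing, or mixed. For the right-pointing-only relations, Proposition~\ref{KhovRobertsAgreeRSAlg} (which rests on Proposition~\ref{BrHnRels}) shows they generate exactly $J_{\B}\cap(T^1(V_{\B})\oplus T^2(V_{\B}))$ inside $T(V_{\B})\subset T(V_{full})$. For the left-pointing-only relations, Proposition~\ref{KhovRobertsLSQuotient} shows they generate exactly $J_{m(\B)^!}\cap(T^1(V_{m(\B)^!})\oplus T^2(V_{m(\B)^!}))$ together with the extra tetrahedron relations of that proposition. For the mixed relations, Definition~\ref{JExtraDef} and the sentence preceding the corollary were set up precisely so that these are the generators of $J_{extra}$ (with the stated signs and the stated choices of intermediate idempotents $h'_2$). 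Summing, the ideal generated by Roberts' relations equals $J_{full}$ together with the ideal generated by the extra tetrahedron relations; therefore $\B\Gamma_n$ is isomorphic, as a bigraded algebra over $\I_{\beta}$, to the quotient of $\B\astrosun m(\B)^!$ by those extra relations, and by inspection the latter involve only quadratic monomials in two generators of $m(\B)^!$.

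Finally I would check that this isomorphism intertwines differentials. On $\B\astrosun m(\B)^!$ the differential is zero on $V_{\B}$, equals the differential $\mu_1$ of $m(\B)^!$ on $V_{m(\B)^!}$, and is extended by the Leibniz rule. Roberts' differential on $\B\Gamma_n$ is zero on the $\overrightarrow{e}$ and, as recorded in the proof of Proposition~\ref{KhovRobertsLSQuotient} (via the formula for $\varphi^*$ after mirroring), is given by the same formula on the $\overleftarrow{e}$, again extended by Leibniz; the extra tetrahedron relations lie in $\B_L\Gamma_n$, where Roberts' relation ideal is already differential-closed. Hence the differentials agree on generators, and since both are Leibniz extensions they agree everywhere, so the isomorphism is one of differential bigraded algebras.

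The hard part is purely bookkeeping: verifying that Roberts' relation list in \cite{RtypeD} contains no relation of word length $\geq 3$ and splits cleanly into exactly the three families above, with matching signs and intermediate idempotents in every case. However, essentially all of this has already been done — in Propositions~\ref{BrHnRels} and \ref{KhovRobertsAgreeRSAlg} for the right-pointing relations, in Proposition~\ref{KhovRobertsLSQuotient} for the left-pointing relations, and in Definition~\ref{JExtraDef} for the mixed relations — so the proof of the corollary amounts to assembling these three inputs together with the differential compatibility just described.
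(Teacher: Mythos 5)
Your proposal is correct and takes essentially the same approach as the paper, which records no separate proof and lets the sentence preceding the corollary — noting that the relations of $J_{extra}$, $J_{\B}$, and $J_{m(\B)^!}$ were deliberately modeled on Roberts' mixed, right-pointing, and left-pointing relations respectively — stand in for the argument. You have simply spelled out the accounting (the three families of word-length-$\leq 2$ relations, the bookkeeping via Propositions~\ref{KhovRobertsAgreeRSAlg} and~\ref{KhovRobertsLSQuotient} and Definition~\ref{JExtraDef}, and the Leibniz comparison of differentials) that the paper leaves implicit.
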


From the definition of the product algebra $\B \astrosun m(\B)^!$, there are natural inclusion maps of $\B$ and $m(\B)^!$ into the product. Also, by Definition~\ref{AlgebraMirroringDef}, we have a mirror algebra $m(\B \astrosun m(\B)^!)$; both $m(\B)$ and $\B^! = m(m(\B)^!)$ have natural inclusion maps into $m(\B \astrosun m(\B)^!)$.

We may view the Type DD map 
\[
\delta_1: \I_{\beta} \to \B \otimes_{\I_{\beta}} (\B^!)^{op} = \B \otimes_{\I_{\beta}} m(m(\B)^!)^{op}
\] 
as a map from $\I_{\beta}$ to $(\B \astrosun m(\B)^!) \otimes_{\I_{\beta}} (m(\B \astrosun m(\B)^!))^{op}$, using the inclusion maps from $\B$ into $\B \astrosun m(\B)^!$ and from $m(m(\B)^!)^{op}$ into $(m(\B \astrosun m(\B)^!))^{op}$. Similarly, we may view
\[
\delta_2: \I_{\beta} \to m(\B)^! \otimes (m(\B))^{op}
\] 
as a map from $\I_{\beta}$ to $(\B \astrosun m(\B)^!) \otimes_{\I_{\beta}} (m(\B \astrosun m(\B)^!))^{op}$ using the inclusion maps from $m(\B)^!$ into $\B \astrosun m(\B)^!$ and from $m(\B)^{op}$ into $(m(\B \astrosun m(\B)^!))^{op}$.

\begin{proposition}\label{FullAlgDDBimodProp} The map $\delta_1 + \delta_2: \I_{\beta} \to (\B \astrosun m(\B)^!) \otimes_{\I_{\beta}} (m(\B \astrosun m(\B)^!))^{op}$ satisfies the Type DD structure relations.
\end{proposition}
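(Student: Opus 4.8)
The plan is to expand the rank-one Type DD structure relation for $\delta := \delta_1 + \delta_2$, observe that the contributions coming purely from $\delta_1$ and purely from $\delta_2$ vanish by Propositions~\ref{BBbangDD} and~\ref{BbangBDD}, and then show that the remaining cross terms cancel precisely because of the relations $J_{extra}$ of Definition~\ref{JExtraDef}.

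First I would write $d^{DD}(-)$ for the left-hand side of the rank-one Type DD relation and $[\alpha,\beta] := (\mu_2 \otimes \mu_2) \circ \sigma \circ (\id \otimes \beta \otimes \id) \circ \alpha$. Additivity of the $\mu_1$-terms and bilinearity of the quadratic term give
\[
d^{DD}(\delta_1 + \delta_2) = d^{DD}(\delta_1) + d^{DD}(\delta_2) + [\delta_1,\delta_2] + [\delta_2,\delta_1],
\]
where the $d^{DD}(\delta_i)$ are computed using the structure maps of $\B \astrosun m(\B)^!$ and $m(\B \astrosun m(\B)^!)$. The inclusions $\B,\, m(\B)^! \hookrightarrow \B \astrosun m(\B)^!$ and $m(\B),\, \B^! = m(m(\B)^!) \hookrightarrow m(\B \astrosun m(\B)^!)$ are ring maps that preserve homological degree (hence commute with $\sigma$) and intertwine the differentials: on $\B$ the differential is zero in both the subalgebra and the product, and the differential of $\B \astrosun m(\B)^!$ restricts to that of $m(\B)^!$, so after mirroring the differential of $m(\B \astrosun m(\B)^!)$ restricts to that of $\B^! = m(m(\B)^!)$. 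Therefore $d^{DD}(\delta_1)$, so computed, is the image under these inclusions of the Type DD relation for ${^{\B}}K^{(\B^!)^{op}}$, which is $0$ by Proposition~\ref{BBbangDD}, and $d^{DD}(\delta_2)$ is the image of the Type DD relation for ${^{m(\B)^!}}K^{m(\B)^{op}}$, which is $0$ by Proposition~\ref{BbangBDD} applied to the linear-quadratic algebra $m(\B)$ (with its inherited intrinsic grading, using $(m(\B))^! = m(\B)^!$). So it remains to show $[\delta_1,\delta_2] + [\delta_2,\delta_1] = 0$.

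Next I would compute the two cross terms explicitly on an elementary idempotent $e \in \I_\beta$. Expanding $(\id \otimes \delta_2 \otimes \id)\circ\delta_1$ and $(\id \otimes \delta_1 \otimes \id)\circ\delta_2$ and using that every generator of $\B$ and of $m(\B)$ has homological degree $0$ while every generator of $m(\B)^!$ and of $\B^!$ has homological degree $1$, the sign map $\sigma$ contributes $-1$ to $[\delta_1,\delta_2]$ and $+1$ to $[\delta_2,\delta_1]$. Abbreviating a generator of $\B$ by $b$, the corresponding generator of $\B^!$ by $b^*$, a generator of $m(\B)^!$ by $c^*$, and the corresponding generator of $m(\B)$ by $c$, one gets
\[
[\delta_1,\delta_2](e) = -\sum_{b,\,c^*} (b\, c^*) \otimes_{\I_\beta} (b^*\, c)^{op}, \qquad [\delta_2,\delta_1](e) = \sum_{c^*,\,b} (c^*\, b) \otimes_{\I_\beta} (c\, b^*)^{op},
\]
the sums running over pairs whose idempotents make the displayed products nonzero. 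Thus the claim reduces to: for each $e$, $\sum_{b,\,c^*} (b\, c^*) \otimes_{\I_\beta} (b^*\, c)^{op} = \sum_{c^*,\,b} (c^*\, b) \otimes_{\I_\beta} (c\, b^*)^{op}$ inside $(\B \astrosun m(\B)^!) \otimes_{\I_\beta} (m(\B \astrosun m(\B)^!))^{op}$.

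Finally I would verify this identity by matching the two sides against the five families of relations in Definition~\ref{JExtraDef}. By construction, in $\B \astrosun m(\B)^!$ each mixed product $b\, c^*$ equals the specified $\Z$-linear combination of mixed products of the form $c'^*\, b'$ -- a single such product in items~\ref{JExtraRels1}--\ref{JExtraRels4}, and a two-term combination in item~\ref{JExtraRels5} (corresponding to a bridge joining or splitting a circle) -- and the mirror images of these relations hold in $m(\B \astrosun m(\B)^!)$ for products of a generator of $m(\B)$ with one of $\B^!$. Substituting these relations into the left-hand side and keeping track of the mirrored idempotent labels, one checks that the terms reorganize exactly into the right-hand side, the multi-term relations of item~\ref{JExtraRels5} being precisely what reconciles the asymmetry between ``$b$ before $c^*$'' and ``$c^*$ before $b$'' when the relevant bridge meets the relevant circle. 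I expect this last step -- a finite case check organized by the geometry of the bridges and circles of $W(a)b$, which is in effect a transcription of the portion of Roberts' relation set in \cite{RtypeD} mixing one left-pointing and one right-pointing generator -- to be the main obstacle, though it uses no ideas beyond those already appearing in Section~\ref{RobertsFromKhovanovSection} and in the proof of Proposition~\ref{BBbangDD}.
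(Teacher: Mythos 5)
Your framework agrees with the paper's: you note that the contributions coming purely from $\delta_1$ and purely from $\delta_2$ vanish by Propositions~\ref{BBbangDD} and~\ref{BbangBDD} (using that the inclusions into $\B \astrosun m(\B)^!$ and its mirror respect the differentials and homological gradings), compute the signs correctly, and reduce to showing that the two sums of cross-terms agree, which you then propose to verify using the relations $J_{extra}$. This is exactly the decomposition the paper uses.

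However, the step you leave as ``one checks that the terms reorganize exactly'' is where the paper's entire argument lives, and your description of it is too optimistic in one respect. You assert that items~\ref{JExtraRels1}--\ref{JExtraRels4} of Definition~\ref{JExtraDef} always give ``a single such product,'' with item~\ref{JExtraRels5} as the only multi-term complication. But item~\ref{JExtraRels1} holds ``for any valid choice of $h'_2 \in \beta$,'' and when a right bridge $\gamma$ splits a plus-labeled circle while the left bridge $\eta'$ rejoins the pieces into a minus-labeled circle, there are two valid choices of $h'_2$. These are precisely the tetrahedra of the graph $G$ from Proposition~\ref{KhovRobertsLSQuotient}, and they produce the set the paper calls $S_1$: a given LHS generator pair has \emph{two} corresponding RHS pairs, and there is also a \emph{second} LHS pair yielding the same products. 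A naive generator-pair bijection fails here; the paper shows all four terms are equal as tensors and cancel in a $(-X)+(-X)+X+X$ pattern. (Also, for item~\ref{JExtraRels5}, matching the two-term decomposition of $b_i c^*_j$ with the mirrored side requires an additional application of the commutation relations from item~\ref{JExtraRels2}, which you do not mention.) Your conclusion is correct because the multiplicities agree on both sides, but establishing that multiplicity agreement requires the $S_1$ bookkeeping, and as written your proposal asserts a uniqueness of decomposition that item~\ref{JExtraRels1} does not provide.
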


\begin{proof} Many of the Type DD structure terms cancel since $\delta_1$ and $\delta_2$ individually satisfy the Type DD relations. In particular, all terms of type $(\mu_1 \otimes \left|\id\right|) \circ \delta$ and $(\id  \otimes \mu_1) \circ \delta$ are accounted for, and we are left with terms of type $(\mu_2 \otimes \mu_2) \circ \sigma \circ (\id \otimes \delta \otimes \id) \circ \delta$. 

Those terms which do not cancel as part of relations for $\delta_1$ or $\delta_2$ are
\begin{equation}\label{FirstTypeDDTerms}
\begin{aligned}
&-b_i \cdot m(b^*_j) \otimes m(b_j)^{op} \cdot (b_i^*)^{op} \\
&= -b_i \cdot m(b^*_j) \otimes m(m(b^*_i) \cdot b_j)^{op},
\end{aligned}
\end{equation}
referred to as terms of type \ref{FirstTypeDDTerms}, as well as 
\begin{equation}\label{SecondTypeDDTerms}
\begin{aligned}
&m(b^*_j) \cdot b_i \otimes (b^*_i)^{op} \cdot m(b_j)^{op} \\
&= m(b^*_j) \cdot b_i \otimes m(b_j \cdot m(b^*_i))^{op},
\end{aligned}
\end{equation}
referred to as terms of type \ref{SecondTypeDDTerms}, where $b_i$ and $b^*_j$ run over all generators of $\B$ and $\B^!$, respectively, with compatible idempotents. Note that the negative signs in the terms of type \ref{FirstTypeDDTerms} come from the sign-flip operator $\sigma$. (The numbering \ref{FirstTypeDDTerms} and \ref{SecondTypeDDTerms} refers to the above equation numbers.)

The commutation relations among the relations defining $\B \astrosun m(\B)^!$ ensure that all the above terms cancel, except for potentially two sets of terms.

The first set $S_1$ of terms includes those terms $-b_i \cdot m(b^*_j) \otimes m(m(b^*_i) \cdot b_j)^{op}$ of Type~\ref{FirstTypeDDTerms} in which both $b_i$ and $b_j$ are among the generators $b_{\gamma}$, and such that the product $b_i \cdot m(b^*_j)$ corresponds to splitting a circle $C$ on the right into two circles $C_1$ and $C_2$ and then joining $C_1$ to $C_2$ again on the left to produce a new circle $C_3$. Similarly, it includes those terms $m(b^*_j) \cdot b_i \otimes m(b_j \cdot m(b^*_i))^{op}$ of Type~\ref{SecondTypeDDTerms} with $b_i$ and $b_j$ among the generators $b_{\gamma}$ such that $m(b^*_j) \cdot b_i$ corresponds to splitting a circle $C$ on the left into two circles $C_1$ and $C_2$ and then joining $C_1$ and $C_2$ again on the right to produce a new circle $C_3$.

The second set $S_2$ consists of those terms of Type \ref{FirstTypeDDTerms} or \ref{SecondTypeDDTerms} in which $b_i$ is one of the generators $b_{\gamma}$ and $b_j$ is one of the generators $b_C$, where $C$ is one of the circles involved in surgery on $\gamma$. 

For all terms except those in $S_1$ and $S_2$, commutation relations may be applied to the Type~\ref{FirstTypeDDTerms} term $-b_i \cdot m(b^*_j) \otimes m(m(b^*_i) \cdot b_j)^{op}$ uniquely to cancel with a unique corresponding Type~\ref{SecondTypeDDTerms} term $m(b^*_{j'}) \cdot b_{i'} \otimes m(b_{j'} \cdot m(b^*_{i'}))^{op}$.

First, we show that the terms in $S_1$ sum to zero. If the Type~\ref{FirstTypeDDTerms} term $-b_i \cdot m(b^*_j) \otimes m(m(b^*_i) \cdot b_j)^{op}$ is in $S_1$, then there are two terms $m(b^*_{j'}) \cdot b_{i'} \otimes m(b_{j'} \cdot m(b^*_{i'}))^{op}$ and $m(b^*_{j''}) \cdot b_{i''} \otimes m(b_{j''} \cdot m(b^*_{i''}))^{op}$ of Type~\ref{SecondTypeDDTerms} in $S_1$ which have the same left and right idempotents as $-b_i \cdot m(b^*_j) \otimes m(m(b^*_i) \cdot b_j)^{op}$. By the commutation relations, both these terms are equal to $b_i \cdot m(b^*_j) \otimes m(m(b^*_i) \cdot b_j)^{op}$.

Furthermore, there is one other Type~\ref{FirstTypeDDTerms} term $-b_{i'''} \cdot m(b^*_{j'''}) \otimes m(m(b^*_{i'''}) \cdot b_{j'''})^{op}$ of $S_1$ which is equal to both $-m(b^*_{j'}) \cdot b_{i'} \otimes m(b_{j'} \cdot m(b^*_{i'}))^{op}$ and $-m(b^*_{j''}) \cdot b_{i''} \otimes m(b_{j''} \cdot m(b^*_{i''}))^{op}$ by the commutation relations. Hence it is equal to $-b_i \cdot m(b^*_j) \otimes m(m(b^*_i) \cdot b_j)^{op}$ as well. These four terms are the only terms in $S_1$ with the same idempotents as $-b_i \cdot m(b^*_j) \otimes m(m(b^*_i) \cdot b_j)^{op}$, and their sum is zero. Thus, the terms in $S_1$ sum to zero.

Now we show that the terms in $S_2$ sum to zero. If $b_i$ is a $b_{\gamma}$ generator and $b_j$ is a $b_C$ generator with $C$ involved in surgery on $\gamma$, then suppose first that $\gamma$ joins two circles $C_1$ and $C_2$ to produce $C$. By item~\ref{JExtraRels5} of Definition~\ref{JExtraDef}, we have $b_i \cdot m(b^*_j) = m(b'^*_j) \cdot b'_i + m(b''^*_j) \cdot b''_i$, where $b'_j$ and $b''_j$ are the generators $b_{C_1}$ and $b_{C_2}$, and $b'_i$ and $b''_i$ are the appropriate $b_{\gamma}$ generators.

Thus, if $-b_i \cdot m(b^*_j) \otimes m(m(b^*_i) \cdot b_j)^{op}$ is the corresponding term of type \ref{FirstTypeDDTerms}, we have
\begin{align*} 
&-b_i \cdot m(b^*_j) \otimes m(m(b^*_i) \cdot b_j)^{op} \\
&= -m(b'^*_j) \cdot b'_i \otimes m(m(b^*_i) \cdot b_j)^{op} - m(b''^*_j) \cdot b''_i \otimes m(m(b^*_i) \cdot b_j)^{op} \\
&= -m(b'^*_j) \cdot b'_i \otimes m(b'_j \cdot m(b'^*_i))^{op} - m(b''^*_j) \cdot b''_i \otimes m(b''_j \cdot m(b''^*_i))^{op}, \\
\end{align*}
where in the last step we use commutation relations from item~\ref{JExtraRels2} of Definition~\ref{JExtraDef}. The two resulting terms cancel the two relevant terms of type \ref{SecondTypeDDTerms}. The case when $\gamma$ splits a circle, rather than joining two circles, is analogous, and so the terms in $S_2$ sum to zero.

Hence all the relation terms cancel, and $(\delta_1 + \delta_2)$ satisfies the Type DD structure relations.
\end{proof}

A general property of Type D structures and Type DD bimodules over an algebra $\B$ is they give induced Type D or DD structures over any quotient of $\B$:
\begin{proposition}\label{DandDDQuotient}
Let $\B$ be a differential bigraded algebra; let $J$ be a bigrading-homogeneous ideal of $\B$ which is preserved by the differential on $\B$. Let $\pi: \B \to \B / J$ denote the quotient projection map. Let $(\D,\delta)$ be a Type D structure over $\B$; then $\D$ descends to a Type $\D$ structure over $\B / J$, with structure operation
\[
\D \xrightarrow{\delta} \B \otimes \D \xrightarrow{\pi \otimes \id} (\B / J) \otimes \D.
\]

Similarly, if $\B'$ and $J'$ are another algebra and ideal satisfying the same conditions as $\B$ and $J$, and $(\DD, \delta)$ is a Type DD bimodule over $\B$ and $\B'$, then $\DD$ descends to a Type DD bimodule over $\B / J$ and $\B' / J'$, with structure operation
\[
\DD \xrightarrow{\delta} \B \otimes \DD \otimes (\B')^{op} \xrightarrow{\pi \otimes \id \otimes (\pi')^{op}} (\B / J) \otimes \DD \otimes (\B' / J')^{op}.
\]
\end{proposition}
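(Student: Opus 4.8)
The plan is to observe that $\pi \colon \B \to \B/J$ is a morphism of differential bigraded algebras and that the descended operations are simply the original ones pushed forward along $\pi$; the structure relations then hold because they are the images under $\pi$ of the structure relations over $\B$.

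First I would record the needed compatibilities. Since $J$ is bigrading-homogeneous and preserved by $\mu_1$, the bigrading, the multiplication, and the differential all descend to $\B/J$, and $\pi$ intertwines them: $\pi \circ \mu_1^{\B} = \mu_1^{\B/J} \circ \pi$ and $\mu_2^{\B/J} \circ (\pi \otimes \pi) = \pi \circ \mu_2^{\B}$. In the cases of interest one has $J \cap \B_{0,0} = 0$, so the idempotent ring $R = (\B/J)_{0,0}$ is unchanged and the elementary idempotents of $\B$ map to those of $\B/J$; in particular the underlying $R$-module $\D$ of a Type D structure, being a direct sum of modules $Re_{i_\alpha}[j_\alpha,k_\alpha]$, is still of the required form over $\B/J$. (If $J \cap \B_{0,0} \neq 0$ the only change is that some of these idempotents, and the corresponding summands of $\D$, become zero.)

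For the Type D statement I would set $\delta' := (\pi \otimes \id) \circ \delta$, note that it is $R$-linear and, since $\pi$ is bigrading-preserving, lands in $((\B/J) \otimes_R \D)[0,-1]$, and then verify the Type D relation by direct substitution: using $\pi \mu_1^{\B} = \mu_1^{\B/J} \pi$ one gets $(\mu_1^{\B/J} \otimes \left|\id\right|) \circ \delta' = (\pi \otimes \id) \circ (\mu_1^{\B} \otimes \left|\id\right|) \circ \delta$, and using $(\id \otimes \delta') \circ \delta' = (\pi \otimes \pi \otimes \id) \circ (\id \otimes \delta) \circ \delta$ together with $\mu_2^{\B/J}(\pi \otimes \pi) = \pi \mu_2^{\B}$ one gets $(\mu_2^{\B/J} \otimes \id) \circ (\id \otimes \delta') \circ \delta' = (\pi \otimes \id) \circ (\mu_2^{\B} \otimes \id) \circ (\id \otimes \delta) \circ \delta$. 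Adding, the left side of the Type D relation for $\delta'$ is $(\pi \otimes \id)$ applied to the left side of the Type D relation for $\delta$, hence zero.

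For the Type DD statement the argument is identical, carried out with $\pi$ on the left tensor factor and $(\pi')^{op}$ on the right factor: set $\delta'' := (\pi \otimes \id \otimes (\pi')^{op}) \circ \delta$ and check, term by term, that each of the three summands in the Type DD relation is carried by $\pi \otimes \id \otimes (\pi')^{op}$ to the corresponding summand for $\delta''$. The one extra point is that the sign flip $\sigma$ is compatible with applying $\pi$ and $\pi'$ to the tensor factors, since $\pi$ and $\pi'$ preserve the homological grading and hence leave the signs in $\sigma$ unchanged. I do not expect a genuine obstacle: the verification is \emph{entirely formal}, and the only places needing a moment's care are this compatibility of $\sigma$ with $\pi$ and the coefficient-ring and idempotent bookkeeping ensuring that $\B/J$ still satisfies Definition~\ref{DgAlgDef} and that $\D$ remains an admissible underlying module.
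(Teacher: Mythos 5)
Your proof is correct and is exactly the argument the paper has in mind: the paper's own "proof" is the single line that this is a simple consequence of the Type D and Type DD structure relations, and your write-up is the careful expansion of that remark, pushing $\delta$ forward along $\pi$ and using that $\pi$ intertwines $\mu_1$ and $\mu_2$ so that the structure relations for $\delta'$ are the image under $\pi \otimes \id$ (resp.\ $\pi \otimes \id \otimes (\pi')^{op}$) of those for $\delta$. The extra observations about the idempotent ring and the compatibility of $\sigma$ with $\pi$ and $\pi'$ are the right things to flag and are handled correctly.
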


\begin{proof} This is a simple consequence of the Type D and Type DD structure relations.
\end{proof}

\begin{corollary} The map $\I_{\beta} \to (\B \Gamma_n) \otimes_{\I_{\beta}} (m(\B \Gamma_n))^{op}$ obtained by postcomposing $\delta_1 + \delta_2$ with the tensor product of the quotient projections onto $\B \Gamma_n$ and $m(\B \Gamma_n)$ satisfies the Type DD structure relations.
\end{corollary}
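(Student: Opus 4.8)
The plan is to obtain this corollary as a direct application of Proposition~\ref{DandDDQuotient}, in its Type DD form, to the rank-one Type DD bimodule over $\B \astrosun m(\B)^!$ and $m(\B \astrosun m(\B)^!)$ produced in Proposition~\ref{FullAlgDDBimodProp} (the one with underlying module $\DD = \I_{\beta}$ and structure map $\delta_1 + \delta_2$). By Corollary~\ref{FullAlgQuotientCorr}, $\B \Gamma_n$ is the quotient of $\B \astrosun m(\B)^!$ by the two-sided ideal $J$ generated by the ``anticommutation'' relations $a+c$, $a+d$, $b+c$ attached to each tetrahedron of the graph $G$ in Proposition~\ref{KhovRobertsLSQuotient}; applying the mirroring isomorphism $mirr$ of Definition~\ref{AlgebraMirroringDef}, $m(\B \Gamma_n)$ is the quotient of $m(\B \astrosun m(\B)^!)$ by the corresponding ideal $m(J)$. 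Thus all that remains is to verify the two hypotheses of Proposition~\ref{DandDDQuotient} for $J$ and $m(J)$: that they are bigrading-homogeneous and that they are preserved by the differential.

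Homogeneity is immediate. Each vertex $a,b,c,d$ of a tetrahedron in $G$ is a product of two bridge-type generators of $m(\B)^!$ --- each of $m(b^*_{\gamma;\cdot,\cdot})$, $m(b^*_{\eta';\cdot,\cdot})$, $m(b^*_{\eta;\cdot,\cdot})$, $m(b^*_{\gamma';\cdot,\cdot})$ has bidegree $(\tfrac{1}{2},1)$ --- so each vertex has bidegree $(1,2)$, and hence every generating relation $a+c$, $a+d$, $b+c$ of $J$ is homogeneous. Mirroring does not change bidegrees, so $m(J)$ is homogeneous too.

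Differential-invariance is where the structure of the relations matters, but it is again short. The differential on $\B \astrosun m(\B)^!$ is induced from the differential $\mu_1$ on $m(\B)^!$, which vanishes on every bridge-type generator $m(b^*_{\gamma;\cdot,\cdot})$: indeed $\mu_1$ on such a generator is computed from $\varphi^*$, which is zero on bridge-type elements of $V^*$ (by the computation of $\varphi$ preceding Proposition~\ref{KhovRobertsLSQuotient}, the dual map $\varphi^*$ is supported on the decoration-type generators $m(b^*_{C;\cdot,\cdot})$). Since each tetrahedron vertex is a product of two bridge-type generators, the Leibniz rule gives $\mu_1(a)=\mu_1(b)=\mu_1(c)=\mu_1(d)=0$, so $\mu_1$ kills every generator of $J$; for a general element $\sum_k x_k r_k y_k \in J$ with $r_k$ a generating relation, the Leibniz rule then gives $\mu_1\big(\sum_k x_k r_k y_k\big) = \sum_k\big( \pm\,\mu_1(x_k)\,r_k\,y_k \pm x_k\,r_k\,\mu_1(y_k)\big) \in J$, so $\mu_1(J)\subseteq J$. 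The same argument applies verbatim to $m(J)$ inside $m(\B \astrosun m(\B)^!)$, using that $mirr$ intertwines the differentials.

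With both hypotheses in hand, Proposition~\ref{DandDDQuotient} (the Type DD case, taking $\B := \B \astrosun m(\B)^!$, the ideal $J$ above, $\B' := m(\B \astrosun m(\B)^!)$, and $J' := m(J)$) yields that $\delta_1 + \delta_2$ composed with $\pi \otimes \id \otimes (\pi')^{op}$ satisfies the Type DD structure relations; identifying $\B \otimes_{\I_{\beta}} \I_{\beta} \otimes_{\I_{\beta}} (\B')^{op}$ with $\B \otimes_{\I_{\beta}} (\B')^{op}$ as in the rank-one discussion, this composite is exactly the asserted map $\I_{\beta} \to (\B \Gamma_n) \otimes_{\I_{\beta}} (m(\B \Gamma_n))^{op}$. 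The only step requiring genuine thought is the differential-invariance of $J$ and $m(J)$, for which it is essential that the extra tetrahedron relations involve only bridge generators (on which the differential vanishes); the rest is bookkeeping.
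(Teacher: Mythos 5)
Your proposal is correct and takes exactly the route the paper intends: the corollary is stated immediately after Proposition~\ref{DandDDQuotient} and is meant to be a direct application of the Type DD case of that proposition to the bimodule of Proposition~\ref{FullAlgDDBimodProp}, with $J$ the ideal of extra relations from Corollary~\ref{FullAlgQuotientCorr} and $m(J)$ its mirror. You go slightly beyond the paper, which leaves the hypotheses of Proposition~\ref{DandDDQuotient} unverified, by checking explicitly that $J$ is bigrading-homogeneous (all tetrahedron vertices have bidegree $(1,2)$) and is preserved by the differential (each vertex is a product of two bridge-type generators, on which $\mu_1$ vanishes because $\varphi^*$ is supported on the decoration generators $m(b^*_{C;\cdot,\cdot})$); both checks are correct and are exactly the details a careful reader would want filled in.
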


Thus, we have rank-one Type DD bimodules 
\[
{^{\B \astrosun m(\B)^!}}K^{m(\B \astrosun m(\B)^!)^{op}}
\]
and 
\[
{^{\B \Gamma_n}}K^{m(\B \Gamma_n)^{op}}.
\]

\begin{conjecture}\label{BiggerAlgConjecture} Either or both of the DD bimodules ${^{\B \astrosun m(\B)^!}}K^{m(\B \astrosun m(\B)^!)^{op}}$ and ${^{\B \Gamma_n}}K^{m(\B \Gamma_n)^{op}}$ are quasi-invertible. Hence, either or both of the algebras $\B \astrosun m(\B)^!$ and $\B \Gamma_n$ are Koszul dual to their mirrors, $m(\B \astrosun m(\B)^!)$ and $m(\B \Gamma_n)$, in the generalized sense of \cite{LOTMorphism}.
\end{conjecture}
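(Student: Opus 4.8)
The plan is to reduce Conjecture~\ref{BiggerAlgConjecture} to a Koszulness statement and then feed it into the general linear-quadratic duality machinery of Polishchuk--Positselski \cite{PP}. Write $\Lambda := \B \astrosun m(\B)^!$. Both $\Lambda$ and its quotient $\B\Gamma_n$ are finitely generated over $\Z$ (for $\Lambda$, the idempotents of $\I_{\beta}$ bound the length of nonzero monomials, exactly as in Remark~\ref{MagicFGRemark}), and the rank-one DD bimodule ${}^{\Lambda}K^{m(\Lambda)^{op}}$ with operation $\delta_1+\delta_2$ is the Koszul-type DD bimodule attached by Proposition~\ref{BBbangDD} to a linear-quadratic algebra built from the relation data recorded by the map $\varphi$. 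The governing principle is that such a DD bimodule is quasi-invertible precisely when the underlying linear-quadratic algebra is Koszul in the sense of \cite{PP}, i.e.\ when its quadratic part is a Koszul quadratic algebra (the linear part is automatically compatible, since $\varphi$ is well defined). Thus it suffices to prove that $\Lambda$, respectively $\B\Gamma_n$, is a Koszul linear-quadratic algebra.

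The concrete steps are then as follows. \emph{(1)} Verify that $\Lambda$ is linear-quadratic with the generators of Definition~\ref{JExtraDef}: the relations in $J_{extra}$ are quadratic, those inherited from $\B$ and $m(\B)^!$ are linear-quadratic, and the argument that no further relations are needed is parallel to the proof of Proposition~\ref{BrHnLinQuad}, again organized around paths in the undirected Hasse diagram $G_n$ of $NC_n$ and using Lemma~\ref{NonCrossPartLemma}. \emph{(2)} Form the quadratic part $\Lambda^{(0)}$ and its quadratic dual. \emph{(3)} Produce a candidate quasi-inverse: the bar/cobar construction of \cite{PP} furnishes a Type AA bimodule ${}_{\Lambda}L_{m(\Lambda)}$ together with candidate homotopy equivalences $K \boxtimes L \simeq$ (the identity DA bimodule over $\Lambda$) and $L \boxtimes K \simeq$ (the identity bimodule over $m(\Lambda)$), and unwinding these box tensor products idempotent by idempotent shows that they hold if and only if the Koszul complex of $\Lambda$ (and that of $m(\Lambda)$) is a resolution of $\I_{\beta}$ over $\I_{\beta}$ --- that is, if and only if $\Lambda^{(0)}$ is Koszul. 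The same outline applies to $\B\Gamma_n$ after replacing $m(\B)^!$ by its quotient from Proposition~\ref{KhovRobertsLSQuotient}; one might hope to deduce the $\B\Gamma_n$ case from the $\Lambda$ case via Proposition~\ref{DandDDQuotient}, but quasi-invertibility is not automatically inherited by quotients, so the two statements should be treated in parallel rather than one reduced to the other.

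The main obstacle is the input to step \emph{(3)}: proving that $\Lambda^{(0)}$ (equivalently $(\B\Gamma_n)^{(0)}$) is Koszul as a quadratic algebra over $\I_{\beta}$. Two routes seem worth pursuing. The first is to exhibit an explicit Poincar\'e--Birkhoff--Witt, or Gr\"obner, basis for the presentation: every defining relation (those of $\B$, those of $m(\B)^!$, and the commutation relations of Definition~\ref{JExtraDef}) has the shape ``monomial'' or ``monomial $\pm$ monomial'', so together they form a rewriting system, and one would check confluence of the overlaps via the diamond lemma, with the noncrossing-partition combinatorics of Lemma~\ref{NonCrossPartLemma} reappearing to resolve the overlaps among the $\B$-relations; a PBW presentation implies Koszulness. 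The second route is to bootstrap from Brundan--Stroppel's theorem that $H^n$ is Koszul, transporting that structure through the constructions of Sections~\ref{RightSideSection}--\ref{FullAlgebraSection}. Either route carries a genuine difficulty, and this is why the statement is only a conjecture: Koszulness of the underlying graded algebra is \emph{not} by itself sufficient, since $H^n$ is Koszul yet its own DD bimodule fails to be quasi-invertible (Section~\ref{DualOfHnSect}); the finiteness phenomenon of Remark~\ref{MagicFGRemark} must genuinely be used to see that the Koszul complex over $\Lambda$ is finite in each homological degree and actually resolves $\I_{\beta}$. Moreover, because $m(\B)^!$ carries a nonzero differential $\mu_1$, one is working in the \emph{differential} Koszul duality setting of \cite{PP}, so the homotopies of step \emph{(3)} must be made compatible with $\mu_1$, and tracking the signs in the $\boxtimes$-composites under our conventions is where most of the remaining work would lie.
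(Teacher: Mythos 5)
The statement you are asked to prove is stated in the paper only as a \emph{conjecture}: the text gives no proof, and explicitly frames it as an open question (``A proof of the above conjecture would provide a nice parallel between Roberts' theory and bordered Floer homology''). Your proposal correctly recognizes this and does not claim to close the argument; as a roadmap it is consistent with the paper's posture.

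That said, there is an internal inconsistency in your sketch that is worth flagging. You state as a ``governing principle'' that the DD bimodule of Proposition~\ref{BBbangDD} is quasi-invertible precisely when the underlying linear-quadratic algebra is Koszul in the sense of \cite{PP}, and you use this to reduce the conjecture to showing that $\Lambda^{(0)}$ is Koszul. But a few lines later you assert that ``$H^n$ is Koszul yet its own DD bimodule fails to be quasi-invertible,'' which directly contradicts the governing principle (and is also not what the paper says --- Section~\ref{DualOfHnSect} only observes that $(H^n)^!$ is infinite-dimensional over $\Z$, and the paper explicitly leaves open whether the $H^n$--$(H^n)^!$ DD bimodule is quasi-invertible). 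You cannot have it both ways: either the equivalence with PP-Koszulness holds, in which case the $H^n$ case is not a counterexample and Koszulness of $\Lambda^{(0)}$ would suffice, or the equivalence needs to be replaced by something more refined (perhaps involving the finiteness phenomenon of Remark~\ref{MagicFGRemark}, or a finite-dimensionality hypothesis on the bar resolution), in which case step \emph{(3)} of your outline is not actually reduced to a Koszulness statement. Resolving this tension --- i.e.\ formulating precisely which duality property of a linear-quadratic algebra over $\Z$ is equivalent to quasi-invertibility of the rank-one DD bimodule in the bordered sense --- is itself an unproved and nontrivial step, and it is arguably the main gap that keeps this a conjecture rather than a theorem.
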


A proof of the above conjecture would provide a nice parallel between Roberts' theory and bordered Floer homology. In bordered Floer homology, the rank-one DD bimodule corresponding to the identity cobordism of a parametrized surface has a quasi-inverse, namely the Type AA bimodule associated to this cobordism.

\section{Khovanov's modules and Roberts' modules}\label{ModuleSection}

In this section, we relate Roberts' Type D and Type A structures from \cite{RtypeD} and \cite{RtypeA} to the Type D and Type A structures over $H^n$ from Section~\ref{HnAsBorderedSection}, or equivalently to Khovanov's $[T]^{Kh}$ which contains the same information. In Section~\ref{RobertsTypeAStrSect}, we show that given a chain complex of projective graded right $H^n$-modules satisfying a certain algebraic condition, we may construct a differential bigraded right module over $\B \astrosun m(\B)^!$. Applied to Khovanov's tangle complex $[T]^{Kh}$, this module descends to a module over $\B \Gamma_n$; in other words, the relations of Proposition~\ref{KhovRobertsLSQuotient} act as zero on the $\B \astrosun m(\B)^!$-module. The resulting $\B \Gamma_n$-module agrees with Roberts' Type A structure. 

In Section~\ref{RobertsTypeDStrSect}, given a chain complex of projective graded left $H^n$-modules satisfying the same algebraic condition as in Section~\ref{RobertsTypeAStrSect},  we construct a Type D structure over $\B \astrosun m(\B)^!$. We do this by mirroring the chain complex of left $H^n$-modules to get a complex of right modules, taking the associated Type A structure over $\B \astrosun m(\B)^!$, tensoring with the DD bimodule ${^{\B \astrosun m(\B)^!}}K^{m(\B \astrosun m(\B)^!)^{op}}$ from the end of Section~\ref{FullAlgebraSection} to get a Type D structure over $m(\B \astrosun m(\B)^!)$, and finally mirroring this Type D structure again to get a Type D structure over $\B \astrosun m(\B)^!$. We may quotient the algebra outputs of this Type D structure by the relations from Proposition~\ref{KhovRobertsLSQuotient} to get a Type D structure over $\B \Gamma_n$, which agrees with the one constructed by Roberts when one starts with the complex $[T]^{Kh}$.

Given two chain complexes, one of projective graded left $H^n$-modules and one of projective graded right $H^n$-modules, their tensor product over $H^n$ is a chain complex with an additional grading, or equivalently a differential bigraded $\Z$-module. In Section~\ref{RobertsPairingSect}, we show that this tensor product agrees with the box tensor product of the Type D and Type A structures over $\B \astrosun m(\B)^!$ associated to the two complexes in Sections~\ref{RobertsTypeDStrSect} and \ref{RobertsTypeAStrSect}. If the Type A structure is an ordinary right $\B \astrosun m(\B)^!$-module which descends to a $\B \Gamma_n$-module, then the box tensor products over $\B \astrosun m(\B)^!$ and $\B \Gamma_n$ agree. Applying these constructions to Khovanov's chain complexes $[T]^{Kh}$ of $H^n$-modules, we get an alternate proof that the pairing of Roberts' Type D and Type A structures recovers the original Khovanov complex.

In Section~\ref{TypeAEquivSect}, we show that chain homotopy equivalences of complexes of $H^n$-modules give $\mathcal{A}_{\infty}$ homotopy equivalences of the corresponding Type A structures over $\B \astrosun m(\B)^!$. Reidemeister moves on tangle diagrams yield chain homotopy equivalences of complexes of $H^n$-modules, as shown by Khovanov in \cite{KhovFunctor}. The $\mathcal{A}_{\infty}$ homotopy equivalences associated to these Reidemeister-move homotopy equivalences descend to $\mathcal{A}_{\infty}$ homotopy equivalences of Type A structures over $\B \Gamma_n$. This reasoning yields an alternate proof that Roberts' Type A structures are tangle invariants up to $\A_{\infty}$ homotopy equivalence. 

In Section~\ref{TypeDEquivSect}, we do the same for the Type D structures over $\B \astrosun m(\B)^!$. All homotopy equivalences of Type D structures over $\B \astrosun m(\B)^!$ descend to homotopy equivalences of Type D structures over the quotient $\B \Gamma_n$. Thus, we obtain an alternate proof that Roberts' Type D structures are tangle invariants up to homotopy equivalence.

\subsection{Preliminaries}\label{ModulePrelims}
Let $M$ be a differential bigraded projective right $H^n$-module, or equivalently a chain complex of projective graded $H^n$-modules by Proposition~\ref{DgModIsChainCx} or a right Type D structure over $H^n$ by the appropriate analogue of Proposition~\ref{TypeDIsChainCx}. Recall that in accordance with Convention~\ref{FGConvention}, such an $M$ is assumed to be finitely generated over $\Z$. Let $\{x_i: i \in S\}$ be bigrading-homogeneous elements of $M$, where $S$ is some finite index set, such that 
\[
M \cong \oplus_i x_i H^n
\]
as right $H^n$-modules, and each summand $x_i H^n$ is isomorphic to $e H^n$ for some elementary idempotent $e$ of $H^n$ via an isomorphism sending $x_i$ to $e$. The idempotent $e$ associated to $x_i$ will be denoted $e(x_i)$. 

We will use notation from Section~\ref{KhovTypeDSect}: $\beta$ will denote the usual $\Z$-basis of $H^n$, and $\beta_{mult}$ will denote the subset of $\beta$ consisting of the multiplicative generators $h_{\gamma}$ and $h_{\alpha}$ of $H^n$. We will further subdivide $\beta_{mult}$ into $\beta_{\gamma}$, consisting of $h_{\gamma}$ generators, and $\beta_{\alpha}$, consisting of $h_{\alpha}$ generators.

The module $M$ has a $\Z$-basis given by
\[
\{ x_i \cdot h: i \in S, h \in \beta, e_L(h) = e(x_i) \}.
\]
We define integer coefficients $c_{i,j}$ and $\tilde{c}_{i,j;h'}$ for $i,j \in S$ and $h' \in \beta$ by expanding out the differential of each $x_i$:
\begin{align*}
d(x_i) &= \sum_{j \in S, \deg x_j = \deg x_i + (0,1)} c_{i,j} x_j \\
&+ \sum_{j \in S, h' \in \beta, \deg h' \neq 0, \deg x_j + \deg h' = \deg x_i + (0,1)}  \tilde{c}_{i,j;h'} x_j \cdot h'.
\end{align*}

The algebraic condition we impose on $M$ is the following:
\begin{definition}\label{CModuleAlgCondition}
$M$ satisfies the algebraic condition $C_{module}$ if $\tilde{c}_{i,j;h'} = 0$ unless $h' \in \beta_{mult}$. 
\end{definition}
This condition is satisfied for Khovanov's tangle complexes $[T]^{Kh}$. For any $M$ satisfying $C_{module}$, we can write the above sum as
\begin{align*}
d(x_i) &= \sum_{j \in S, \deg x_j = \deg x_i + (0,1)} c_{i,j} x_j \\
&+ \sum_{j \in S, h' \in \beta_{\gamma}, \deg x_j = \deg x_i + (-1,1)}  \tilde{c}_{i,j;h'} x_j \cdot h' \\ 
&+ \sum_{j \in S, h' \in \beta_{\alpha}, \deg x_j = \deg x_i + (-2,1)}  \tilde{c}_{i,j;h'} x_j \cdot h'.
\end{align*}
This is an expansion of $d(x_i)$ in the $\Z$-basis of $M$.

Thus, if $x_i \cdot h$ is a basis element of $M$, we have
\begin{align*}
d(x_i \cdot h) &= \sum_{j \in S, \deg x_j = \deg x_i + (0,1)} c_{i,j} x_j \cdot h \\
&+ \sum_{j \in S, h' \in \beta_{\gamma}, \deg x_j = \deg x_i + (-1,1)}  \tilde{c}_{i,j;h'} x_j \cdot h'h \\ 
&+ \sum_{j \in S, h' \in \beta_{\alpha}, \deg x_j = \deg x_i + (-2,1)}  \tilde{c}_{i,j;h'} x_j \cdot h'h.
\end{align*}
However, this is not necessarily a basis expansion of $d(x_i \cdot h)$, because the elements $h'h \in H^n$ are not necessarily elements of the basis $\beta$. Instead, we may define integer coefficients $\tilde{\tilde{c}}$ by
\[
h'h = \sum_{h'' \in \beta} \tilde{\tilde{c}}_{h'h;h''} h'',
\]
and thus
\begin{align*}
d(x_i \cdot h) &= \sum_{j \in S, \deg x_j = \deg x_i + (0,1)} c_{i,j} x_j \cdot h \\
&+ \sum_{j \in S, h' \in \beta_{\gamma}, h'' \in \beta, \deg x_j = \deg x_i + (-1,1)}  \tilde{c}_{i,j;h'} \tilde{\tilde{c}}_{h'h;h''} x_j \cdot h'' \\ 
&+ \sum_{j \in S, h' \in \beta_{\alpha}, h'' \in \beta, \deg x_j = \deg x_i + (-2,1)}  \tilde{c}_{i,j;h'} \tilde{\tilde{c}}_{h'h;h''} x_j \cdot h''.
\end{align*}
This is a basis expansion of $d(x_i \cdot h)$ in the $\Z$-basis of $M$.

\begin{proposition}\label{HnProjModStructure} The equation $d^2 = 0$ on $M$ gives rise to the following five sets of equations involving the coefficients $c_{i,j}$, $\tilde{c}_{i,j,h'}$, and $\tilde{\tilde{c}}_{h'h;h''}$:
\begin{enumerate}
\item\label{Deg0CEqns} For all $x_i$ and $x_k$ with $\deg x_k = \deg x_i + (0,2)$, we have
\[
\sum_j c_{i,j} c_{j,k} = 0.
\]

\item\label{Deg1CEqns} For all $x_i \cdot h$ and $x_k \cdot h''$ with $\deg x_k = \deg x_i + (-1,2)$, we have
\[
\sum_{j,h' \in \beta_{\gamma}} (\tilde{c}_{i,j;h'} \tilde{\tilde{c}}_{h'h;h''} c_{j,k} + c_{i,j} \tilde{c}_{j,k;h'} \tilde{\tilde{c}}_{h'h;h''}) = 0.
\]

\item\label{Deg2CEqns} For all $x_i \cdot h$ and $x_k \cdot h''$ with $\deg x_k = \deg x_i + (-2,2)$, we have
\begin{align*}
&\sum_{j,h' \in \beta_{\alpha}} (\tilde{c}_{i,j;h'} \tilde{\tilde{c}}_{h'h;h''} c_{j,k} + c_{i,j} \tilde{c}_{j,k;h'} \tilde{\tilde{c}}_{h'h;h''}) \\
&+ \sum_{j, h' \in \beta_{\gamma}, h''' \in \beta_{\gamma}, h'''' \in \beta} \tilde{c}_{i,j;h'} \tilde{\tilde{c}}_{h'h;h''''} \tilde{c}_{j,k;h'''} \tilde{\tilde{c}}_{h'''h'''';h''} \\
&= 0.
\end{align*}

\item\label{Deg3CEqns} For all $x_i \cdot h$ and $x_k \cdot h''$ with $\deg x_k = \deg x_i + (-3,2)$, we have
\begin{align*}
&\sum_{j, h' \in \beta_{\gamma}, h''' \in \beta_{\alpha}, h'''' \in \beta} \tilde{c}_{i,j;h'} \tilde{\tilde{c}}_{h'h;h''''} \tilde{c}_{j,k;h'''} \tilde{\tilde{c}}_{h'''h'''';h''} \\
&+ \sum_{j, h' \in \beta_{\alpha}, h''' \in \beta_{\gamma}, h'''' \in \beta} \tilde{c}_{i,j;h'} \tilde{\tilde{c}}_{h'h;h''''} \tilde{c}_{j,k;h'''} \tilde{\tilde{c}}_{h'''h'''';h''} \\
&= 0.
\end{align*}

\item\label{Deg4CEqns} For all $x_i \cdot h$ and $x_k \cdot h''$ with $\deg x_k = \deg x_i + (-4,2)$, we have
\begin{align*}
\sum_{j, h' \in \beta_{\alpha}, h''' \in \beta_{\alpha}, h'''' \in \beta} \tilde{c}_{i,j;h'} \tilde{\tilde{c}}_{h'h;h''''} \tilde{c}_{j,k;h'''} \tilde{\tilde{c}}_{h'''h'''';h''} = 0.
\end{align*}

\end{enumerate}
\end{proposition}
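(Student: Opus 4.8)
The plan is to expand $d^2(x_i)$ directly in the $\Z$-basis of $M$ using the formulas derived just before the proposition, and then collect terms according to the change in intrinsic degree. Since $d$ raises the homological degree by $1$, the operator $d^2$ raises homological degree by $2$; and since $d$ changes intrinsic degree by $0$, $-1$, or $-2$ (depending on whether the relevant coefficient is $c_{i,j}$, a $\tilde c$ with $h'\in\beta_\gamma$, or a $\tilde c$ with $h'\in\beta_\alpha$), the composite $d^2$ can change intrinsic degree only by $0,-1,-2,-3$, or $-4$. Because the basis elements $x_k\cdot h''$ with distinct values of $\deg x_k$ and distinct words $h''$ are linearly independent over $\Z$, the vanishing of $d^2$ is equivalent to the vanishing of each of its ``homogeneous pieces'' separately; these five pieces are exactly the five families of equations \ref{Deg0CEqns}--\ref{Deg4CEqns}.

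Concretely, I would first write $d^2(x_i)$ as $d\bigl(\sum_j c_{i,j}x_j + \sum_{j,h'\in\beta_\gamma}\tilde c_{i,j;h'}x_j\cdot h' + \sum_{j,h'\in\beta_\alpha}\tilde c_{i,j;h'}x_j\cdot h'\bigr)$, then apply $d$ to each term using the basis-expansion formula for $d(x_j\cdot h)$ recorded above. This produces a triple sum; I would reorganize the result as a sum over target basis elements $x_k\cdot h''$, grouping by the total intrinsic-degree drop. The degree-$0$ drop only involves $c_{i,j}c_{j,k}$ and gives \ref{Deg0CEqns}. The degree-$(-1)$ drop picks up one factor from $\beta_\gamma$ and one degree-$0$ factor, in either order, giving the two summands of \ref{Deg1CEqns} (here one must be careful to use $h''$ and the coefficients $\tilde{\tilde c}$ to re-expand $h'h$ in the basis $\beta$). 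The degree-$(-2)$ drop has two sources — either one $\beta_\alpha$ factor paired with a degree-$0$ factor (in either order), or two $\beta_\gamma$ factors in succession (which requires re-expanding $h'''(h'h)$, hence the iterated $\tilde{\tilde c}$ coefficients) — yielding \ref{Deg2CEqns}. Similarly the degree-$(-3)$ drop comes from a $\beta_\gamma$ factor composed with a $\beta_\alpha$ factor in either order, giving \ref{Deg3CEqns}, and the degree-$(-4)$ drop comes from two successive $\beta_\alpha$ factors, giving \ref{Deg4CEqns}.

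The one genuinely delicate point — and the main thing to get right rather than the main obstacle — is bookkeeping the associativity of multiplication in $H^n$ when two multiplicative generators act in succession: the product $h'''\cdot(h'\cdot h)$ must be re-expanded in $\beta$, and this is where the nested coefficients $\tilde{\tilde c}_{h'h;h''''}$ and $\tilde{\tilde c}_{h'''h'''';h''}$ come from. One should check that the condition $C_{module}$ of Definition~\ref{CModuleAlgCondition} is what guarantees that, after one application of $d$, the only terms $x_j\cdot h'$ appearing with $h'\notin\beta$ have $h'\in\beta_{mult}$, so that a second application of $d$ again produces only products of a multiplicative generator with an arbitrary basis element, keeping the computation closed within the stated families. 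No sign subtleties arise because $H^n$ has trivial differential, so the Leibniz rule for the right $H^n$-module $M$ reduces to $d(x\cdot h)=d(x)\cdot h$. Matching coefficients of each basis element $x_k\cdot h''$ on both sides of $d^2=0$ then yields precisely the five displayed systems, completing the proof.
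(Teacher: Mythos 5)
Your proposal follows essentially the same route as the paper's one-sentence proof: expand $d^2$ applied to a basis element in the $\Z$-basis of $M$, group by the intrinsic-degree drop, and match coefficients of $x_k\cdot h''$. The remarks about linear independence, the role of $C_{module}$, and the absence of sign subtleties (since $H^n$ has no differential) are all correct.

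One small discrepancy: you begin from $d^2(x_i)$, while items \ref{Deg1CEqns}--\ref{Deg4CEqns} are parametrized by an arbitrary $h$ in $x_i\cdot h$, and the coefficient $\tilde{\tilde{c}}_{h'h;h''}$ appearing in those items explicitly carries that free $h$. Expanding $d^2(x_i)$ only yields these equations at $h = e(x_i)$ (where $\tilde{\tilde{c}}_{h'h;h''}$ collapses to a Kronecker delta); the $\tilde{\tilde{c}}$'s that actually arise in your computation are of the form $\tilde{\tilde{c}}_{h'''h';h''}$ from the second application of $d$. This is logically sufficient --- right-multiplying $d^2(x_i)=0$ by $h$ and re-expanding in $\beta$ recovers the general-$h$ case by $H^n$-linearity --- but that step should be made explicit. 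The paper avoids the issue by expanding $d^2(x_i\cdot h)$ directly from the start, applying the displayed basis-expansion formula for $d(x_i\cdot h)$ twice; your parenthetical about re-expanding $h'h$ shows you have the right picture, but it does not square with a starting point of $d^2(x_i)$.
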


\begin{proof} This follows from writing out $d^2(x_i \cdot h)$ as a sum of basis elements $x_k \cdot h''$, using the above basis expansion for $d(x_i \cdot h)$, and then grouping the $x_k \cdot h''$ according to the intrinsic degree of $x_k$ relative to $x_i$.
\end{proof}

\begin{example}\label{HnCCoeffsEx}
Suppose $M = [T]^{Kh}$, where $T$ is an oriented tangle diagram in $\R_{\leq 0} \times \R$. We will analyze the generators $x_i$ and coefficients $\tilde{c}_{i,j;h'}$ and $c_{i,j}$. To specify a generator $x_i$ of $[T]^{Kh}$, we first specify a resolution $\rho_i$ of all crossings of $T$; we can view $\rho_i$ as a function from the set of crossings to the two-element set $\{0,1\}$. If $T_{\rho_i}$ denotes the diagram $T$ with the crossings resolved according to $\rho_i$, then $T_{\rho_i}$ consists of a left crossingless matching of $2n$ points together with some free circles contained in $\R_{< 0} \times \R$. The remaining data needed to specify $x_i$ is a choice of $+$ (plus) or $-$ (minus) on each free circle. Then $[T]^{Kh}$ has a $\Z$-basis consisting of elements $x_i \cdot h$, where the left crossingless matching of $h$ agrees with the matching obtained from $T_{\rho_i}$ by discarding the free circles.

Let $S$ denote the set of $x_i$ specified above. The basis expansion defining $c_{i,j}$ and $\tilde{c}_{i,j;h'}$ is
\begin{align*}
d(x_i) &= \sum_{j \in S, \deg x_j = \deg x_i + (0,1)} c_{i,j} x_j \\
&+ \sum_{j \in S, h' \in \beta_{\gamma}, \deg x_j = \deg x_i + (-1,1)}  \tilde{c}_{i,j;h'} x_j \cdot h' \\ 
&+ \sum_{j \in S, h' \in \beta_{\alpha}, \deg x_j = \deg x_i + (-2,1)}  \tilde{c}_{i,j;h'} x_j \cdot h'.
\end{align*}
The coefficients $c_{i,j}$ and $\tilde{c}_{i,j;h'}$ can only be nonzero when the resolution $\rho_j$ of $x_j$ differs from the resolution $\rho_i$ of $x_i$ only at one crossing, to which $\rho_i$ assigns $0$ and $\rho_j$ assigns $1$. Let $\#_1(i,j)$ denote the number of $1$-resolutions of crossings in $x_i$ among those crossings which, in the ordering on crossings, occur earlier than the crossing being changed when going from $x_i$ to $x_j$.

Changing the crossing to get from $x_i$ to $x_j$ has several possible effects:
\begin{enumerate}
\item\label{KhCoeffs1} The crossing change could join two free circles or split a free circle. In this case, $c_{i,j}$ is $(-1)^{\#_1(i,j)}$, and all $\tilde{c}_{i,j;h'}$ are zero.

\item\label{KhCoeffs2} The crossing change could join a free circle in $x_i$, labeled $+$, with an arc of $x_i$. Alternatively, it could split a new free circle, labeled $-$ in $x_j$, off an arc of $x_i$. In both these cases, $c_{i,j}$ is $(-1)^{\#_1(i,j)}$, and all $\tilde{c}_{i,j;h'}$ are zero.

\item\label{KhCoeffs3} The crossing change could join a free circle in $x_i$, labeled $-$, with an arc $\alpha$ of $x_i$. Alternatively, it could split a new free circle, labeled $+$ in $x_j$, off an arc $\alpha$ of $x_i$. In both these cases, $c_{i,j}$ is zero, and $c_{i,j;h'}$ is only nonzero for one value of $h'$. If $a$ denotes the crossingless matching of $x_i$ (or of $x_j$), then when $h' = (W(a)a, \textrm{ minus on } \alpha)$, we have $c_{i,j;h'} = (-1)^{\#_1(i,j)}$; all other $\tilde{c}_{i,j;h'}$ are zero.

\item\label{KhCoeffs4} Finally, the crossing change could surger two arcs of $x_i$, changing the crossingless matching. Again, $c_{i,j} = 0$, and $c_{i,j;h'}$ is nonzero for a unique $h'$. Let $a_i$ and $a_j$ denote the crossingless matchings of $x_i$ and $x_j$ respectively. Then if $h' = (W(a_j)a_i, \textrm{ all }+)$, we have $c_{i,j;h'} = (-1)^{\#_1(i,j)}$; all other $\tilde{c}_{i,j;h'}$ are zero.
\end{enumerate}
Note that $[T]^{Kh}$ satisfies the condition $C_{module}$.
\end{example}

\subsection{Type A structures}\label{RobertsTypeAStrSect}
As in Section~\ref{FullAlgebraSection}, let $\B$ denote $\B_R(H^n) \cong \B_R \Gamma_n$. Recall that $\B_L \Gamma_n$ is the quotient of $m(\B)^!$ by the extra relations listed in Proposition~\ref{KhovRobertsLSQuotient}. 

Let $M$ be a differential bigraded projective right $H^n$ module as at the beginning of Section~\ref{ModulePrelims}; assume that $M$ satisfies the algebraic condition $C_{module}$ of Definition~\ref{CModuleAlgCondition}. We first define a Type A structure $\widehat{A}(M)^{m(\B)^!}$ over $m(\B)^!$. Then we formally extend $\widehat{A}(M)$ to a Type A structure $\widehat{A}(M)^{\B \astrosun m(\B)^!}$ over $\B \astrosun m(\B)^!$.

\begin{definition}\label{FirstTypeAStrDef}
As a $\Z$-module, $\widehat{A}(M)$ is defined to be $M$, with a $\Z$-basis given by $\{ x_i \cdot h: i \in S, h \in \beta, e_L(h) = e(x_i) \}$. The idempotent ring of $m(\B)^!$ is $\I_{\beta}$; let $h' \in \beta$ be an elementary idempotent of $m(\B)^!$. Multiplying $x_i \cdot h$ by $h'$ gives $x_i \cdot h$ if $h' = h$ and zero otherwise.

Suppose the generator $x_i$ has bigrading $(j,k)$ as an element of $M$, and $h$ has grading $j'$ (or bigrading $(j',0)$) as an element of $H^n$. Then, as an element of $\widehat{A}(M)$, the bigrading of $x_i \cdot h$ is defined to be
\[
\deg_{\widehat{A}(M)}(x_i \cdot h) := (-j-\frac{1}{2}j',k).
\]

The algebra $m(\B)^!$ acts on $\widehat{A}(M)$ on the right; we will use $m_2$ to denote this action (not to be confused with $m$ here, which means ``mirror''). Let $m(b^*)$ denote either $m(b^*_{\gamma;m(h),m(h'')})$ or $m(b^*_{C;m(h),m(h'')})$. We define
\[
m_2(x_i \cdot h, m(b^*)) = \sum_j \sum_{h' \in \beta_{mult}} \tilde{c}_{i,j;h'} \tilde{\tilde{c}}_{h'h;h''} x_j \cdot h''.
\]

If $m(b^*) = m(b^*_{\gamma;m(h),m(h'')})$, then $\tilde{\tilde{c}}_{h'h;h''}$ is only nonzero for one value of $h'$, namely $h' = (W(a')a,\textrm{ all } +)$, where $h = (W(a)b,\sigma)$ and $h'' = (W(a')b,\sigma')$. For this value of $h'$, $\tilde{\tilde{c}}_{h'h;h''}$ is $1$. Thus,
\begin{equation}\label{TopSimpleMultEqn}
m_2(x_i \cdot h, m(b^*_{\gamma;m(h),m(h'')})) = \sum_j \tilde{c}_{i,j;h'} x_j \cdot h''.
\end{equation}

If $m(b^*) = m(b^*_{C;m(h),m(h'')})$, then $\tilde{\tilde{c}}_{h'h;h''}$ will be nonzero for any $h'_{\alpha}$ which equals $(W(a)a,\textrm{ minus on }\alpha)$, where $h = (W(a)b,\sigma)$ and $\alpha$ is any arc of $a$ which is part of the circle $C$ in $W(a)a$. For $h'$ equal to one of the $h'_{\alpha}$, $\tilde{\tilde{c}}_{h'h;h''}$ is $1$, and for all other $h'$, $\tilde{\tilde{c}}_{h'h;h''}$ is zero. Thus,
\begin{equation}\label{BottomSimpleMultEqn}
m_2(x_i \cdot h, m(b^*_{C;m(h),m(h'')})) = \sum_j \sum_{\textrm{left arcs } \alpha \textrm{ of }C} \tilde{c}_{i,j;h'_{\alpha}} x_j \cdot h'',
\end{equation}
Note that $m_2$ is bigrading-preserving; this follows from the degree conditions on $x_i$ and $x_j$ in the basis expansion of $d(x_i \cdot h)$ given in Section~\ref{ModulePrelims} above.

We then extend $m_2$ to an action of $m(\B)^!$ on $\widehat{A}(M)$ by imposing the associativity relation 
\[
m_2 \circ (m_2 \otimes \id) = m_2 \circ (\id \otimes \mu_2),
\]
where $\mu_2$ is the algebra multiplication on $m(\B)^!$. Below we will verify that this algebra action is well-defined. Finally, $\widehat{A}(M)$ has a differential $m_1$ given by 
\[
m_1(x_i \cdot h) = \sum_j c_{i,j} x_j \cdot h.
\]
\end{definition}

\begin{proposition}\label{mBBangActionWellDef} The action of $m(\B)^!$ on $\widehat{A}(M)$ is well-defined and associative. Thus, $\widehat{A}(M)$ is a right module over $m(\B)^!$.
\end{proposition}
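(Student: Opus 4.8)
The plan is to reduce the claim that $m_2$ extends to a well-defined associative action to a finite check involving only quadratic relations, exactly as one does for any presented algebra acting on a module. Since $m(\B)^!$ is presented as $T(V_{m(\B)^!})/J_{m(\B)^!}$ with $J_{m(\B)^!}$ generated multiplicatively by its linear-quadratic part (this presentation is the content of Proposition~\ref{KhovRobertsLSQuotient} together with the identification of $m(\B)^!$ with the mirror of the quadratic dual of $\B$, whose relations $I^\perp$ are enumerated explicitly in Section~\ref{RightSideSection}), it suffices to define $m_2$ on generators, extend freely by $m_2\circ(m_2\otimes\id)=m_2\circ(\id\otimes\mu_2)$, and verify that each generating relation of $m(\B)^!$ acts as zero on every basis element $x_i\cdot h$. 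The relations to check are: the ``line segment'' relations $v^*+(v')^*\in I^\perp$, the ``triangle'' relations $v^*+(v')^*+(v'')^*\in I^\perp$, and the single ``tetrahedron'' relation $v^*+(v')^*+(v'')^*+(v''')^*\in I^\perp$, in each of the cases listed in Section~\ref{RightSideSection} (bridge-bridge, bridge-circle disjoint, two disjoint circles, bridge-circle non-disjoint, and the split-then-rejoin tetrahedron case). There are no purely linear relations since $\B$ (hence $m(\B)^!$) has no linear redundancies among generators.

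First I would record, using equations \eqref{TopSimpleMultEqn} and \eqref{BottomSimpleMultEqn}, the explicit form of $m_2(x_i\cdot h, v^*v'^*)$ for each type of quadratic monomial $v^*v'^*$ appearing as a vertex of the graph $G$: this is a double sum $\sum_{j,k}(\text{product of two }\tilde c\text{'s and two }\tilde{\tilde c}\text{'s})\, x_k\cdot h'''$, with the middle idempotent constrained so that the two generators compose. Then for each relation I would show that the corresponding coefficient sums cancel. The key input is that these cancellations are precisely the equations of Proposition~\ref{HnProjModStructure}: the bridge-bridge line segments and triangles correspond to equation~\eqref{Deg2CEqns} (the purely-$\beta_\gamma$ term, which must vanish on its own by a degree argument, or more precisely the relevant sub-sum indexed by geodesics between fixed idempotents), the circle-circle and bridge-circle commutations correspond to the ``mixed'' terms in \eqref{Deg2CEqns}, \eqref{Deg3CEqns}, and \eqref{Deg4CEqns}, and the tetrahedron relation corresponds to the full content of \eqref{Deg2CEqns} in the one degenerate configuration where a bridge splits a plus-circle and its dual rejoins the pieces into a minus-circle. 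In each case one rewrites $\tilde{\tilde c}_{h'h;h''''}\tilde{\tilde c}_{h'''h'''';h''}$ using the multiplication table of $H^n$ restricted to products of multiplicative generators, observes that this is exactly the combinatorial data that went into defining the relations of $\B$ (via the map $\varphi$ and the graph $G$), and concludes that the alternating sum of $m_2$-outputs over the vertices of a connected component of $G$ equals (a scalar multiple of) the corresponding component of $d^2(x_i\cdot h)=0$.

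For associativity I would then argue as follows. Once every generating relation of $m(\B)^!$ annihilates $\widehat A(M)$, the free extension of $m_2$ along $m_2\circ(m_2\otimes\id)=m_2\circ(\id\otimes\mu_2)$ is automatically well-defined: the obstruction to defining a module structure over $T(V_{m(\B)^!})/J_{m(\B)^!}$ from an action of the free algebra $T(V_{m(\B)^!})$ is precisely that $J_{m(\B)^!}$ act by zero, and since $J_{m(\B)^!}=T(V)\cdot J_2\cdot T(V)$ with $J_2$ the quadratic part, it is enough that $J_2$ act by zero, which is the check above. Associativity over the free algebra is built into the definition. Finally I would note that $m_1$ squares to zero (equation~\eqref{Deg0CEqns} of Proposition~\ref{HnProjModStructure}) and that $m_1$ is a derivation for $m_2$; but strictly for the statement of Proposition~\ref{mBBangActionWellDef} only the module-action well-definedness is needed, with the Leibniz/$d^2$ compatibility deferred to where $\widehat A(M)$ is assembled into a differential bigraded module.

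The main obstacle I expect is the bridge-bridge case and the tetrahedron case, where the cancellation is not a single term-by-term identity but requires summing $\tilde c$-coefficients over \emph{all} geodesics in $G_n$ between two fixed idempotents; making this rigorous is exactly where Lemma~\ref{NonCrossPartLemma} enters, since one needs that the relevant relations of $\B$ (the ``two-bridge'' relations of item~\ref{TwoBridgeRels}-type) suffice to identify any two such geodesic monomials, and dually that the relations of $m(\B)^!$ impose no \emph{more} than the alternating-sum condition. Concretely, I would handle this by fixing $h_1=(W(a)b,\sigma)$, the outer idempotents, and the intrinsic degree, noting that the set of geodesic $b_\gamma$-monomials from $e_L$ to $e_R$ in $G_n$ maps onto a single element $(W(a)b,\text{all }+)$ of $H^n$ with multiplicity one in each basis coordinate, and then transporting the degree-$(-2,2)$ case of Proposition~\ref{HnProjModStructure} through equations \eqref{TopSimpleMultEqn}--\eqref{BottomSimpleMultEqn}; the grading bookkeeping (the $-\frac12$ factors in $\deg_{\widehat A(M)}$) should be checked to confirm every relation is homogeneous so the argument can indeed proceed one intrinsic degree at a time, as in the proof of Proposition~\ref{BrHnRels}.
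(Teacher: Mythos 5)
Your overall strategy matches the paper's: the paper also argues that associativity is automatic once well-definedness is established, reduces well-definedness to checking that the multiplicative generators of $m(I^{\perp})$ annihilate every $x_i \cdot h$, organizes those generators by intrinsic degree (degree $2$, $\frac{3}{2}$, $1$), and identifies the needed cancellations with the equations of Proposition~\ref{HnProjModStructure} (items \ref{Deg4CEqns}, \ref{Deg3CEqns}, \ref{Deg2CEqns} respectively). However, there are two places where your proposal has a gap or goes down a wrong path.

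First, your handling of the degree-$1$ (``bridge--bridge'') case is where the real content lies, and the reason you offer --- that ``the purely-$\beta_\gamma$ term \ldots must vanish on its own by a degree argument, or more precisely the relevant sub-sum indexed by geodesics between fixed idempotents'' --- is not right and does not plug the hole. Item \ref{Deg2CEqns} of Proposition~\ref{HnProjModStructure} is a single equation whose left side mixes $\beta_\gamma$ and $\beta_\alpha$ terms, and there is no a priori grading reason for the two families to vanish separately. The correct observation is idempotent-theoretic: the degree-$1$ generators of $m(I^{\perp})$ never contain a monomial of the form $m(b^*_{\gamma}) m(b^*_{\gamma^{\dagger}})$ (those are the isolated points of the graph $G$ and are excluded from $I^{\perp}$); consequently the idempotents $h$ and $h''$ of any such generator have \emph{different} left crossingless matchings. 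For such a pair $(h,h'')$, every $\beta_\alpha$ term in \ref{Deg2CEqns} is automatically zero because $h_\alpha$-multiplication never changes the matching, and the remaining purely-$\beta_\gamma$ sum is exactly the coefficient of $x_k \cdot h''$ in $m_2(x_i \cdot h, m(r^*))$. Without this remark about the idempotents, the split you want is simply unavailable, and your ``sub-sum over geodesics'' phrasing does not recover it.

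Second, Lemma~\ref{NonCrossPartLemma} plays no role in this proposition. That lemma is used in Proposition~\ref{BrHnLinQuad} to show that $J_\B$ is generated by its linear-quadratic part $J_2$ --- i.e., that $\B$ is linear-quadratic. But the quadratic dual $\B^!$ (hence $m(\B)^!$) is defined directly from $J_2$ via $I$ and $I^{\perp}$; the enumeration of generators of $I^{\perp}$ in Section~\ref{RightSideSection} descends from Proposition~\ref{BrHnRels}, whose proof is a degree analysis independent of Lemma~\ref{NonCrossPartLemma}. Likewise, the presentation of $m(\B)^!$ as $T(V_{m(\B)^!})/J_{m(\B)^!}$ is definitional and is not ``the content of Proposition~\ref{KhovRobertsLSQuotient}'' (that proposition relates $\B_L\Gamma_n$ to a \emph{quotient} of $m(\B)^!$). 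So the part of your argument that defers to Lemma~\ref{NonCrossPartLemma} is aimed at a non-problem, while the actual crux --- the idempotent observation above --- goes unaddressed.
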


\begin{proof} The action is associative by definition, once we show that it is well-defined. We may write $\B^!$ as $T(V_{\B}^*) / I^{\perp}$; thus,
\[
m(\B)^! = T(m(V_{\B}^*)) / m(\I^{\perp}),
\]
where the mirrors of the $\I_{\beta}$-bimodules $V_{\B}^*$ and $\I^{\perp}$ are defined as in Definition~\ref{IBetaGeneralMirrorDef}. Now, Definition~\ref{FirstTypeAStrDef} gives us a map from $\widehat{A}(M) \otimes_{\I_{\beta}} T(m(V_{\B}^*)) \to \widehat{A}(M)$. We want to show that if $m(r^*)$ is a generator of $m(I^{\perp})$, then multiplying any $x_i \cdot h$ by $m(r^*)$ gives zero.

The generators of $m(I^{\perp})$ are quadratic in the $m(b^*)$, and they have intrinsic degree either $1$, $\frac{3}{2}$, or $2$. For those $m(r^*)$ of intrinsic degree $2$, the equations in item~\ref{Deg4CEqns} of Proposition~\ref{HnProjModStructure} above imply that $m(r^*)$ acts as zero on any $x_i \cdot h$. For those $m(r^*)$ of intrinsic degree $\frac{3}{2}$, the equations in item \ref{Deg3CEqns} of Proposition~\ref{HnProjModStructure} similarly imply that $m(r^*)$ acts as zero on $\widehat{A}(M)$.

The generators $m(r^*)$ of $m(I^{\perp})$ which have intrinsic degree $1$ are sums of either one, two, three, or four terms $m(b^*_{\gamma}) m(b^*_{\gamma'})$ with all coefficients $+1$. For a fixed $m(r^*)$, let $m(h) \in \I_{\beta}$ denote its left idempotent and let $m(h'') \in \I_{\beta}$ denote its right idempotent. The element $h''$ of $\beta$ has degree $2$ more than $h$, as elements of $H^n$ with its intrinsic grading, and $h''$ differs from $h$ by two surgeries on its left crossingless matching. In particular, the left crossingless matchings of $h$ and $h''$ are different; this follows from inspection of the generators $m(r^*)$ of intrinsic degree $1$ which actually appear in $m(I^{\perp})$. Monomials of the form $m(b^*_{\gamma}) m(b^*_{\gamma^{\dagger}})$ do not appear as terms of these generators.

For any generators of $\widehat{A}(M)$ of the form $x_i \cdot h$ and $x_k \cdot h''$, where $h$ and $h''$ are as above, with $\deg x_k = \deg x_i + (-2,2)$ as elements of $M$, the equations from \ref{Deg2CEqns} above become
\[
\sum_{j, h' \in \beta_{\gamma}, h''' \in \beta_{\gamma}, h'''' \in \beta} \tilde{c}_{i,j;h'} \tilde{\tilde{c}}_{h'h;h''''} \tilde{c}_{j,k;h'''} \tilde{\tilde{c}}_{h'''h'''';h''} = 0;
\]
the terms involving $h' \in \beta_{\alpha}$ vanish for these choices of $h$ and $h''$. These equations imply that all generators $m(r^*)$ of $m(I^{\perp})$ of intrinsic degree $1$ act as zero on $\widehat{A}(M)$. Thus, the algebra action $m_2$ of $m(\B)^!$ on $\widehat{A}(M)$ is well-defined.

\end{proof}

\begin{proposition}\label{LeibnizRuleOnAM} The differential $m_1$ on $\widehat{A}(M)$ satisfies $m_1^2 = 0$, and the Leibniz rule 
\[
m_1 \circ m_2 = m_2 \circ (m_1 \otimes \left|\id\right|) + m_2 \circ (\id \otimes \mu_1)
\]
is satisfied, where $\mu_1$ is the differential on $m(\B)^!$. Thus, $\widehat{A}(M)$ is a differential bigraded right module over $m(\B)^!$, and hence a Type A structure over $m(\B)^!$.
\end{proposition}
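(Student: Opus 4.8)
The plan is to verify the two assertions directly from the families of equations in Proposition~\ref{HnProjModStructure}, which encode $d^2=0$ on $M$. Items~\ref{Deg4CEqns} and~\ref{Deg3CEqns}, together with the ``different left matchings'' case of item~\ref{Deg2CEqns}, were already used in Proposition~\ref{mBBangActionWellDef} to establish well-definedness and associativity of $m_2$; the remaining content of Proposition~\ref{HnProjModStructure} — item~\ref{Deg0CEqns}, item~\ref{Deg1CEqns}, and the ``same left matching'' case of item~\ref{Deg2CEqns} — supplies the present proof. For $m_1^2=0$: since $m_1(x_i \cdot h) = \sum_j c_{i,j}\, x_j \cdot h$ and the $c_{i,j}$ vanish unless $\deg x_j = \deg x_i + (0,1)$, the operator $m_1$ is $\I_\beta$-linear of bidegree $(0,+1)$, and $m_1^2(x_i \cdot h) = \sum_{j,k} c_{i,j} c_{j,k}\, x_k \cdot h$, each inner sum being zero by item~\ref{Deg0CEqns}.

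For the Leibniz rule, linearity and an induction on word-length in the generators of $m(\B)^!$ — using associativity of $m_2$ (Proposition~\ref{mBBangActionWellDef}) and the Leibniz rule for the differential $\mu_1$ on $m(\B)^!$ — reduce the identity $m_1 \circ m_2 = m_2 \circ (m_1 \otimes \left|\id\right|) + m_2 \circ (\id \otimes \mu_1)$ to the case where the algebra input is a single multiplicative generator (the idempotent case being immediate). Since every generator $m(b^*_\gamma)$, $m(b^*_C)$ has homological degree $1$, so that $\left|\id\right|$ contributes $-1$, on $x_i \cdot h \otimes m(b^*)$ the identity reads
\[
m_1\big(m_2(x_i \cdot h,\, m(b^*))\big) + m_2\big(m_1(x_i \cdot h),\, m(b^*)\big) = m_2\big(x_i \cdot h,\, \mu_1(m(b^*))\big).
\]

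When $m(b^*) = m(b^*_\gamma)$, the right side vanishes, since $\mu_1(m(b^*_\gamma))$ mirrors $\varphi^*(b^*_\gamma)$ and $\varphi$ (Definition~\ref{VarphiDef}) is supported on the relations of item~\ref{BrHnRel4} of Proposition~\ref{BrHnRels}, taking values in decoration generators $b_C$, so $\varphi^*$ annihilates $b^*_\gamma$. Expanding the left side via equation~\ref{TopSimpleMultEqn} and the definition of $m_1$, the coefficient of a basis element $x_k \cdot h''$ is $\sum_j(\tilde c_{i,j;h'} c_{j,k} + c_{i,j}\, \tilde c_{j,k;h'})$, with $h' \in \beta_\gamma$ the unique surgery generator relating the matchings of $h$ and $h''$; this is the left side of the equation of item~\ref{Deg1CEqns}, hence zero. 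When $m(b^*) = m(b^*_C)$, the element $\mu_1(m(b^*_{C;m(h),m(h'')}))$ mirrors $\varphi^*(b^*_{C;h,h''}) = -\sum_{(\gamma_i,\,h_{2,i})} b^*_{\gamma_i;h,h_{2,i}}\, b^*_{\gamma_i^\dagger;h_{2,i},h''}$, summed over bridges $\gamma_i$ with an endpoint on $C$ and the compatible intermediate basis elements $h_{2,i}$. Using associativity to write $m_2(x_i\cdot h,\, m(b^*_{\gamma_i})m(b^*_{\gamma_i^\dagger})) = m_2\big(m_2(x_i \cdot h,\, m(b^*_{\gamma_i})),\, m(b^*_{\gamma_i^\dagger})\big)$ and applying equation~\ref{TopSimpleMultEqn} twice, the right side of the identity becomes — with the sign from the leading $-$ — exactly the $\beta_\gamma \times \beta_\gamma$ double-product sum in item~\ref{Deg2CEqns}; the left side, expanded via equation~\ref{BottomSimpleMultEqn} and the definition of $m_1$, becomes the $\beta_\alpha$ sum in item~\ref{Deg2CEqns}, the sum over left arcs $\alpha$ of $C$ matching the $h' \in \beta_\alpha$ with $\tilde{\tilde c}_{h'h;h''} \neq 0$. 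Item~\ref{Deg2CEqns} asserts that these add to zero, which is the desired identity.

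The main obstacle is the bookkeeping in this last case: one must set up the dictionary identifying the index set of pairs $(\gamma_i, h_{2,i})$ in $\varphi^*(b^*_C)$ with the index set of the $\beta_\gamma \times \beta_\gamma$ sum in item~\ref{Deg2CEqns}, and the set of left arcs of $C$ with the relevant $h' \in \beta_\alpha$ — tracking how the surgery (TQFT) multiplication of $H^n$ produces the intermediate basis elements and the coefficients $\tilde{\tilde c}$, all of which equal $1$ on the relevant indices — and one must check that no sign arises beyond the one already handled by $\left|\id\right|$. Once the dictionary is in place the identities agree term by term. Combining the three cases gives the Leibniz rule; together with $m_1^2 = 0$, this shows $\widehat{A}(M)$ is a differential bigraded right $m(\B)^!$-module, hence a Type A structure over $m(\B)^!$ by Example~\ref{OrdinaryModTypeAEx}.
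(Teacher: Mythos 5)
Your proof follows the paper's own argument essentially verbatim: $m_1^2=0$ from item~\ref{Deg0CEqns}, reduction of the Leibniz rule to single generators via associativity and the Leibniz rule for $\mu_1$, the $m(b^*_\gamma)$ case from item~\ref{Deg1CEqns} with the $\mu_1$-term vanishing, and the $m(b^*_C)$ case from the $\mu_1$-formula together with item~\ref{Deg2CEqns}. The extra remarks — why $\varphi^*$ kills $b^*_\gamma$, and which items of Proposition~\ref{HnProjModStructure} were already consumed by Proposition~\ref{mBBangActionWellDef} — are correct supporting observations but do not change the route.
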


\begin{proof}
First, $m_1^2 = 0$ by the equations in item~\ref{Deg0CEqns} of Proposition~\ref{HnProjModStructure}. 

We want to show that the Leibniz rule is satisfied for $\widehat{A}(M)$. Since the action of $m(\B)^!$ on $\widehat{A}(M)$ is associative, and $\mu_1$ satisfies its own Leibniz rule, it suffices to show that 
\[
m_1 \circ m_2 (x_i \cdot h, m(b^*_{\gamma;m(h),m(h'')})) = - m_2 \circ (m_1 (x_i \cdot h) \otimes m(b^*_{\gamma;m(h),m(h'')}))
\]
and 
\begin{align*}
m_1 \circ m_2 (x_i \cdot h, m(b^*_{C;m(h),m(h'')})) &= - m_2 \circ (m_1 (x_i \cdot h) \otimes m(b^*_{C;m(h),m(h'')})) \\
&+ m_2 (x_i \cdot h \otimes \mu_1(m(b^*_{C;m(h),m(h'')})));
\end{align*}
note that in the relation involving $m(b^*_{\gamma})$, the $\mu_1$ term vanishes.

The first of these two equations follows from the equations in item~\ref{Deg1CEqns} of Proposition~\ref{HnProjModStructure}. For the second equation, note that for a fixed $h \in \beta$, the only $h''$ such that $m(b^*_{C;m(h),m(h'')})$ is a generator of $m(\B)^!$ are those $h'' \in \beta$ which differ only from $h$ by changing the sign of one circle $C$ from plus to minus. We have
\[
\mu_1(m(b^*_{C;m(h),m(h'')})) = - \sum_{h'''' \in \beta} m(b^*_{\gamma;m(h),m(h'''')}) m(b^*_{\gamma^{\dagger};m(h''''),m(h'')}),
\]
where the sum is implicitly over those $h''''$ such that generators $m(b^*_{\gamma;m(h),m(h'''')})$ and $m(b^*_{\gamma^{\dagger};m(h''''),m(h'')})$ exist.

For such $h$ and $h''$, consider generators $x_i \cdot h$ and $x_k \cdot h''$ such that $\deg x_k = \deg x_i + (-2,2)$; these are the only $x_k \cdot h''$ which may appear in the basis expansion of the left or right side of the second equation above. Applying the equations in item~\ref{Deg2CEqns} of Proposition~\ref{HnProjModStructure} to $x_i \cdot h$ and $x_k \cdot h''$, we see that the second equation above holds. Thus, the Leibniz rule on $\widehat{A}(M)^{m(\B)^!}$ is satisfied.
\end{proof}

Now we formally add actions of $\B$ to $\widehat{A}(M)$, to make it a Type A structure over $\B \astrosun m(\B)^!$ rather than just over $m(\B)^!$:

\begin{definition}\label{FullTypeAStructureDefn} The Type A operation $m_2$ on $\widehat{A}(M)^{\B \astrosun m(\B)^!}$ is defined as in Definition~\ref{FirstTypeAStrDef} on the generators of $m(\B)^!$. On the generators of $\B$, it is defined by
\[
m_2(x_i \cdot h, b_{\gamma;h;h''}) = x_i \cdot h''
\]
and
\[
m_2(x_i \cdot h, b_{C;h;h''}) = x_i \cdot h''.
\]
Note that these actions are bigrading-preserving. To define the action of an arbitrary element of $\B \astrosun m(\B)^!$ on $\widehat{A}(M)$, we impose associativity of the action. Below we check that this definition respects the relations on $\B \astrosun m(\B)^!$.
\end{definition}

\begin{proposition}\label{FullMTwoActionWellDef} The action $m_2$ of $\B \astrosun m(\B)^!$ on $\widehat{A}(M)$ is well-defined; with this action and the differential $m_1: \widehat{A}(M) \to \widehat{A}(M)$ from Definition~\ref{FirstTypeAStrDef}, $\widehat{A}(M)$ is a differential bigraded module (hence Type A structure) over $\B \astrosun m(\B)^!$.
\end{proposition}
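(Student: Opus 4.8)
The plan is to prove the statement in two stages: (1) verify that the operators attached to the multiplicative generators of $\B \astrosun m(\B)^!$ in Definition~\ref{FirstTypeAStrDef} and Definition~\ref{FullTypeAStructureDefn} are compatible with the defining relations $J_{full}$, so that $m_2$ extends to a well-defined associative action respecting the $\I_\beta$-bimodule structure and the bigrading; (2) verify $m_1^2 = 0$ and the Leibniz rule. Since $J_{full}$ is the two-sided ideal generated by $J_{\B} \cap (T^1(V_{\B}) \oplus T^2(V_{\B}))$, by $J_{m(\B)^!} \cap (T^1(V_{m(\B)^!}) \oplus T^2(V_{m(\B)^!}))$, and by $J_{extra}$, and since monomials act by iterated $m_2$, for (1) it suffices to show that each of these three families of relations annihilates every basis element $x_i \cdot h$.

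For the relations coming from $J_{m(\B)^!}$ there is nothing to do: the restriction of $m_2$ to $m(\B)^!$ is exactly the action $\widehat{A}(M)^{m(\B)^!}$ of Definition~\ref{FirstTypeAStrDef}, which was shown to be well-defined and associative in Proposition~\ref{mBBangActionWellDef}. For the relations coming from $J_{\B}$, I would note that, after identifying $\widehat{A}(M)$ with $\bigoplus_{i \in S} e(x_i) H^n$ as a $\Z$-module---with $x_i \cdot h$ corresponding to $h$ in the $i$-th summand---the restriction of $m_2$ to $\B$ is the tautological right action of $\B \subset \Hom_{\I_n}(H^n,H^n)$ in which an endomorphism acts on $H^n$ by evaluation, applied diagonally over $i$. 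This really is a right action because multiplication in $\B$ is composition read backwards, and each generator $b_{\gamma;h_1,h_2}$ or $b_{C;h_1,h_2}$ is the matrix unit $e(h_1,h_2)$. Any element of $J_{\B}$ vanishes in $\B$, hence in $\Hom_{\I_n}(H^n,H^n)$, hence acts as zero on $\widehat{A}(M)$.

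The substantive case is $J_{extra}$, which I would treat by going through the five families in Definition~\ref{JExtraDef}, in each case applying $m_2$ twice in each of the two orders to a basis element $x_i \cdot h_1$ and comparing. The organizing principle is this: a generator of $\B$ alters only the right-hand crossingless matching and the circle labels of the $\beta$-index $h$, while a bridge (resp. circle) generator of $m(\B)^!$ acts by \eqref{TopSimpleMultEqn} (resp. \eqref{BottomSimpleMultEqn})---that is, by extracting from $d(x_i)$ the coefficients $\tilde{c}_{i,j;h'}$ of the relevant degree-$1$ (resp. degree-$2$) multiplicative generators of $H^n$, which depend only on the left-hand crossingless matchings of source and target, and re-indexing. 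Since a bridge generator of $H^n$ is determined by its pair of idempotents, and since the merges and splits of circles produced by a right bridge do not affect the left matching or the distribution of the unchanged left arcs among the unchanged circles, the commutation relations \ref{JExtraRels1}--\ref{JExtraRels4} reduce, after unwinding, to the equality of a pair of $\tilde{c}$-coefficients (or of sums of them indexed by the left arcs of a circle), and the split/join relations \ref{JExtraRels5} reduce to the fact that the left arcs on a merged circle are the union of the left arcs on the two circles merged. Crucially, none of the relations in $J_{extra}$ involve iterating the differential of $M$, so---unlike the relations internal to $m(\B)^!$---they require no input from $d^2 = 0$, only the combinatorics of $H^n$ and of crossingless matchings; and no sign subtleties occur, since every generator of $\B$ has homological degree $0$ and the $m_2$-formulas of Definition~\ref{FirstTypeAStrDef} introduce no signs beyond those carried by the $\tilde{c}$'s.

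Finally, $m_1^2 = 0$ is item \ref{Deg0CEqns} of Proposition~\ref{HnProjModStructure}, and for the Leibniz rule $m_1 \circ m_2 = m_2 \circ (m_1 \otimes \left|\id\right|) + m_2 \circ (\id \otimes \mu_1)$ it suffices, both sides being derivation-like in the algebra input, to check the identity when that input is a single generator: for generators of $m(\B)^!$ this is Proposition~\ref{LeibnizRuleOnAM}, and for a generator $b$ of $\B$, where $\mu_1(b) = 0$ and $\left|\id\right|(b) = b$, the identity $m_1(m_2(y,b)) = m_2(m_1(y),b)$ is immediate because $b$ acts by a relabelling of the $\beta$-index that is the same for all $x_j$ and so commutes with the matrix $(c_{i,j})$ defining $m_1$. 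The main obstacle is the $J_{extra}$ verification---particularly family \ref{JExtraRels5}---which is conceptually routine (the relations in Definition~\ref{JExtraDef} were chosen precisely to match these two-step computations) but demands careful tracking of crossingless matchings, circle labels, and the merges and splits produced by each bridge surgery.
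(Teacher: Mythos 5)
Your proposal is correct and follows essentially the same route as the paper: reduce well-definedness to the three generating families of $J_{full}$, cite Proposition~\ref{mBBangActionWellDef} for $J_{m(\B)^!}$, observe that $J_\B$ is tautologically killed, verify $J_{extra}$ family by family (correctly identifying the key combinatorial fact that the left arcs of a merged circle are the disjoint union of the left arcs of the two circles merged), and then deduce $m_1^2=0$ and the Leibniz rule from item~\ref{Deg0CEqns} of Proposition~\ref{HnProjModStructure}, Proposition~\ref{LeibnizRuleOnAM}, and a direct check on $\B$-generators. The only substantive difference is that you justify the vanishing of $J_\B$ by identifying the $\B$-action with the tautological evaluation action of $\B \subset \Hom_{\I_n}(H^n,H^n)$ on $\bigoplus_i e(x_i)H^n$ (a clean reformulation of the paper's ``immediate from the definition''), and you describe the $J_{extra}$ computations by their organizing principle rather than writing out the five cases; both moves are sound, and the structural reductions and required combinatorial inputs match the paper's proof.
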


\begin{proof}
First we need to check that
\[
m_2: \widehat{A}(M) \otimes_{\I_{\beta}} (\B \astrosun m(\B)^!) \to \widehat{A}(M)
\]
is well-defined. Recall that the relation ideal $J_{full}$ of $\B \astrosun m(\B)^!$ was defined to be
\begin{align*}
J_{full} := T(V_{full}) \cdot &((J_{\B} \cap (T^1(V_{\B}) \oplus T^2(V_{\B}))) \\
&\oplus (J_{m(\B)^!} \cap (T^1(V_{m(\B)^!}) \oplus T^2(V_{m(\B)^!}))) \\
&\oplus J_{extra}) \cdot T(V_{full}).
\end{align*}
By Proposition~\ref{mBBangActionWellDef}, the generators of $J_{m(\B)^!} \cap (T^1(V_{m(\B)^!}) \oplus T^2(V_{m(\B)^!}))$ act as zero on $\widehat{A}(M)$. It is immediate from the definition of the action of $\B$ on $\widehat{A}(M)$ that the generators of $J_{\B} \cap (T^1(V_{\B}) \oplus T^2(V_{\B}))$ act as zero on $\widehat{A}(M)$.

Thus, to show that $m_2$ is well-defined, it remains to show that the generators of $J_{extra}$ act as zero on $\widehat{A}(M)$. These generators are listed in items \ref{JExtraRels1} through \ref{JExtraRels5} of Definition~\ref{JExtraDef}.

\begin{itemize}
\item

Consider a relation 
\[
b_{\gamma;h_1,h_2} m(b^*_{\eta';m(h_2),m(h_3)}) - m(b^*_{\eta;m(h_1),m(h'_2)}) b_{\gamma';h'_2,h_3}
\]
from item \ref{JExtraRels1} of Definition~\ref{JExtraDef}. Write $h_1 = (W(a_1)b_1,\sigma_1)$, and let $x_i \cdot h_1$ be a generator of $\widehat{A}(M)$. Multiplying $x_i \cdot h_1$ by $b_{\gamma;h_1,h_2}$, we get $x_i \cdot h_2$ where $h_2 = (W(a_1)b_2,\sigma_2)$. Multiplying $x_i \cdot h_2$ by $m(b^*_{\eta';m(h_2),m(h_3)})$, with $h_3 = (W(a_2)b_2,\sigma_3)$, by equation~\ref{TopSimpleMultEqn} we get
\[
\sum_j \tilde{c}_{i,j;h'} x_j \cdot h_3
\]
where $h' \in \beta_{mult}$ is $(W(a_2)a_1, \textrm{ all } +)$. 

On the other hand, if we first multiply $x_i \cdot h_1$ by $m(b^*_{\eta;m(h_1),m(h'_2)})$, by equation~\ref{TopSimpleMultEqn} we get 
\[
\sum_j \tilde{c}_{i,j;h'} x_j \cdot h'_2,
\]
where $h'$ is also $(W(a_2)a_1, \textrm{ all } +)$. 

If we multiply this result by $b_{\gamma';h'_2,h_3}$, we get
\[
\sum_j \tilde{c}_{i,j;h'} x_j \cdot h_3.
\]
Thus, generators of $J_{extra}$ from item \ref{JExtraRels1} of Definition~\ref{JExtraDef} act as zero on $\widehat{A}(M)$.

\item
For relations
\[
b_{C;h_1,h_2} m(b^*_{\eta;m(h_2),m(h_3)}) - m(b^*_{\eta;m(h_1),m(h'_2)}) b_{C;h'_2,h_3}
\]
from item \ref{JExtraRels2} of Definition~\ref{JExtraDef}, the argument is essentially the same. If $h_1 = (W(a_1)b_1, \sigma_1)$ and $h'_2 = (W(a_2)b_1,\sigma'_2)$, then $h'$ is again $(W(a_2)a_1, \textrm{ all } +)$. We still use equation~\ref{TopSimpleMultEqn}.

\item
Consider a relation
\[
b_{\gamma;h_1,h_2} m(b^*_{C;m(h_2),m(h_3)}) - m(b^*_{C;m(h_1),m(h'_2)}) b_{\gamma;h'_2,h_3}
\]
from item \ref{JExtraRels3} of Definition~\ref{JExtraDef}. Let $x_i \cdot h_1$ be a generator of $\widehat{A}(M)$, and write $h_1 = (W(a_1)b_1,\sigma_1)$. Multiplying $x_i \cdot h_1$ by $b_{\gamma;h_1,h_2}$, we get $x_i \cdot h_2$ where $h_2 = (W(a_1)b_2,\sigma_2)$. Multiplying $x_i \cdot h_2$ by $m(b^*_{C;m(h_2),m(h_3)})$, with $h_3 = (W(a_1)b_2,\sigma_3)$, by equation~\ref{BottomSimpleMultEqn} we get
\[
\sum_j \sum_{\textrm{left arcs } \alpha \textrm{ of } C} \tilde{c}_{i,j;h'_{\alpha}} x_j \cdot h_3
\]
where $h'_{\alpha}$ is $(W(a_1)a_1, \textrm{ minus on } \alpha)$.

On the other hand, if we first multiply $x_i \cdot h_1$ by $m(b^*_{C;m(h_1),m(h'_2)})$, by equation~\ref{BottomSimpleMultEqn} we get 
\[
\sum_j \sum_{\textrm{left arcs } \alpha \textrm{ of } C} \tilde{c}_{i,j;h'_{\alpha}} x_j \cdot h'_2,
\]
where $h'_{\alpha}$ is again $(W(a_1)a_1, \textrm{ minus on } \alpha)$.

If we multiply this result by $b_{\gamma;h'_2,h_3}$, we get
\[
\sum_j \sum_{\textrm{left arcs } \alpha \textrm{ of } C} \tilde{c}_{i,j;h'_{\alpha}} x_j \cdot h_3.
\]
Thus, generators of $J_{extra}$ from item \ref{JExtraRels3} of Definition~\ref{JExtraDef} act as zero on $\widehat{A}(M)$.

\item
For relations
\[
b_{C;h_1,h_2} m(b^*_{C';m(h_2),m(h_3)}) - m(b^*_{C';m(h_1),m(h'_2)}) b_{C;h'_2,h_3}
\]
from item \ref{JExtraRels4} of Definition~\ref{JExtraDef}, the argument is the same as for relations from item \ref{JExtraRels3}.

\item
Finally, consider a relation
\begin{align*}
b_{\gamma;h_1,h_2} m(b^*_{C;m(h_2),m(h_3)}) &- m(b^*_{C';m(h_1),m(h'_2)}) b_{\gamma;h'_2,h_3} \\
&- m(b^*_{C'';m(h_1),m(h''_2)}) b_{\gamma;h''_2,h_3}
\end{align*}
from item \ref{JExtraRels5} of Definition~\ref{JExtraDef}. Write $h_1 = (W(a_1)b_1,\sigma_1)$, and let $x_i \cdot h_1$ be a generator of $\widehat{A}(M)$. Multiplying $x_i \cdot h_1$ by $b_{\gamma;h_1,h_2}$, we get $x_i \cdot h_2$ where $h_2 = (W(a_1)b_2,\sigma_2)$. Multiplying $x_i \cdot h_2$ by $m(b^*_{C;m(h_2),m(h_3)})$, with $h_3 = (W(a_1)b_2,\sigma_3)$, by equation~\ref{BottomSimpleMultEqn} we get
\[
\sum_j \sum_{\textrm{left arcs } \alpha \textrm{ of } C} \tilde{c}_{i,j;h'_{\alpha}} x_j \cdot h_3
\]
where $h'_{\alpha}$ is $(W(a_1)a_1, \textrm{ minus on } \alpha)$.

On the other hand, if we first multiply $x_i \cdot h_1$ by $m(b^*_{C';m(h_1),m(h'_2)})$, we get 
\[
\sum_j \sum_{\textrm{left arcs } \alpha \textrm{ of } C'} \tilde{c}_{i,j;h'_{\alpha}} x_j \cdot h'_2,
\]
where $h'_{\alpha}$ is again $(W(a_1)a_1, \textrm{ minus on } \alpha)$. Multiplying by $b_{\gamma';h'_2,h_3}$, we get
\[
\sum_j \sum_{\textrm{left arcs } \alpha \textrm{ of } C'} \tilde{c}_{i,j;h'_{\alpha}} x_j \cdot h_3.
\]
Finally, if we multiply $x_i \cdot h_1$ by $m(b^*_{C'';m(h_1),m(h''_2)})$, we get 
\[
\sum_j \sum_{\textrm{left arcs } \alpha \textrm{ of } C''} \tilde{c}_{i,j;h'_{\alpha}} x_j \cdot h''_2.
\]
Multiplying this result by $b_{\gamma;h''_2,h_3}$, we get
\[
\sum_j \sum_{\textrm{left arcs } \alpha \textrm{ of } C''} \tilde{c}_{i,j;h'_{\alpha}} x_j \cdot h_3.
\]

Now, since $\gamma$ was assumed to join the circles $C'$ and $C''$ to produce $C$ via a bridge on the right side of $\{0\} \times \R$, the set of left arcs $\alpha$ of $C$ is the disjoint union of the sets of left arcs of $C'$ and $C''$. 
Thus, the relation
\begin{align*}
b_{\gamma;h_1,h_2} m(b^*_{C;m(h_2),m(h_3)}) &- m(b^*_{C';m(h_1),m(h'_2)}) b_{\gamma;h'_2,h_3} \\
&- m(b^*_{C'';m(h_1),m(h''_2)}) b_{\gamma;h''_2,h_3}
\end{align*}
acts as zero on $\widehat{A}(M)$. For relations of the form
\begin{align*}
&b_{\gamma;h_1,h'_2} m(b^*_{C';m(h'_2),m(h_3)}) + b_{\gamma;h_1,h''_2} m(b^*_{C'';m(h''_2),m(h_3)}) \\
&- m(b^*_{C;m(h_1),m(h_2)}) b_{\gamma;h_2,h_3},
\end{align*}
where $\gamma$ splits $C$ into $C'$ and $C''$, the argument is analogous. Thus, generators of $J_{extra}$ from item \ref{JExtraRels5} of Definition~\ref{JExtraDef} act as zero on $\widehat{A}(M)$.
\end{itemize}

At this point, we have shown that the action $m_2$ of $\B \astrosun m(\B)^!$ on $\widehat{A}(M)$ is well-defined. It is associative by definition. To show that the Leibniz rule is satisfied, it suffices by associativity to check it on the generators of $\B$ and of $m(\B)^!$, and we have already done this for the generators of $m(\B)^!$ in Proposition~\ref{mBBangActionWellDef}.

Let $x_i \cdot h_1$ be a generator of $\widehat{A}(M)$ and let $b_{\gamma;h_1,h_2}$ be a generator of $\B$. Then
\begin{align*}
m_1 \circ m_2(x_i \cdot h_1, b_{\gamma;h_1,h_2}) &= m_1 (x_i \cdot h_2) \\
&= \sum_j c_{i,j} x_j \cdot h_2,
\end{align*}
while
\begin{align*}
m_2(m_1 \otimes \left|\id\right|)(x_i \cdot h_1, b_{\gamma; h_1,h_2}) &= m_2\bigg(\sum_j c_{i,j} x_j \cdot h_1, b_{\gamma; h_1,h_2}\bigg) \\
&= \sum_j c_{i,j} x_j \cdot h_2,
\end{align*}
and the $m_2 \circ (\id \otimes \mu_1)$ term is zero because $\mu_1 = 0$ on $\B$. The argument is unchanged for generators $b_{C;h_1,h_2}$. Thus, the Leibniz rule
\[
m_1 \circ m_2 = m_2 \circ (m_1 \otimes \left|\id\right|) + m_2 \circ (\id \otimes \mu_1)
\]
holds, and $\widehat{A}(M)$ is a differential bigraded right module over $\B \astrosun m(\B)^!$.

\end{proof}

Now we will consider the case $M = [T]^{Kh}$, where $T$ is a tangle diagram in $\R_{\leq 0} \times \R$. For this choice of $M$, we get a Type A structure over the quotient algebra $\B \Gamma_n$:

\begin{proposition}\label{TangleTypeADescends}
The extra relations from Proposition~\ref{KhovRobertsLSQuotient} act as zero on the $\B \astrosun m(\B)^!$-module $\widehat{A}([T]^{Kh})$ defined above in Definition~\ref{FullTypeAStructureDefn}. Thus, $\widehat{A}([T]^{Kh})$ descends to a differential bigraded right module over the quotient algebra $\B \Gamma_n$ of $\B \astrosun m(\B)^!$ by these relations.
\end{proposition}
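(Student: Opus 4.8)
The plan is to show that each generator of the ideal by which $\B \astrosun m(\B)^!$ is quotiented to obtain $\B\Gamma_n$ --- i.e., each of the ``anticommutation'' relations $a+c$, $a+d$, $b+c$, $b+d$ attached to a tetrahedron of the graph $G$ in Proposition~\ref{KhovRobertsLSQuotient} --- annihilates every generator $x_i \cdot h$ of $\widehat{A}([T]^{Kh})$ under the action $m_2$. Since $a+b+c+d$ is already the relation imposed in $m(\B)^!$ itself, it acts as zero on $\widehat{A}([T]^{Kh})$ by Proposition~\ref{mBBangActionWellDef}; hence it suffices to prove that $a-b$, $c-d$, and $a+c$ act as zero, the four extra relations then being the formal combinations $(a+c)$, $(a+c)+(d-c)$, $(b-a)+(a+c)$, and $(b-a)+(a+c)+(d-c)$.

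For $a-b$ (and, symmetrically, $c-d$): the monomials $a$ and $b$ differ only in their intermediate idempotent, $m(h_2)$ versus $m(h'_2)$, which record the two ways of placing a $+$ and a $-$ on the two circles produced by the splitting along $\gamma$; in particular $h_2$ and $h'_2$ have the same underlying crossingless matchings. Applying the formula~\eqref{TopSimpleMultEqn} for the action of a bridge generator twice (legitimate by associativity of $m_2$) gives $m_2(x_i \cdot h_1, a) = \sum_{j,k} \tilde{c}_{i,j;h'}\,\tilde{c}_{j,k;h''}\, x_k \cdot h_3$, where the auxiliary elements $h' = (W(a_j)a_i,\text{all }+)$ and $h'' = (W(a_k)a_j,\text{all }+)$ depend only on the crossingless matchings underlying $h_1$, $h_2$, $h_3$, by the case analysis of Example~\ref{HnCCoeffsEx}. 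Replacing $h_2$ by $h'_2$ changes none of these matchings, so $m_2(x_i \cdot h_1, b)$ is given by the same sum and $a-b$ acts as zero.

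The crux is the relation $a+c$, which uses the signs coming from the chosen ordering of the crossings of $T$. Expanding $m_2(x_i \cdot h_1, a)$ and $m_2(x_i \cdot h_1, c)$ via~\eqref{TopSimpleMultEqn} as above, both become double sums over ordered pairs $(\kappa_1,\kappa_2)$ of crossings of $T$: for $a$ one first resolves the crossing realizing the surgery attached to $\gamma$ and then that of $\eta'$, while for $c$ one first resolves that of $\eta$ and then that of $\gamma'$. Unwinding the combinatorics of a tetrahedron (Proposition 11 of \cite{RtypeD}: $\gamma$ splits a plus-circle, $\eta'$ joins the two new circles into a minus-circle, and $\gamma,\gamma'$ as well as $\eta,\eta'$ are ``parallel'' bridges with disjoint supports), one checks that these are the same unordered set of pairs of crossings traversed in opposite orders, and that both sums land on the same terms $x_k \cdot h_3$ with coefficients of equal magnitude; only the sign $(-1)^{\#_1(i,j)}(-1)^{\#_1(j,k)}$ differs, and resolving $\kappa_1$ before $\kappa_2$ versus the reverse changes this total exponent by exactly $1$, precisely as in the change-of-ordering isomorphism of Remark~\ref{OrderingIsoIndep}. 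Hence $m_2(x_i \cdot h_1, a) = -m_2(x_i \cdot h_1, c)$, so $a+c$ acts as zero, and combining the three relations above shows that $\widehat{A}([T]^{Kh})$ descends to a right module over $\B\Gamma_n$.

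I expect the main obstacle to be purely one of bookkeeping: matching Roberts' combinatorial notation for the tetrahedra (which bridge splits or joins which circle, the classes $B_o$, $B_d$, $B_s$) with the crossing-by-crossing description of the coefficients $\tilde{c}_{i,j;h'}$ from Example~\ref{HnCCoeffsEx}, so as to be certain that the surgeries attached to $\gamma$, $\eta'$, $\eta$, $\gamma'$ are realized by the expected crossings of $T$, that the intermediate and terminal $H^n$-basis elements line up, and that no spurious terms intervene --- in particular none of the $h' \in \beta_{\alpha}$ contributions, which are absent here because $a$, $b$, $c$, $d$ are all products of two bridge generators. No idea beyond the sign count above is needed; it is the familiar but delicate tracking of signs and idempotents through a concatenation of resolutions.
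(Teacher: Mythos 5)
Your proposal is correct and, at its core, uses the same argument as the paper's proof: the single sign count showing that the coefficient of $m_2(x_{00}\cdot h_1, a+c)$ on $x_{11}\cdot h_3$ vanishes because resolving the two designated crossings in opposite orders flips $(-1)^{\#_1(i,j)+\#_1(j,k)}$, via item \ref{KhCoeffs4} of Example~\ref{HnCCoeffsEx}. Where you diverge is in the bookkeeping: the paper does this sign count for $a+c$ and then says the remaining extra relations $a+d$, $b+c$, $b+d$ are ``exactly analogous,'' whereas you reduce to the three relations $a-b$, $c-d$, $a+c$ and observe that $a-b$ and $c-d$ act as zero \emph{identically}, with no sign considerations at all. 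That observation is correct and is arguably cleaner than the appeal to analogy: in equation~\eqref{TopSimpleMultEqn} the auxiliary element $h'$ (and hence the coefficient $\tilde{c}_{i,j;h'}$) depends only on the underlying crossingless matchings of the left and right idempotents, not on the circle labelings that distinguish $h_2$ from $h'_2$, so $m_2(\cdot,a) = m_2(\cdot,b)$ on the nose. Your decomposition then recovers all four extra relations by linear combinations with $a+c$, so one sign count truly suffices. One small imprecision: citing Remark~\ref{OrderingIsoIndep} for the sign flip is not quite the right reference --- that remark is about the isomorphism between complexes built from different crossing orderings, while what is being used here is the anticommutativity of the square faces of the cube, which the paper derives directly from the explicit $\#_1$-count of Example~\ref{HnCCoeffsEx}; the underlying calculation is the same and your argument is unaffected, but the citation is off-target.
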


\begin{proof}
Since the relations from Proposition~\ref{KhovRobertsLSQuotient} involve only quadratic monomials in the generators $m(b^*)$ of $m(\B)^!$, with no generators from $\B$ appearing, it suffices to show that these relations act as zero on the $m(\B)^!$-module $\widehat{A}([T]^{Kh})$ defined in Definition~\ref{FirstTypeAStrDef}.

Consider a tetrahedron in the graph $G$ of Proposition~\ref{KhovRobertsLSQuotient}, with vertices $a,b,c$, and $d$ as labeled in that proposition. We will show that the relation term $a + c$ acts as zero on $\widehat{A}([T]^{Kh})$; the proofs for the remaining extra relation terms are exactly analogous.

We may write out 
\[
a = m(b^*_{\gamma;m(h_1),m(h_2)}) m(b^*_{\eta';m(h_2),m(h_3)})
\]
and 
\[
c = m(b^*_{\eta;m(h_1),m(h''_2)}) m(b^*_{\gamma';m(h''_2),m(h_3)})
\]
as in Proposition~\ref{KhovRobertsLSQuotient}. Suppose we have two generators of $\widehat{A}([T]^{Kh})$ of the form $x_{00} \cdot h_1$ and $x_{11} \cdot h_3$. Here, $T$ may have more than two crossings, but for two designated crossings, $x_{00}$ has the zero-resolution at both and $x_{11}$ has the one-resolution at both (and $x_{00}$ and $x_{11}$ agree at all other crossings). We assume that changing $x_{00}$ to $x_{10}$ has the effect of surgery on $\gamma$, while changing $x_{00}$ to $x_{01}$ has the effect of surgery on $\eta$.

To show that $a + c$ acts as zero on $\widehat{A}([T]^{Kh})$, it suffices to show $m_2(x_{00} \cdot h_1, a + c)$ has zero coefficient on $x_{11} \cdot h_3$. We can compute $m_2(x_{00} \cdot h_1, a)$ and $m_2(x_{00} \cdot h_1, c)$ using associativity and equation~\ref{TopSimpleMultEqn}: we have
\begin{align*}
m_2(x_{00} \cdot h_1, a) &= \tilde{c}_{00,10;h'} m_2(x_{10} \cdot h_2, m(b^*_{\eta';m(h_2),m(h_3)})) \\
&= \tilde{c}_{00,10;h'} \tilde{c}_{10,11;h''} x_{11} \cdot h_3;
\end{align*}
this computation ignores terms which do not contribute to a coefficient on $x_{11} \cdot h_3$. The elements $h'$ and $h''$ of $\beta_{\gamma}$ are uniquely determined. Similarly, the coefficient of $m_2(x_{00} \cdot h_1, c)$ on $x_{11} \cdot h_3$ is $\tilde{c}_{00,01;h'''} \tilde{c}_{01,11;h''''}$, for two further uniquely-defined elements $h'''$ and $h''''$ of $\beta_{\gamma}$.

By item~\ref{KhCoeffs4} of Example~\ref{HnCCoeffsEx}, we have:
\begin{itemize}
\item $\tilde{c}_{00,10;h'} = (-1)^{\#_1(00,10)}$,
\item $\tilde{c}_{10,11;h''} = (-1)^{\#_1(10,11)}$,
\item $\tilde{c}_{00,01;h'''} = (-1)^{\#_1(00,01)}$, and
\item $\tilde{c}_{01,11;h''''} = (-1)^{\#_1(01,11)}$;
\end{itemize}
recall that $\#_1(i,j)$ denote the number of $1$-resolutions of crossings in $x_i$ among those crossings which occur earlier than the changed crossing (going from $x_i$ to $x_j$) in the ordering on crossings of $T$ (which is implicitly assumed, as usual, to be part of the choice of $T$).

Since
\[
(-1)^{\#_1(00,10)} (-1)^{\#_1(10,11)} + (-1)^{\#_1(00,01)} (-1)^{\#_1(01,11)} = 0,
\]
we can conclude that the coefficient of $m_2(x_{00} \cdot h_1, a + c)$ on $x_{11} \cdot h_3$ is zero, for all possible pairs $x_{00} \cdot h_1$ and $x_{11} \cdot h_3$. Thus, the extra relation terms of Proposition~\ref{KhovRobertsLSQuotient} act as zero on $\widehat{A}([T]^{Kh})$.

\end{proof}

\begin{proposition}\label{RobertsKhovanovTypeA}
Roberts' Type A structure from \cite{RtypeA} agrees with $\widehat{A}([T]^{Kh})$, the module over $\B \Gamma_n$ constructed in Proposition~\ref{TangleTypeADescends}.
\end{proposition}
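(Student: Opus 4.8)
The plan is to construct an explicit isomorphism of differential bigraded right $\B\Gamma_n$-modules from $\widehat{A}([T]^{Kh})$, the module built in Proposition~\ref{TangleTypeADescends}, to Roberts' Type A structure from \cite{RtypeA}, by matching the $\Z$-bases, the bigradings, the differential $m_1$, and the action map $m_2$ term by term. Because $\widehat{A}([T]^{Kh})$ is an honest dg module over $\B\Gamma_n$ (it has $m_n = 0$ for $n \neq 1,2$) and Roberts' Type A structure is also an honest module, no higher $\mathcal{A}_\infty$ operations enter the comparison, so the entire argument is a dictionary between two sets of conventions together with one genuine sign check.

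First I would recall Roberts' construction in \cite{RtypeA}: his Type A structure has a $\Z$-basis indexed by a resolution $\rho$ of the crossings of $T$, a sign decoration of the free circles of $T_\rho$, and a decorated crossingless matching along $\{0\}\times\R$. This is precisely the data of a basis element $x_i \cdot h$ of $[T]^{Kh}$, where $x_i$ records a decorated resolution of $T$ (with free circles in $\R_{<0}\times\R$) and $h = (W(a)b,\sigma) \in \beta$ has left crossingless matching equal to that of $T_{\rho_i}$; I would write out this bijection and check that the $\I_\beta$-idempotent of $x_i \cdot h$, namely $h$, matches Roberts' idempotent. Next I would verify the bigradings agree: by Definition~\ref{FirstTypeAStrDef}, $\deg_{\widehat{A}([T]^{Kh})}(x_i \cdot h) = (-j - \frac{1}{2}j', k)$ with $(j,k) = \deg_{[T]^{Kh}}(x_i)$ and $j' = \deg_{H^n}(h)$, and after accounting for the grading reversal built into $[T]^{Kh}$ (Remark~\ref{FirstGradingRevRem}), the factor $\frac{1}{2}$ in Definition~\ref{WeirdHomSpaceGradingDef} (which was chosen precisely to agree with Roberts), and Roberts' convention of listing the homological grading first, this coincides with his bigrading.

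Then I would compare the structure maps. By Definition~\ref{FirstTypeAStrDef}, $m_1(x_i \cdot h) = \sum_j c_{i,j}\, x_j \cdot h$, and Example~\ref{HnCCoeffsEx} identifies $c_{i,j}$ as the Khovanov cube coefficient exactly in cases~\ref{KhCoeffs1} and~\ref{KhCoeffs2}, i.e. when the crossing change is internal to the left half-plane; these are exactly the vertical differential terms of Roberts' Type A structure, with the same signs $(-1)^{\#_1(i,j)}$ coming from the cube edge assignment. For $m_2$: restricted to $\B = \B_R(H^n)$ it sends $x_i \cdot h \mapsto x_i \cdot h''$ by Definition~\ref{FullTypeAStructureDefn}, which under Proposition~\ref{KhovRobertsAgreeRSAlg} is the action of the right-pointing generators $\overrightarrow{e}$; restricted to $m(\B)^!$ (equivalently to $\B_L\Gamma_n$, using Proposition~\ref{TangleTypeADescends} and Proposition~\ref{KhovRobertsLSQuotient}) it is given by equations~\ref{TopSimpleMultEqn} and~\ref{BottomSimpleMultEqn}, whose coefficients $\tilde{c}_{i,j;h'}$ are identified in Example~\ref{HnCCoeffsEx} (cases~\ref{KhCoeffs3} and~\ref{KhCoeffs4}) as the remaining Khovanov cube coefficients — the ones encoding the action of the left-pointing generators $\overleftarrow{e}$ in Roberts' picture. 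As in Remark~\ref{OrderingIsoIndep}, the isomorphism depends only on the chosen ordering of the crossings, and different orderings give isomorphic Type A structures, so the statement does not depend on this auxiliary choice.

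The main obstacle is bookkeeping rather than a conceptual difficulty: one must unpack Roberts' definitions in \cite{RtypeA} precisely — his decorated-resolution generators, his bigrading normalization, his cube-edge sign conventions, and the explicit formulas by which $\B\Gamma_n$ acts — and check them against Definition~\ref{FirstTypeAStrDef}, Definition~\ref{FullTypeAStructureDefn}, and the coefficient computations of Example~\ref{HnCCoeffsEx}. The one genuinely non-formal point is the sign comparison: since Roberts and Khovanov set up their cubes of resolutions somewhat differently, one must confirm that Roberts' edge assignment yields the same signs $(-1)^{\#_1(i,j)}$, possibly after a diagonal basis change of the form appearing in Remark~\ref{OrderingIsoIndep}, so that the two differentials and the two algebra actions agree exactly and not merely up to sign.
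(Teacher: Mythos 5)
Your proposal is correct and follows essentially the same route as the paper's proof: identify the $\Z$-bases, bigradings, and $\I_\beta$-actions, then match $m_1$ and $m_2$ term by term against Roberts' formulas using the coefficient computations of Example~\ref{HnCCoeffsEx}. The paper simply asserts that the signs $(-1)^{\#_1(i,j)}$ match Roberts' directly, without needing the diagonal basis change you flag as a possible fallback, but otherwise the structure of the argument is the same.
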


\begin{proof}

First, $\widehat{A}([T]^{Kh})$ has the same $\Z$-basis, with the same bigradings and action of the idempotent ring $\I_{\beta}$, as Roberts' Type A structure. We can use the data of Example~\ref{HnCCoeffsEx} to check that the differentials $m_1$ agree, and that the algebra actions $m_2$ agree under the identification of $\B \astrosun m(\B)^!$ with $\B \Gamma_n$.

For the differentials, we have $m_1(x_i \cdot h) = \sum_j (-1)^{\#_1(i,j)} x_j \cdot h$, where the sum is over those $x_j$ related to $x_i$ by crossing changes from items \ref{KhCoeffs1} or \ref{KhCoeffs2} of Example~\ref{HnCCoeffsEx}. This formula also gives Roberts' differential $m_1 = d_{APS}$ as specified in Section 3.3 of \cite{RtypeA}.

It suffices to check that the algebra actions $m_2$ agree when multiplying by the generators of $\B$ and $m(\B)^!$. First, for a generator $x_i \cdot h_1$ of $\widehat{A}([T]^{Kh})$ and a generator $b_{\gamma;h_1,h_2}$ of $\B$, we have
\[
m_2(x_i \cdot h_1, b_{\gamma;h_1,h_2}) = x_i \cdot h_2,
\]
agreeing with Roberts' definition of the action of $\overrightarrow{e}_{\gamma;h_1,h_2}$ in item 5 of his definition of $m_2$, in Section 4 of \cite{RtypeA}. Similarly, our action of $b_{C;h_1,h_2}$ is the same as Roberts' action of $\overrightarrow{e}_{C;h_1,h_2}$, defined in item 2 of his definition of $m_2$.

For a generator $m(b^*_{\gamma;m(h_1),m(h_2)})$ of $m(\B)^!$, we have
\[
m_2(x_i \cdot h_1, m(b^*_{\gamma;m(h_1),m(h_2)})) = \sum_j \tilde{c}_{i,j;h'} x_j \cdot h_2,
\]
where if $h_1 = (W(a_1)b_1,\sigma_1)$ and $h_2 = (W(a_2)b_1,\sigma_2)$, then $h' = (W(a_2)a_1, \textrm{ all }+)$, and the coefficient $\tilde{c}_{i,j;h'}$ equals zero or $(-1)^{\#_1(i,j)}$ according to item \ref{KhCoeffs4} of Example~\ref{HnCCoeffsEx}. Thus,
\[
m_2(x_i \cdot h_1, m(b^*_{\gamma;m(h_1),m(h_2)})) = \sum_j (-1)^{\#_1(i,j)} x_j \cdot h_2,
\]
where the sum is over the subset of $j$ making $\tilde{c}_{i,j;h'}$ nonzero. This algebra action agrees with the action of $\overleftarrow{e}_{\gamma;h_1,h_2}$ as defined in item 4 of Roberts' definition of $m_2$.

Finally, for a generator $m(b^*_{C;m(h_1),m(h_2)})$ of $m(\B)^!$, we have
\[
m_2(x_i \cdot h_1, m(b^*_{C;m(h_1),m(h_2)})) = \sum_j \sum_{\textrm{left arcs }\alpha\textrm{ of }C} \tilde{c}_{i,j;h'_{\alpha}} x_j \cdot h_2,
\]
where if $h_1 = (W(a)b,\sigma_1)$ and $h_2 = (W(a)b,\sigma_2)$, then $h'_{\alpha} = (W(a)a, \textrm{ minus on }\alpha)$, and the coefficient $\tilde{c}_{i,j;h'}$ equals zero or $(-1)^{\#_1(i,j)}$ according to item \ref{KhCoeffs3} above. Thus,
\[
m_2(x_i \cdot h_1, m(b^*_{C;m(h_1),m(h_2)})) = \sum_j (-1)^{\#_1(i,j)} x_j \cdot h_2,
\]
where the sum is over the subset of $j$ making some $\tilde{c}_{i,j;h'_{\alpha}}$ nonzero (note that, given such $j$, the element $h'_{\alpha}$ is uniquely determined). This algebra action agrees with the action of $\overleftarrow{e}_{C;h_1,h_2}$ as defined in item 3 of Roberts' definition of $m_2$.
\end{proof}

\subsection{Type D structures}\label{RobertsTypeDStrSect}

Given a chain complex $M$ of projective graded right $H^n$-modules, in Definition~\ref{FullTypeAStructureDefn} we defined a Type A structure $\widehat{A}(M)$ over $\B \astrosun m(\B)^!$. For a tangle diagram $T$, $\widehat{A}([T]^{Kh})$ descends to a Type A structure over $\B \Gamma_n$ by Proposition~\ref{TangleTypeADescends}, which agrees with Roberts' Type A structure by Proposition~\ref{RobertsKhovanovTypeA}.

Now suppose $N$ is a chain complex of graded projective left $H^n$-modules. We will define another type of mirroring operation which, when applied to $N$, yields a complex of right modules. This is not the most general definition possible, but it will suffice for our purposes.
 
\begin{definition}\label{HnComplexMirrorDef}
Let $N$ be a chain complex of projective graded left $H^n$-modules. Viewing $N$ as a differential bigraded projective left $H^n$-module, write $N = \oplus_i H^n \cdot x_i[j_i,k_i]$, with $H^n$ acting by left multiplication. Then we may define a differential bigraded projective right $H^n$-module $m(N)$, called the mirror of $N$, as
\[
m(N) := \oplus_i m(x_i) [j_i,k_i] \cdot H^n,
\]
where the $m(x_i)$ are formal mirrors of the $x_i$, with the same idempotents as the $x_i$, and $H^n$ acts by right multiplication. Let $mirr$ denote the map from $N$ to $m(N)$ such that 
\[
mirr(h \cdot x_i) = m(x_i) \cdot m(h).
\]
The inverse of $mirr: N \to m(N)$ is $mirr: m(N) \to m(m(N)) = N$ (using a simple generalization of the above definition). If $m_1$ denotes the differential on $N$, then the differential on $m(N)$ is $mirr \circ m_1 \circ mirr$.

\end{definition}

\begin{remark}\label{TwoTypesOfMirrorRemark}
Although the geometric content of both Definition~\ref{HnComplexMirrorDef} and (variants of) Definition~\ref{IBetaGeneralMirrorDef} is just the reflection across the line $\{0\} \times \R$, the algebraic consequences of this reflection are different in Definition~\ref{HnComplexMirrorDef}. Whereas in Definition~\ref{IBetaGeneralMirrorDef}, left modules remain left modules and right modules remain right modules under mirroring, in Definition~\ref{HnComplexMirrorDef} left modules are sent to right modules and vice-versa.
\end{remark}

\begin{definition}\label{CModuleForLeftMods}
A chain complex $N$ of projective graded left $H^n$-modules satisfies the condition $C_{module}$ if and only if $m(N)$ satisfies the condition $C_{module}$ as defined in Definition~\ref{CModuleAlgCondition}.
\end{definition}
If $N$ satisfies $C_{module}$, then we can take the box tensor product of $\widehat{A}(m(N))$ and the Type DD bimodule ${^{\B \astrosun m(\B)^!}}K^{m(\B \astrosun m(\B)^!)^{op}}$ to get a (left) Type D structure over $m(\B \astrosun m(\B)^!)$. Below we define this tensor product precisely. It is a slight modification of Definition~\ref{XBoxWithSigns}; again, we will not give the definition in full generality.

\begin{definition}\label{XBoxWithDDDef} 
Let $\B$ be a differential bigraded algebra over an idempotent ring $\I$. Let $\widehat{A}$ be a differential bigraded right module over $\B$. Assume $\widehat{A}$ is free as a $\Z$-module, with a $\Z$-basis consisting of elements which are grading-homogeneous and have a unique right idempotent. 

Let $\B'$ be another differential bigraded algebra over $\I$, and let $\DD$ be a rank-one Type DD bimodule over $\B$ and $\B'$ with DD operation $\delta_{DD}: \I \to \B \otimes_{\I} \B'$. The Type D structure $\widehat{A} \boxtimes \DD$ over $\B'$, as a $\Z$-module, is 
\[
\widehat{A} \boxtimes \DD := \widehat{A} \otimes_{\I} \DD = \widehat{A} \otimes_{\I} \I = \widehat{A}.
\]
The idempotent ring $\I$ has a right action on $\widehat{A}$, which we will view instead as a left action (since $\I$ is commutative, we may view right actions as left actions and vice-versa). Since $\DD$ is a rank-one DD bimodule, the left and right actions of $\I$ on $\DD$ are the same. There is a bigrading on $\widehat{A} \boxtimes \DD$ inherited from that on $\widehat{A}$ (recall that $\DD$ is contained in bigrading $(0,0)$).

The Type D operation $\delta^{\boxtimes}: \widehat{A} \boxtimes \DD \to \B' \otimes_{\I} (\widehat{A} \boxtimes \DD)$ is defined by
\[
\delta^{\boxtimes} := m_1 + \xi \circ (m_2 \otimes \id) \circ (\id \otimes \delta_{DD}): \widehat{A} \to \B' \otimes_{\I} \widehat{A},
\]
where $m_1$ and $m_2$ are the Type A operations on $\widehat{A}$, and $\xi: \widehat{A} \otimes_{\I} (\B')^{op} \to \B' \otimes_{\I} \widehat{A}$ is defined by 
\[
\xi(x \otimes (b')^{op}) = (-1)^{(\deg_h x)(\deg_h b')} b' \otimes x.
\]
More precisely, the second summand is the composition
\[
\widehat{A} \xrightarrow{\id \otimes \delta_{DD}} \widehat{A} \otimes_{\I} \B \otimes_{\I} (\B')^{op} \xrightarrow{m_2 \otimes \id} \widehat{A} \otimes_{\I} (\B')^{op} \xrightarrow{\xi} \B' \otimes_{\I} \widehat{A}.
\]
The map $\delta^{\boxtimes}$ has bidegree $(0,+1)$.
\end{definition}

\begin{proposition}
$(\widehat{A} \boxtimes \DD, \delta^{\boxtimes})$ is a well-defined Type D structure over $\B'$.
\end{proposition}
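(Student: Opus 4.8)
The plan is to verify two things: first, that $\delta^{\boxtimes}$ has the correct grading, and second, that the Type D structure relation
\[
(\mu_1 \otimes \left|\id\right|) \circ \delta^{\boxtimes} + (\mu_2 \otimes \id) \circ (\id \otimes \delta^{\boxtimes}) \circ \delta^{\boxtimes} = 0
\]
holds, where $\mu_1$ and $\mu_2$ are the differential and multiplication on $\B'$. The grading statement is immediate: $m_1$ has bidegree $(0,+1)$ since $\widehat{A}$ is a differential bigraded module, $\delta_{DD}$ has bidegree $(0,+1)$ by the DD axioms, $m_2$ is bigrading-preserving, and $\xi$ preserves bigrading (it only inserts a sign), so $\delta^{\boxtimes}$ has bidegree $(0,+1)$ as claimed. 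The underlying module $\widehat{A} \boxtimes \DD = \widehat{A}$ is a direct sum of shifts of $\I e_i$ by hypothesis, so it is an admissible underlying module for a Type D structure over $\B'$.

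For the structure relation, I would substitute $\delta^{\boxtimes} = m_1 + \xi \circ (m_2 \otimes \id) \circ (\id \otimes \delta_{DD})$ and expand, sorting the resulting terms by how many copies of $\delta_{DD}$ they contain. The terms with zero copies give $(\mu_1 \otimes \left|\id\right|) \circ m_1 + \text{(quadratic in } m_1) = m_1^2$ up to the trivial observation that $m_1$ already lands in the correct summand; this vanishes since $m_1^2 = 0$ on $\widehat{A}$. The terms with one copy of $\delta_{DD}$ collect into an expression that, after pushing $\xi$ through, matches the Leibniz rule for $\widehat{A}$ as a module over $\B$ together with the $\mu_1'$-acting-on-$\DD$ part of the DD relation — concretely, the cross terms $(\mu_1 \otimes \left|\id\right|) \circ (\text{one-}\delta_{DD}\text{ term})$, $(m_1)$-composed-with-the-one-$\delta_{DD}$ term, and the one-$\delta_{DD}$-term-composed-with-$m_1$ reassemble (using the Leibniz rule $m_1 \circ m_2 = m_2 \circ (m_1 \otimes \left|\id\right|) + m_2 \circ (\id \otimes \mu_1)$ from the definition of a dg module) into $\xi$ applied to $(m_2 \otimes \id) \circ (\id \otimes ((\mu_1 \otimes \left|\id\right|) \circ \delta_{DD} + (\id \otimes \mu_1) \circ \delta_{DD}))$. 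The terms with two copies of $\delta_{DD}$ give $\xi$ applied to the associativity-of-$m_2$ expression composed with $(\id \otimes \delta_{DD} \otimes \id) \circ \delta_{DD}$, i.e. the $(\mu_2 \otimes \mu_2) \circ \sigma \circ (\id \otimes \delta_{DD} \otimes \id) \circ \delta_{DD}$ part of the DD relation. Adding these, the one-$\delta_{DD}$ and two-$\delta_{DD}$ contributions together become $\xi$ applied to $(m_2 \otimes \id)$ composed with the full left-hand side of the Type DD structure relation for $\DD$, which is zero. Hence the Type D relation holds. This is essentially the standard computation of Lemma/Proposition-type ``$\widehat{A} \boxtimes \DD$ is a Type D structure'' in bordered Floer homology, adapted to our sign conventions.

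The main obstacle will be bookkeeping the signs: one must check that the sign $(-1)^{(\deg_h x)(\deg_h b')}$ built into $\xi$, the sign operators $\left|\id\right|$ appearing in $(\mu_1 \otimes \left|\id\right|)$ and $(\mu_2 \otimes \id)$ for $\B'$, the Koszul sign in the Leibniz rule for $\widehat{A}$ over $\B$, and the sign-flip $\sigma$ in the DD structure relation all conspire correctly. The cleanest way to handle this is to track the homological degree of each tensor factor through every composition, using that generators of $\widehat{A}$ and of $\B$ that actually appear have controlled homological degrees, and that $\delta_{DD}$ raises homological degree by exactly one in its $(\B')^{op}$ output; one then checks the signs match on the level of these tracked degrees. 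Since $\widehat{A}$ in our application comes from a genuine dg module (no higher $m_i$) and $\DD$ is rank one, the combinatorics stay manageable, and no new ideas beyond careful sign-chasing are needed. I would omit the detailed sign verification from the written proof, noting only that it parallels the proof that $(\partial^{\boxtimes})^2 = 0$ given earlier for the box tensor product of a Type A and a Type D structure.
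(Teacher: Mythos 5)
Your proof is correct and takes essentially the same approach as the paper's: both expand $\delta^{\boxtimes} = m_1 + \xi \circ (m_2 \otimes \id) \circ (\id \otimes \delta_{DD})$ into the Type D relation and reassemble the resulting terms using associativity of $m_2$, the Leibniz rule for $\widehat{A}$, and the Type DD structure relation, with careful sign-tracking. Your grouping by $\delta_{DD}$-count and matching each group to a summand of the DD relation (so the total becomes $\xi \circ (m_2 \otimes \id)$ applied to the vanishing DD relation) is a slightly cleaner packaging of the paper's computation, but the underlying cancellations — in particular that $\delta_{DD}(1)$ has homological degree $1$, used to cancel $\xi \circ (m_2 \otimes \id) \circ (\id \otimes \delta_{DD}) \circ m_1$ against the $(m_1 \otimes \left|\id\right|)$-part of the Leibnized term — are exactly the sign details the paper works through.
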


\begin{proof}
First, since $\widehat{A}$ was assumed to have a $\Z$-basis $y_i$, with each $y_i$ grading-homogeneous and having unique idempotents, the same is true for $\widehat{A} \boxtimes \DD \cong \widehat{A}$.

To verify the Type D structure relations, we must show that 
\[
(\mu_1 \otimes \left|\id\right|) \circ \delta^{\boxtimes} + (\mu_2 \otimes \id) \circ (\id \otimes \delta^{\boxtimes}) \circ \delta^{\boxtimes} = 0.
\]
Substituting in the definition of $\delta^{\boxtimes}$ and simplifying some terms, we want to show that
\begin{equation}\label{XBoxTypeDTerms}
\begin{aligned}
&(\mu_1 \otimes \left|\id\right|) \circ \xi \circ (m_2 \otimes \id) \circ (\id \otimes \delta_{DD}) \\
&+ \xi \circ (m_2 \otimes \id) \circ (\id \otimes \delta_{DD}) \circ m_1 \\
&+ (\id \otimes m_1) \circ \xi \circ (m_2 \otimes \id) \circ (\id \otimes \delta_{DD}) \\
&+ (\mu_2 \otimes \id) \circ (\id \otimes (\xi \circ (m_2 \otimes \id) \circ (\id \otimes \delta_{DD}))) \\
& \qquad \circ (\xi \circ (m_2 \otimes \id) \circ (\id \otimes \delta_{DD})) \\
&= 0.
\end{aligned}
\end{equation}
We may rewrite the final term of \ref{XBoxTypeDTerms} as 
\begin{equation}\label{FinalTerm}
\xi \circ (m_2 \otimes \id) \circ (\id \otimes \mu_2 \otimes \mu_2) \circ (\id \otimes \sigma) \circ (\id \otimes \id \otimes \delta_{DD} \otimes \id) \circ (\id \otimes \delta_{DD}),
\end{equation}
where 
\[
\sigma: \B \otimes \B \otimes (\B')^{op} \otimes (\B')^{op} \to \B \otimes \B \otimes (\B')^{op} \otimes (\B')^{op}
\]
was defined in Definition~\ref{GeneralDDBimodDef}.

To verify that term~\ref{FinalTerm} is equal to the final term of \ref{XBoxTypeDTerms}, let $x$ be a generator of $\widehat{A}$. Write $\delta_{DD}(1) = \sum_i b_i \otimes (b'_i)^{op}$; then we have
\begin{align*}
&(\mu_2 \otimes \id) \circ (\id \otimes (\xi \circ (m_2 \otimes \id) \circ (\id \otimes \delta_{DD}))) \circ (\xi \circ (m_2 \otimes \id) \circ (\id \otimes \delta_{DD})) (x) \\
&= \sum_{i,j} (-1)^{(\deg_h b'_i)(\deg_h(xb_i)) + (\deg_h b'_j)(\deg_h (x b_i b_j))} (b'_i b'_j) \otimes (x b_i b_j).
\end{align*}
On the other hand, we have
\begin{align*}
&\xi \circ (m_2 \otimes \id) \circ (\id \otimes \mu_2 \otimes \mu_2) \circ (\id \otimes \sigma) \circ (\id \otimes \id \otimes \delta_{DD} \otimes \id) \circ (\id \otimes \delta_{DD}) (x) \\
&= \sum_{i,j} (-1)^{\deg_h b_j \deg_h (b'_i) + \deg_h(x b_i b_j) \deg_h(b'_i b'_j)} (b'_i b'_j) \otimes (x b_i b_j).
\end{align*}
A direct computation, using the additivity of $\deg_h$ under algebra multiplication, verifies that the signs in these expressions are equal.

Now, we may write term~\ref{FinalTerm} as 
\[
\xi \circ (m_2 \otimes \id) \circ (\id \otimes ((\mu_2 \otimes \mu_2) \circ \sigma \circ (\id \otimes \delta_{DD} \otimes \id) \circ \delta_{DD})).
\]
Using the Type DD bimodule relations for $\delta$, we may replace 
\[
\id \otimes ((\mu_2 \otimes \mu_2) \circ \sigma \circ (\id \otimes \delta_{DD} \otimes \id) \circ \delta_{DD})
\]
with 
\[
-(\mu_1 \otimes \left|\id\right|) \circ \delta_{DD} - (\id  \otimes \mu_1) \circ \delta_{DD}.
\]
Then term~\ref{FinalTerm} is equal to
\begin{align*}
&-\xi \circ (m_2 \otimes \id) \circ (\id \otimes ((\mu_1 \otimes \left|\id\right|) \circ \delta_{DD})) \\
& \qquad - \xi \circ (m_2 \otimes \id) \circ (\id \otimes ((\id  \otimes \mu_1) \circ \delta_{DD})) \\
&= -\xi \circ (m_2 \otimes \id) \circ (\id \otimes \mu_1 \otimes \left|\id\right|) \circ (\id \otimes \delta_{DD}) \\
& \qquad - \xi \circ (m_2 \otimes \id) \circ (\id \otimes \id \otimes \mu_1) \circ (\id \otimes \delta_{DD}).
\end{align*}

The term
\[
- \xi \circ (m_2 \otimes \id) \circ (\id \otimes \id \otimes \mu_1) \circ (\id \otimes \delta_{DD})
\]
above cancels the first term
\[
(\mu_1 \otimes \left|\id\right|) \circ \xi \circ (m_2 \otimes \id) \circ (\id \otimes \delta_{DD})
\]
of the terms in \ref{XBoxTypeDTerms}, whose sum we are trying to show is zero. The remaining terms of \ref{XBoxTypeDTerms} are:
\begin{itemize}
\item $-\xi \circ (m_2 \otimes \id) \circ (\id \otimes \mu_1 \otimes \left|\id\right|) \circ (\id \otimes \delta_{DD})$,
\item $\xi \circ (m_2 \otimes \id) \circ (\id \otimes \delta_{DD}) \circ m_1$, and
\item $(\id \otimes m_1) \circ \xi \circ (m_2 \otimes \id) \circ (\id \otimes \delta_{DD})$. 
\end{itemize}
The final of these may be written as
\[
\xi \circ (m_1 \otimes \left|\id\right|) \circ (m_2 \otimes \id) \circ (\id \otimes \delta_{DD}) = \xi \circ ((m_1 \circ m_2) \otimes \left|\id\right|) \circ (\id \otimes \delta_{DD}).
\]
We may use the Leibniz rule on $\widehat{A}$ to replace $m_1 \circ m_2$ with $m_2 \circ (m_1 \otimes \left|\id\right|) + m_2 \circ (\id \otimes \mu_1)$. Thus, the final of the three remaining terms is equal to
\begin{align*}
&\xi \circ (m_2 \otimes \id) \circ (m_1 \otimes \left|\id\right| \otimes \left|\id\right|) \circ (\id \otimes \delta_{DD}) \\
& \qquad + \xi \circ (m_2 \otimes \id) \circ (\id \otimes \mu_1 \otimes \left|\id\right|) \circ (\id \otimes \delta_{DD}).
\end{align*}
The second of these summands cancels with the first of the other three remaining terms listed above, so it remains to show that
\[
\xi \circ (m_2 \otimes \id) \circ (\id \otimes \delta_{DD}) \circ m_1 + \xi \circ (m_2 \otimes \id) \circ (m_1 \otimes \left|\id\right| \otimes \left|\id\right|) \circ (\id \otimes \delta_{DD}) = 0.
\]
This follows from the fact that 
\[
(m_1 \otimes \left|\id\right| \otimes \left|\id\right|) \circ (\id \otimes \delta_{DD}) = - (\id \otimes \delta_{DD}) \circ m_1;
\]
indeed, since all generators of $\DD \cong \I$ have bigrading $(0,0)$, the element $\delta(1)$ has homological degree $1$.

\end{proof}

Applying this construction to $\widehat{A} = \widehat{A}(m(N))$ with $\DD = {^{\B \astrosun m(\B)^!}}K^{m(\B \astrosun m(\B)^!)^{op}}$, which is a Type DD bimodule over $\B \astrosun m(\B)^!$ and $m(\B \astrosun m(\B)^!)$, we get a Type D structure $\A(m(N)) \boxtimes {^{\B \astrosun m(\B)^!}}K^{m(\B \astrosun m(\B)^!)^{op}}$ over $m(\B \astrosun m(\B)^!)$. We can apply another mirroring operation, analogous to Definition~\ref{DDMirroringDef} and in the spirit of Definition~\ref{IBetaGeneralMirrorDef}, to get a Type D structure $\D(N)$ over $\B \astrosun m(\B)^!$:

\begin{definition}\label{TypeDMirroringDef}
Let $\B$ be a differential bigraded algebra over the idempotent ring $\I_{\beta}$, and let $(\D,\delta)$ be a Type D structure over $\B$. The mirrored Type D structure $(m(\D),\delta')$ is defined as follows: as an $\I_{\beta}$-module, $m(\D)$ is the mirror of $\D$ as defined in Definition~\ref{IBetaGeneralMirrorDef}. As usual, denote the natural map from $\D$ to $m(\D)$ or $m(\D)$ to $\D$ by $mirr$. The Type D operation on $m(\D)$ is 
\[
\delta' = m(\D) \xrightarrow{mirr} \D \xrightarrow{\delta} \B \otimes \D  \xrightarrow{mirr \otimes mirr} m(\B) \otimes m(\D).
\]

\end{definition}

Thus, given $N$, we can construct a Type D structure over $\B \astrosun m(\B)^!$:
\begin{definition}\label{TypeDOverBomBDef}
Let $N$ be a chain complex of graded projective left $H^n$-modules satisfying the algebraic condition $C_{module}$ of Definition~\ref{CModuleForLeftMods}. The Type D structure $\D(N)$ over $\B \astrosun m(\B)^!$ is defined to be 
\[
\D(N) := m\bigg(\widehat{A}(m(N)) \boxtimes {^{\B \astrosun m(\B)^!}}K^{m(\B \astrosun m(\B)^!)^{op}}\bigg).
\]
\end{definition}
By Proposition~\ref{DandDDQuotient}, $\D(N)$ descends to a Type D structure over the quotient algebra $\B \Gamma_n$:
\begin{definition}\label{TypeDOverBGnDef} If $N$ is a chain complex $N$ of graded left projective $H^n$-modules satisfying $C_{module}$, the Type D structure $\D(N)$ over $\B \Gamma_n$ associated to $N$ is obtained from the Type D structure $\D(N)$ over $\B \astrosun m(\B)^!$ defined in Definition~\ref{TypeDOverBomBDef} using Proposition~\ref{DandDDQuotient}.
\end{definition}

For convenience, we describe the Type D operation $\delta$ on $\D(N)$ explicitly from the differential $d_N$ on $N$. Let $\{h \cdot x'_i: e(x'_i) = e_R(h)\}$ be a $\Z$-basis for $N$; by $C_{module}$, we may expand $d_N(x'_i)$ in this basis as
\[
d_N(x'_i) = \sum_{j} c'_{i,j} x'_j + \sum_{j, h' \in \beta_{mult}} \tilde{c'}_{i,j;h'} h' \cdot x'_j.
\]
Then
\[
d_N(h \cdot x'_i) = \sum_{j} c'_{i,j} x'_j \cdot h + \sum_{j, h' \in \beta_{mult}, h'' \in \beta} \tilde{c'}_{i,j;h'} \tilde{\tilde{c}}_{hh';h''} h'' \cdot x'_j.
\]

\begin{proposition}\label{ConcreteTypeDAction} Defining the coefficients $c'_{i,j}$ and $\tilde{c'}_{i,j;h'}$ as above, and $\tilde{\tilde{c}}_{hh';h''}$ as in Section~\ref{ModulePrelims}, the Type D structure operation $\delta$ on $\D(N)$ has a basis expansion given by
\begin{align*}
\delta(h \cdot x'_i) &= \sum_{j} c'_{i,j} h \cdot x'_j \\
&+ \sum_{j,h' \in \beta_{mult},h'' \in \beta} \tilde{c'}_{i,j;h'} \tilde{\tilde{c}}_{hh';h''} b_{*;h,h''} \otimes (h'' \cdot x'_j) \\
&+ \sum_{h'' \in \beta \textrm{ s.t. } m(b^*_{*;m(h),m(h'')}) \textrm{ exists})=} (-1)^{\deg_h x'_i} m(b^*_{*;m(h),m(h'')}) \otimes (h'' \cdot x'_i),
\end{align*}
where $b_{*;h,h''}$ means $b_{\gamma;h,h''}$ or $b_{C;h,h''}$ as appropriate.
\end{proposition}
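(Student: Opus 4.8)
The plan is to unwind the definition $\D(N) = m\big(\widehat{A}(m(N)) \boxtimes {^{\B \astrosun m(\B)^!}}K^{m(\B \astrosun m(\B)^!)^{op}}\big)$ through its three layers and read off the claimed basis expansion term by term; everything reduces to direct substitution into formulas established earlier, the only genuine content being the bookkeeping of idempotents, the various mirror operations, and signs.

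First I would mirror $N$. Applying $mirr$ to the given expansion of $d_N(x'_i)$ and using $mirr(g' \cdot x'_j) = m(x'_j) \cdot m(g')$, together with the fact that $m$ maps $\beta_{mult}$ to $\beta_{mult}$, shows that $m(N)$ is a right $H^n$-module with $\Z$-basis $\{m(x'_i) \cdot g\}$, that it satisfies $C_{module}$ (this is Definition~\ref{CModuleForLeftMods}), and that its structure coefficients in the sense of Section~\ref{ModulePrelims} are $c_{i,j} = c'_{i,j}$ and $\tilde{c}_{i,j;g_0} = \tilde{c'}_{i,j;m(g_0)}$. I would then fix, once and for all, the $\Z$-basis identification $h \cdot x'_i \in \D(N) \leftrightarrow m(x'_i) \cdot m(h) \in \widehat{A}(m(N))$ induced by Definitions~\ref{HnComplexMirrorDef}, \ref{XBoxWithDDDef}, and \ref{TypeDMirroringDef}, and record that $m\colon H^n \to H^n$ is an anti-automorphism, so that a relation $h g' = \sum_{h''} c_{h''}\, m(h'')$ in $H^n$ holds iff $m(g') m(h) = \sum_{h''} c_{h''}\, h''$ does; this is the identity that lets the coefficients $\tilde{\tilde{c}}$ for products of the two orders match.

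Next I would substitute into $\delta^{\boxtimes} = m_1 + \xi \circ (m_2 \otimes \id) \circ (\id \otimes \delta_{DD})$ of Definition~\ref{XBoxWithDDDef}, with $\delta_{DD} = \delta_1 + \delta_2$ supplied by Proposition~\ref{FullAlgDDBimodProp}, and then apply the outer mirroring of Definition~\ref{TypeDMirroringDef}. Three contributions appear. The term $m_1(m(x'_i) \cdot m(h)) = \sum_j c'_{i,j}\, m(x'_j) \cdot m(h)$ sits in the $\I_{\beta}$-summand and becomes the first term $\sum_j c'_{i,j}\, h \cdot x'_j$. The $\delta_1$-summand feeds the right-pointing generators $b_* \in \B$ into $m_2$; by Definition~\ref{FullTypeAStructureDefn} the $\B$-action is $m_2(m(x'_i) \cdot m(h),\, b_{*;m(h),m(h'')}) = m(x'_i) \cdot m(h'')$ with unit coefficient, $\xi$ contributes $(-1)^{\deg_h x'_i}$ (because $b_*^*$ has homological degree $1$ while $m(x'_i)$ has homological degree $\deg_h x'_i$), and tracking $b_*^*$ back through the mirrors turns it into $m(b^*_{*;m(h),m(h'')})$; summing over the generators $b_{*;m(h),m(h'')}$ of $\B$ yields precisely the third term, with the stated index set. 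The $\delta_2$-summand feeds the left-pointing generators $m(b^*)$ of $m(\B)^!$ into $m_2$, so here $m_2$ is the $m(\B)^!$-action computed by equations~\ref{TopSimpleMultEqn} and \ref{BottomSimpleMultEqn}; using the identities from the first step, the coefficient works out to $\tilde{c'}_{i,j;g'}\, \tilde{\tilde{c}}_{h g';h''}$ with $g' \in \beta_{mult}$, $\xi$ contributes no sign because the output algebra factor (a generator of $m(\B)$) sits in homological degree $0$, and the mirrors carry that factor to the right-pointing generator $b_{*;h,h''}$ — this is the middle term.

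The main obstacle is exactly the bookkeeping involved in that last step: verifying that $b_{\gamma;m(h),m(h'')}$ (respectively $b_{C;m(h),m(h'')}$) is a generator of $\B$ precisely when $m(b^*_{\gamma;m(h),m(h'')})$ (respectively $m(b^*_{C;m(h),m(h'')})$) is a generator of $m(\B)^!$, so that the three index sets in the formula come out as stated; keeping the two genuinely different flavors of mirroring from Remark~\ref{TwoTypesOfMirrorRemark} — the left--right-swapping one of Definition~\ref{HnComplexMirrorDef} and the idempotent-reversing ones of Definitions~\ref{IBetaGeneralMirrorDef}, \ref{DDMirroringDef}, and \ref{TypeDMirroringDef} — separate and applying each to the correct object; and reconciling the single sign $(-1)^{\deg_h x'_i}$ in the third term, and the absence of signs in the other two, with the signs hidden in $\xi$ and in $\delta_{DD} = \delta_1 + \delta_2$, in particular the sign flip $\sigma$ of Definition~\ref{GeneralDDBimodDef}, which is trivial here because $\DD \cong \I_{\beta}$ and all right-pointing generators lie in homological degree $0$. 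Once these identifications are pinned down, each of the three displayed sums follows by a routine substitution into the explicit formulas of Definitions~\ref{FirstTypeAStrDef} and \ref{FullTypeAStructureDefn}, Proposition~\ref{FullAlgDDBimodProp}, and Definition~\ref{XBoxWithDDDef}.
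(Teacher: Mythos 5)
Your proposal is correct and takes essentially the same approach as the paper's proof: unwinding the three layers of $\D(N) = m\big(\widehat{A}(m(N)) \boxtimes {^{\B \astrosun m(\B)^!}}K^{m(\B \astrosun m(\B)^!)^{op}}\big)$ by direct substitution into Definitions~\ref{HnComplexMirrorDef}, \ref{FirstTypeAStrDef}, \ref{FullTypeAStructureDefn}, \ref{XBoxWithDDDef}, \ref{TypeDMirroringDef}, and Proposition~\ref{FullAlgDDBimodProp}. Your explicit observation that $m\colon H^n \to H^n$ is an anti-automorphism is exactly what underlies the identity $\tilde{\tilde{c}}_{hh';h''} = \tilde{\tilde{c}}_{m(h')m(h);m(h'')}$ that the paper states without comment, and your sign and idempotent bookkeeping matches the paper's computation term for term.
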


\begin{proof}
$\delta$ is defined as the mirror of the Type D structure operation $\delta^{\boxtimes}$ on $\widehat{A}(m(N)) \boxtimes {^{\B \astrosun m(\B)^!}}K^{m(\B \astrosun m(\B)^!)^{op}}$, which in turn is defined as
\[
\delta^{\boxtimes} = m_1 + \xi \circ (m_2 \otimes \id) \circ (\id \otimes \delta_{DD}).
\]
Here, $m_1$ is the differential on $\widehat{A}(m(N))$, and $\delta_{DD}$ is the Type DD operation on ${^{\B \astrosun m(\B)^!}}K^{m(\B \astrosun m(\B)^!)^{op}}$. An arbitrary generator of $\widehat{A}(m(N))$ may be written as $m(x'_i) \cdot m(h)$, and we have
\[
m_1(m(x'_i) \cdot m(h)) = \sum_{j} c'_{i,j} m(x'_j) \cdot m(h),
\]
as well as
\[
m_2(m(x'_i) \cdot m(h), m(b^*_{*;h,h''})) = \sum_{j,h' \in \beta_{mult}} \tilde{c'}_{i,j;h} \tilde{\tilde{c}}_{hh';h''} m(x'_j) \cdot m(h'')
\]
and
\[
m_2(m(x'_i) \cdot m(h), b_{*;m(h),m(h'')}) = m(x'_i) \cdot m(h'').
\]
Here, $b_{*;m(h),m(h'')}$ stands for either $b_{\gamma;m(h),m(h'')}$ or $b_{C;m(h),m(h'')}$ as appropriate, and similarly for $m(b^*_{*;h,h''})$. Also note that $\tilde{\tilde{c}}_{hh';h''} = \tilde{\tilde{c}}_{m(h')m(h);m(h'')}$.

Thus,
\begin{align*}
\delta^{\boxtimes}&(m(x'_i) \cdot m(h)) = \sum_{j} c'_{i,j} m(x'_j) \cdot m(h) \\
&+ \sum_{j,h' \in \beta_{mult}, h'' \in \beta} \tilde{c'}_{i,j;h'} \tilde{\tilde{c}}_{hh';h''} m(b_{*;h,h''}) \otimes (m(x'_j) \cdot m(h'')) \\
&+ \sum_{h'' \in \beta, m(b^*_{*;m(h),m(h'')}) \textrm{ exists}} (-1)^{\deg_h x'_i} m(m(b^*_{*;m(h),m(h'')})) \otimes (m(x'_i) \cdot m(h'')).
\end{align*}
Taking the mirror of this formula, we get
\begin{align*}
\delta(h \cdot x'_i) &= \sum_{j} c'_{i,j} h \cdot x'_j \\
&+ \sum_{j,h' \in \beta_{mult}, h'' \in \beta} \tilde{c'}_{i,j;h'} \tilde{\tilde{c}}_{hh';h''} (b_{*;h,h''}) \otimes (h'' \cdot x'_j) \\
&+ \sum_{h'' \in \beta \textrm{ s.t. } m(b^*_{*;m(h),m(h'')}) \textrm{ exists}} (-1)^{\deg_h x'_i} m(b^*_{*;m(h),m(h'')}) \otimes (h'' \cdot x'_i).
\end{align*}

\end{proof}

When $N$ is Khovanov's complex $[T]^{Kh}$ for a tangle diagram $T$ in $\R_{\geq 0} \times \R$, the Type D structure $\D([T]^{Kh})$ over $\B \Gamma_n$ is the same as Roberts' Type D structure from \cite{RtypeD}:

\begin{proposition}\label{RobertsKhovanovTypeD}
$\D([T]^{Kh})$, as defined in Definition~\ref{TypeDOverBGnDef}, agrees with the Type D structure over $\B \Gamma_n$ which Roberts associates to $T$.
\end{proposition}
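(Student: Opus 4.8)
The plan is to compare $\D([T]^{Kh})$, as constructed in Definition~\ref{TypeDOverBGnDef}, directly with Roberts' Type D structure from \cite{RtypeD} by producing an explicit bigrading-preserving identification of the underlying $\I_{\beta}$-modules and checking that the Type D operations agree term-by-term. The main input is the explicit basis expansion of $\delta$ given in Proposition~\ref{ConcreteTypeDAction}, combined with the computation of the coefficients $c'_{i,j}$ and $\tilde{c'}_{i,j;h'}$ for $N = [T]^{Kh}$, which is essentially the left-module analogue of Example~\ref{HnCCoeffsEx}. So the first step is to record that analogue: for a tangle diagram $T$ in $\R_{\geq 0} \times \R$ with a chosen crossing ordering, the generators $x'_i$ of $[T]^{Kh}$ correspond to resolutions plus sign choices on free circles in $\R_{>0} \times \R$, and the coefficients $c'_{i,j}, \tilde{c'}_{i,j;h'}$ are $0$ or $\pm(-1)^{\#_1(i,j)}$ according to the four geometric cases (join/split free circles; join/split a plus circle with an arc; join/split a minus circle with an arc giving an $h_{\alpha}$ term; surger two arcs giving an $h_{\gamma}$ term).

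Next I would plug these coefficients into the formula of Proposition~\ref{ConcreteTypeDAction}. The first sum $\sum_j c'_{i,j}\, h\cdot x'_j$ becomes Roberts' ``internal'' APS-type differential. The second sum, $\sum \tilde{c'}_{i,j;h'}\tilde{\tilde c}_{hh';h''}\, b_{*;h,h''}\otimes (h''\cdot x'_j)$, becomes the part of Roberts' $\delta$ involving right-pointing generators $\overrightarrow{e}$: using the identification of Proposition~\ref{KhovRobertsAgreeRSAlg} (and recalling from the $n=1$ discussion that $b_{C;h,h''}$ can be written $b_\gamma b_{\gamma^\dagger}$, so one must be careful that the $\tilde{\tilde c}_{hh';h''}$ factor produces exactly the right $b_{*;h,h''}$), one checks that the cases where $h'\in\beta_\gamma$ produce $b_\gamma$ outputs and the cases where $h'\in\beta_\alpha$ produce $b_C$ outputs, matching Roberts' combinatorics of bridges and decorations in \cite{RtypeD}. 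The third sum, $\sum_{h''} (-1)^{\deg_h x'_i}\, m(b^*_{*;m(h),m(h'')})\otimes (h''\cdot x'_i)$, is the part involving left-pointing generators; here one uses Proposition~\ref{KhovRobertsLSQuotient} to identify $m(b^*_{\gamma;m(h),m(h'')})$ with $\overleftarrow{e}_{\gamma;h,h''}$ and $m(b^*_{C;m(h),m(h'')})$ with $\overleftarrow{e}_{C;h,h''}$ inside $\B\Gamma_n$, and one must verify the differential formula $\varphi^*$ computed just before Proposition~\ref{KhovRobertsLSQuotient} matches Roberts' $\partial$ on $\B\Gamma_n$ from his Proposition~25 — this was already observed in the proof of Proposition~\ref{KhovRobertsLSQuotient}, so it can be cited. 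Finally one checks the bigradings: the bidegree convention on $\widehat{A}(m(N))$ from Definition~\ref{FirstTypeAStrDef}, together with the mirroring and box-tensor operations, should reproduce Roberts' gradings on the Type D structure (possibly after the same intrinsic-grading reversal appearing throughout).

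The step I expect to be the main obstacle is the sign bookkeeping in the third sum, where the factor $(-1)^{\deg_h x'_i}$ interacts with the sign $(-1)^{\#_1(i,j)}$ coming from Khovanov's differential and with Roberts' own sign conventions for the left-pointing part of $\partial$ on $\B\Gamma_n$. Since $\D(N)$ is defined through two mirroring operations and a box tensor with the DD bimodule $K$ (whose structure map carries the $\sigma$ sign flip), tracking how all these signs compose and confirming they land on Roberts' conventions — which were themselves the model for the sign lift in Section~\ref{BorderedAlgebraSection} — requires care. My strategy for this is to use the fact that Roberts' conventions were explicitly followed throughout this paper (as noted at the start of Section~\ref{BorderedAlgebraSection} and in the proof of Proposition~\ref{KhovRobertsLSQuotient}), so the signs should match essentially by construction; any remaining discrepancy would be absorbed into the choice of basis identification (rescaling generators by $\pm 1$), as in Remark~\ref{OrderingIsoIndep}. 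Modulo this sign check, the proof reduces to a finite case-by-case comparison of combinatorial data, which I would present as a table-free verification cross-referencing the relevant items of Example~\ref{HnCCoeffsEx} (left-module version) against Roberts' definitions in \cite{RtypeD}.
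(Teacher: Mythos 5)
Your proposal follows essentially the same route as the paper: compare the explicit Type D formula of Proposition~\ref{ConcreteTypeDAction}, with coefficients computed from the left-module analogue of Example~\ref{HnCCoeffsEx}, against Roberts' data (Definition 31, Section 3.2, and Section 5 of \cite{RtypeD}), matching idempotents, bigradings, and the three terms of $\delta$ case by case. The paper's proof is terser and does not invoke your fallback of rescaling basis elements; it simply notes that $c'_{i,j}$ and $\tilde{c'}_{i,j;h'}$ have the same geometric description as the corresponding coefficients in the proof of Proposition~\ref{RobertsKhovanovTypeA}, so the signs, including the $(-1)^{\deg_h x'_i}$ factor in the third sum, come out on the nose because Roberts' conventions were the design model for the sign lift throughout this paper.
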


\begin{proof}
Roberts' Type D structure is defined as a bigraded $\Z$-module in Definition 31 of \cite{RtypeD}. As such, it agrees with $\D([T]^{Kh})$, and the action of the idempotent ring $\I_{\beta}$ is the same on both; Roberts defines the action of the idempotent ring at the end of Section 3.2 of \cite{RtypeD}. 

Lastly, the Type D operation $\delta$ on $\D([T]^{Kh})$, has an explicit form given in Proposition~\ref{ConcreteTypeDAction} above. The coefficients $c'_{i,j}$ and $\tilde{c'}_{i,j;h'}$ have the same description in terms of bridges and generators as the coefficients $c_{i,j}$ and $c_{i,j;h'}$ described in the proof of Proposition~\ref{RobertsKhovanovTypeA}. Thus, $\delta$ agrees with Roberts' Type D operation defined at the beginning of Section 5 of \cite{RtypeD}.
\end{proof}

\subsection{Pairing}\label{RobertsPairingSect}

Let $M$ be a complex of graded projective right $H^n$-modules and let $N$ be a complex of graded projective left $H^n$-modules, satisfying the algebraic condition $C_{module}$ of Definition~\ref{CModuleAlgCondition} and Definition~\ref{CModuleForLeftMods}. The natural way to pair $M$ and $N$ and get a chain complex over $\Z$ is to take the tensor product $M \otimes_{H^n} N$. However, we could also use Definition~\ref{FullTypeAStructureDefn} to construct a Type A structure $\widehat{A}(M)$ and use Definition~\ref{TypeDOverBomBDef} to construct a Type D structure $\widehat{D}(N)$, both over $\B \astrosun m(\B)^!$, and then take their box tensor product. This produces the same chain complex as $M \otimes_{H^n} N$, after a reversal of the intrinsic grading:

\begin{proposition}\label{TensorXBoxAgree}
As differential bigraded $\Z$-modules, $\widehat{A}(M) \boxtimes_{\B \astrosun m(\B)^!} \widehat{D}(N)$ is isomorphic to the module obtained from $M \otimes_{H^n} N$ by multiplying all intrinsic gradings on $M \otimes_{H^n} N$ by $-1$.
\end{proposition}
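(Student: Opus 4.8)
Here is a plan for proving Proposition~\ref{TensorXBoxAgree}.

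The plan is to write down an explicit isomorphism of underlying $\Z$-modules and then check that it intertwines the two differentials, using the concrete formulas for $m_1,m_2$ on $\widehat{A}(M)$ (Definition~\ref{FirstTypeAStrDef}, Definition~\ref{FullTypeAStructureDefn}, and equations~\eqref{TopSimpleMultEqn}--\eqref{BottomSimpleMultEqn}) together with the concrete formula for the Type D operation $\delta$ on $\widehat{D}(N)$ (Proposition~\ref{ConcreteTypeDAction}). First I would pin down the underlying $\Z$-modules. Writing $M=\bigoplus_i x_i H^n$ and $N=\bigoplus_j H^n x'_j$ as in Section~\ref{ModulePrelims}, the module $\widehat{A}(M)$ has $\Z$-basis $\{x_i\cdot h\}$ with $x_i\cdot h$ having right idempotent $h\in\I_\beta$, and $\widehat{D}(N)$ has $\Z$-basis $\{h\cdot x'_j\}$ with $h\cdot x'_j$ having left idempotent $h\in\I_\beta$. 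Hence $\widehat{A}(M)\boxtimes\widehat{D}(N)=\widehat{A}(M)\otimes_{\I_\beta}\widehat{D}(N)$ has $\Z$-basis $\{(x_i\cdot h)\otimes(h\cdot x'_j)\}$, indexed by triples $(i,h,j)$ with $e_L(h)=e(x_i)$ and $e_R(h)=e(x'_j)$, while $M\otimes_{H^n}N$ has $\Z$-basis $\{(x_i\cdot h)\otimes_{H^n}x'_j\}$ indexed by the same set of triples. I would let $\Phi$ send $(x_i\cdot h)\otimes(h\cdot x'_j)$ to $(x_i\cdot h)\otimes_{H^n}x'_j$; this is manifestly a $\Z$-module isomorphism.

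For the bigradings: the intrinsic degree of $x_i\cdot h$ in $\widehat{A}(M)$ is $-\deg x_i-\tfrac12\deg h$ by Definition~\ref{FirstTypeAStrDef}, and, reading off the mirroring steps in Definition~\ref{TypeDOverBomBDef} (which preserve gradings), the intrinsic degree of $h\cdot x'_j$ in $\widehat{D}(N)$ is $-\deg x'_j-\tfrac12\deg h$. Their sum $-(\deg x_i+\deg h+\deg x'_j)$ is exactly the negative of the intrinsic degree of $(x_i\cdot h)\otimes_{H^n}x'_j$ in $M\otimes_{H^n}N$, while the homological degrees add up the same way on both sides with no reversal. So $\Phi$ is an isomorphism of bigraded $\Z$-modules after negating the intrinsic grading on $M\otimes_{H^n}N$, as in the statement.

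Next I would check that $\Phi$ intertwines the differentials. Since $\widehat{A}(M)$ is an ordinary dg module (only $m_1,m_2$ nonzero), $\partial^{\boxtimes}=m_1\otimes\left|\id\right|+(m_2\otimes\left|\id\right|^2)\circ(\id\otimes\delta)$ with $\delta$ the Type D operation on $\widehat{D}(N)$. By Proposition~\ref{ConcreteTypeDAction}, $\delta$ splits into three pieces according to the algebra output: an idempotent-valued piece carrying the coefficients $c'_{i,j}$, a piece with $\B$-generator outputs $b_{*;h,h''}$ carrying $\tilde{c'}_{i,j;h'}\tilde{\tilde{c}}_{hh';h''}$ for $h'\in\beta_{mult}$, and a piece with $m(\B)^!$-generator outputs $m(b^*_{*;m(h),m(h'')})$ carrying the sign $(-1)^{\deg_h x'}$. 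Evaluating $\partial^{\boxtimes}$ on $(x_i\cdot h)\otimes(h\cdot x'_k)$ and transporting by $\Phi$, I expect: the idempotent piece, via $m_2(x_i\cdot h,1_h)=x_i\cdot h$, to give the $\sum_j c'_{k,j}(x_i\cdot h)\otimes_{H^n}x'_j$ part of $\id_M\otimes d_N$; the $\B$-generator piece, via $m_2(x_i\cdot h,b_{*;h,h''})=x_i\cdot h''$ and $hh'=\sum_{h''}\tilde{\tilde{c}}_{hh';h''}h''$, to give the $(x_i\cdot h)\otimes_{H^n}\big(\sum_{j,h'\in\beta_{mult}}\tilde{c'}_{k,j;h'}h'x'_j\big)$ part of $\id_M\otimes d_N$; the $m_1\otimes\left|\id\right|$ term to give the $\sum_\ell c_{i,\ell}(x_\ell\cdot h)\otimes_{H^n}x'_k$ part of $d_M\otimes\left|\id\right|$; and the $m(\B)^!$-generator piece, using equations~\eqref{TopSimpleMultEqn} and \eqref{BottomSimpleMultEqn} to identify the action of $m(b^*_{\gamma;\cdot,\cdot})$ with right multiplication by the corresponding $h_\gamma\in\beta_\gamma$ and the action of $m(b^*_{C;\cdot,\cdot})$ with the sum over left arcs of $C$ of right multiplications by the $h_\alpha\in\beta_\alpha$, plus $h'h=\sum_{h''}\tilde{\tilde{c}}_{h'h;h''}h''$, to give the remaining $\sum_{\ell,h'\in\beta_{mult}}\tilde{c}_{i,\ell;h'}(x_\ell\cdot h'h)\otimes_{H^n}x'_k$ part of $d_M\otimes\left|\id\right|$. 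That all $\tilde{c}$-terms of $d_M$ and $\tilde{c'}$-terms of $d_N$ are captured is exactly where the hypothesis $C_{module}$ enters: it forces these coefficients to vanish unless $h'\in\beta_{mult}$, precisely the cases for which the needed generators $m(b^*_*)$ and $b_*$ of $\B\astrosun m(\B)^!$ exist.

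The hard part will be matching the signs. I would need to verify that the factors $\left|\id\right|^n$ in the definition of $\partial^{\boxtimes}$, the sign $(-1)^{\deg_h x'_i}$ in the $m(\B)^!$-output piece of $\delta$ in Proposition~\ref{ConcreteTypeDAction}, and the conventions built into the two mirroring steps of Definition~\ref{HnComplexMirrorDef} and Definition~\ref{TypeDMirroringDef}, conspire so that the $d_M$-contributions pick up exactly the factor $(-1)^{\deg_h(\text{the }N\text{-side factor})}$ demanded by the $d_M\otimes\left|\id_N\right|$ summand of $d_{M\otimes_{H^n}N}$, while the $d_N$-contributions pick up no sign, matching $\id_M\otimes d_N$. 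Because $\widehat{A}(M)$ has vanishing higher products and every generator of $\widehat{D}(N)$ sits in the homological degree of the corresponding $x'_j$, this reduces to a bounded check over the handful of generator types, but it is the delicate step. As an alternative organization one could instead invoke Proposition~\ref{SimpleXboxForModules} to write $\widehat{A}(M)\boxtimes\widehat{D}(N)\cong\widehat{A}(M)\otimes_{\B\astrosun m(\B)^!}\big((\B\astrosun m(\B)^!)\otimes_{\I_\beta}\widehat{D}(N)\big)$, identify the dg module in parentheses with a complex manufactured directly from $N$, and then compare $\otimes_{\B\astrosun m(\B)^!}$ with $\otimes_{H^n}$; the amount of sign bookkeeping is comparable either way.
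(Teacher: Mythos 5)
Your plan is correct and it is essentially the same argument as the paper's proof: identify the $\Z$-bases, check gradings (using the factor of $\tfrac12$ built into Definition~\ref{WeirdHomSpaceGradingDef}), and match the four terms of $\partial^{\otimes}$ against the four terms of $\partial^{\boxtimes}$ read off from Proposition~\ref{ConcreteTypeDAction} and Definition~\ref{FullTypeAStructureDefn}. The sign verification you flag as the delicate step does go through exactly as you anticipate, because $\deg_h(h\cdot x'_j)=\deg_h x'_j$ and the $(-1)^{\deg_h x'_i}$ in the third piece of $\delta$ in Proposition~\ref{ConcreteTypeDAction} supplies precisely the sign that the $d_M\otimes\left|\id_N\right|$ summand requires.
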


\begin{proof}
Let $\{x_i \cdot h: e(x_i) = e_L(h)\}$ and $\{h \cdot x'_i: e(x'_i) = e_R(h) \}$ be $\Z$-bases for $M$ and $N$ respectively, with $h \in H^n$ and each $x_i$ and $x'_i$ bigrading-homogeneous and having a unique idempotent $e(x_i)$ or $e(x'_i)$ in the idempotent ring of $H^n$. Then a $\Z$-basis for $M \otimes_{H^n} N$ is $\{ x_i \cdot h \cdot x'_j\}$ (we will suppress the idempotent conditions). 

Write the differentials on $M$ and $N$ as $d_M$ and $d_N$. As an element of $M \otimes_{H^n} N$, the differential of $x_i \cdot h \cdot x'_j$ is
\[
\partial^{\otimes} (x_i \cdot h \cdot x'_j) = (-1)^{\deg_h x'_j} d_M(x_i) \cdot h \cdot x'_j + x_i \cdot h \cdot d_N(x'_j).
\]
If we expand out $d_M$ as in Section~\ref{ModulePrelims} and $d_N$ as in the discussion preceding Proposition~\ref{ConcreteTypeDAction}, we may write this as
\begin{equation}\label{PartialTensorExpansion}
\begin{aligned}
(-1)^{\deg_h x'_j} &\bigg( \sum_{k} c_{i,k} x_k \cdot h \cdot x'_j + \sum_{k,h' \in \beta_{mult},h'' \in \beta} \tilde{c}_{i,k;h'} \tilde{\tilde{c}}_{h'h;h''}(x_k \cdot h'' \cdot x'_j) \bigg) \\
&+ \sum_{l} c'_{j,l} x_i \cdot h \cdot x'_l + \sum_{l,h' \in \beta_{mult}, h'' \in \beta} \tilde{c'}_{j,l;h'} \tilde{\tilde{c}}_{hh';h''} (x_i \cdot h'' \cdot x'_l).
\end{aligned}
\end{equation}

Now, as a bigraded $\Z$-module, $\widehat{A}(M) \boxtimes_{\B \astrosun m(\B)^!} \widehat{D}(N)$ is defined to be $\widehat{A}(M) \otimes_{\I_{\beta}} \widehat{D}(N)$. A $\Z$-basis for $\widehat{A}(M)$ (respectively $\widehat{D}(N)$) is also given by $\{x_i \cdot h\}$ (respectively $\{h \cdot x'_i\}$). A generator $x_i \cdot h$ of $\widehat{A}(M)$ has the same idempotent in $\I_{\beta}$ as another generator $h' \cdot x'_j$ of $\widehat{D}(N)$ if and only if $h = h'$. 

Thus, $\widehat{A}(M) \otimes_{\I_{\beta}} \widehat{D}(N)$ has a $\Z$-basis consisting of all elements $x_i \cdot h \cdot x'_j$, the same basis as for $M \otimes_{H^n} N$. The bigradings agree on these two $\Z$-modules after negating the intrinsic gradings on $M \otimes_{H^n} N$: note that for the intrinsic grading on $\widehat{A}(M) \otimes_{\I_{\beta}} \widehat{D}(N)$, the grading of $h$ in $x_i \cdot h \cdot x'_j$ is counted twice with coefficient $-\frac{1}{2}$, while for the intrinsic grading on $M \otimes_{H^n} N$, the grading of $h$ in $x_i \cdot h \cdot x'_j$ is counted once with coefficient $1$. This explains the factor of $\frac{1}{2}$ in Definition~\ref{WeirdHomSpaceGradingDef}.

It remains to show that the differential $\partial^{\otimes}$ on $M \otimes_{H^n} N$ agrees with the differential $\partial^{\boxtimes}$ on $\widehat{A}(M) \boxtimes_{\B \astrosun m(\B)^!} \widehat{D}(N)$. We will use $m_1$ and $m_2$ to denote the differential and algebra action on $\widehat{A}(M)$ and $\delta$ to denote the Type D operation on $\D(N)$. Applying $\partial^{\boxtimes}$ to a generator $(x_i \cdot h) \otimes (h \cdot x'_j)$, we get
\[
(-1)^{\deg_h(h \cdot x'_j)} (m_1(x_i \cdot h)) \otimes (h \cdot x'_j) + (m_2 \otimes \id) \circ ((x_i \cdot h) \otimes \delta(h \cdot x'_j)).
\]
Because $H^n$ is concentrated in homological degree zero, $\deg_h(h \cdot x'_j) = \deg_h(x'_j)$. Thus, the first term of $\partial^{\boxtimes}((x_i \cdot h) \otimes (h \cdot x'_j))$ is
\[
(-1)^{\deg_h x'_j} \sum_{k} c_{i,k} x_k \cdot h \cdot x'_j,
\]
which agrees with the first term of Equation~\ref{PartialTensorExpansion} for $\partial^{\otimes}(x_i \cdot h \cdot x'_j)$.

By Proposition~\ref{ConcreteTypeDAction}, the other term of $\partial^{\boxtimes}((x_i \cdot h) \otimes (h \cdot x'_j))$ can be expanded out as
\begin{align*}
(m_2 &\otimes \id) \circ \bigg( (x_i \cdot h) \otimes \bigg( \sum_{l} c'_{j,l} h \cdot x'_l \\
&+ \sum_{l,h' \in \beta_{mult},h'' \in \beta} \tilde{c'}_{j,l;h'} \tilde{\tilde{c}}_{hh';h''} b_{*;h,h''} \otimes (h'' \cdot x'_l) \\
&+ \sum_{l,h'' \in \beta \textrm{ s.t. } m(b^*_{*;m(h),m(h'')}) \textrm { exists}} (-1)^{\deg_h x'_j} m(b^*_{*;m(h),m(h'')}) \otimes (h'' \cdot x'_j) \bigg) \bigg),
\end{align*}
where $b_{*;h,h''}$ denotes either $b_{\gamma;h,h''}$ or $b_{C;h,h''}$ and $m(b^*_{*;m(h),m(h'')})$ denotes either $m(b^*_{\gamma;m(h),m(h'')})$ or $m(b^*_{C;m(h),m(h'')})$. This expansion gives us three remaining terms of $\partial^{\boxtimes}((x_i \cdot h) \otimes (h \cdot x'_j))$. The first of these is
\[
\sum_{l} c'_{j,l} (x_i \cdot h \cdot x'_l),
\]
which agrees with the third term of equation~\ref{PartialTensorExpansion}. The second is
\[
\sum_{l,h' \in \beta_{mult}, h'' \in \beta} \tilde{c'}_{j,l;h'} \tilde{\tilde{c}}_{hh';h''} (x_i \cdot h'' \cdot x'_j),
\]
which agrees with the fourth term of equation~\ref{PartialTensorExpansion}. Finally, the remaining term of $\partial^{\boxtimes}((x_i \cdot h) \otimes (h \cdot x'_j))$ is
\[
(-1)^{\deg_h x'_j} \sum_{k,h' \in \beta_{mult},h'' \in \beta} \tilde{c}_{i,k;h'} \tilde{\tilde{c}}_{h'h;h''} (x_k \cdot h'' \cdot x'_j),
\]
which agrees with the second term of equation~\ref{PartialTensorExpansion}. Thus, after reversing the intrinsic gradings on $M \otimes_{H^n} N$, we conclude that $M \otimes_{H^n} N$ is isomorphic to $\widehat{A}(M) \otimes_{\B \astrosun m(\B)^!} \widehat{D}(N)$ as differential bigraded $\Z$-modules.
\end{proof}

\begin{remark} The negation of the intrinsic gradings on $M \otimes_{H^n} N$ is done for the same reason as in Remark~\ref{FirstGradingRevRem}.
\end{remark}

\begin{proposition}\label{XBoxQuotientDescend} Let $\B$ be a differential bigraded algebra, and let $J$ be a bigrading homogeneous ideal of $\B$ which is preserved by the differential on $\B$. Let $\D$ be a Type D structure over $\B$, and let $\widehat{A}$ be a differential bigraded right $\B$-module which descends to a module over $\B / J$. By Proposition~\ref{DandDDQuotient}, $\D$ automatically descends to a Type D structure over $\B / J$, and we have
\[
\widehat{A} \boxtimes_{\B} \D \cong \widehat{A} \boxtimes_{\B / J} \D.
\]
\end{proposition}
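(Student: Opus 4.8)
The plan is to observe that the two box tensor products have the same underlying bigraded abelian group and then to verify that their differentials coincide. By Definition~\ref{XBoxWithSigns}, both $\widehat{A} \boxtimes_{\B} \D$ and $\widehat{A} \boxtimes_{\B / J} \D$ are, as bigraded abelian groups, equal to $\widehat{A} \otimes_R \D$, where $R$ is the idempotent ring (which is common to $\B$ and $\B / J$). Both box products are defined: $\widehat{A}$, being an ordinary dg module, is a Type A structure over $\B$ and, by hypothesis, also over $\B / J$; and $\D$ is a Type D structure over $\B$ and, by Proposition~\ref{DandDDQuotient}, over $\B / J$. So it remains to compare differentials.

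Next I would write out the two differentials explicitly. Let $\pi: \B \to \B / J$ be the quotient projection, let $\delta$ be the Type D operation of $\D$ over $\B$, and let $\bar\delta := (\pi \otimes \id) \circ \delta$ be its descent to $\B / J$ as in Proposition~\ref{DandDDQuotient}. Let $m_1, m_2$ be the Type A operations of $\widehat{A}$ over $\B$; since $\widehat{A}$ descends to a module over $\B / J$, the ideal $J$ acts as zero, so $m_2$ factors as $m_2 = \bar m_2 \circ (\id \otimes \pi)$ for the induced action $\bar m_2: \widehat{A} \otimes_R (\B / J) \to \widehat{A}$, while $m_1$ (the internal differential of $\widehat{A}$) is unchanged. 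Because $\widehat{A}$ is an ordinary dg module, the higher Type A operations vanish, so by Definition~\ref{XBoxWithSigns} the two differentials are
\[
\partial^{\boxtimes} = m_1 \otimes \left|\id\right| + (m_2 \otimes \id) \circ (\id \otimes \delta),
\]
\[
\bar\partial^{\boxtimes} = m_1 \otimes \left|\id\right| + (\bar m_2 \otimes \id) \circ (\id \otimes \bar\delta).
\]

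The key step is then the purely formal identity
\begin{align*}
(\bar m_2 \otimes \id) \circ (\id \otimes \bar\delta)
&= (\bar m_2 \otimes \id) \circ (\id \otimes \pi \otimes \id) \circ (\id \otimes \delta) \\
&= \big( (\bar m_2 \circ (\id \otimes \pi)) \otimes \id \big) \circ (\id \otimes \delta) \\
&= (m_2 \otimes \id) \circ (\id \otimes \delta),
\end{align*}
which gives $\bar\partial^{\boxtimes} = \partial^{\boxtimes}$ and hence the claimed isomorphism of differential bigraded abelian groups. I do not expect any serious obstacle: the verification is essentially bookkeeping, and the only points requiring (routine) care are checking that the hypotheses of Proposition~\ref{DandDDQuotient} are in force so that $\bar\delta$ is a legitimate Type D operation over $\B / J$, and noting that the sign factors $\left|\id\right|$ depend only on homological degrees, which are unaffected by passing to the quotient. (The same computation works verbatim for a general Type A structure $\widehat{A}$, using $m_n = \bar m_n \circ (\id \otimes \pi^{\otimes (n-1)})$ and $\bar\delta^{\,n-1} = (\pi^{\otimes (n-1)} \otimes \id) \circ \delta^{n-1}$, but only the dg-module case is needed here.)
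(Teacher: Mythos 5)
Your proof is correct and follows exactly the route the paper takes: the paper's own proof is the single sentence that the claim "follows immediately from Definition~\ref{XBoxWithSigns}," and your argument simply spells out that observation (same underlying group, and the differentials agree because $m_2 = \bar m_2 \circ (\id \otimes \pi)$). No discrepancies.
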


\begin{proof}
This follows immediately from Definition~\ref{XBoxWithSigns}.
\end{proof}

\begin{corollary}
Let $T_1$ and $T_2$ be oriented tangle diagrams in $\R_{\geq 0} \times \R$ and $\R_{\leq 0} \times \R$ respectively, with orderings chosen of the crossings of $T_1$ and $T_2$. Assume that $T_1$ and $T_2$ have consistent orientations, so that their horizontal concatenation is an oriented link diagram $L$ in $\R^2$. Order the crossings of $L$ such that those of $T_1$ come before those of $T_2$. Then
\[
CKh(L) \cong \widehat{A}([T_2]^{Kh}) \boxtimes_{\B \astrosun m(\B)^!} \D([T_1]^{Kh}) \cong \widehat{A}([T_2]^{Kh}) \boxtimes_{\B \Gamma_n} \D([T_1]^{Kh}).
\]
\end{corollary}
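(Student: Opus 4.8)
The plan is to chain together the pairing results already established in the paper. First I would apply Proposition~\ref{TensorXBoxAgree} with $M = [T_2]^{Kh}$, a complex of projective graded right $H^n$-modules, and $N = [T_1]^{Kh}$, a complex of projective graded left $H^n$-modules; both satisfy the condition $C_{module}$ by the local analysis of Example~\ref{HnCCoeffsEx} (for $N$ one checks directly that $m([T_1]^{Kh})$ satisfies $C_{module}$ in the sense of Definition~\ref{CModuleForLeftMods}, which is the same computation with the half-plane reflected). This gives an isomorphism
\[
\widehat{A}([T_2]^{Kh}) \boxtimes_{\B \astrosun m(\B)^!} \D([T_1]^{Kh}) \cong [T_2]^{Kh} \otimes_{H^n} [T_1]^{Kh}
\]
of differential bigraded $\Z$-modules, after multiplying the intrinsic gradings on the right-hand side by $-1$.

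Next I would invoke Khovanov's pairing theorem from \cite{KhovFunctor}, exactly as recalled in the discussion preceding Proposition~\ref{TypeATypeDPairing}: with the crossings of $L$ ordered so that those of $T_1$ precede those of $T_2$, one has $CKh(L) \cong [T_2]^{Kh} \otimes_{H^n} [T_1]^{Kh}$ after reversing the intrinsic $q$-gradings on the tensor product (see Remark~\ref{FirstGradingRevRem}). The two grading reversals — the one from Proposition~\ref{TensorXBoxAgree} and the one from Khovanov's convention — compose to the identity, so combining the isomorphisms yields
\[
CKh(L) \cong \widehat{A}([T_2]^{Kh}) \boxtimes_{\B \astrosun m(\B)^!} \D([T_1]^{Kh})
\]
with no net grading shift. (As noted in Remark~\ref{OrderingIsoIndep}, the isomorphism type of $CKh(L)$ is independent of the chosen ordering of the crossings.)

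For the second isomorphism, I would observe that by Proposition~\ref{TangleTypeADescends} the extra relations of Proposition~\ref{KhovRobertsLSQuotient} act as zero on $\widehat{A}([T_2]^{Kh})$, so this Type A structure descends to a differential bigraded module over the quotient $\B \Gamma_n = (\B \astrosun m(\B)^!)/J$, where $J$ is the ideal generated by those relations. By Proposition~\ref{DandDDQuotient}, $\D([T_1]^{Kh})$ descends to a Type D structure over $\B\Gamma_n$ — precisely the Type D structure of Definition~\ref{TypeDOverBGnDef}. Applying Proposition~\ref{XBoxQuotientDescend} with $\B$ replaced by $\B \astrosun m(\B)^!$ then gives
\[
\widehat{A}([T_2]^{Kh}) \boxtimes_{\B \astrosun m(\B)^!} \D([T_1]^{Kh}) \cong \widehat{A}([T_2]^{Kh}) \boxtimes_{\B \Gamma_n} \D([T_1]^{Kh}),
\]
completing the proof.

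The only slightly delicate point — the closest thing to an obstacle — is the bookkeeping of the two intrinsic-grading reversals together with checking that the identifications of the underlying $\Z$-modules in Proposition~\ref{TensorXBoxAgree} and in Khovanov's theorem are compatible on the nose. Once the conventions are lined up, the statement is a formal concatenation of results cited above, with no substantial new content.
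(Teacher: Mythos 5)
Your proposal is correct and follows essentially the same route as the paper, which cites exactly Proposition~\ref{TensorXBoxAgree}, Proposition~\ref{XBoxQuotientDescend}, and Khovanov's pairing theorem from \cite{KhovFunctor}; you have simply unpacked the grading bookkeeping and the supporting propositions (\ref{TangleTypeADescends}, \ref{DandDDQuotient}) that the paper leaves implicit.
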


\begin{proof} This is a corollary of Proposition~\ref{TensorXBoxAgree}, Proposition~\ref{XBoxQuotientDescend}, and Khovanov's results from \cite{KhovFunctor}.
\end{proof}
Identifying $\widehat{A}([T_2]^{Kh})$ with Roberts' Type A structure over $\B \Gamma_n$ by Proposition~\ref{RobertsKhovanovTypeA}, and identifying $\widehat{D}([T_1]^{Kh})$ with Roberts' Type A structure over $\B \Gamma_n$ by Proposition~\ref{RobertsKhovanovTypeD}, we obtain an alternate proof of Proposition 36 of Roberts~\cite{RtypeD}.

\subsection{Equivalences of Type A structures}\label{TypeAEquivSect}

We start by defining $\mathcal{A}_{\infty}$-morphisms. The following definition is general enough for our purposes, although it is not the most general definition possible. A more general definition is given in Definition 26 of Roberts \cite{RtypeA}; our sign conventions are the same as Roberts'.

\begin{definition}\label{TypeAMorphismDef} Let $\B$ be a differential bigraded algebra with idempotent ring $\I$. Let $\widehat{A}$ and $\widehat{A'}$ be differential bigraded right modules over $\B$. An $\mathcal{A}_{\infty}$-morphism $F$ from $\widehat{A}$ to $\widehat{A'}$ is a collection 
\[
F_n: \widehat{A} \otimes_{\I} \B^{\otimes (n-1)} \to \widehat{A'}[0,n-1]
\]
of bigrading-preserving $\I$-linear maps satisfying the compatibility condition
\begin{align*}
m'_1 \circ F_n &+ (-1)^n m'_2 \circ (F_{n-1} \otimes \left|\id\right|^n) \\
&= F_{n-1} \circ (m_2 \otimes \id^{\otimes(n-2)}) + (-1)^{n+1} F_n \circ (m_1 \otimes \left|\id\right|^{\otimes(n-1)}) \\
&+ (-1)^{n+1} \sum_{k=1}^{n-1} F_n \circ (\id^{\otimes k} \otimes \mu_1 \otimes \left|\id\right|^{\otimes(n-k-1)}) \\
&+ \sum_{k=1}^{n-2} (-1)^k F_{n-1} \circ (\id^{\otimes k} \otimes \mu_2 \otimes \left|\id\right|^{\otimes(n-k-2)})
\end{align*}
for all $n \geq 1$. Recall that $\left|\id\right|^n$ and $\left|\id\right|^{\otimes n}$ mean different things; see Section~\ref{DiffGrAlgModSect}.
\end{definition}

\begin{example}\label{F1F2NonzeroExample}
For an $\mathcal{A}_{\infty}$ morphism $F$ with only $F_1$ and $F_2$ nonzero, the condition of Definition~\ref{TypeAMorphismDef} is nontrivial only for $n = 1$, $2$, and $3$. The $n = 1$ condition is
\[
m'_1 \circ F_1 = F_1 \circ m_1,
\]
the $n = 2$ condition is
\[
m'_1 \circ F_2 + m'_2 \circ (F_1 \otimes \id) = F_1 \circ m_2 - F_2 \circ (m_1 \otimes \left|\id\right|) - F_2 \circ (\id \otimes \mu_1),
\]
and the $n = 3$ condition is
\[
-m'_2 \circ (F_2 \otimes \left|\id\right|) = F_2 \circ (m_2 \otimes \id) - F_2 \circ (\id \otimes \mu_2).
\]
\end{example}

Let $(M,d_M)$ and $(M',d_{M'})$ be two chain complexes of graded projective right $H^n$-modules satisfying the algebraic condition $C_{module}$ of Definition~\ref{CModuleAlgCondition}. Let $f: M \to M'$ be a bigrading-preserving $H^n$-linear map such that $d_{M'} f = f d_M$; as shorthand, we will say ``let $f$ be a chain map from $M$ to $M'$.'' We first show that certain chain maps $f$ induce $\mathcal{A}_{\infty}$-morphisms of Type A structures $\widehat{A}(M) \to \widehat{A}(M')$ over $\B \astrosun m(\B)^!$. Let $\{x_i \cdot h: e(x_i) = e_L(h)\}$ be a $\Z$-basis for $M$, such that each $x_i$ is bigrading-homogeneous with a unique idempotent, and let $\{x'_i \cdot h\}$ be such a basis for $M'$ (we will suppress the idempotent conditions). We may expand $f(x_i)$ in the basis for $M'$:
\[
f(x_i) = \sum_j f_{i,j} x'_j + \sum_{j,h' \in \beta, \deg h' \neq 0} \tilde{f}_{i,j;h'} x'_j \cdot h'.
\]
The algebraic condition we will assume of $f$ is the following:
\begin{definition}\label{CMorphismDef}
The chain map $f$ satisfies the algebraic condition $C_{morphism}$ if $\tilde{f}_{i,j;h'}$ is only nonzero when $h' \in \beta_{mult}$. 
\end{definition} 
For a chain map $f$ satisfying $C_{morphism}$, we may write
\[
f(x_i) = \sum_j f_{i,j} x'_j + \sum_{j,h' \in \beta_{mult}} \tilde{f}_{i,j;h'} x'_j \cdot h'.
\]
Thus, a basis expansion for $f(x_i \cdot h)$ is
\[
f(x_i \cdot h) = \sum_j f_{i,j} x'_j \cdot h + \sum_{j,h' \in \beta_{mult}, h'' \in \beta} \tilde{f}_{i,j;h'} \tilde{\tilde{c}}_{h'h;h''} x'_j \cdot h''.
\]

Since $M$ and $M'$ satisfy $C_{module}$, we also have
\[
d_M(x_i \cdot h) = \sum_j c_{i,j} x_j \cdot h + \sum_{j,h' \in \beta_{mult}, h'' \in \beta} \tilde{c}_{i,j;h'} \tilde{\tilde{c}}_{h'h;h''} x_j \cdot h''
\]
and
\[
d_{M'}(x'_i \cdot h) = \sum_j c'_{i,j} x'_j \cdot h + \sum_{j,h' \in \beta_{mult}, h'' \in \beta} \tilde{c'}_{i,j;h'} \tilde{\tilde{c}}_{h'h;h''} x'_j \cdot h''.
\]

\begin{proposition}\label{HnChainMapStructure}
Suppose the chain map $f$ satisfies $C_{morphism}$. The equation $d_{M'} f = f d_M$ gives us the following equations in the coefficients $f_{i,j}$, $\tilde{f}_{i,j;h'}$, $c_{i,j}$, $\tilde{c}_{i,j;h'}$, $c'_{i,j}$, and $\tilde{c'}_{i,j,h'}$:

\begin{enumerate}
\item\label{HnChainMapRels1}
For all generators $x_i$ of $M$ and $x'_k$ of $M'$,
\[
\sum_j f_{i,j} c'_{j,k} = \sum_j c_{i,j} f_{j,k}.
\]

\item\label{HnChainMapRels2}
For all generators $x_i \cdot h$ of $M$ and $x'_k \cdot h''$ of $M'$,
\begin{align*}
&\sum_{j,h' \in \beta_{\gamma}} \tilde{f}_{i,j;h'} \tilde{\tilde{c}}_{h'h;h''} c'_{j,k} + \sum_{j,h' \in \beta_{\gamma}} f_{i,j} \tilde{c'}_{j,k;h'} \tilde{\tilde{c}}_{h'h;h''} \\
&= \sum_{j,h' \in \beta_{\gamma}} c_{i,j} \tilde{f}_{j,k;h'} \tilde{\tilde{c}}_{h'h;h''} + \sum_{j,h' \in \beta_{\gamma}} \tilde{c}_{i,j;h'} \tilde{\tilde{c}}_{h'h;h''} f_{j,k}.
\end{align*}

\item\label{HnChainMapRels3}
For all generators $x_i \cdot h$ of $M$ and $x'_k \cdot h''$ of $M'$,
\begin{align*}
&\sum_{j,h' \in \beta_{\alpha}} \tilde{f}_{i,j;h'} \tilde{\tilde{c}}_{h'h;h''} c'_{j,k} + \sum_{j,h' \in \beta_{\alpha}} f_{i,j} \tilde{c'}_{j,k;h'} \tilde{\tilde{c}}_{h'h;h''} \\
&\qquad + \sum_{j,h' \in \beta_{\gamma}, h''' \in \beta_{\gamma}, h'''' \in \beta} \tilde{f}_{i,j;h'} \tilde{\tilde{c}}_{h'h;h''''} \tilde{c'}_{j,k;h'''} \tilde{\tilde{c}}_{h'''h'''';h''} \\
&= \sum_{j,h' \in \beta_{\alpha}} c_{i,j} \tilde{f}_{j,k;h'} \tilde{\tilde{c}}_{h'h;h''} + \sum_{j,h' \in \beta_{\alpha}} \tilde{c}_{i,j;h'} \tilde{\tilde{c}}_{h'h;h''} f_{j,k} \\
&\qquad + \sum_{j,h' \in \beta_{\gamma}, h''' \in \beta_{\gamma}, h'''' \in \beta} \tilde{c}_{i,j;h'} \tilde{\tilde{c}}_{h'h;h''''} \tilde{f}_{j,k;h'''} \tilde{\tilde{c}}_{h'''h'''';h''}.
\end{align*}

\item\label{HnChainMapRels4}
For all generators $x_i \cdot h$ of $M$ and $x'_k \cdot h''$ of $M'$,
\begin{align*}
&\sum_{j,h' \in \beta_{\gamma}, h''' \in \beta_{\alpha}, h'''' \in \beta} \tilde{f}_{i,j;h'} \tilde{\tilde{c}}_{h'h;h''''} \tilde{c'}_{j,k;h'''} \tilde{\tilde{c}}_{h'''h'''';h''} \\
&\qquad + \sum_{j,h' \in \beta_{\alpha}, h''' \in \beta_{\gamma}, h'''' \in \beta} \tilde{f}_{i,j;h'} \tilde{\tilde{c}}_{h'h;h''''} \tilde{c'}_{j,k;h'''} \tilde{\tilde{c}}_{h'''h'''';h''} \\
&= \sum_{j,h' \in \beta_{\gamma}, h''' \in \beta_{\alpha}, h'''' \in \beta} \tilde{c}_{i,j;h'} \tilde{\tilde{c}}_{h'h;h''''} \tilde{f}_{j,k;h'''} \tilde{\tilde{c}}_{h'''h'''';h''} \\
&\qquad + \sum_{j,h' \in \beta_{\alpha}, h''' \in \beta_{\gamma}, h'''' \in \beta} \tilde{c}_{i,j;h'} \tilde{\tilde{c}}_{h'h;h''''} \tilde{f}_{j,k;h'''} \tilde{\tilde{c}}_{h'''h'''';h''}.
\end{align*}

\item\label{HnChainMapRels5}
For all generators $x_i \cdot h$ of $M$ and $x'_k \cdot h''$ of $M'$,
\begin{align*}
&\sum_{j,h' \in \beta_{\alpha}, h''' \in \beta_{\alpha}, h'''' \in \beta} \tilde{f}_{i,j;h'} \tilde{\tilde{c}}_{h'h;h''''} \tilde{c'}_{j,k;h'''} \tilde{\tilde{c}}_{h'''h'''';h''} \\
&= \sum_{j,h' \in \beta_{\alpha}, h''' \in \beta_{\alpha}, h'''' \in \beta} \tilde{c}_{i,j;h'} \tilde{\tilde{c}}_{h'h;h''''} \tilde{f}_{j,k;h'''} \tilde{\tilde{c}}_{h'''h'''';h''}.
\end{align*}

\end{enumerate}
\end{proposition}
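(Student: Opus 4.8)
The plan is to follow the proof of Proposition~\ref{HnProjModStructure} almost verbatim, replacing the single operator $d$ by the pair $(d_M,d_{M'})$ and the relation $d^2=0$ by the chain-map relation $d_{M'}\circ f=f\circ d_M$. Since $f$ is homogeneous of bidegree $(0,0)$ and each of $d_M$, $d_{M'}$ is homogeneous of bidegree $(0,+1)$, both composites $d_{M'}\circ f$ and $f\circ d_M$ are homogeneous of bidegree $(0,+1)$; in particular no signs $\left|\id\right|$ enter, so the whole argument is a purely combinatorial bookkeeping of $\Z$-coefficients, exactly as in Proposition~\ref{HnProjModStructure}.

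First I would write out $d_{M'}(f(x_i\cdot h))$ and $f(d_M(x_i\cdot h))$ as $\Z$-linear combinations of the basis elements $x'_k\cdot h''$ of $M'$, using the basis expansions for $f(x_i\cdot h)$, $d_M(x_i\cdot h)$, and $d_{M'}(x'_k\cdot h)$ recorded just above the statement (all of which already incorporate the re-expansion coefficients $\tilde{\tilde{c}}_{h'h;h''}$ needed to rewrite a product $h'h$ of a multiplicative generator with a basis element back in the basis $\beta$). Setting the two linear combinations equal and reading off the coefficient of each fixed basis element $x'_k\cdot h''$ then yields, for each pair $(x_i\cdot h,\,x'_k\cdot h'')$, a numerical identity among the coefficients $f_{i,j}$, $\tilde{f}_{i,j;h'}$, $c_{i,j}$, $\tilde{c}_{i,j;h'}$, $c'_{i,j}$, $\tilde{c'}_{i,j;h'}$, $\tilde{\tilde{c}}$; when the homological degrees of $x_i$ and $x'_k$ are incompatible both sides are empty and the identity is trivial.

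The only remaining point is to organize these identities into the five families \ref{HnChainMapRels1}--\ref{HnChainMapRels5}. The key observation is that, in its basis expansion, each of $f$, $d_M$, $d_{M'}$ applied to a generator either preserves the intrinsic degree of the underlying generator $x_j$ (the ``pure'' terms, with coefficients $f_{i,j}$ or $c_{i,j}$), lowers it by $1$ (the terms carrying a $\beta_{\gamma}$-factor), or lowers it by $2$ (the terms carrying a $\beta_{\alpha}$-factor). Hence in either composite the underlying generator of $x'_k$ has intrinsic degree $0$, $-1$, $-2$, $-3$, or $-4$ relative to $x_i$, and these five cases are mutually exclusive and exhaustive. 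Grouping the coefficient identity by this shift gives: shift $0$, where both factors must be pure (the result being independent of $h$), producing \ref{HnChainMapRels1}; shift $-1$, where exactly one factor carries a $\beta_{\gamma}$-element, producing \ref{HnChainMapRels2}; shift $-2$, where either one factor carries a $\beta_{\alpha}$-element or both carry $\beta_{\gamma}$-elements (the latter contributing the double sums with the intermediate index $h''''$), producing \ref{HnChainMapRels3}; shift $-3$, one $\beta_{\gamma}$ and one $\beta_{\alpha}$, producing \ref{HnChainMapRels4}; and shift $-4$, both $\beta_{\alpha}$, producing \ref{HnChainMapRels5}.

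The main obstacle is bookkeeping rather than conceptual: one must thread the re-expansion coefficients $\tilde{\tilde{c}}$ correctly through the two-step composites (since intermediate products like $h'h$ and $h'''h''''$ are generally not basis elements), keep the left/right idempotent and bidegree constraints consistent throughout, and verify that the partition of terms by intrinsic-degree shift is precisely the partition into the five displayed families. No genuinely new ideas beyond those in the proof of Proposition~\ref{HnProjModStructure} are needed.
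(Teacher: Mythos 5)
Your proposal is correct and matches the approach the paper intends: the paper explicitly omits the proof, stating only that it is "very similar to that of Proposition~\ref{HnProjModStructure}," and that proof proceeds exactly as you describe — expand both sides of the relevant relation (there $d^2 = 0$, here $d_{M'} \circ f = f \circ d_M$) in the $\Z$-basis of the target and collect coefficients of each $x'_k \cdot h''$, grouping by the intrinsic-degree shift of $x'_k$ relative to $x_i$. Your accounting of the five possible shifts $0, -1, -2, -3, -4$ (from the degree-$0$ map $f$ composed with a degree-$(0,+1)$ differential whose basis expansion drops intrinsic degree by $0$, $1$, or $2$) is precisely the right partition into the five families.
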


\begin{proof} The proof is very similar to that of Proposition~\ref{HnProjModStructure} and will be omitted. Note that explicitly writing the degree conditions in the sums is unnecessary, since the relevant products of coefficients are always zero unless the degree conditions are satisfied. In Proposition~\ref{HnProjModStructure}, we chose to write out the degree conditions for clarity.
\end{proof}

\begin{definition}\label{InducedAInftyMorphismDef} Suppose $(M,d_M)$ and $(M',d_{M'})$ satisfy $C_{module}$ and $f: M \to M'$ is a chain map satisfying $C_{morphism}$. Define the first component $\widehat{A}(f)_1$ of an $\mathcal{A}_{\infty}$-morphism $\widehat{A}(f): \widehat{A}(M) \to \widehat{A}(M')$ of Type A structures over $\B \astrosun m(\B)^!$ by
\[
\widehat{A}(f)_1 (x_i \cdot h) = \sum_j f_{i,j} x'_j \cdot h.
\]
The map $\widehat{A}(f)_1: \widehat{A}(M) \to \widehat{A}(M')$ respects the right actions of the idempotent ring $\I_{\beta}$, and it is bigrading-preserving because $f$ is. 

If $\widehat{A}(f)_1$ were the only nonzero component of $\widehat{A}(f)$, then $\widehat{A}(f)$ would be an ordinary chain map between differential bigraded $\B \astrosun m(\B)^!$-modules. However, $\widehat{A}(f)_2$ will also be nonzero in general; thus, we must deal with genuine higher $\mathcal{A}_{\infty}$ terms when working with these morphisms. The component 
\[
\widehat{A}(f)_2: \widehat{A}(M) \otimes_{\I_{\beta}} \B \astrosun m(\B)^! \to \widehat{A}(M)
\]
of $\widehat{A}(f)$ is defined on the generators $x_i \cdot h$ of $\widehat{A}(M)$ and $m(b^*_{*;m(h),m(h'')})$ of $\B \astrosun m(\B)^!$ by
\[
\widehat{A}(f)_2 (x_i \cdot h, m(b^*_{*;m(h),m(h'')})) = \sum_{j,h' \in \beta_{mult}} \tilde{f}_{i,j;h'} \tilde{\tilde{c}}_{h'h;h''} x'_j \cdot h'', 
\]
where $m(b^*_{*;m(h),m(h'')})$ denotes $m(b^*_{\gamma;m(h),m(h'')})$ or $m(b^*_{C;m(h),m(h'')})$ as appropriate. 

Suppose the algebra input $m(b^*_{*;m(h),m(h'')})$ is equal to $m(b^*_{\gamma;m(h),m(h'')})$. Let $h = (W(a)b,\sigma)$ and $h'' = (W(a')b,\sigma')$; then $\tilde{\tilde{c}}_{h'h;h''}$ is only nonzero for one value of $h'$, namely $h' = (W(a')a,\textrm{ all } +)$. For this value of $h'$, $\tilde{\tilde{c}}_{h'h;h''}$ is $1$. Thus,
\begin{equation}\label{TopAf2SimpleEqn}
\widehat{A}(f)_2 (x_i \cdot h, m(b^*_{\gamma;m(h),m(h'')})) = \sum_j \tilde{f}_{i,j;h'} x'_j \cdot h''.
\end{equation}

Now suppose the algebra input is equal to $m(b^*_{C;m(h),m(h'')})$. As before, write $h$ as $h = (W(a)b,\sigma)$. In this case, $\tilde{\tilde{c}}_{h'h;h''}$ will be nonzero for any $h'_{\alpha}$ which equals $(W(a)a,\textrm{ minus on }\alpha)$, where $h = (W(a)b,\sigma)$ and $\alpha$ is any arc of $a$ which is part of the circle $C$ in $W(a)a$. For $h'$ equal to one of the $h'_{\alpha}$, we have $\tilde{\tilde{c}}_{h'h;h''} = 1$, and for all other $h'$, $\tilde{\tilde{c}}_{h'h;h''}$ is zero. Thus,
\begin{equation}\label{BottomAf2SimpleEqn}
\widehat{A}(f)_2 (x_i \cdot h, m(b^*_{C;m(h),m(h'')})) = \sum_j \sum_{\textrm{left arcs } \alpha \textrm{ of }C} \tilde{f}_{i,j;h'_{\alpha}} x'_j \cdot h'',
\end{equation}

Any action of the form $\widehat{A}(f)_2(x_i \cdot h, b_{*;h;h''})$ is defined to be zero. Writing $\B \astrosun m(\B)^!$ as $T(V_{full}) / J_{full}$ as in Section~\ref{FullAlgebraSection}, the above formulas define a map
\[
\widehat{A}(f)_2: \widehat{A}(M) \otimes_{\I_{\beta}} V_{full} \to \widehat{A}(M').
\]
We can extend to a map
\[
\widehat{A}(f)_2: \widehat{A}(M) \otimes_{\I_{\beta}} T(V_{full}) \to \widehat{A}(M')
\]
which is defined as the sum, over all $n \geq 2$, of the maps
\begin{align*}
\sum_{k=1}^{n-1} m'_2 \circ (m'_2 \otimes \id) \circ \cdots &\circ (m'_2 \otimes \id^{\otimes(k-2)}) \\ 
&\circ (\widehat{A}(f)_2 \otimes \left|\id\right|^{\otimes(k-1)}) \\
&\circ (m_2 \otimes \id^{\otimes k}) \circ \cdots \circ (m_2 \otimes \id^{\otimes(n-2)})
\end{align*}
from $\widehat{A}(M) \otimes (V_{full})^{\otimes (n-1)}$ to $\widehat{A}(M')$. In Proposition~\ref{AInfMorphWellDef}, we show that $\widehat{A}(f)_2$ descends to a map
\[
\widehat{A}(f)_2: \widehat{A}(M) \otimes_{\I_{\beta}} \B \astrosun m(\B)^! \to \widehat{A}(M');
\]
in Proposition~\ref{AInftyRelsSatisfied} we verify that $\widehat{A}(f)_1$ and $\widehat{A}(f)_2$ together satisfy the conditions to form an $\mathcal{A}_{\infty}$ morphism $\widehat{A}(f)$.

\end{definition}

\begin{example}\label{LowSummandsAf2Ex}
The $n = 2$ summand of $\widehat{A}(f)_2: \widehat{A}(M) \otimes_{\I_{\beta}} T(V_{full}) \to \widehat{A}(M')$ is simply $\widehat{A}(f)_2$, the map from $\widehat{A}(M) \otimes_{\I_{\beta}} V_{full}$ to $\widehat{A}(M')$ defined above. The $n = 3$ summand of $\widehat{A}(f)_2: \widehat{A}(M) \otimes_{\I_{\beta}} T(V_{full}) \to \widehat{A}(M')$, or in other words the definition of $\widehat{A}(f)_2$ when the algebra input is a quadratic monomial in the generators of $V_{full}$, is
\[
\widehat{A}(f)_2 \circ (m_2 \otimes \id) + m'_2 \circ (\widehat{A}(f)_2 \otimes \left|\id\right|),
\]
where in this expression $\widehat{A}(f)_2$ again denotes the map from $\widehat{A}(M) \otimes_{\I_{\beta}} V_{full}$ to $\widehat{A}(M')$.
\end{example}

\begin{proposition}\label{AInfMorphWellDef} Write $\B \astrosun m(\B)^!$ as $T(V_{full}) / J_{full}$, and let $r$ be any element of $J_{full}$. The map 
\[
\widehat{A}(f)_2(-,r): \widehat{A}(M) \to \widehat{A}(M')
\] 
is identically zero. Thus, we get a well-defined map
\[
\widehat{A}(f)_2: \widehat{A}(M) \otimes_{\I_{\beta}} \B \astrosun m(\B)^! \to \widehat{A}(M')
\]
which is linear with respect to the right actions of the idempotent ring $\I_{\beta}$ on $\widehat{A}(M)$ and $\widehat{A}(M')$, and which preserves the intrinsic grading and decreases the homological grading by one.
\end{proposition}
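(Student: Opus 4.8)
The plan is to verify that $\widehat{A}(f)_2(-,r)$ vanishes for every $r$ in the relation ideal $J_{full}$ of $\B \astrosun m(\B)^!$, after disposing of the grading and $\I_{\beta}$-linearity assertions, which are immediate: each defining formula in Definition~\ref{InducedAInftyMorphismDef} matches left and right idempotents, and since $f$ is a bigrading-preserving chain map from $M$ to $M'$ the degree bookkeeping for $\widehat{A}(f)_1$ and $\widehat{A}(f)_2$ is the same one already checked for $m_1$ and $m_2$ in Definition~\ref{FirstTypeAStrDef} and Definition~\ref{FullTypeAStructureDefn}. By $\Z$-bilinearity I would reduce to $r = w_1 \otimes g \otimes w_2$ with $w_1,w_2$ monomials in the generators of $V_{full}$ and $g$ one of the explicit generators of $J_{full}$: a generator of $J_{\B} \cap (T^1(V_{\B}) \oplus T^2(V_{\B}))$ (items \ref{BrHnRel1}--\ref{BrHnRel4} of Proposition~\ref{BrHnRels}), a generator of $J_{m(\B)^!} \cap T^2(V_{m(\B)^!}) = I^{\perp}$ (the vertex-sum relations of the line segments, triangles, and tetrahedra in the graph $G$), or a generator of $J_{extra}$ (items \ref{JExtraRels1}--\ref{JExtraRels5} of Definition~\ref{JExtraDef}). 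Recall that on a length-$N$ monomial $v_1 \cdots v_N$ the map $\widehat{A}(f)_2$ is the sum, over the positions $k$ at which $\widehat{A}(f)_2$ acts primitively, of the composite that first applies the $\widehat{A}(M)$-action $m_2$ of $v_1,\dots,v_{k-1}$ to the module input, then applies $\widehat{A}(f)_2$ at $v_k$, then applies the $\widehat{A}(M')$-action $m'_2$ of $v_{k+1},\dots,v_N$, with the signs carried by the $\left|\id\right|$'s.

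Next I would split this sum, for $r = w_1 g w_2$, according to whether the primitive position $v_k$ lies in $w_1$, inside the $g$-block, or in $w_2$. Terms with $v_k$ in $w_1$ contain $m'_2(-,g)$ in the tail, which is zero because $g \in J_{full}$ and the $\B \astrosun m(\B)^!$-action on $\widehat{A}(M')$ is well-defined and associative (Proposition~\ref{FullMTwoActionWellDef} applied to $M'$); symmetrically, terms with $v_k$ in $w_2$ contain $m_2(-,g) = 0$ by Proposition~\ref{FullMTwoActionWellDef}. So everything reduces to showing that the primitive evaluation of $\widehat{A}(f)_2$ on $g$ alone (the $n=2$ summand when $g$ is linear, the $n=3$ summand of Example~\ref{LowSummandsAf2Ex} when $g$ is quadratic) vanishes — a check independent of $w_1$ and $w_2$, exactly parallel to how Proposition~\ref{mBBangActionWellDef} and Proposition~\ref{FullMTwoActionWellDef} used Proposition~\ref{HnProjModStructure}.

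It remains to carry out this check for each generating relation. For $g$ a generator of $J_{\B}$ it is trivial, since the only letters that get differentiated are generators of $\B$, on which $\widehat{A}(f)_2 = 0$. For $g$ a generator of $I^{\perp}$ I would expand the primitive evaluation using equations (\ref{TopAf2SimpleEqn}) and (\ref{BottomAf2SimpleEqn}) — noting, as in Proposition~\ref{mBBangActionWellDef}, that for an intrinsic-degree-$1$ relation the source and target left crossingless matchings differ, so no $\beta_{\alpha}$ terms enter — and match the result term-for-term with the corresponding chain-map identity of Proposition~\ref{HnChainMapStructure}: item~\ref{HnChainMapRels5} in intrinsic degree $2$, item~\ref{HnChainMapRels4} in degree $\tfrac32$, item~\ref{HnChainMapRels3} in degree $1$. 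This is the computation of Proposition~\ref{mBBangActionWellDef} with the structure coefficients of $f$ in place of those of the identity map, the signs matching because Proposition~\ref{HnChainMapStructure} is derived from the sign-free identity $d_{M'}f = f d_M$. For $g$ a generator of $J_{extra}$, one letter of each monomial is a generator of $\B$ (hence killed by $\widehat{A}(f)_2$), so the surviving primitive terms get paired off against $m_2$- or $m'_2$-boundary terms through the commutation relations exactly as in the case analysis of Proposition~\ref{FullMTwoActionWellDef}; in the join/split relations of item~\ref{JExtraRels5} I would again use that the left arcs of the merged circle partition into those of the two pieces. This exhausts the generators of $J_{full}$.

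The main obstacle is this last point. The $J_{extra}$ relations do not collapse to a single clean identity like those of Proposition~\ref{HnChainMapStructure}, so the cancellation argument of Proposition~\ref{FullMTwoActionWellDef} must be re-run with $\widehat{A}(f)_2$ occupying one of the two active slots, carefully tracking which of the primitive differentiations survive (only those hitting the $m(\B)^!$-letters) and checking that the join/split relation of item~\ref{JExtraRels5} — a three-term relation with a two-versus-one structure — still closes up.
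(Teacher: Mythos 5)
Your proposal follows the paper's proof: it reduces, using associativity and the vanishing of $m_2$ and $m'_2$ on relation elements (Proposition~\ref{FullMTwoActionWellDef}), to checking that $\widehat{A}(f)_2$ kills the multiplicative generators of $J_{full}$, then handles the $J_{\B}$ generators trivially, the $m(I^{\perp})$ generators by the chain-map identities of items~\ref{HnChainMapRels3}--\ref{HnChainMapRels5} of Proposition~\ref{HnChainMapStructure} (noting that the degree-one relations only involve $\beta_\gamma$ terms because the left matchings of the two idempotents differ), and the $J_{extra}$ generators by re-running the commutation case analysis of Proposition~\ref{FullMTwoActionWellDef}. The paper packages the reduction as an abstract product lemma while you split by the position of the primitively differentiated letter, but these are the same argument; the level of detail you give for the $J_{extra}$ case also matches the paper, which works out item~\ref{JExtraRels1} and leaves the remaining cases.
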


\begin{proof}
First, $\widehat{A}(f)_2$ decreases the homological grading by $1$, since $m(b^*_{*;m(h),m(h'')})$ carries homological grading $1$ and $f:M \to M'$ preserves homological grading. Also, $\widehat{A}(f)_2$ preserves the intrinsic grading. To see this, note that as elements of $M$ and $M'$, $x_i \cdot h$ and $x'_j \cdot h''$ must have the same intrinsic grading whenever $x'_j \cdot h''$ appears with nonzero coefficient in the basis expansion of $f(x_i \cdot h)$, because $f$ preserves intrinsic grading. Also, as elements of $H^n$, the intrinsic degree of $h''$ is either one or two greater than that of $h$. Since, in $\widehat{A}(M)$ and $\widehat{A}(M')$, the intrinsic degrees of $h$ and $h''$ are multiplied by $-\frac{1}{2}$ whereas the intrinsic degrees of $x_i$ and $x'_j$ are multiplied by $-1$, the element $x'_j \cdot h''$ of $\widehat{A}(M')$ should have intrinsic degree which is $\frac{1}{2}$ or $1$ greater than the intrinsic degree of $x_i \cdot h \in \widehat{A}(M)$. This extra $\frac{1}{2}$ or $1$ is exactly compensated by the intrinsic degree of $m(b^*_{*;m(h),m(h'')})$, which is $\frac{1}{2}$ for $m(b^*_{\gamma;m(h),m(h'')})$ and $1$ for $m(b^*_{C;m(h),m(h'')})$.

To show that $\widehat{A}(f)_2(-,r)$ is zero for any $r \in J_{full}$, note first that Definition~\ref{InducedAInftyMorphismDef} implies that if we have elements $r$ and $r'$ of $T(V_{full})$ such that $m_2(-,r) = 0$, $m'_2(-,r') = 0$, $\widehat{A}(f)_2(-,r) = 0$, and $\widehat{A}(f)_2(-,r') = 0$, then $\widehat{A}(f)_2(-,r \cdot r') = 0$ as well.

Thus, we only need to show that $\widehat{A}(f)_2(-,r)$ is zero for the multiplicative generators $r$ of $J_{full}$. These were defined to be the generators of $J_{\B} \cap (T^1(V_{\B}) \oplus T^2(V_{\B}))$, $J_{m(\B)^!} \cap (T^1(V_{m(\B)^!}) \oplus T^2(V_{m(\B)^!}))$, and $J_{extra}$. For generators in $J_{\B} \cap (T^1(V_{\B}) \oplus T^2(V_{\B}))$, there is nothing to show, since $\widehat{A}(f)_2(-,b)$ is zero for any $b \in V_{\B}$.

For the generators in $J_{m(\B)^!} \cap (T^1(V_{m(\B)^!}) \oplus T^2(V_{m(\B)^!}))$, the proof closely follows the proof of Proposition~\ref{mBBangActionWellDef}. Write $J_{m(\B)^!} \cap (T^1(V_{m(\B)^!}) \oplus T^2(V_{m(\B)^!}))$ as $m(I^{\perp})$. 

The generators $m(r^*)$ of $m(I^{\perp})$ have intrinsic degree either $1$, $\frac{3}{2}$, or $2$. For those $m(r^*)$ of intrinsic degree $2$, the equations in item \ref{HnChainMapRels5} of Proposition~\ref{HnChainMapStructure} above imply that $\widehat{A}(f)_2(x_i \cdot h, m(r^*)) = 0$ for any $x_i \cdot h$. For those $m(r^*)$ of intrinsic degree $\frac{3}{2}$, the equations in item \ref{HnChainMapRels4} of this proposition similarly imply that $\widehat{A}(f)_2(-,m(r^*))$ is zero.

The generators $m(r^*)$ of $m(I^{\perp})$ which have intrinsic degree $1$ are sums of either one, two, three, or four terms $m(b^*_{\gamma}) m(b^*_{\gamma'})$ with all coefficients $+1$. For a fixed $m(r^*)$, let $h \in \I_{\beta}$ denote its left idempotent and let $h'' \in \I_{\beta}$ denote its right idempotent. The element $h''$ of $\beta$ has degree $2$ more than $h$, as elements of $H^n$ with its intrinsic grading, and $h''$ differs from $h$ by two surgeries on its left crossingless matching. As in Proposition~\ref{mBBangActionWellDef}, the left crossingless matchings of $h$ and $h''$ are different. 

For any generators of $\widehat{A}(M)$ and $\widehat{A}(M')$ of the form $x_i \cdot h$ and $x'_k \cdot h''$, where $h$ and $h''$ are as above, the equations from item~\ref{HnChainMapRels3} of Proposition~\ref{HnChainMapStructure} become
\begin{align*}
&\sum_{j,h' \in \beta_{\gamma}, h''' \in \beta_{\gamma}, h'''' \in \beta} \tilde{f}_{i,j;h'} \tilde{\tilde{c}}_{h'h;h''''} \tilde{c'}_{j,k;h'''} \tilde{\tilde{c}}_{h'''h'''';h''} \\
&= \sum_{j,h' \in \beta_{\gamma}, h''' \in \beta_{\gamma}, h'''' \in \beta} \tilde{c}_{i,j;h'} \tilde{\tilde{c}}_{h'h;h''''} \tilde{f}_{j,k;h'''} \tilde{\tilde{c}}_{h'''h'''';h''};
\end{align*}
the terms involving $h' \in \beta_{\alpha}$ vanish for these choices of $h$ and $h''$. These equations imply that for all generators $m(r^*)$ of $m(I^{\perp})$ of intrinsic degree $1$, $\widehat{A}(f)_2(-,m(r^*))$ is zero.

Finally, the generators of $J_{extra}$ are listed in items \ref{JExtraRels1}-\ref{JExtraRels5} of Definition~\ref{JExtraDef}. If $r$ is one of these generators, the proof that the map $\widehat{A}(f)_2(-,r)$ is zero is similar to the proof of Proposition~\ref{FullMTwoActionWellDef}.

In more detail, consider a relation 
\[
r = b_{\gamma;h_1,h_2} m(b^*_{\eta';m(h_2),m(h_3)}) - m(b^*_{\eta;m(h_1),m(h'_2)}) b_{\gamma';h'_2,h_3}
\]
from item \ref{JExtraRels1} of Definition~\ref{JExtraDef}. Write $h_1 = (W(a_1)b_1,\sigma_1)$, and let $x_i \cdot h_1$ be a generator of $\widehat{A}(M)$. By Example~\ref{LowSummandsAf2Ex}, we have
\begin{equation}\label{ActionDefConsEqn}
\begin{aligned}
\widehat{A}(f)_2(x_i \cdot h_1, r) &= \widehat{A}(f)_2(m_2(x_i \cdot h_1, b_{\gamma;h_1,h_2}), m(b^*_{\eta';m(h_2),m(h_3)})) \\
&- m_2(\widehat{A}(f)_2(x_i \cdot h_1, m(b^*_{\eta;m(h_1),m(h'_2)})), b_{\gamma';h'_2,h_3}).
\end{aligned}
\end{equation}

Write $h_2$ as $(W(a_1)b_2,\sigma_2)$ and $h_3$ as $(W(a_2)b_2,\sigma_3)$. Let $h' = (W(a_2)a_1, \textrm{ all }+)$, an element of $\beta_{\gamma}$. For the first term in equation~\ref{ActionDefConsEqn} above, we first multiply $x_i \cdot h_1$ by $b_{\gamma;h_1,h_2}$ to get $x_i \cdot h_2$. Applying $\widehat{A}(f)_2(-,m(b^*_{\eta';m(h_2),m(h_3)}))$ to the element $x_i \cdot h_2$, by equation~\ref{TopAf2SimpleEqn} we get
\[
\sum_j \tilde{f}_{i,j;h'} x_j \cdot h_3.
\]
For the second term in equation~\ref{ActionDefConsEqn}, $\widehat{A}(f)_2(x_i \cdot h_1, m(b^*_{\eta;m(h_1),m(h'_2)}))$ equals
\[
\sum_j \tilde{f}_{i,j;h'} x_j \cdot h'_2
\]
by equation~\ref{TopAf2SimpleEqn} again, where $h'$ is still equal to $(W(a_2)a_1, \textrm{ all } +)$. If we multiply this result by $b_{\gamma';h'_2,h_3}$, we get
\[
\sum_j \tilde{f}_{i,j;h'} x_j \cdot h_3.
\]
Thus, if a generator $r$ of $J_{extra}$ comes from item \ref{JExtraRels1} of Definition~\ref{JExtraDef}, then the map $\widehat{A}(f)_2(-,r): \widehat{A}(M) \to \widehat{A}(M')$ is zero.

For generators of $J_{extra}$ from items \ref{JExtraRels2}-\ref{JExtraRels5} of Definition~\ref{JExtraDef}, the proof is analogous to that of Proposition~\ref{FullMTwoActionWellDef} in the same way as above. We will leave the remaining cases to the reader.
\end{proof}
 
\begin{proposition}\label{AInftyRelsSatisfied} $\widehat{A}(f)$ satisfies the $\mathcal{A}_{\infty}$-morphism compatibility conditions.
\end{proposition}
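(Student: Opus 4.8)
The plan is to verify the $\mathcal{A}_{\infty}$-morphism compatibility conditions of Definition~\ref{TypeAMorphismDef} for $\widehat{A}(f) = (\widehat{A}(f)_1, \widehat{A}(f)_2)$, with all higher components $\widehat{A}(f)_n$ for $n \geq 3$ set to zero. Since only $\widehat{A}(f)_1$ and $\widehat{A}(f)_2$ are nonzero, only the $n = 1$, $n = 2$, and $n = 3$ conditions of Example~\ref{F1F2NonzeroExample} are nontrivial, so the proof reduces to checking those three. In each case I will check the identity on generators $x_i \cdot h$ of $\widehat{A}(M)$, with algebra inputs drawn from the multiplicative generators of $\B \astrosun m(\B)^!$ (those of $\B$, namely $b_{\gamma;h,h''}$ and $b_{C;h,h''}$, and those of $m(\B)^!$, namely $m(b^*_{\gamma;m(h),m(h'')})$ and $m(b^*_{C;m(h),m(h'')})$); by $\I_{\beta}$-linearity and the fact that both sides of each condition respect the algebra structure, it suffices to treat these cases.

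First I would dispatch the $n = 1$ condition $m'_1 \circ \widehat{A}(f)_1 = \widehat{A}(f)_1 \circ m_1$: writing out both sides on $x_i \cdot h$ using the formulas for $m_1$ (from Definition~\ref{FirstTypeAStrDef}) and $\widehat{A}(f)_1$ (from Definition~\ref{InducedAInftyMorphismDef}) reduces this exactly to the equations in item~\ref{HnChainMapRels1} of Proposition~\ref{HnChainMapStructure}. Next, the $n = 3$ condition $-m'_2 \circ (\widehat{A}(f)_2 \otimes \left|\id\right|) = \widehat{A}(f)_2 \circ (m_2 \otimes \id) - \widehat{A}(f)_2 \circ (\id \otimes \mu_2)$ is built into the construction: by the definition of the extension of $\widehat{A}(f)_2$ to $\widehat{A}(M) \otimes_{\I_{\beta}} T(V_{full})$ in Definition~\ref{InducedAInftyMorphismDef}, together with the fact (established in Proposition~\ref{AInfMorphWellDef}) that $\widehat{A}(f)_2$ descends to $\B \astrosun m(\B)^!$, the $n = 3$ relation holds essentially tautologically; I would just cite Example~\ref{LowSummandsAf2Ex} for the explicit $n = 3$ summand and note it is precisely the rearrangement of this relation. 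The main work is the $n = 2$ condition
\[
m'_1 \circ \widehat{A}(f)_2 + m'_2 \circ (\widehat{A}(f)_1 \otimes \id) = \widehat{A}(f)_1 \circ m_2 - \widehat{A}(f)_2 \circ (m_1 \otimes \left|\id\right|) - \widehat{A}(f)_2 \circ (\id \otimes \mu_1),
\]
which I would check case-by-case on the algebra input. When the input is a generator $b_{\gamma;h,h''}$ or $b_{C;h,h''}$ of $\B$, the $\widehat{A}(f)_2$-terms vanish, $\mu_1 = 0$ on $\B$, and the identity collapses to $m'_2 \circ (\widehat{A}(f)_1 \otimes \id) = \widehat{A}(f)_1 \circ m_2$ on that input, which follows because $\widehat{A}(f)_1$ intertwines the $\B$-actions (both just relabel the right idempotent). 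When the input is $m(b^*_{\gamma;m(h),m(h'')})$, the $\mu_1$-term still vanishes and, using equations~\eqref{TopSimpleMultEqn}, \eqref{TopAf2SimpleEqn}, the identity becomes exactly item~\ref{HnChainMapRels2} of Proposition~\ref{HnChainMapStructure}. When the input is $m(b^*_{C;m(h),m(h'')})$, the $\mu_1$-term is nonzero — here $\mu_1(m(b^*_{C;m(h),m(h'')}))$ expands as a sum of quadratic monomials $m(b^*_{\gamma}) m(b^*_{\gamma^{\dagger}})$ as in the proof of Proposition~\ref{LeibnizRuleOnAM} — and after substituting equations~\eqref{BottomSimpleMultEqn}, \eqref{BottomAf2SimpleEqn} and expanding the composite $\widehat{A}(f)_2 \circ (\id \otimes \mu_1)$ via the $n=3$ summand formula, the identity reduces to item~\ref{HnChainMapRels3} of Proposition~\ref{HnChainMapStructure}.

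The hard part will be bookkeeping in the $m(b^*_C)$ case of the $n = 2$ condition: one must carefully match the term $\widehat{A}(f)_2 \circ (\id \otimes \mu_1)$, whose algebra input $\mu_1(m(b^*_C))$ is a sum of \emph{quadratic} monomials, against the linear $\widehat{A}(f)_2$-term and the $m'_2 \circ (\widehat{A}(f)_1 \otimes \id)$ term, tracking signs (the Leibniz sign in $\mu_1$, and the $(-1)^n$ factors in Definition~\ref{TypeAMorphismDef}) and confirming that the coefficient identities that drop out are precisely those of item~\ref{HnChainMapRels3}. This is structurally identical to the corresponding step in Proposition~\ref{LeibnizRuleOnAM} (which handled the case $f = \id$, i.e. the Leibniz rule for $\widehat{A}(M)$), so I would organize the argument to parallel that proof and simply indicate that the sign computations go through unchanged, leaving the fully-expanded coefficient manipulation to the reader as is done elsewhere in the paper.
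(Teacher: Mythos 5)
Your overall strategy matches the paper's: verify only the $n=1,2,3$ conditions, dispatch $n=1$ via item~\ref{HnChainMapRels1} of Proposition~\ref{HnChainMapStructure} and $n=3$ via the definition of the extended $\widehat{A}(f)_2$, then tackle $n=2$ on the multiplicative generators of $\B \astrosun m(\B)^!$. Your case analysis for $n=2$ on generators (trivial on $\B$-generators, item~\ref{HnChainMapRels2} for $m(b^*_{\gamma})$, item~\ref{HnChainMapRels3} plus the expansion of $\mu_1(m(b^*_C))$ for $m(b^*_C)$) is also the one the paper uses.

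However, there is a genuine gap in the reduction step. You assert that ``by $\I_{\beta}$-linearity and the fact that both sides of each condition respect the algebra structure, it suffices to treat'' multiplicative generators, but this is not automatic, and it is where a substantial fraction of the work in the paper's $n=2$ argument actually lives. The $n=2$ compatibility condition is not obviously multiplicative in the algebra input: both sides involve the differentials $m_1$, $m'_1$, and $\mu_1$, and these do not simply pass through products. The paper proves a separate Claim — that if the $n=2$ condition holds when the algebra input is $b_1$ and when it is $b_2$, then it holds for $b_1 b_2$ — and that proof requires rewriting the $n=2$ identity on input $b_1 \otimes b_2$ using the $n=3$ consistency condition (twice), the associativity of $m_2$ and $m'_2$, the Leibniz rule for $\mu_1$ on $\B \astrosun m(\B)^!$, and the Leibniz rules for $m_1$ and $m'_1$, and then matching about a dozen terms. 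Your proof needs to carry out (or at least explicitly invoke and sketch) this induction on word length in the algebra input, rather than treating the reduction to generators as a consequence of general nonsense.
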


\begin{proof} 

Since $\widehat{A}(f)_n$ is zero for $n > 2$, it suffices to show that the $n = 1$, $n = 2$, and $n = 3$ conditions listed in Example~\ref{F1F2NonzeroExample} hold. For the $n = 1$ condition, we want to show that
\[
m'_1 (\widehat{A}(f)_1(x_i \cdot h)) = \widehat{A}(f)_1(m_1(x_i \cdot h)).
\]
The left side is 
\[
m'_1\bigg(\sum_{j} f_{i,j} x_j \cdot h\bigg) = \sum_{j,k} f_{i,j} c'_{j,k} x'_k \cdot h,
\]
while the right side is
\[
\widehat{A}(f)_1\bigg(\sum_j c_{i,j} x_j \cdot h\bigg) = \sum_{j,k} c_{i,j} f_{j,k} x'_k \cdot h.
\]
These are equal by item~\ref{HnChainMapRels1} of Proposition~\ref{HnChainMapStructure}.

We may write the $n = 3$ condition of Example~\ref{F1F2NonzeroExample} as 
\[
\widehat{A}(f)_2 \circ (id \otimes \mu_2) = \widehat{A}(f)_2 \circ (m_2 \otimes \id) + m'_2 \circ (\widehat{A}(f)_2 \otimes |\id|).
\]
In this form, it is clear from the definition of $\widehat{A}(f)_2$ in Definition~\ref{InducedAInftyMorphismDef} that this condition holds.

For the $n = 2$ condition, we want to show that
\begin{equation}\label{UsualN2FirstEqn}
\begin{aligned}
&m'_1 \circ \widehat{A}(f)_2 + m'_2 \circ (\widehat{A}(f)_1 \otimes \id) \\
&= \widehat{A}(f)_1 \circ m_2 - \widehat{A}(f)_2 \circ (m_1 \otimes \left|\id\right|) - \widehat{A}(f)_2 \circ (\id \otimes \mu_1)
\end{aligned}
\end{equation}
as maps from $\widehat{A}(M) \otimes_{\I_{\beta}} \B \astrosun m(\B)^!$ to $\widehat{A}(M')$.

We will first reduce to the case of proving the above equation when applied to terms of the form $(x_i \cdot h) \otimes b_{*;h,h''}$ or $(x_i \cdot h) \otimes m(b^*_{*;m(h),m(h'')})$, where $b_{*;h,h''}$ and $m(b^*_{*;m(h),m(h'')})$ are the multiplicative generators of $\B \astrosun m(\B)^!$:

\begin{claim*}
If $b_1$ and $b_2$ are two elements of $\B \astrosun m(\B)^!$ such that the $n = 2$ condition is satisfied both when the algebra input is $b_1$ and when it is $b_2$, then the $n = 2$ condition is also satisfied when the algebra input is $b_1 b_2$.
\end{claim*}

\begin{proof}[Proof of claim.]
We want to show that 
\begin{equation}\label{ProductN2ConsistencyRel}
\begin{aligned}
&m'_1 \circ \widehat{A}(f)_2 \circ (\id \otimes \mu_2) + m'_2 \circ (\widehat{A}(f)_1 \otimes \id) \circ (\id \otimes \mu_2) \\
&= \widehat{A}(f)_1 \circ m_2 \circ (\id \otimes \mu_2) - \widehat{A}(f)_2 \circ (m_1 \otimes \left|\id\right|) \circ(\id \otimes \mu_2) \\
&\qquad - \widehat{A}(f)_2 \circ (\id \otimes \mu_1) \circ (\id \otimes \mu_2)
\end{aligned}
\end{equation}
when the algebra input to these maps is $b_1 \otimes b_2$, assuming the usual $n = 2$ condition \ref{UsualN2FirstEqn} holds when the algebra input is $b_1$ or $b_2$. In the proof of this claim, the algebra input to all maps will be assumed to be $b_1 \otimes b_2$.

The left side of equation~\ref{ProductN2ConsistencyRel} can be rewritten as
\begin{equation}\label{LHSTerms1}
\begin{aligned}
&m'_1 \circ m'_2 \circ (\widehat{A}(f)_2 \otimes \left|\id\right|) \\
&+ m'_1 \circ \widehat{A}(f)_2 \circ (m_2 \otimes \id) \\
&+ m'_2 \circ (m'_2 \otimes \id) \circ (\widehat{A}(f)_1 \otimes \id \otimes \id),
\end{aligned}
\end{equation}
using the $n = 3$ consistency condition for $\widehat{A}(f)$ and the associativity of the action of $\B \astrosun m(\B)^!$ on $\widehat{A}(M')$. Call these terms $LHS_1$, $LHS_2$, and $LHS_3$. Now we may use the assumption that the $n = 2$ consistency condition holds when the algebra input is $b_1$ to write the third term $LHS_3$ of \ref{LHSTerms1} as 
\begin{align*}
&m'_2 \circ (\widehat{A}(f)_1 \otimes \id) \circ (m_2 \otimes \id) \\
&- m'_2 \circ (m'_1 \otimes \left|\id\right|) \circ (\widehat{A}(f)_2 \otimes \left|\id\right|) \\
&- m'_2 \circ (\widehat{A}(f)_2 \otimes \id) \circ (m_1 \otimes \left|\id\right| \otimes \id) \\
&- m'_2 \circ (\widehat{A}(f)_2 \otimes \id) \circ (\id \otimes \mu_1 \otimes \id).
\end{align*}
Call these four terms $LHS_{3a}$, $LHS_{3b}$, $LHS_{3c}$, and $LHS_{3d}$.

On the other hand, the right side of equation~\ref{ProductN2ConsistencyRel} can be rewritten as 
\begin{equation}\label{RightSideProductTerms}
\begin{aligned}
&\widehat{A}(f)_1 \circ m_2 \circ (m_2 \otimes \id) \\
&- m'_2 \circ (\widehat{A}(f)_2 \otimes \id) \circ (m_1 \otimes \left|\id\right| \otimes \id) \\
&- \widehat{A}(f)_2 \circ (m_2 \otimes \id) \circ (m_1 \otimes \left|\id\right| \otimes \left|\id\right|) \\
&- \widehat{A}(f)_2 \circ (\id \otimes \mu_2) \circ (\id \otimes \mu_1 \otimes \left|\id\right|) \\
&- \widehat{A}(f)_2 \circ (\id \otimes \mu_2) \circ (\id \otimes \id \otimes \mu_1),
\end{aligned}
\end{equation}
using the $n = 3$ consistency equation for $\widehat{A}(f)$, the associativity of the action of $\B \astrosun m(\B)^!$ on $\widehat{A}(M)$, and the Leibniz rule for the derivative $\mu_1$ on $\B \astrosun m(\B)^!$. Call these five terms $RHS_1$, $RHS_2$, $RHS_3$, $RHS_4$, and $RHS_5$. Using the $n = 3$ consistency equation again, we can rewrite the term $RHS_4$ as
\begin{align*}
&-m'_2 \circ (\widehat{A}(f)_2 \otimes \id) \circ (\id \otimes \mu_1 \otimes \id) \\
&- \widehat{A}(f)_2 \circ (m_2 \otimes \id) \circ (\id \otimes \mu_1 \otimes \left|\id\right|);
\end{align*}
call these two terms $RHS_{4a}$ and $RHS_{4b}$. Similarly, we can rewrite the term $RHS_5$ as
\begin{align*}
&-m'_2 \circ (\widehat{A}(f)_2 \otimes \left|\id\right|) \circ (\id \otimes \id \otimes \mu_1) \\
&- \widehat{A}(f)_2 \circ (m_2 \otimes \id) \circ (\id \otimes \id \otimes \mu_1);
\end{align*}
call these terms $RHS_{5a}$ and $RHS_{5b}$. Using the assumption that the $n = 2$ consistency condition holds when the algebra input is $b_2$, we can write the term $RHS_1$ as
\begin{align*}
&m'_2 \circ (\widehat{A}(f)_1 \otimes \id) \circ (m_2 \otimes \id) \\
&+ m'_1 \circ \widehat{A}(f)_2 \circ (m_2 \otimes \id) \\
&+ \widehat{A}(f)_2 \circ (m_1 \otimes \left|\id\right|) \circ (m_2 \otimes \id) \\
& + \widehat{A}(f)_2 \circ (m_2 \otimes \id) \circ (\id \otimes \id \otimes \mu_1).
\end{align*}
Call these four terms $RHS_{1a}$, $RHS_{1b}$, $RHS_{1c}$, and $RHS_{1d}$.

After rewriting the left and right sides of equation~\ref{ProductN2ConsistencyRel} in this way, we want to show that
\begin{align*}
&LHS_1 + LHS_2 + LHS_{3a} + LHS_{3b} + LHS_{3c} + LHS_{3d} \\
&= RHS_{1a} + RHS_{1b} + RHS_{1c} + RHS_{1d} + RHS_2 + RHS_3 \\
& \qquad + RHS_{4a} + RHS_{4b} + RHS_{5a} + RHS_{5b}.
\end{align*}
Several terms cancel: 
\begin{itemize}
\item $LHS_2 = RHS_{1b}$;
\item $LHS_{3a} = RHS_{1a}$;
\item $LHS_{3c} = RHS_2$;
\item $LHS_{3d} = RHS_{4a}$;
\item $RHS_{1d} + RHS_{5b} = 0$; and
\item $RHS_{1c} + RHS_3 + RHS_{4b} = 0$.
\end{itemize}
The final equality follows from the Leibniz rule for the differential $m_1$ on $\widehat{A}(M)$. Canceling corresponding terms between the sides, it remains to prove
\[
LHS_1 + LHS_{3b} =  RHS_{5a}.
\]

The Leibniz rule for the differential $m'_1$ on $\widehat{A}(M')$ lets us rewrite $LHS_1 + LHS_{3b}$ as
\[
m'_2 \circ (\id \otimes \mu_1) \circ (\widehat{A}(f)_2 \otimes \left|\id\right|)
\]
This term equals the remaining right-side term $RHS_{5a}$, because $\mu_1$ increases homological grading by one.
\end{proof}

Thus, we have reduced to showing that the $n = 2$ consistency condition \ref{UsualN2FirstEqn} holds for $\widehat{A}(f)$ when the algebra input is either $b_{\gamma;h,h''}$, $b_{C;h,h''}$, $m(b^*_{\gamma;m(h),m(h'')})$, or $m(b^*_{C;m(h),m(h'')})$. If the input $b_{*;h,h''}$ is $b_{\gamma;h,h''}$ or $b_{C;h,h''}$, the $\widehat{A}(f)_2$ terms in the $n = 2$ equation are zero, and we must show that
\begin{align*}
m'_2 \circ (\widehat{A}(f)_1 \otimes \id) = \widehat{A}(f)_1 \circ m_2
\end{align*}
for these algebra inputs. If $x_i \cdot h$ is a generator of $\widehat{A}(M)$, then
\begin{align*}
m'_2 \circ (\widehat{A}(f)_1 \otimes \id) (x_i \cdot h, b_{*;h,h''}) &= m'_2(\sum_j f_{i,j} x'_j \cdot h, b_{*;h,h''}) \\
&= \sum_j f_{i,j} x'_j \cdot h'',
\end{align*}
while
\begin{align*}
\widehat{A}(f)_1 \circ m_2 (x_i \cdot h, b_{*;h,h''}) &= \widehat{A}(f)_1 (x_i \cdot h'') \\
&= \sum_j f_{i,j} x'_j \cdot h'',
\end{align*}
and these are equal.

Now let the algebra input be a generator $m(b^*_{\gamma;m(h),m(h'')})$. The left side of the $n = 2$ condition with this algebra input is 
\[
\sum_{j,k,h' \in \beta_{\gamma}} \tilde{f}_{i,j;h'} \tilde{\tilde{c}}_{h'h;h''} c'_{j,k} (x'_k \cdot h'') + \sum_{j,k,h' \in \beta_{\gamma}} f_{i,j} \tilde{c'}_{j,k;h'} \tilde{\tilde{c}}_{h'h;h''} (x'_k \cdot h''),
\]
and the right side is 
\[
\sum_{j,k,h' \in \beta_{\gamma}} \tilde{c}_{i,j;h'} \tilde{\tilde{c}}_{h'h;h''} f_{j,k} (x'_k \cdot h'') + \sum_{j,k,h' \in \beta_{\gamma}} c_{i,j} \tilde{f}_{j,k;h'} \tilde{\tilde{c}}_{h'h;h''} (x'_k \cdot h'').
\]
These are equal by item~\ref{HnChainMapRels2} of Proposition~\ref{HnChainMapStructure}.

Finally, let the algebra input be a generator $m(b^*_{C;m(h),m(h'')})$. The left side of the $n = 2$ condition with this algebra input and module input $x_i \cdot h$ is 
\[
\sum_{j,k,h' \in \beta_{\alpha}} \tilde{f}_{i,j;h'} \tilde{\tilde{c}}_{h'h;h''} c'_{j,k} (x'_k \cdot h'') + \sum_{j,k,h' \in \beta_{\alpha}} f_{i,j} \tilde{c'}_{j,k;h'} \tilde{\tilde{c}}_{h'h;h''} (x'_k \cdot h'').
\]
As in Proposition~\ref{LeibnizRuleOnAM}, we have
\[
\mu_1(m(b^*_{C;m(h),m(h'')})) = - \sum_{h'''' \in \beta} m(b^*_{\gamma;m(h),m(h'''')}) m(b^*_{\gamma^{\dagger};m(h''''),m(h'')}),
\]
where the sum is implicitly over those $h''''$ such that generators $m(b^*_{\gamma;m(h),m(h'''')})$ and $m(b^*_{\gamma^{\dagger};m(h''''),m(h'')})$ exist. Thus,
\begin{align*}
&\widehat{A}(f)_2 \circ (\id \otimes \mu_1)(x_i \cdot h, m(b^*_{C;m(h),m(h'')})) \\
&= -\sum_{h''''} \widehat{A}(f)_2 (x_i \cdot h, m(b^*_{\gamma;m(h),m(h'''')}) m(b^*_{\gamma^{\dagger};m(h''''),m(h'')})) \\
&= \sum_{h'''' \in \beta} m'_2 \circ (\widehat{A}(f)_2 \otimes \id) (x_i \cdot h, m(b^*_{\gamma;m(h),m(h'''')}) m(b^*_{\gamma^{\dagger};m(h''''),m(h'')})) \\
&\qquad - \sum_{h'''' \in \beta} \widehat{A}(f)_2 \circ (m_2 \otimes \id) (x_i \cdot h, m(b^*_{\gamma;m(h),m(h'''')}) m(b^*_{\gamma^{\dagger};m(h''''),m(h'')}))
\end{align*}
by the $n = 3$ consistency conditions; note that $\deg_h m(b^*_{\gamma^{\dagger};m(h''''),m(h'')}) = 1$. Expanding the above expression out, the top line is
\[
\sum_{j,k,h' \in \beta_{\gamma}, h''' \in \beta_{\gamma}, h'''' \in \beta} \tilde{f}_{i,j;h'} \tilde{\tilde{c}}_{h'h;h''''} \tilde{c'}_{j,k;h'''} \tilde{\tilde{c}}_{h'''h'''';h''} (x'_k \cdot h''),
\]
and the bottom line is 
\[
-\sum_{j,k,h' \in \beta_{\gamma}, h''' \in \beta_{\gamma}, h'''' \in \beta} \tilde{c}_{i,j;h'} \tilde{\tilde{c}}_{h'h;h''''} \tilde{f}_{j,k;h'''} \tilde{\tilde{c}}_{h'''h'''';h''} (x'_k \cdot h'')
\]
Thus, the right side of the $n = 2$ condition is 
\begin{align*}
&\sum_{j,k,h' \in \beta_{\alpha}} \tilde{c}_{i,j;h'} \tilde{\tilde{c}}_{h'h;h''} f_{j,k} (x'_k \cdot h'') + \sum_{j,k,h' \in \beta_{\alpha}} c_{i,j} \tilde{f}_{j,k;h'} \tilde{\tilde{c}}_{h'h;h''} (x'_k \cdot h'') \\
&- \sum_{j,k,h' \in \beta_{\gamma}, h''' \in \beta_{\gamma}, h'''' \in \beta} \tilde{f}_{i,j;h'} \tilde{\tilde{c}}_{h'h;h''''} \tilde{c'}_{j,k;h'''} \tilde{\tilde{c}}_{h'''h'''';h''} (x'_k \cdot h'') \\
&+ \sum_{j,k,h' \in \beta_{\gamma}, h''' \in \beta_{\gamma}, h'''' \in \beta} \tilde{c}_{i,j;h'} \tilde{\tilde{c}}_{h'h;h''''} \tilde{f}_{j,k;h'''} \tilde{\tilde{c}}_{h'''h'''';h''} (x'_k \cdot h'').
\end{align*}
This is equal to the left side of the $n = 2$ condition,
\[
\sum_{j,k,h' \in \beta_{\alpha}} \tilde{f}_{i,j;h'} \tilde{\tilde{c}}_{h'h;h''} c'_{j,k} (x'_k \cdot h'') + \sum_{j,k,h' \in \beta_{\alpha}} f_{i,j} \tilde{c'}_{j,k;h'} \tilde{\tilde{c}}_{h'h;h''} (x'_k \cdot h''),
\]
by item~\ref{HnChainMapRels3} of Proposition~\ref{HnChainMapStructure}.

\end{proof}
We define the composition of two $\mathcal{A}_{\infty}$-morphisms as in Roberts \cite{RtypeA}, Definition 27:
\begin{definition} The composition of two $\mathcal{A}_{\infty}$-morphisms $F:\widehat{A} \to \widehat{A'}$ and $G: \widehat{A'} \to \widehat{A''}$ is the $\mathcal{A}_{\infty}$-morphism $G \circ F$ defined as
\[
(G \circ F)_n := \sum_{i + j = n+1} (-1)^{(i+1)(j+1)} G_i \circ (F_j \otimes \left|\id\right|^{(j+1) \otimes (i-1)}).
\]
\end{definition}

For the purposes of this section, we will only need to compose morphisms $\widehat{A}(f)$ and $\widehat{A}(g)$ such that either $f$ or $g$ satisfies a more restrictive condition than $C_{morphism}$:
\begin{definition}
A chain map $f: M \to M'$ of complexes of graded projective right $H^n$-modules satisfies the algebraic condition $\tilde{C}_{morphism}$ if it satisfies the condition $C_{morphism}$ of Definition~\ref{CMorphismDef}, and furthermore $\tilde{f}_{i,j;h'} = 0$ for all $h' \in \beta_{mult}$.
\end{definition}

\begin{proposition}\label{AInftyCompProp} Let $f: M \to M'$ and $g: M' \to M''$ be chain maps between complexes of projective graded right $H^n$ modules, such that $M$, $M'$, and $M''$ satisfy the algebraic condition $C_{module}$ of Definition~\ref{CModuleAlgCondition}, while $f$ and $g$ satisfy the condition $C_{morphism}$ and either $f$ or $g$ satisfies the condition $\tilde{C}_{morphism}$. Then $g \circ f$ satisfies $C_{morphism}$, and
\[
\widehat{A}(g \circ f) = \widehat{A}(g) \circ \widehat{A}(f).
\]
\end{proposition}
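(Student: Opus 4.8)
The plan is to prove the two assertions of Proposition~\ref{AInftyCompProp} in order. \emph{First, that $g \circ f$ satisfies $C_{morphism}$.} Suppose first that $f$ satisfies $\tilde{C}_{morphism}$ (the case where $g$ does is entirely parallel). Combining $\tilde{C}_{morphism}$ with $C_{morphism}$, all the coefficients $\tilde{f}_{i,j;h'}$ vanish, so $f(x_i) = \sum_j f_{i,j} x'_j$. Applying $g$ and using that $g$ is $H^n$-linear and satisfies $C_{morphism}$, one obtains
\[
(g\circ f)(x_i) = \sum_{j,k} f_{i,j} g_{j,k}\, x''_k + \sum_{j,k,\, h' \in \beta_{mult}} f_{i,j}\, \tilde{g}_{j,k;h'}\, x''_k \cdot h',
\]
which is already a basis expansion of $(g\circ f)(x_i)$ in $M''$, since each $x''_k \cdot h'$ with $h' \in \beta_{mult} \subseteq \beta$ (and matching idempotents) is a basis element. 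Hence $(g\circ f)_{i,k} = \sum_j f_{i,j} g_{j,k}$ and $\widetilde{(g\circ f)}_{i,k;h'} = \sum_j f_{i,j}\tilde{g}_{j,k;h'}$, the latter supported on $\beta_{mult}$, so $g\circ f$ satisfies $C_{morphism}$. When $g$ satisfies $\tilde{C}_{morphism}$ instead, $H^n$-linearity of $g$ gives $(g\circ f)(x_i) = \sum_{j,k} f_{i,j} g_{j,k}\, x''_k + \sum_{j,k,\, h' \in \beta_{mult}} \tilde{f}_{i,j;h'}\, g_{j,k}\, x''_k \cdot h'$, with the same conclusion; in both cases $(g\circ f)_{i,k} = \sum_j f_{i,j} g_{j,k}$. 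This is the one place where $\tilde{C}_{morphism}$ is essential: for a general pair satisfying only $C_{morphism}$, the composite would produce terms proportional to $x''_k \cdot (h'_2 h'_1)$ with $h'_1, h'_2 \in \beta_{mult}$, and $h'_2 h'_1$ need not be a multiplicative generator.

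\emph{Second, the equality of morphisms.} Now that $g\circ f$ satisfies $C_{morphism}$, the morphism $\widehat{A}(g\circ f)$ is defined by Definition~\ref{InducedAInftyMorphismDef} and is an $\mathcal{A}_{\infty}$-morphism by Proposition~\ref{AInftyRelsSatisfied}, and $\widehat{A}(g)\circ\widehat{A}(f)$ is an $\mathcal{A}_{\infty}$-morphism as a composite. I would first observe that both sides have only their index-$1$ and index-$2$ components nonzero. For $\widehat{A}(g)\circ\widehat{A}(f)$: since $\widehat{A}(f)_n = \widehat{A}(g)_n = 0$ for $n > 2$, the composition formula can only contribute for $n \leq 3$, and the $n = 3$ contribution is $\pm\,\widehat{A}(g)_2\circ(\widehat{A}(f)_2 \otimes \left|\id\right|^{3\otimes 1})$, which vanishes because the hypothesis forces $\widehat{A}(f)_2 = 0$ or $\widehat{A}(g)_2 = 0$. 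Indeed, the length-one part of $\widehat{A}(f)_2$ (resp.\ $\widehat{A}(g)_2$) is built entirely from coefficients $\tilde{f}_{i,j;h'}$ (resp.\ $\tilde{g}_{i,j;h'}$) with $h' \in \beta_{mult}$, all of which vanish under $\tilde{C}_{morphism}$, and the extension to $T(V_{full})$ of Definition~\ref{InducedAInftyMorphismDef} then vanishes summand by summand. Tracking the signs $(-1)^{(i+1)(j+1)}$ and the $\left|\id\right|$-factors in the composition formula (using $\left|\id\right|^{2} = \id$), it collapses to
\[
(\widehat{A}(g)\circ\widehat{A}(f))_1 = \widehat{A}(g)_1 \circ \widehat{A}(f)_1, \qquad (\widehat{A}(g)\circ\widehat{A}(f))_2 = \widehat{A}(g)_1\circ\widehat{A}(f)_2 + \widehat{A}(g)_2\circ(\widehat{A}(f)_1 \otimes \id),
\]
with all signs $+1$.

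\emph{Third, checking the two components.} The identity $\widehat{A}(g\circ f)_1 = \widehat{A}(g)_1\circ\widehat{A}(f)_1$ is immediate from $(g\circ f)_{i,k} = \sum_j f_{i,j} g_{j,k}$ and the definition of the first component. For the index-$2$ components, I would reduce to the case of a multiplicative-generator algebra input: both $\widehat{A}(g\circ f)$ and $\widehat{A}(g)\circ\widehat{A}(f)$ satisfy the $n = 3$ $\mathcal{A}_{\infty}$-morphism relation of Example~\ref{F1F2NonzeroExample}, which expresses the index-$2$ component on a product $b_1 b_2$ in terms of its values on $b_1$ and $b_2$ together with the shared maps $m_1, m_2, m'_1, m'_2$; since the two morphisms also share their index-$1$ component, an induction on algebra word-length (the same bookkeeping as in the Claim inside the proof of Proposition~\ref{AInftyRelsSatisfied}) reduces the index-$2$ identity to generator inputs. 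On a generator input the identity is a direct computation: both $\widehat{A}(g)_2$ and $\widehat{A}(f)_2$ are zero on the generators $b_{*;h,h''}$ of $\B \subseteq \B \astrosun m(\B)^!$ (so both sides vanish there), while on $(x_i\cdot h)\otimes m(b^*_{*;m(h),m(h'')})$ one uses one of $\widehat{A}(f)_2 = 0$ or $\widehat{A}(g)_2 = 0$, equations~\eqref{TopAf2SimpleEqn} and~\eqref{BottomAf2SimpleEqn}, and the basis expansion of $(g\circ f)(x_i)$ from the first step to see that both sides equal $\sum_j \widetilde{(g\circ f)}_{i,j;h'}\,\tilde{\tilde{c}}_{h'h;h''}\, x''_j \cdot h''$ when the input is $m(b^*_{\gamma;m(h),m(h'')})$, and the analogous sum over the left arcs of $C$ when the input is $m(b^*_{C;m(h),m(h'')})$.

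\emph{Expected obstacle.} No step is deep; the whole argument is bookkeeping. I expect the fussiest points to be the sign and degree tracking in the composition formula of the second step, and, in the third step, carrying out the word-length reduction so that it genuinely parallels the structure of the proof of Proposition~\ref{AInftyRelsSatisfied}. Throughout, the conceptual point to keep in view is the double role of $\tilde{C}_{morphism}$: it keeps $g\circ f$ inside the class of maps satisfying $C_{morphism}$, and it kills the would-be index-$3$ component of the composite, so that $\widehat{A}(g)\circ\widehat{A}(f)$ again has only two nonzero components.
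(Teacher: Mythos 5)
Your proposal is correct and follows essentially the same approach as the paper's proof: verify the coefficient formula $(g\circ f)_{i,k} = \sum_j f_{i,j}g_{j,k}$, observe that exactly one of $\widehat{A}(f)_2$, $\widehat{A}(g)_2$ vanishes so that the composition collapses to two components, and then match the two components directly on generators. The paper leaves implicit both the verification that $g\circ f$ satisfies $C_{morphism}$ (it simply asserts this follows from "the conditions on $f$ and $g$") and the word-length reduction needed to conclude equality of the index-$2$ components from their agreement on generator inputs; you make both of these explicit, the first by tracking where the problematic products $h'_2 h'_1$ would arise, the second by invoking the shared $n=3$ consistency relation — which is a reasonable way to fill the small gap.
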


\begin{proof}
By the conditions on $f$ and $g$, the chain map $g \circ f$ satisfies the condition $C_{morphism}$, and thus $\widehat{A}(g \circ f)$ is a well-defined $\mathcal{A}_{\infty}$ morphism. We have 
\[
(g \circ f)_{i,k} = \sum_j f_{i,j} g_{j,k}.
\]
If $g$ satisfies $\tilde{C}_{morphism}$, then for $h' \in \beta_{mult}$ we have
\[
\widetilde{(g \circ f)}_{i,k;h'} = \sum_j \tilde{f}_{i,j;h'} g_{j,k},
\]
while if $f$ satisfies $\tilde{C}_{morphism}$, then
\[
\widetilde{(g \circ f)}_{i,k;h'} = \sum_j f_{i,j} \tilde{g}_{j,k;h'}.
\]

Let $x_i \cdot h$ be a generator of $\widehat{A}(M)$. We have 
\[
\widehat{A}(g \circ f)_1 (x_i \cdot h) = \sum_{j,k} f_{i,j} g_{j,k} x''_k \cdot h,
\]
and this sum also equals $(\widehat{A}(g) \circ \widehat{A}(f))_1 (x_i \cdot h)$.

Let $b_{*;h,h''}$ be a generator of $\B \subset \B \astrosun m(\B)^!$. By the definition of the operation $f \mapsto \widehat{A}(f)$, 
\[
\widehat{A}(g \circ f)_2 (x_i \cdot h, b_{*;h,h''}) = 0 = (\widehat{A}(f) \circ \widehat{A}(g))_2 (x_i \cdot h, b_{*;h,h''}).
\]

Finally, let $m(b^*_{*;m(h),m(h'')})$ be a generator of $m(\B)^! \subset \B \astrosun m(\B)^!$. Suppose $g$ satisfies the condition $\tilde{C}_{morphism}$. Then 
\[
\widehat{A}(g \circ f)_2 (x_i \cdot h, m(b^*_{*;m(h),m(h'')})) = \sum_{j,k,h' \in \beta_{mult}} \tilde{f}_{i,j;h'} \tilde{\tilde{c}}_{h'h;h''} g_{j,k} x''_k \cdot h'',
\]
while
\begin{align*}
(\widehat{A}(g) \circ \widehat{A}(f))_2 (x_i \cdot h, m(b^*_{*;m(h),m(h'')})) &= (\widehat{A}(g)_1 \circ \widehat{A}(f)_2) (x_i \cdot h,  m(b^*_{*;m(h),m(h'')})) \\
&= \sum_{j,k,h' \in \beta_{mult}} \tilde{f}_{i,j;h'} \tilde{\tilde{c}}_{h'h;h''} g_{j,k} x''_k \cdot h''.
\end{align*}
The case when $f$ satisfies $\tilde{C}_{morphism}$ is analogous. Thus, $\widehat{A}(g \circ f)_2 = (\widehat{A}(g) \circ \widehat{A}(f))_2$. We have 
\[
\widehat{A}(g \circ f)_n = (\widehat{A}(g) \circ \widehat{A}(f))_n = 0
\]
for all $n > 2$, so $\widehat{A}(g \circ f) = \widehat{A}(g) \circ \widehat{A}(f)$.
\end{proof}

Now we will consider homotopies. The following definition is a special case of Definition 28 of Roberts \cite{RtypeA}:
\begin{definition} 

Let $\B$ be a differential bigraded algebra with idempotent ring $\I$. Let $\widehat{A}$ and $\widehat{A'}$ be differential bigraded right modules over $\B$. Let $F = \{F_n\}$ and $G = \{G_n\}$ be $\mathcal{A}_{\infty}$-morphisms from $\widehat{A}$ to $\widehat{A'}$. An $\mathcal{A}_{\infty}$ homotopy $H$ between $F$ and $G$ is a collection 
\[
H_n: \widehat{A} \otimes_{\I} \B^{\otimes (n-1)} \to \widehat{A'}[0,n]
\]
of bigrading-preserving $\I$-linear maps satisfying the relation
\begin{align*}
F_n - G_n &= m'_1 \circ H_n + (-1)^{n-1} m'_2 \circ (H_{n-1} \otimes \left|\id\right|^{n-1}) \\
&+ (-1)^{n+1}H_n \circ (m_1 \otimes \left|\id\right|^{\otimes(n-1)}) + H_{n-1} \circ (m_2 \otimes \id^{\otimes(n-2)}) \\
&+ (-1)^{n+1} \sum_{k=1}^{n-1} H_n \circ (\id^{\otimes k} \otimes \mu_1 \otimes \left|\id\right|^{\otimes(n-k-1)}) \\
&+ \sum_{k=1}^{n-2} (-1)^k H_{n-1} \circ (\id^{\otimes k} \otimes \mu_2 \otimes \id^{\otimes(n-k-2)}).
\end{align*}
for all $n \geq 1$.
\end{definition}

\begin{example}\label{SimplestHtpyExample}
Suppose $H$ is an $\mathcal{A}_{\infty}$ homotopy between $F$ and $G$ with $H_n = 0$ for $n > 1$. Then the $n = 1$ $\mathcal{A}_{\infty}$ homotopy condition for $H$ becomes
\[
F_1 - G_1 = m'_1 \circ H_1 + H_1 \circ m_1
\]
and the $n = 2$ homotopy condition becomes
\[
F_2 - G_2 = -m'_2 \circ (H_1 \otimes \left|\id\right|) + H_1 \circ m_2.
\]
For $n > 2$, the homotopy condition is $F_n - G_n = 0$.
\end{example}

We will get homotopies $H = \widehat{A}(\psi)$ from certain homotopies $\psi$ between chain maps $f$ and $g$ from a chain complex of projective graded right $H^n$ modules $M$ to another such complex $M'$; that is, $H^n$-linear maps $\psi: M \to M'$ of bidegree $(0,-1)$ satisfying
\[
f - g = d_{M'} \psi + \psi d_M.
\]
We will only need to consider homotopies $\psi$ which satisfy the analogue of the more restrictive condition $\tilde{C}_{morphism}$ on chain maps:
\begin{definition} A homotopy $\psi$ as above satisfies the condition $\tilde{C}_{homotopy}$ if, for all generators $x_i$ of $M$,
\[
\psi(x_i) = \sum_j \psi_{i,j} x'_j
\]
is a basis expansion of $\psi(x_i)$ in the basis $\{ x'_j \cdot h\}$ for $M'$, for some integer coefficients $\psi_{i,j}$.
\end{definition}

If $\psi$ satisfies the condition $\tilde{C}_{homotopy}$, $M$ and $M'$ satisfy the condition $C_{module}$, and $f$ and $g$ satisfy the condition $C_{morphism}$, then the homotopy relation $f - g = d_{M'} \psi + \psi d_M$ becomes the two sets of equations
\begin{equation}\label{TopBasicHtpyRel}
f_{i,k} - g_{i,k} = \sum_j \psi_{i,j} c'_{j,k} + \sum_j c_{i,j} \psi_{j,k}
\end{equation}
for all generators $x_i$ of $M$ and $x'_k$ of $M'$, and
\begin{equation}\label{BottomBasicHtpyRel}
\tilde{f}_{i,k;h'} - \tilde{g}_{i,k;h'} = \sum_j \psi_{i,j} \tilde{c'}_{j,k;h'} + \sum_j \tilde{c}_{i,j;h'} \psi_{j,k}
\end{equation}
for all generators $x_i \in M$, $x'_k \in M'$, and $h' \in \beta_{mult}$. 

\begin{definition}\label{InducedHtpyDef} Suppose $M$ and $M'$ are chain complexes of graded projective right $H^n$-modules, satisfying the condition $C_{module}$, and $f$ and $g$ are chain maps from $M$ to $M'$ satisfying the condition $C_{morphism}$. Let $\psi$ be an $H^n$-linear chain homotopy between $f$ and $g$ satisfying the condition $\tilde{C}_{homotopy}$ defined above.

Define an $\mathcal{A}_{\infty}$ homotopy $\widehat{A}(\psi)$ between $\widehat{A}(f)$ and $\widehat{A}(g)$ by
\[
\widehat{A}(\psi)_1 (x_i \cdot h) := \sum_j \psi_{i,j} (x'_j \cdot h).
\]
and
\[
\widehat{A}(\psi)_n = 0
\]
for $n > 1$.
\end{definition}

\begin{proposition}
$\widehat{A}(\psi)$ is a valid $\mathcal{A}_{\infty}$ homotopy between $\widehat{A}(f)$ and $\widehat{A}(g)$.
\end{proposition}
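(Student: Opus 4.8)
The plan is to verify the $\mathcal{A}_{\infty}$ homotopy conditions directly, exploiting that $\widehat{A}(\psi)_n = 0$ for $n > 1$. By Example~\ref{SimplestHtpyExample}, only the $n = 1$ and $n = 2$ conditions are nontrivial: for $n > 2$ the condition reads $\widehat{A}(f)_n - \widehat{A}(g)_n = 0$, which holds since $\widehat{A}(f)$ and $\widehat{A}(g)$ have no components beyond the second. First I would record that $\widehat{A}(\psi)_1$, as defined in Definition~\ref{InducedHtpyDef}, is $\I_{\beta}$-linear (it acts with the idempotent $h$ unchanged) and has the correct bidegree, preserving intrinsic degree and shifting homological degree as an $H_1$ should, since $\psi$ has bidegree $(0,-1)$; this is the same grading bookkeeping as in Proposition~\ref{AInfMorphWellDef}. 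The $n = 1$ condition $\widehat{A}(f)_1 - \widehat{A}(g)_1 = m'_1 \circ \widehat{A}(\psi)_1 + \widehat{A}(\psi)_1 \circ m_1$ then follows by evaluating both sides on a basis element $x_i \cdot h$ and comparing coefficients of $x'_k \cdot h$: this is exactly equation~\ref{TopBasicHtpyRel}.

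For the $n = 2$ condition $\widehat{A}(f)_2 - \widehat{A}(g)_2 = -m'_2 \circ (\widehat{A}(\psi)_1 \otimes \left|\id\right|) + \widehat{A}(\psi)_1 \circ m_2$, both sides are maps out of $\widehat{A}(M) \otimes_{\I_{\beta}} \B \astrosun m(\B)^!$, so I would first reduce to checking the identity when the algebra input is one of the multiplicative generators $b_{\gamma;h,h''}$, $b_{C;h,h''}$, $m(b^*_{\gamma;m(h),m(h'')})$, or $m(b^*_{C;m(h),m(h'')})$. The reduction is a claim analogous to, but considerably shorter than, the claim proved inside Proposition~\ref{AInftyRelsSatisfied}: if the $n = 2$ identity holds when the algebra input is $b_1$ and when it is $b_2$, it holds when the input is $b_1 b_2$. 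The proof uses the product rule $\widehat{A}(f)_2 \circ (m_2 \otimes \id) + m'_2 \circ (\widehat{A}(f)_2 \otimes \left|\id\right|)$ for the extension of $\widehat{A}(f)_2$ and $\widehat{A}(g)_2$ from Example~\ref{LowSummandsAf2Ex}, together with associativity of $m_2$ on $\widehat{A}(M)$ and of $m'_2$ on $\widehat{A}(M')$; no Leibniz-rule terms intervene because $\widehat{A}(\psi)_2 = 0$.

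It then remains to check the $n = 2$ identity on the four generator types. When the input is $b_{\gamma;h,h''}$ or $b_{C;h,h''}$, the left-hand side vanishes (both $\widehat{A}(f)_2$ and $\widehat{A}(g)_2$ are zero on $\B$-generators), and on the right-hand side the two terms cancel, since $m_2(x_i \cdot h, b_{*;h,h''}) = x_i \cdot h''$ and $m'_2(\widehat{A}(\psi)_1(x_i \cdot h), b_{*;h,h''}) = \sum_j \psi_{i,j} (x'_j \cdot h'') = \widehat{A}(\psi)_1(x_i \cdot h'')$. When the input is $m(b^*_{\gamma;m(h),m(h'')})$, evaluating both sides on $x_i \cdot h$ and using the explicit formulas for $\widehat{A}(f)_2$, $m_2$, and $m'_2$ on this generator, the identity becomes precisely equation~\ref{BottomBasicHtpyRel} for the relevant $h' \in \beta_{mult}$; for $m(b^*_{C;m(h),m(h'')})$ it becomes equation~\ref{BottomBasicHtpyRel} summed over the left arcs $\alpha$ of $C$. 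I expect the main, and minor, obstacle to be the sign bookkeeping in the $n = 2$ reduction step and in matching the $\left|\id\right|$ factors against the homological degrees of the generators $m(b^*)$; but since $\widehat{A}(\psi)$ has no higher components this is routine and strictly simpler than the corresponding argument for morphisms in Proposition~\ref{AInftyRelsSatisfied}.
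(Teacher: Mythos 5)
Your proposal is correct and matches the paper's own proof essentially step for step: the same grading/idempotent check for $\widehat{A}(\psi)_1$, the same use of equation~\ref{TopBasicHtpyRel} for the $n=1$ condition, the same reduction-to-generators claim (shorter than the one in Proposition~\ref{AInftyRelsSatisfied} because $\widehat{A}(\psi)_2 = 0$), and the same four generator checks with $b_*$-inputs giving cancelling terms and $m(b^*_*)$-inputs reducing to equation~\ref{BottomBasicHtpyRel}, with the sign accounted for by the homological degree $1$ of $m(b^*_*)$. The only cosmetic difference is that the paper handles the two $m(b^*)$-generator types together while you split them; the substance is the same.
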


\begin{proof}
First, $\widehat{A}(\psi)_1$ respects the right action of the idempotent ring $\I_{\beta}$ on $\widehat{A}(M)$ and $\widehat{A}(M')$, and $\widehat{A}(\psi)_1$ preserves intrinsic grading and decreases homological grading by one because $\psi$ has the same properties.

By Example~\ref{SimplestHtpyExample}, the $n = 1$ condition is
\[
\widehat{A}(f)_1 - \widehat{A}(g)_1 = m'_1 \circ \widehat{A}(\psi)_1 + \widehat{A}(\psi)_1 \circ m_1.
\]
If $x_i \cdot h$ is a generator of $\widehat{A}(M)$, then
\begin{align*}
(\widehat{A}(f)_1 - \widehat{A}(g)_1)(x_i \cdot h) &= \sum_k (f_{i,k} - g_{i,k}) (x'_k \cdot h) \\
&= \sum_{j,k} (\psi_{i,j} c'_{j,k}) (x'_k \cdot h) + \sum_{j,k} (c_{i,j} \psi_{j,k}) (x'_k \cdot h)
\end{align*}
by equation~\ref{TopBasicHtpyRel}, while 
\[
m'_1 \circ \widehat{A}(\psi)_1 (x_i \cdot h) = \sum_{j,k} \psi_{i,j} c'_{j,k} (x'_k \cdot h)
\]
and
\[
\widehat{A}(\psi)_1 \circ m_1 (x_i \cdot h) = \sum_{j,k} c_{i,j} \psi_{j,k} (x'_k \cdot h).
\]
Thus, the $n = 1$ condition is satisfied.

By Example~\ref{SimplestHtpyExample}, the $n = 2$ condition is
\begin{equation}\label{N2HtpyCond}
\widehat{A}(f)_2 - \widehat{A}(g)_2 = -m'_2 \circ (\widehat{A}(\psi)_1 \otimes \left|\id\right|) + \widehat{A}(\psi)_1 \circ m_2.
\end{equation}
As in Proposition~\ref{AInftyRelsSatisfied}, we first reduce to the case where the algebra input is one of the generators $b_{*;h,h''}$ or $m(b^*_{*;m(h),m(h'')})$ of $\B \astrosun m(\B)^!$: 
\begin{claim*}
If $b_1$ and $b_2$ are two elements of $\B \astrosun m(\B)^!$ such that the $n = 2$ homotopy condition \ref{N2HtpyCond} is satisfied both when the algebra input is $b_1$ and when it is $b_2$, then the $n = 2$ homotopy condition is also satisfied when the algebra input is $b_1 b_2$.
\end{claim*}

\begin{proof}[Proof of claim.]
When the algebra input is $b_1 \otimes b_2$, we want to show that the maps
\[
\widehat{A}(f)_2 \circ (\id \otimes \mu_2) - \widehat{A}(g)_2 \circ (\id \otimes \mu_2) 
\]
and
\[
-m'_2 \circ (\widehat{A}(\psi)_1 \otimes \left|\id\right|) \circ (\id \otimes \mu_2) + \widehat{A}(\psi)_1 \circ m_2 \circ (\id \otimes \mu_2)
\]
take the same value. In the proof of this claim, the algebra input to all maps will be assumed to be $b_1 \otimes b_2$.

By Example~\ref{LowSummandsAf2Ex} and the associativity of the algebra actions $m_2$ and $m'_2$, we want to show that
\begin{equation}\label{HtpyN2ProductRels}
\begin{aligned}
&m'_2 \circ (\widehat{A}(f)_2 \otimes \left|\id\right|) \\
& \qquad + \widehat{A}(f)_2 \circ (m_2 \otimes \id) \\
& \qquad - m'_2 \circ (\widehat{A}(g)_2 \otimes \left|\id\right|) \\
& \qquad - \widehat{A}(g)_2 \circ (m_2 \otimes \id) \\
&= - m'_2 \circ (m'_2 \otimes \id) \circ (\widehat{A}(\psi)_1 \otimes \left|\id\right| \otimes \left|\id\right|) \\
& \qquad + \widehat{A}(\psi)_1 \circ m_2 \circ (m_2 \otimes \id)
\end{aligned}
\end{equation}
when the algebra input is $b_1 \otimes b_2$.

Call the terms on the left side of equation~\ref{HtpyN2ProductRels} $LHS_1$, $LHS_2$, $LHS_3$, and $LHS_4$; call the terms on the right side $RHS_1$ and $RHS_2$. Using the $n = 2$ homotopy condition for the algebra input $b_1$, the term $RHS_1$ can be written as 
\begin{align*}
&m'_2 \circ (\widehat{A}(f)_2 \otimes \left|\id\right|) \\
&- m'_2 \circ (\widehat{A}(g)_2 \otimes \left|\id\right|) \\
&- m'_2 \circ (\widehat{A}(\psi)_1 \otimes \left|\id\right|) \circ (m_2 \otimes \id);
\end{align*}
call these terms $RHS_{1a}$, $RHS_{1b}$, and $RHS_{1c}$. Using the $n = 2$ homotopy condition for the algebra input $b_2$, the term $RHS_2$ can be written as
\begin{align*}
&\widehat{A}(f)_2 \circ (m_2 \otimes \id) \\
&- \widehat{A}(g)_2 \circ (m_2 \otimes \id) \\
&+ m'_2 \circ (\widehat{A}(\psi)_1 \otimes \left|\id\right|) \circ (m_2 \otimes \id);
\end{align*}
call these terms $RHS_{2a}$, $RHS_{2b}$, and $RHS_{2c}$. Then 
\begin{itemize}
\item $LHS_1 = RHS_{1a}$;
\item $LHS_2 = RHS_{2a}$;
\item $LHS_3 = RHS_{1b}$;
\item $LHS_4 = RHS_{2b}$; and
\item $RHS_{1c} + RHS_{2c} = 0$,
\end{itemize}
proving the claim.
\end{proof}

It remains to show that the $n = 2$ homotopy condition \ref{N2HtpyCond} is satisfied when the algebra input is one of the multiplicative generators of $\B \astrosun m(\B)^!$. When the input is $b_{*;h,h''}$, the left side of the $n = 2$ condition is zero, so we want to show that the right side is also zero. If $(x_i \cdot h)$ is a generator of $\widehat{A}(M)$, then the right side of the $n = 2$ condition with algebra input $b_{*;h,h''}$ is
\[
-\sum_j \psi_{i,j} (x'_j \cdot h'') + \sum_j \psi_{i,j} (x'_j \cdot h''),
\]
which is zero as desired.

Finally, suppose the algebra input is $m(b^*_{*;m(h),m(h'')})$ and let $x_i \cdot h$ be a generator of $\widehat{A}(M)$. The left side of the $n = 2$ condition applied to these inputs is
\[
\sum_{k,h' \in \beta} (\tilde{f}_{i,k;h'} - \tilde{g}_{i,k;h'}) \tilde{\tilde{c}}_{h'h;h''} (x'_k \cdot h''),
\]
which equals
\[
\sum_{j,k,h' \in \beta} \psi_{i,j} \tilde{c'}_{j,k;h'} \tilde{\tilde{c}}_{h'h;h''} + \sum_{j,k,h' \in \beta} \tilde{c}_{i,j;h'} \psi_{j,k} \tilde{\tilde{c}}_{h'h;h''}
\]
by equation~\ref{BottomBasicHtpyRel}. This expression is also equal to the right side of the $n = 2$ condition applied to these inputs, since $m(b^*_{m(h),m(h'')})$ has homological degree 1. Thus, the $\mathcal{A}_{\infty}$ homotopy relations are satisfied for $\widehat{A}(\psi)$.

\end{proof}

\begin{corollary}\label{GetInducedHtpyEquivCorr}
Let $M$ and $M'$ be chain complexes of projective graded right $H^n$-modules satisfying the algebraic condition $C_{module}$. Suppose there exist chain maps $f: M \to M'$ and $g: M' \to M$ satisfying the condition $C_{morphism}$, with either $f$ or $g$ satisfying the more restrictive condition $\tilde{C}_{morphism}$, and chain homotopies $\psi$ between $g \circ f$ and $\id_M$ and $\psi'$ between $f \circ g$ and $\id_{M'}$, both satisfying the condition $\tilde{C}_{homotopy}$.

Then $\widehat{A}(M)$ and $\widehat{A}(M')$ are $\mathcal{A}_{\infty}$-homotopy equivalent Type A structures over $\B \astrosun m(\B)^!$.
\end{corollary}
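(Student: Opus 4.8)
The plan is to assemble the statement directly from the functoriality results established in this section, so that essentially no new calculation is needed. Set $F := \widehat{A}(f) : \widehat{A}(M) \to \widehat{A}(M')$ and $G := \widehat{A}(g) : \widehat{A}(M') \to \widehat{A}(M)$. Since $M$, $M'$ satisfy $C_{module}$ and $f$, $g$ satisfy $C_{morphism}$, Definition~\ref{InducedAInftyMorphismDef} together with Proposition~\ref{AInfMorphWellDef} and Proposition~\ref{AInftyRelsSatisfied} guarantee that $F$ and $G$ are well-defined $\mathcal{A}_{\infty}$-morphisms of Type A structures over $\B \astrosun m(\B)^!$. It then suffices to show that $G \circ F$ is $\mathcal{A}_{\infty}$-homotopic to $\id_{\widehat{A}(M)}$ and that $F \circ G$ is $\mathcal{A}_{\infty}$-homotopic to $\id_{\widehat{A}(M')}$; this is exactly what it means for $F$ and $G$ to be mutually homotopy-inverse, hence for $\widehat{A}(M)$ and $\widehat{A}(M')$ to be $\mathcal{A}_{\infty}$-homotopy equivalent.

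First I would compute the composites. Because either $f$ or $g$ satisfies the stronger condition $\tilde{C}_{morphism}$, Proposition~\ref{AInftyCompProp} applies in both orders and yields that $g \circ f$ and $f \circ g$ satisfy $C_{morphism}$, with $G \circ F = \widehat{A}(g) \circ \widehat{A}(f) = \widehat{A}(g \circ f)$ and $F \circ G = \widehat{A}(f) \circ \widehat{A}(g) = \widehat{A}(f \circ g)$. Next I would observe that $\widehat{A}(\id_M)$ is the identity $\mathcal{A}_{\infty}$-morphism of $\widehat{A}(M)$: the chain map $\id_M$ satisfies $C_{morphism}$ with coefficients $(\id_M)_{i,j} = \delta_{i,j}$ and $\tilde{(\id_M)}_{i,j;h'} = 0$, so Definition~\ref{InducedAInftyMorphismDef} gives $\widehat{A}(\id_M)_1 = \id$ and $\widehat{A}(\id_M)_2 = 0$, i.e. $\widehat{A}(\id_M) = \id_{\widehat{A}(M)}$, and likewise $\widehat{A}(\id_{M'}) = \id_{\widehat{A}(M')}$. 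Finally, applying Definition~\ref{InducedHtpyDef} (and the proposition immediately following it) to the chain maps $g \circ f$ and $\id_M$, which are homotopic via $\psi$ and satisfy $C_{morphism}$, with $\psi$ satisfying $\tilde{C}_{homotopy}$, produces an $\mathcal{A}_{\infty}$-homotopy $\widehat{A}(\psi)$ between $\widehat{A}(g \circ f) = G \circ F$ and $\widehat{A}(\id_M) = \id_{\widehat{A}(M)}$; the same argument applied to $\psi'$ gives an $\mathcal{A}_{\infty}$-homotopy $\widehat{A}(\psi')$ between $F \circ G$ and $\id_{\widehat{A}(M')}$. Combining these four facts completes the proof.

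There is no genuine obstacle here: the corollary's hypotheses have been tailored so that Proposition~\ref{AInftyCompProp} and Definition~\ref{InducedHtpyDef} apply verbatim. The only point needing a moment's care is the identification $\widehat{A}(\id_M) = \id_{\widehat{A}(M)}$, which unwinds immediately from Definition~\ref{InducedAInftyMorphismDef}; one should also note in passing that the composition and homotopy conventions for $\mathcal{A}_{\infty}$-morphisms used above are the ones recalled earlier (matching Roberts'), so the abstract fact that ``$G\circ F \simeq \id$ and $F \circ G \simeq \id$'' implies homotopy equivalence requires no extra sign bookkeeping.
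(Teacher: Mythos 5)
Your proof is correct and follows essentially the same route as the paper: invoke Proposition~\ref{AInftyCompProp} to identify $\widehat{A}(g)\circ\widehat{A}(f)$ and $\widehat{A}(f)\circ\widehat{A}(g)$ with $\widehat{A}(g\circ f)$ and $\widehat{A}(f\circ g)$, then apply the construction of Definition~\ref{InducedHtpyDef} to $\psi$ and $\psi'$ to obtain the required $\mathcal{A}_\infty$-homotopies to the identities. You spell out the identification $\widehat{A}(\id_M)=\id_{\widehat{A}(M)}$ and the well-definedness of $F$ and $G$ a bit more explicitly than the paper does, but the argument is the same.
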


\begin{proof} By Proposition~\ref{AInftyCompProp}, 
\[
\widehat{A}(g) \circ \widehat{A}(f) = \widehat{A}(g \circ f)
\]
and
\[
\widehat{A}(f) \circ \widehat{A}(g) = \widehat{A}(f \circ g).
\]
Thus, $\widehat{A}(\psi)$ provides an $\mathcal{A}_{\infty}$ homotopy between $\widehat{A}(g) \circ \widehat{A}(f)$ and $\id_{\widehat{A}(M)} = \widehat{A}(\id_M)$, and $\widehat{A}(\psi')$ provides an $\mathcal{A}_{\infty}$ homotopy between $\widehat{A}(f) \circ \widehat{A}(g)$ and $\id_{\widehat{A}(M')} = \widehat{A}(\id_{M'})$.
\end{proof}

The case of interest to us is when $M = [T]^{Kh}$ and $M' = [T']^{Kh}$ for two oriented tangle diagrams $T$ and $T'$ in $\R_{\leq 0} \otimes \R$ which are related by a Reidemeister move. In \cite{KhovFunctor}, Khovanov shows that $[T]^{Kh}$ is chain homotopy equivalent to $[T']^{Kh}$. In the following two propositions, we verify that the maps involved in these homotopy equivalences satisfy the relevant algebraic conditions:

\begin{proposition}\label{SubcxHtpyEquiv1}
Let $(M,d_M)$ be a chain complex of projective graded right $H^n$-modules, with a basis $\{x_i \cdot h\}$ as usual. Assume the following conditions hold:
\begin{enumerate}
\item
$M$ satisfies the algebraic condition $C_{module}$ with respect to the basis $\{x_i \cdot h\}$. 

\item 
$M \cong M_1 \oplus M_2$ as right $H^n$-modules; furthermore, $M_1$ is the submodule of $M$ spanned over $H^n$ by some subset of the $x_i$, while $M_2$ is the submodule spanned by the rest of the $x_i$. 

\item\label{SubcxCondition3}
$M_2$ is a subcomplex of $M$; write $d_M$ in matrix form with respect to the direct sum decomposition as
\[
d_M = \begin{bmatrix} d_1 & 0 \\ d_{1,2} & d_2 \end{bmatrix}.
\]
Note that $d_M^2 = 0$ is equivalent to the equations $d_1^2 = 0$, $d_{1,2} \circ d_1 + d_2 \circ d_{1,2} = 0$, and $d_2^2 = 0$.

\item\label{SubcxCondition4}
There exists an $H^n$-linear map $\psi': M_2 \to M_2$ of bidegree $(0,-1)$ with $\id_{M_2} = d_2 \psi' + \psi' d_2$, and such that we may write
\[
\psi'(x_i) = \sum_j \psi'_{i,j} x_j,
\]
where the $\psi'_{i,j}$ are integer coefficients.
\end{enumerate}

Among the equations implied by $d^2_M = 0$ is $d^2_1 = 0$; thus $(M_1,d_1)$ is a chain complex of projective graded right modules over $H^n$. It satisfies the condition $C_{module}$ because $(M,d_M)$ does. Define $f: (M,d_M) \to (M_1,d_1)$, $g: (M_1,d_1) \to (M,d_M)$, and $\psi: M \to M$ by the matrix formulas
\[
f = \begin{bmatrix} \id_{M_1} & 0 \end{bmatrix}, \quad g = \begin{bmatrix} \id_{M_1} \\ -\psi' d_{1,2} \end{bmatrix}, \textrm{ and } \quad \psi = \begin{bmatrix} 0 & 0 \\ 0 & -\psi' \end{bmatrix}.
\]
Then $f$ and $g$ are chain maps, $f \circ g = \id_{M_1}$, and $g \circ f - \id_{M} = d_M \psi + \psi d_M$. Furthermore, $f$ satisfies the condition $\tilde{C}_{morphism}$, $g$ satisfies the condition $C_{morphism}$, and $\psi$ satisfies the condition $\tilde{C}_{homotopy}$.
\end{proposition}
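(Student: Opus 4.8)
The plan is to treat this as a Gaussian-elimination (cancellation lemma) argument, carried out at the level of explicit $2 \times 2$ block matrices with respect to the splitting $M = M_1 \oplus M_2$, followed by a bookkeeping check that the three algebraic conditions are preserved. First I would record the bidegrees: $f$ and $g$ are bigrading-preserving (each block is either an identity, a zero map, or the composite $\psi' d_{1,2}$ of a $(0,1)$ map with a $(0,-1)$ map), and $\psi$ has bidegree $(0,-1)$ since $\psi'$ does; these are the degree conditions needed for the conclusions.

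Next I would verify the homological assertions by direct block computation. The identity $d_1 \circ f = f \circ d_M$ is immediate. For $d_M \circ g = g \circ d_1$, multiplying out the blocks reduces the claim to $d_{1,2} - d_2 \psi' d_{1,2} = -\psi' d_{1,2} d_1$; substituting $d_2 \psi' = \id_{M_2} - \psi' d_2$ (the defining homotopy relation for $\psi'$) turns the left side into $\psi' d_2 d_{1,2}$, and then $d_2 d_{1,2} = -d_{1,2} d_1$ (one of the relations packaged in $d_M^2 = 0$, as noted in condition \ref{SubcxCondition3}) finishes it. The identity $f \circ g = \id_{M_1}$ is a one-line product. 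For the homotopy identity I would compute both sides: $g \circ f - \id_M = \begin{bmatrix} 0 & 0 \\ -\psi' d_{1,2} & -\id_{M_2} \end{bmatrix}$ and $d_M \psi + \psi d_M = \begin{bmatrix} 0 & 0 \\ -\psi' d_{1,2} & -(d_2 \psi' + \psi' d_2) \end{bmatrix}$, and conclude equality again from $\id_{M_2} = d_2 \psi' + \psi' d_2$. In particular $f$ and $g$ are chain maps.

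Then I would check the algebraic conditions on generators. The map $f$ sends each basis generator $x_i$ to itself (if $x_i$ lies in $M_1$) or to $0$, so it has no correction terms of any kind; hence $f$ satisfies $C_{morphism}$ and in fact $\tilde{C}_{morphism}$. For $\psi$, condition \ref{SubcxCondition4} gives $\psi'(x_i) = \sum_j \psi'_{i,j} x_j$ with integer coefficients and no correction terms, and $\psi$ equals $-\psi'$ on the $M_2$-generators and $0$ on the $M_1$-generators, so $\psi$ satisfies $\tilde{C}_{homotopy}$. For $g$, I would write $g(x_i) = x_i - \psi'(d_{1,2}(x_i))$ for a generator $x_i$ of $M_1$; since $M$ satisfies $C_{module}$, $d_{1,2}(x_i)$ is a sum of terms $c_{i,k} x_k$ and $\tilde{c}_{i,k;h'} x_k \cdot h'$ with $x_k$ a generator of $M_2$ and $h' \in \beta_{mult}$, and applying $\psi'$, using its $H^n$-linearity so that $\psi'(x_k \cdot h') = \psi'(x_k) \cdot h' = \sum_l \psi'_{k,l} x_l \cdot h'$, shows every correction term in the basis expansion of $g(x_i)$ is an integer multiple of some $x_l \cdot h'$ with $h' \in \beta_{mult}$. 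Hence $g$ satisfies $C_{morphism}$.

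I expect no essential obstacle here: the argument is the standard cancellation lemma plus bookkeeping. The only steps requiring care are tracking which of the relations encoded in $d_M^2 = 0$ is used at each point of the chain-map and homotopy identities, and, in the analysis of $g$, making sure that post-composing with $\psi'$ does not introduce correction terms outside $\beta_{mult}$ — which is precisely where $C_{module}$ for $M$ and the $H^n$-linearity of $\psi'$ (part of $\tilde{C}_{homotopy}$) are needed.
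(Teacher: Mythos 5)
Your proposal is correct and follows essentially the same route as the paper: the standard cancellation-lemma block-matrix verification of the chain-map, projection, and homotopy identities, reducing the only nontrivial block identity to the homotopy relation for $\psi'$ together with $d_2 d_{1,2} = -d_{1,2} d_1$, and then reading off $\tilde{C}_{morphism}$, $C_{morphism}$, and $\tilde{C}_{homotopy}$ from the explicit basis expansions (with the $H^n$-linearity of $\psi'$ keeping the correction terms of $g$ inside $\beta_{mult}$). Nothing to change.
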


\begin{proof}
Both $f$ and $g$ are bigrading-preserving and $H^n$-linear; $\psi$ preserves the intrinsic grading and decreases the homological grading by one, because the same holds for $\psi'$. The map $f$ is a chain map because it is the projection map onto a quotient complex. To show that $g$ is a chain map, we want to show that $g(d_1(x_i)) = d_M(g(x_i))$ for all $x_i$ in $M_1$. We have
\[
g (d_1(x_i)) = d_1(x_i) - \psi' d_{1,2} \circ d_1(x_i),
\]
and 
\begin{align*}
d_M (g(x_i)) &= d_M(x_i)  - d_M(\psi' d_{1,2} (x_i)) \\
&= d_M(x_i) + \psi'(d_2 \circ d_{1,2}) (x_i) - d_{1,2}(x_i) \\
&= d_1(x_i) - \psi'(d_{1,2} \circ d_1) (x_i).
\end{align*}
In the second line, we use that
\[
d_{1,2}(x_i) = \id_{M'}(d_{1,2}(x_i)) = d_M (\psi' d_{1,2} (x_i)) + \psi'(d_2 \circ d_{1,2})(x_i),
\]
and in the third line we use the equation $d_2 \circ d_{1,2} + d_{1,2} \circ d_1 = 0$ from item~\ref{SubcxCondition3} above. Thus, $g$ is a chain map as well, and by definition, $f \circ g = \id_{M'}$. To verify that $\psi$ is a homotopy between $g \circ f$ and $\id_M$, we can write out the terms of the relevant equation as matrices:
\[
g \circ f = \begin{bmatrix} \id_{M_1} & 0 \\ -\psi' d_{1,2} & 0 \end{bmatrix}, \quad d_M \psi = \begin{bmatrix} 0 & 0 \\ 0 & -d_2 \psi \end{bmatrix}, \quad \psi d_M = \begin{bmatrix} 0 & 0 \\ -\psi d_{1,2} & - \psi d_2 \end{bmatrix}.
\]
Thus, the equation $g \circ f - \id_{M} = d_M \psi + \psi d_M$ holds.

From their definitions, $f$ satisfies the condition $\tilde{C}_{morphism}$ and $\psi$ satisfies the condition $\tilde{C}_{homotopy}$. By item~\ref{SubcxCondition4} above, $g$ satisfies the condition $C_{morphism}$. 
\end{proof}

\begin{proposition}\label{SubcxHtpyEquiv2}
Let $M$ be a chain complex of projective graded right $H^n$-modules, with a basis $\{x_i \cdot h\}$ as usual. Assume the following conditions hold:
\begin{enumerate}
\item
$M$ satisfies the algebraic condition $C_{module}$ with respect to the basis $\{x_i \cdot h\}$. 

\item 
$M \cong M_1 \oplus M_2$ as right $H^n$-modules, and $M_1$ is the submodule of $M$ spanned over $H^n$ by some subset of the $x_i$, say $M_1 = \{x_i \cdot h: i \in S\}$. The submodule $M_2$ has a $\Z$-basis $\{z_i: i \notin S\}$, where 
\[
z_i = x_i + \sum_{j \in S} \tau_{i,j} x_j + \sum_{j \in S, h' \in \beta_{mult}} \tilde{\tau}_{i,j;h'} x_j \cdot h'
\]
for some integer coefficients $\tau_{i,j}$ and $\tilde{\tau}_{i,j;h'}$.

\item\label{Subcx2Condition3}
$M_2$ is a subcomplex of $M$; write $d_M$ in matrix form with respect to the direct sum decomposition as
\[
d_M = \begin{bmatrix} d_1 & 0 \\ d_{1,2} & d_2 \end{bmatrix}.
\]

\item\label{Subcx2Condition4}
There exists an $H^n$-linear map $\psi': M_2 \to M_2$ of bidegree $(0,-1)$ with $\id_{M_2} = d_2 \psi' + \psi' d_2$, and such that we may write
\[
\psi'(z_i) = \sum_j \psi'_{i,j} z_j,
\]
for some integer coefficients $\psi'_{i,j}$.

\item\label{Subcx2Condition5}
Write
\[
d_M(x_i) = \sum_j c_{i,j} x_j + \sum_{j, h' \in \beta_{mult}} \tilde{c}_{i,j;h'} x_j \cdot h'.
\]
For all indices $i \in S$, $j \notin S$, $k \notin S$ and elements $h'$ of $\beta_{mult}$, we have $\tilde{c}_{i,j;h'} \psi'_{j,k} = 0$.

\item\label{Subcx2Condition6}
For all indices $i \in X$, $j \notin S$, $k \in S$, and elements $h', h'''$ of $\beta_{mult}$, we have $\tilde{c}_{i,j;h'} \tau_{j,k} = 0$ and $\tilde{c}_{i,j;h'} \tilde{\tau}_{j,k;h'''} = 0$.

\item\label{Subcx2Condition7}
For all indices $i \notin S$, $j \notin S$, $k \in S$ and elements $h'$ of $\beta_{mult}$, we have $\psi'_{i,j} \tau_{j,k} = 0$ and $\psi'_{i,j} \tilde{\tau}_{j,k;h'} = 0$.
\end{enumerate}
As bigraded right $H^n$-modules, $M_1 \cong M / M_2$. Since $M_2$ is a subcomplex of $M$, the differential on $M$ induces a differential on $M/M_2$ and thus on $M_1$. Call this differential $d_M \textrm{ mod } M_2$; then $(M_1,d_M \textrm{ mod } M_2)$ is a chain complex of projective graded right modules over $H^n$. In fact, $d_M \textrm{ mod } M_2$ agrees with $d_1$, so we will identify these differentials. Define $f: (M,d_M) \to (M_1,d_1)$, $g: (M_1,d_1) \to (M,d_M)$, and $\psi: M \to M$ by the matrix formulas
\[
f = \begin{bmatrix} \id_{M_1} & 0 \end{bmatrix}, \quad g = \begin{bmatrix} \id_{M_1} \\ -\psi' d_{1,2} \end{bmatrix}, \textrm{ and } \quad \psi = \begin{bmatrix} 0 & 0 \\ 0 & -\psi' \end{bmatrix}.
\]
Then $f$ and $g$ are chain maps, $f \circ g = \id_{M_1}$, and $g \circ f - \id_{M} = d_M \psi + \psi d_M$. Furthermore, $(M_1,d_1)$ satisfies the condition $C_{module}$, $f$ satisfies the condition $C_{morphism}$, $g$ satisfies the condition $\tilde{C}_{morphism}$, and $\psi$ satisfies the condition $\tilde{C}_{homotopy}$.
\end{proposition}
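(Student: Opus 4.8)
The plan is to split the statement into two halves: the purely homological-algebra assertions, which are formally identical to those already proved, and the verification of the four algebraic conditions, which is where hypotheses \ref{Subcx2Condition5}--\ref{Subcx2Condition7} are used.

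For the first half --- that $f$ and $g$ are chain maps, that $f\circ g=\id_{M_1}$, that $g\circ f-\id_M=d_M\psi+\psi d_M$, that $d_1^2=0$ so $(M_1,d_1)$ is a chain complex, and that $M_1\cong M/M_2$ with $d_1$ agreeing with the induced differential $d_M\bmod M_2$ --- the argument is word-for-word the one given in the proof of Proposition~\ref{SubcxHtpyEquiv1}. It uses only the block form $d_M=\left[\begin{smallmatrix} d_1 & 0 \\ d_{1,2} & d_2 \end{smallmatrix}\right]$, the identity $\id_{M_2}=d_2\psi'+\psi'd_2$, and the relations $d_1^2=0$ and $d_{1,2}\circ d_1+d_2\circ d_{1,2}=0$ coming from $d_M^2=0$; none of these inputs is sensitive to the fact that $M_2$ is now spanned over $H^n$ by the tilted generators $z_i$ instead of by a subset of the $x_i$. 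So I would state this half is identical to the earlier proof and not repeat it.

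The substance is the four conditions. The common tool is the change of basis: for $j\notin S$, $x_j=z_j-\sum_{k\in S}\tau_{j,k}x_k-\sum_{k\in S,\,h'\in\beta_{mult}}\tilde{\tau}_{j,k;h'}\,x_k\cdot h'$, and conversely $z_k=x_k+\sum_{l\in S}\tau_{k,l}x_l+\sum_{l\in S,\,h''\in\beta_{mult}}\tilde{\tau}_{k,l;h''}\,x_l\cdot h''$; these are combined with $H^n$-linearity of $\psi'$, $d_{1,2}$, $d_1$ and with the expansion $h'h''=\sum_{h'''}\tilde{\tilde{c}}_{h'h'';h'''}h'''$. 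First, for $C_{module}$ on $(M_1,d_1)$: for $i\in S$, project $d_M(x_i)=\sum_j c_{i,j}x_j+\sum_{j,h'\in\beta_{mult}}\tilde{c}_{i,j;h'}x_j\cdot h'$ onto $M_1$ along $M_2$ by substituting for the $x_j$ with $j\notin S$; the decorated cross terms produced are multiples of $\tilde{c}_{i,j;h'}\tau_{j,k}$ and $\tilde{c}_{i,j;h'}\tilde{\tau}_{j,k;h''}$, which vanish by condition~\ref{Subcx2Condition6}, so what survives carries only $\beta_{mult}$ decorations. For $C_{morphism}$ on $f$: $f(x_i)=x_i$ for $i\in S$ and $f(x_i)=-\sum_{k\in S}\tau_{i,k}x_k-\sum_{k\in S,\,h'\in\beta_{mult}}\tilde{\tau}_{i,k;h'}x_k\cdot h'$ for $i\notin S$, again decorated only in $\beta_{mult}$. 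For $\tilde{C}_{morphism}$ on $g$: for $i\in S$, $d_{1,2}(x_i)$ equals the $M_2$-component $\sum_{j\notin S}c_{i,j}z_j+\sum_{j\notin S,\,h'\in\beta_{mult}}\tilde{c}_{i,j;h'}z_j\cdot h'$; applying $-\psi'$, condition~\ref{Subcx2Condition5} kills the $\tilde{c}_{i,j;h'}\psi'_{j,k}$ terms, and re-expanding the resulting $z_k$ in the $x$-basis, condition~\ref{Subcx2Condition7} kills the $\psi'_{j,k}\tau_{k,l}$ and $\psi'_{j,k}\tilde{\tau}_{k,l;h''}$ terms, leaving $g(x_i)=x_i-\sum_{k\notin S}\bigl(\sum_{j\notin S}c_{i,j}\psi'_{j,k}\bigr)x_k$, which is decoration-free. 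For $\tilde{C}_{homotopy}$ on $\psi$: $\psi(x_i)=0$ for $i\in S$, and $\psi(x_i)=-\psi'(z_i)=-\sum_{k\notin S}\psi'_{i,k}z_k$ for $i\notin S$; re-expanding the $z_k$ and invoking condition~\ref{Subcx2Condition7} once more to kill the cross terms gives $\psi(x_i)=-\sum_{k\notin S}\psi'_{i,k}x_k$, again decoration-free.

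The main --- and essentially the only --- obstacle is the bookkeeping, and the reason it matters is that $\beta_{mult}$ is \emph{not} closed under multiplication in $H^n$: a decorated cross term of the shape (coefficient)$\cdot x_k\cdot(h''h')$ with $h',h''\in\beta_{mult}$ is genuinely problematic and must be forced to vanish outright, not merely shown to lie over $\beta_{mult}$. Hypotheses~\ref{Subcx2Condition5}--\ref{Subcx2Condition7} are calibrated to annihilate exactly these coefficient products over the three index ranges in which they appear, and the care required is to check that each hypothesis is applied over precisely the right range and that the asymmetry between $f$ (which only satisfies $C_{morphism}$, since the $\tilde{\tau}$ terms in $f$ need not vanish) and $g$ (which satisfies the stronger $\tilde{C}_{morphism}$) is respected. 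Once the two change-of-basis substitutions are written out, the remaining coefficient comparisons are routine matrix algebra and $H^n$-linearity, so I would present the argument in the compressed form above and leave those comparisons to the reader, as is done elsewhere in this section.
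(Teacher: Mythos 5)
Your proposal is correct and takes essentially the same approach as the paper: the chain-map and homotopy verifications are imported from Proposition~\ref{SubcxHtpyEquiv1}, and each of $C_{module}$, $C_{morphism}$, $\tilde{C}_{morphism}$, $\tilde{C}_{homotopy}$ is checked by the same change of basis between $\{x_i\}$ and $\{x_i, z_j\}$, with conditions~\ref{Subcx2Condition5}--\ref{Subcx2Condition7} invoked to annihilate exactly the same coefficient products in the same places. Your remark that the $\tilde{c}\,\tilde{\tau}$ cross-terms must vanish outright --- rather than merely be shown to carry $\beta_{mult}$ decorations --- because $\beta_{mult}$ is not multiplicatively closed in $H^n$ is a correct gloss on why hypotheses~\ref{Subcx2Condition5}--\ref{Subcx2Condition7} are needed, something the paper leaves implicit.
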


\begin{proof} The proof that $f$ and $g$ are chain maps, and that $\psi$ is a homotopy between $f$ and $g$, is the same as in Proposition~\ref{SubcxHtpyEquiv1}. Note that here, all the matrices are chosen with respect to the basis $\{x_i, z_j: i \in S, j \notin S\}$ of $M$, since this basis is compatible with the direct sum decomposition $M \cong M_1 \oplus M_2$. For the algebraic conditions, we need to use the basis $\{x_i,x_j:i \in S, j \notin S\}$ instead, which is not compatible with the direct sum decomposition. Thus, we want to express $d_1$, $f$, $g$, and $\psi$ in this basis and show they satisfy $C_{module}$, $C_{morphism}$, $\tilde{C}_{morphism}$, and $\tilde{C}_{homotopy}$ respectively.

We may write
\begin{align*}
d_M(x_i) &= \sum_{k \in S} c_{i,k} x_k + \sum_{k \in S, h' \in \beta_{mult}} \tilde{c}_{i,k;h'} x_k \cdot h' \\
&\qquad + \sum_{j \notin S} c_{i,j} z_j - \sum_{j \notin S, k \in S} c_{i,j} \tau_{j,k} x_k - \sum_{j \notin S, k \in S, h' \in \beta_{mult}} c_{i,j} \tilde{\tau}_{j,k;h'} x_k \cdot h' \\
&\qquad + \sum_{j \notin S, h' \in \beta_{mult}} \tilde{c}_{i,j;h'} z_j \cdot h';
\end{align*}
the final two terms which would appear in this expression are zero by item~\ref{Subcx2Condition6} of the above assumptions. The third and sixth terms of this expression make up $d_{1,2}(x_i)$:
\[
d_{1,2}(x_i) = \sum_{j \notin S} c_{i,j} z_j + \sum_{j \notin S, h' \in \beta_{mult}} \tilde{c}_{i,j;h'} z_j \cdot h',
\]
and the rest of the terms make up $d_1(x_i)$:
\begin{align*}
d_1(x_i) &= \sum_{k \in S} c_{i,k} x_k + \sum_{k \in S, h' \in \beta_{mult}} \tilde{c}_{i,k;h'} x_k \cdot h' \\
&\qquad - \sum_{j \notin S, k \in S} c_{i,j} \tau_{j,k} x_k - \sum_{j \notin S, k \in S, h' \in \beta_{mult}} c_{i,j} \tilde{\tau}_{j,k;h'} x_k \cdot h'. \\
\end{align*}
From this formula we can see that $(M_1, d_1)$ satisfies the condition $C_{module}$.

For $x_i$ with $i \in S$, we have $f(x_i) = x_i$. For $z_j$ with $j \notin S$, we have $f(z_j) = 0$. We may write this equation as
\[
f\bigg(x_j + \sum_{k \in S} \tau_{j,k} x_k + \sum_{k \in S, h' \in \beta_{mult}} \tilde{\tau}_{j,k;h'} x_k \cdot h'\bigg) = 0,
\]
or equivalently
\[
f(x_j) = - \sum_{k \in S} \tau_{j,k} x_k - \sum_{k \in S, h' \in \beta_{mult}} \tilde{\tau}_{j,k;h'} x_k \cdot h'.
\]
By this formula, we see that $f$ satisfies condition $C_{morphism}$.

For the map $g$, we can write
\begin{align*}
\psi' \circ d_{1,2}(x_i) &= \psi'\bigg(\sum_{j \notin S} c_{i,j} z_j + \sum_{j \notin S, h' \in \beta_{mult}} \tilde{c}_{i,j;h'} z_j \cdot h'\bigg) \\
&= \sum_{j \notin S, k \notin S} c_{i,j} \psi'_{j,k} z_k
\end{align*}
by assumption~\ref{Subcx2Condition5} above. Using assumption~\ref{Subcx2Condition7}, we can write this sum as
\[
\sum_{j \notin S, k \notin S} c_{i,j} \psi'_{j,k} x_k.
\] 
Thus,
\[
g(x_i) = x_i - \sum_{j \notin S, k \notin S} c_{i,j} \psi'_{j,k} x_k,
\]
and $g$ satisfies condition $\tilde{C}_{morphism}$.

Finally, if $x_i \in S$, then $\psi(x_i) = 0$ by definition. For $x_j \notin S$, we have
\begin{align*}
\psi(x_j) &= \psi\bigg(z_j - \sum_{k \in S} \tau_{j,k} x_k - \sum_{k \in S, h' \in \beta} \tilde{\tau}_{j,k} x_k \cdot h'\bigg) \\
&= \psi(z_j) \\
&= -\sum_{k \notin S} \psi'_{j,k} z_k \\
&= -\sum_{k \notin S} \psi'_{j,k} x_k;
\end{align*}
the last line follows from assumption~\ref{Subcx2Condition7}. Thus, $\psi$ satisfies the condition $\tilde{C}_{homotopy}$.
\end{proof}

\begin{figure}
\includegraphics[scale=0.625]{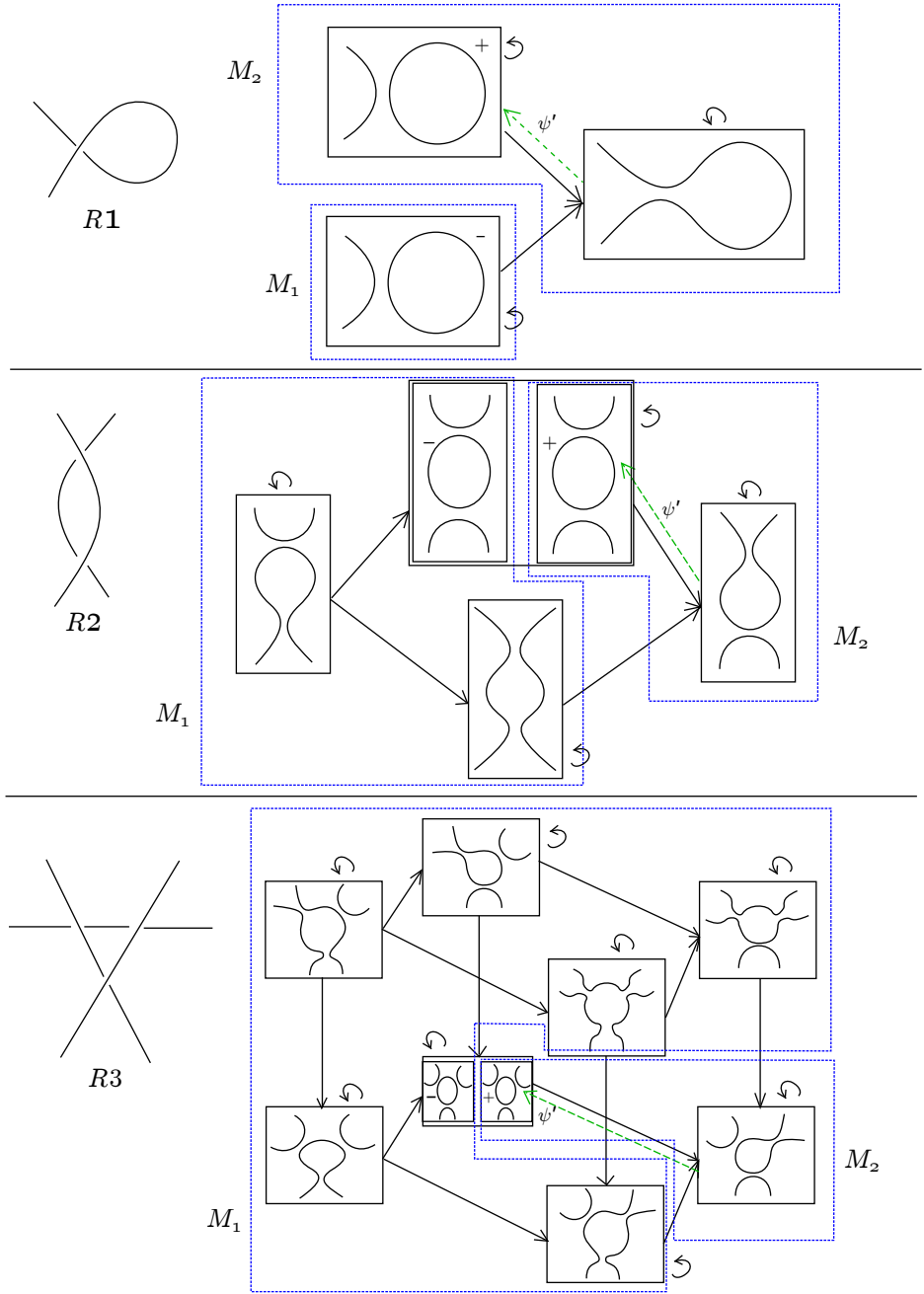}
\caption{First step of R1, R2, and R3 moves.}
\label{Fig1}
\end{figure}

\begin{figure}
\includegraphics[scale=0.625]{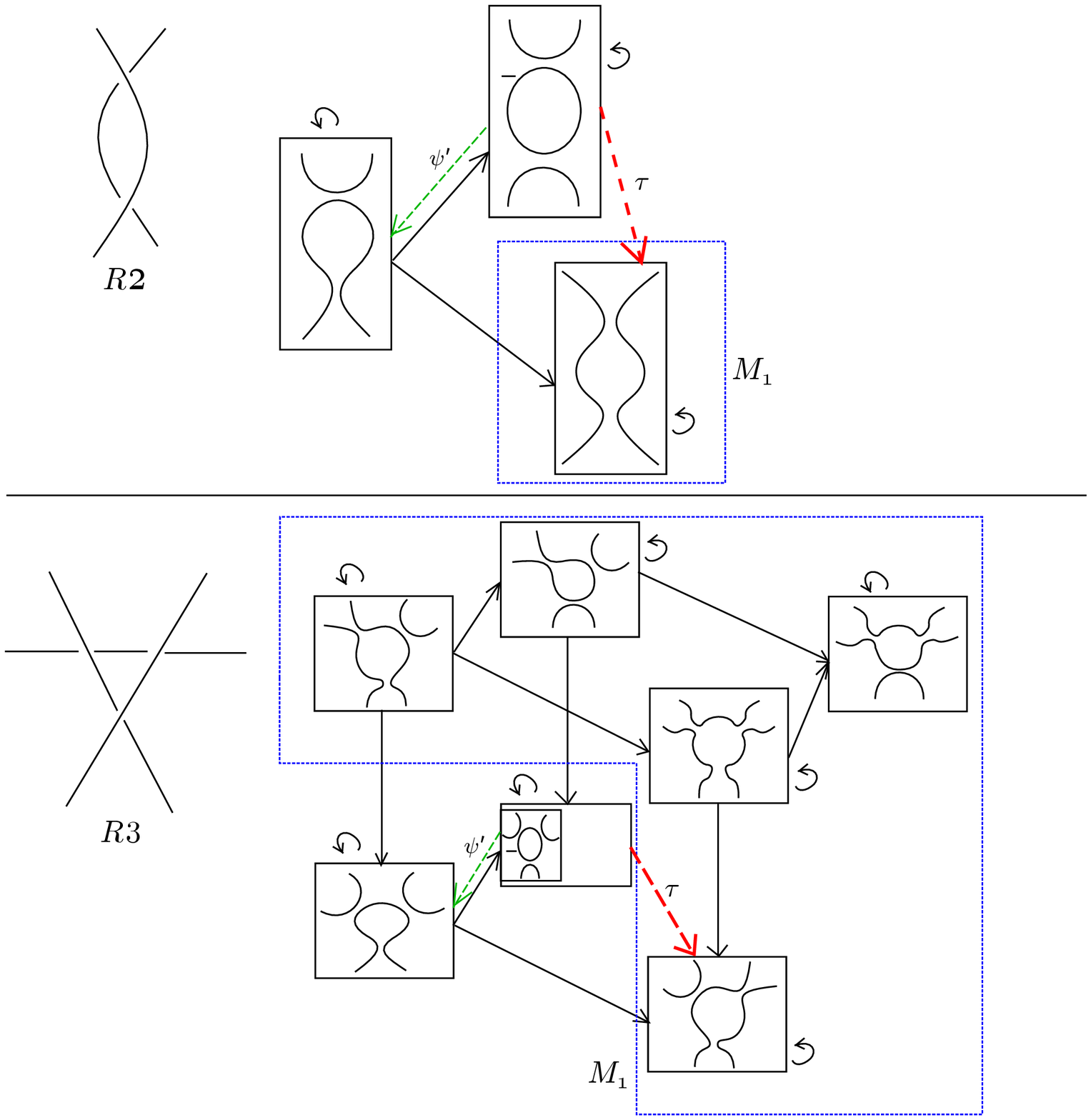}
\caption{Second step of R2 and R3 moves.}
\label{Fig2}
\end{figure}

\begin{corollary}\label{BomBHtpyEquivCorr} If $T$ and $T'$ are oriented tangle diagrams in $\R_{\leq 0} \otimes \R$ which are related by a Reidemeister move, then $\A([T]^{Kh})$ and $\A([T']^{Kh})$ are $\mathcal{A}_{\infty}$ homotopy equivalent as Type A structures over $\B \astrosun m(\B)^!$.
\end{corollary}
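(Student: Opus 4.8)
The plan is to reduce Corollary~\ref{BomBHtpyEquivCorr} to the two structural propositions just proved, namely Proposition~\ref{SubcxHtpyEquiv1} and Proposition~\ref{SubcxHtpyEquiv2}, via Corollary~\ref{GetInducedHtpyEquivCorr}. By that corollary, it suffices to exhibit, for each Reidemeister move relating $T$ and $T'$, chain maps $f$ and $g$ between $[T]^{Kh}$ and $[T']^{Kh}$ satisfying $C_{morphism}$ (with one of the two satisfying $\tilde{C}_{morphism}$) together with homotopies $\psi, \psi'$ satisfying $\tilde{C}_{homotopy}$ witnessing that $f$ and $g$ are inverse up to homotopy. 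Khovanov's proof of invariance in \cite{KhovFunctor} already provides such homotopy equivalences; the content here is to check that the maps can be taken in the specific algebraic form required by Proposition~\ref{SubcxHtpyEquiv1} and Proposition~\ref{SubcxHtpyEquiv2}.

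First I would recall Khovanov's Reidemeister-invariance argument: in each case $[T]^{Kh}$ contains an acyclic subcomplex or quotient complex built from a ``Gaussian elimination'' of a contractible piece, and $[T']^{Kh}$ is the complementary complex. Concretely, for the R1 move one has a free circle summand $\Z[x]/(x^2)$ appearing in one resolution, and the differential joining it to the neighboring arc is an isomorphism onto one of the two $\Z$-summands; for R2, two resolutions cancel against each other via an isomorphism of the relevant $H^n$-modules; for R3, one performs an R2-type cancellation on each side and compares. In every case the ``cancelling'' differential $d_{1,2}$ has a chosen homotopy inverse $\psi'$ on the acyclic subcomplex $M_2$, and the whole equivalence is the Gaussian-elimination data $(f,g,\psi)$ of the matrix form in Proposition~\ref{SubcxHtpyEquiv1} or Proposition~\ref{SubcxHtpyEquiv2}. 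I would organize the argument around the pictures (referring to Figure~\ref{Fig1} and Figure~\ref{Fig2}, which depict the first step of the R1, R2, R3 moves and the second step of the R2 and R3 moves) and treat each move as an instance of one of these two propositions applied to the Khovanov complex, possibly iterated twice for R3.

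The main work is verifying the hypotheses of the propositions. For R1, the relevant $\psi'$ is (up to sign) the map ``label the distinguished circle by $1$,'' which on generators sends each basis element $x_i$ to a single basis element $x_j$ with integer (indeed $\pm1$) coefficient, so it satisfies $\tilde{C}_{homotopy}$, and the surviving subcomplex is spanned by an honest subset of the $x_i$, so Proposition~\ref{SubcxHtpyEquiv1} applies directly. For R2 and R3 the new basis of the surviving complex is twisted: after cancelling, a generator of $M_2$ has the form $z_i = x_i + \sum \tau_{i,j} x_j + \sum \tilde{\tau}_{i,j;h'} x_j \cdot h'$, which is exactly the situation of Proposition~\ref{SubcxHtpyEquiv2}. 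Here the crux is checking the vanishing conditions \ref{Subcx2Condition5}--\ref{Subcx2Condition7}: that the products $\tilde{c}_{i,j;h'} \psi'_{j,k}$, $\tilde{c}_{i,j;h'} \tau_{j,k}$, $\tilde{c}_{i,j;h'} \tilde{\tau}_{j,k;h'''}$, $\psi'_{i,j} \tau_{j,k}$, and $\psi'_{i,j} \tilde{\tau}_{j,k;h'}$ all vanish. This is a degree/support bookkeeping argument: the coefficients $\tilde{c}_{i,j;h'}$ are nonzero only for crossing changes of the surgery type (item~\ref{KhCoeffs4} of Example~\ref{HnCCoeffsEx}) involving the arcs near the Reidemeister-move region, whereas the twisting coefficients $\tau, \tilde\tau$ and the homotopy coefficients $\psi'$ are supported only on the cancelled resolutions local to that region, and the relevant index ranges ($i \in S$, $j \notin S$, $k \in S$ or $k \notin S$) force the compositions to land in geometrically incompatible configurations. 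I expect this local-support verification — confirming it holds uniformly for all three Reidemeister moves and for both steps of R2 and R3 — to be the main obstacle; everything else is formal, following from Corollary~\ref{GetInducedHtpyEquivCorr} once the input data is in the right form.
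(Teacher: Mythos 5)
Your proposal follows essentially the same route as the paper: reduce to Corollary~\ref{GetInducedHtpyEquivCorr} by exhibiting Khovanov's Reidemeister equivalences as instances of Proposition~\ref{SubcxHtpyEquiv1} and Proposition~\ref{SubcxHtpyEquiv2}, and verify the required algebraic conditions by inspecting Figures~\ref{Fig1} and~\ref{Fig2}. One small imprecision worth fixing: the R2 move is not a single application of Proposition~\ref{SubcxHtpyEquiv2} but a composition of a Proposition~\ref{SubcxHtpyEquiv1} step (cancelling a free-circle summand, as in R1) followed by a Proposition~\ref{SubcxHtpyEquiv2} step (the twisted-basis cancellation), and R3 compounds this on both $[T]^{Kh}$ and $[T']^{Kh}$ for a total of four elementary equivalences; your ``possibly iterated twice for R3'' understates the count and leaves R2's two-step structure implicit. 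With that clarification your outline matches the paper's proof, and your emphasis on verifying the vanishing conditions \ref{Subcx2Condition5}--\ref{Subcx2Condition7} is exactly where the nontrivial local bookkeeping lives.
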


\begin{proof}
When $T$ and $T'$ are related by an R1 move, Khovanov's homotopy equivalence between $[T]^{Kh}$ and $[T']^{Kh}$ is of the type constructed in Proposition~\ref{SubcxHtpyEquiv1}. This is most easily seen by looking at the top diagram of Figure~\ref{Fig1}; one can check that the four conditions of Proposition~\ref{SubcxHtpyEquiv1} are satisfied.

When $T$ and $T'$ are related by an R2 move, Khovanov's homotopy equivalence is a composition of a homotopy equivalence from Proposition~\ref{SubcxHtpyEquiv1} followed by one from Proposition~\ref{SubcxHtpyEquiv2}. The relevant diagrams are the middle diagram of Figure~\ref{Fig1} and the top diagram of Figure~\ref{Fig2}. The first homotopy equivalence is very similar to the R1 move. For the second homotopy equivalence, one can see from the diagram in Figure~\ref{Fig2} that the conditions of Proposition~\ref{SubcxHtpyEquiv2} are satisfied.

Finally, when $T$ and $T'$ are related by an R3 move, Khovanov's homotopy equivalence comes from doing a homotopy equivalence from Proposition~\ref{SubcxHtpyEquiv1} and then a homotopy equivalence from Proposition~\ref{SubcxHtpyEquiv2}, to both $[T]^{Kh}$ and $[T']^{Kh}$. After these homotopy equivalences, the complexes are isomorphic. Thus, the full homotopy equivalence from $[T]^{Kh}$ to $[T']^{Kh}$ is a composition of four homotopy equivalences from Proposition~\ref{SubcxHtpyEquiv1} and Proposition~\ref{SubcxHtpyEquiv2}. The relevant diagrams are the bottom diagram of Figure~\ref{Fig1} and the bottom diagram of Figure~\ref{Fig2}. Again, one can check that the conditions of Proposition~\ref{SubcxHtpyEquiv1} are satisfied using Figure~\ref{Fig1}, and that the conditions of Proposition~\ref{SubcxHtpyEquiv2} are satisfied using Figure~\ref{Fig2}.
\end{proof}

\begin{proposition}\label{HomotopyEqDescendProp} If $T$ and $T'$ are oriented tangle diagrams in $\R_{\leq 0} \otimes \R$ which are related by a Reidemeister move, then the $\mathcal{A}_{\infty}$ homotopy equivalence between $\widehat{A}([T]^{Kh})$ and $\widehat{A}([T']^{Kh})$ of Corollary~\ref{BomBHtpyEquivCorr} descend to $\mathcal{A}_{\infty}$ homotopy equivalences of Type A structures over the quotient algebra $\B \Gamma_n$ of $\B \astrosun m(\B)^!$.
\end{proposition}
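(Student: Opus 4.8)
The plan is to show that every component of each $\mathcal{A}_\infty$ morphism and homotopy making up the equivalence of Corollary~\ref{BomBHtpyEquivCorr} vanishes whenever one of its algebra inputs lies in the kernel $J$ of the projection $\B\astrosun m(\B)^!\to\B\Gamma_n$, i.e.\ in the two-sided ideal generated by the tetrahedron relations of Proposition~\ref{KhovRobertsLSQuotient}; such data then induces $\mathcal{A}_\infty$ morphisms and homotopies between the descended Type A structures of Proposition~\ref{TangleTypeADescends}. First I would observe that descent along $\B\astrosun m(\B)^!\to\B\Gamma_n$ is compatible with the composition of $\mathcal{A}_\infty$ morphisms and with the composition of $\mathcal{A}_\infty$ homotopy equivalences, since both operations are given by universal formulas in the component maps and the sign operators $\left|\id\right|$ that make no reference to the algebra multiplication beyond the structure maps (which descend). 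By Corollary~\ref{BomBHtpyEquivCorr} the Reidemeister equivalence is a composite of the $\mathcal{A}_\infty$ homotopy equivalences coming from the elementary homotopy equivalences of Propositions~\ref{SubcxHtpyEquiv1} and \ref{SubcxHtpyEquiv2}, so it suffices to treat one such elementary piece.

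For an elementary piece we have chain maps $f,g$ satisfying $C_{morphism}$ with one of them---say $f$---satisfying $\tilde{C}_{morphism}$, and homotopies $\psi,\psi'$ satisfying $\tilde{C}_{homotopy}$. Since $\tilde{f}_{i,j;h'}=0$ for all $h'\in\beta_{mult}$, Definition~\ref{InducedAInftyMorphismDef} forces $\widehat{A}(f)_2=0$ on $V_{full}$, hence on $\B\astrosun m(\B)^!$, so $\widehat{A}(f)=\widehat{A}(f)_1$ has no algebra input and descends vacuously; the same holds for $\widehat{A}(\psi)$ and $\widehat{A}(\psi')$, whose only nonzero component is the first. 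It remains to show that $\widehat{A}(g)$ descends: here $\widehat{A}(g)_1$ and $\widehat{A}(g)_2$ are the only nonzero components, and $\widehat{A}(g)_1$ carries no algebra input, so I must prove $\widehat{A}(g)_2(-,j)=0$ for every $j\in J$.

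I would do this in two stages. \emph{Base case:} $\widehat{A}(g)_2(-,t)=0$ for each tetrahedron relation element $t$. Expanding $\widehat{A}(g)_2$ on the quadratic monomials of $t$ by the $n=3$ formula of Example~\ref{LowSummandsAf2Ex} and using equations~\ref{TopSimpleMultEqn} and \ref{TopAf2SimpleEqn}, each matrix entry of $\widehat{A}(g)_2(-,t)$ becomes a sum of products of the structure constants $\tilde{c}$, $\tilde{c'}$ of $M$, $M'$ and the transition constants $\tilde{g}$ of $g$, and one checks that the contributions of the two orderings of the two bridges underlying the tetrahedron cancel. This is the analogue of the sign cancellation in the proof of Proposition~\ref{TangleTypeADescends}---now involving both the surgery coefficients of $M$ and $M'$ (Example~\ref{HnCCoeffsEx}) and the coefficients of $g$ supplied by the matrix formulas of Propositions~\ref{SubcxHtpyEquiv1} and \ref{SubcxHtpyEquiv2}, and appealing to the chain-map relations in items~\ref{HnChainMapRels2} and \ref{HnChainMapRels3} of Proposition~\ref{HnChainMapStructure}. \emph{Bootstrap:} writing a general element of $J$ as a $\Z$-linear combination of elements $u\cdot t\cdot v$ with $t$ a tetrahedron relation and $u,v$ monomials, I would expand $\widehat{A}(g)_2(-,u\,t\,v)$ by the bar-type formula of Definition~\ref{InducedAInftyMorphismDef}: the terms whose single $\widehat{A}(g)_2$-on-a-generator factor sits to the right of $t$ involve $m_2(-,t)=0$; those whose factor sits to the left involve $m'_2(-,t)=0$; and the remaining terms, straddling $t$, collapse by associativity of $m_2$ and $m'_2$ into $m'_2\bigl(\widehat{A}(g)_2(-,t),v\bigr)$, which vanishes by the base case. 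Here $m_2$ and $m'_2$ kill the tetrahedron relations by Proposition~\ref{TangleTypeADescends}.

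I expect the base case to be the main obstacle, as it is the only genuinely new computation and requires confirming that the coefficients $\tilde{g}$ produced by the matrix formulas for $g$ in Propositions~\ref{SubcxHtpyEquiv1} and \ref{SubcxHtpyEquiv2} inherit the $(-1)^{\#_1}$ sign pattern needed for the cancellation. Granting this, $\widehat{A}(f)$, $\widehat{A}(g)$, $\widehat{A}(\psi)$, $\widehat{A}(\psi')$ induce $\mathcal{A}_\infty$ morphisms and homotopies over $\B\Gamma_n$, and these satisfy the $\mathcal{A}_\infty$ morphism and homotopy relations there because those relations are identities in the (descending) component and structure maps, exactly as in Proposition~\ref{DandDDQuotient}. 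Composing the descended elementary equivalences, as permitted by the compatibility noted at the outset, yields the desired $\mathcal{A}_\infty$ homotopy equivalence between $\widehat{A}([T]^{Kh})$ and $\widehat{A}([T']^{Kh})$ over $\B\Gamma_n$; combined with Propositions~\ref{RobertsKhovanovTypeA} and \ref{RobertsKhovanovTypeD} this also reproves the Reidemeister-invariance of Roberts' Type A structures.
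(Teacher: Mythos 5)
Your high-level architecture matches the paper: reduce to a single elementary homotopy equivalence from Proposition~\ref{SubcxHtpyEquiv1} or \ref{SubcxHtpyEquiv2}, note that only $\widehat{A}(g)_2$ (resp.\ $\widehat{A}(f)_2$) can fail to descend because the other two maps have vanishing higher terms, and then show the remaining $2$-component kills the ideal $J$ by vanishing on the tetrahedron generators and bootstrapping via the bar-type formula and Proposition~\ref{TangleTypeADescends}. The bootstrap step is fine and is exactly the factorization principle already used inside Proposition~\ref{AInfMorphWellDef}.

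The gap is in the base case, and specifically in the mechanism you propose. You want to deduce $\widehat{A}(g)_2(-,a+c)=0$ by ``appealing to the chain-map relations in items~\ref{HnChainMapRels2} and \ref{HnChainMapRels3} of Proposition~\ref{HnChainMapStructure}.'' Those relations hold for an arbitrary chain map between arbitrary $C_{module}$-complexes, so they cannot possibly force the tetrahedron relations to act as zero: Proposition~\ref{TangleTypeADescends} itself is not a formal consequence of Proposition~\ref{HnProjModStructure} but a special feature of $[T]^{Kh}$ arising from the $(-1)^{\#_1}$ structure in Example~\ref{HnCCoeffsEx}. Correspondingly, the vanishing of $\widehat{A}(g)_2$ on $a+c$ is not purely algebraic. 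The paper's argument instead exploits Remark~\ref{OrderingIsoIndep}: one is free to reorder the crossings of the underlying tangle, and by putting the one, two, or three \emph{local} crossings of the Reidemeister move first in the ordering, the triple products $\tilde{c}_{i,j;h'_{\gamma}}\tilde{c}_{j,k;h'_{\eta'}}\psi'_{k,l}$ and $\tilde{c}_{i,j';h'_{\eta}}\tilde{c}_{j',k';h'_{\gamma'}}\psi'_{k',l}$ pick up identical signs (since a $d_{1,2}$ step raises and a $\psi'$ step lowers the $\#_1$ count by one, so the non-local crossing sees the same residue in both orders). The cancellation then comes from the opposite signs produced by the $n=3$ consistency expansion: the $\widehat{A}(g)_2\circ(m_2\otimes\id)$ sub-term of $a$ enters with a minus, the $m'_2\circ(\widehat{A}(g)_2\otimes|\id|)$ sub-term of $c$ with a plus. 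This is also why the structure is not quite ``two orderings of two bridges cancel pairwise'': the four expressions (FinalPropRHS1--4) group into two disjoint cancelling pairs, dictated by which sub-term of the $n=3$ expansion is nonzero for a given pair of generators, and deciding which pair fires requires inspecting Figure~1. You flag the $(-1)^{\#_1}$ pattern as the thing to confirm, which is the right instinct, but without the crossing-reordering device the sign bookkeeping is not tractable and the chain-map relations offer no substitute.
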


\begin{proof} 
For homotopy equivalences $\{f: M \to M_1, g: M_1 \to M, \psi: M \to M \}$ coming from Proposition~\ref{SubcxHtpyEquiv1}, when doing an R1 move, the first step of an R2 move, or the first or fourth step of an R3 move, we only need to show that $\widehat{A}(g)_2$ descends from a map
\[
\widehat{A}(M_1) \otimes_{\I_{\beta}} \B \astrosun m(\B)^! \to \widehat{A}(M)
\]
to a map
\[
\widehat{A}(M_1) \otimes_{\I_{\beta}} \B \Gamma_n \to \widehat{A}(M);
\]
since $f$ satisfies $\tilde{C}_{morphism}$ and $\psi$ satisfies $\tilde{C}_{homotopy}$, we have $\widehat{A}(f)_2 = 0$ and $\widehat{A}(\psi)_2 = 0$.

As in Proposition~\ref{KhovRobertsLSQuotient}, let $a$, $b$, $c$, and $d$ be vertices of a tetrahedron in the graph $G$. We want to show that $\widehat{A}(g)(-,a+c) = 0$, $\widehat{A}(g)(-,a+d) = 0$, and $\widehat{A}(g)(-,b+c) = 0$. We will show only that $\widehat{A}(g)(-,a+c) = 0$; by symmetry, the proof is the same for the other two extra relations.

Write 
\[
a = m(b^*_{\gamma;m(h_1),m(h_2)}) m(b^*_{\eta';m(h_2),m(h_3)})
\]
and 
\[
c = m(b^*_{\eta;m(h_1),m(h''_2)}) m(b^*_{\gamma';m(h''_2),m(h_3)}).
\]
Let $x_i \cdot h_1$ be a generator of $\widehat{A}(M_1)$; we have $i \in S$, in the notation of Proposition~\ref{SubcxHtpyEquiv1}. Then
\begin{equation}\label{HtpyDescendsEqn1}
\begin{aligned}
&\widehat{A}(g)_2(x_i \cdot h_1, a) \\
&= (\widehat{A}(g)_2 \circ (m_2 \otimes \id))(x_i \cdot h, m(b^*_{\gamma;m(h_1),m(h_2)}) \otimes m(b^*_{\eta';m(h_2),m(h_3)})) \\
&+ (m'_2 \circ (\widehat{A}(g)_2 \otimes \left|\id\right|)) (x_i \cdot h, m(b^*_{\gamma;m(h_1),m(h_2)}) \otimes m(b^*_{\eta';m(h_2),m(h_3)})), \\
\end{aligned}
\end{equation}
using the $n = 3$ consistency condition for the $\mathcal{A}_{\infty}$ morphism $\widehat{A}(g)_2$. The first term on the right side of equation~\ref{HtpyDescendsEqn1} can be expanded out as 
\begin{equation}\label{FinalPropRHS1}
-\sum_{i,j \in S, k,l \notin S} \tilde{c}_{i,j;h'_{\gamma}} \tilde{c}_{j,k;h'_{\eta'}} \psi'_{k,l} (x_l \cdot h_3),
\end{equation}
where $h'_{\gamma}$ and $h'_{\eta'}$ are determined by $\gamma$ and $\eta'$, while the second term on the right side of equation~\ref{HtpyDescendsEqn1} can be expanded out as
\begin{equation}\label{FinalPropRHS2}
\sum_{i \in S, j,k,l \notin S} \tilde{c}_{i,j;h'_{\gamma}} \tilde{c}_{j,k;h'_{\eta'}} \psi'_{k,l} (x_l \cdot h_3).
\end{equation}
Similarly, we may write $\widehat{A}(g)_2(x_i \cdot h_1, c)$ as the sum of the expressions
\begin{equation}\label{FinalPropRHS3}
-\sum_{i,j \in S, k,l \notin S} \tilde{c}_{i,j;h'_{\eta}} \tilde{c}_{j,k;h'_{\gamma'}} \psi'_{k,l} (x_l \cdot h_3)
\end{equation}
and
\begin{equation}\label{FinalPropRHS4}
\sum_{i \in S, j,k,l \notin S} \tilde{c}_{i,j;h'_{\eta}} \tilde{c}_{j,k;h'_{\gamma'}} \psi'_{k,l} (x_l \cdot h_3).
\end{equation}

Examining Figure~\ref{Fig1}, we see that if the expression \ref{FinalPropRHS1} or \ref{FinalPropRHS4} is nonzero for a particular choice of $x_i \cdot h_1$ and $x_l \cdot h_3$, then both \ref{FinalPropRHS2} and \ref{FinalPropRHS3} are zero for these generators of $\widehat{A}(M_1)$ and $\widehat{A}(M)$. Similarly, if \ref{FinalPropRHS2} or \ref{FinalPropRHS3} is nonzero, then both \ref{FinalPropRHS1} and \ref{FinalPropRHS4} are zero.

Thus, suppose that we have $x_i \cdot h_1$ and $x_l \cdot h_3$ such that the expression \ref{FinalPropRHS1} is nonzero. We want to show that, in fact, the expressions \ref{FinalPropRHS1} and \ref{FinalPropRHS4} sum to zero (the case where expressions \ref{FinalPropRHS2} and \ref{FinalPropRHS3} are nonzero instead is very similar). 

Indeed, generators of all complexes $(M,d_M)$ and $(M_1,d_1)$ under consideration come from generators of the Khovanov complex $[T]^{Kh}$ of a tangle $T$, and by Remark~\ref{OrderingIsoIndep} we may choose any ordering we like for the crossings of $T$. We will order the crossings of $T$ such that the one, two, or three crossings ``local'' to the Reidemeister move being performed come first in the ordering. 

Now, to each quadruple $(i \in S, j \in S, k \notin S, l \notin S)$ giving rise to a nonzero term of expression \ref{FinalPropRHS1}, we may associate a pair of indices $(j' \notin S, k' \notin S)$, such that $\tilde{c}_{i,j';h'_{\eta}} \tilde{c}_{j',k';h'_{\gamma'}} \psi'_{k',l} \neq 0$. In fact, with the above ordering convention, we have
\[
\tilde{c}_{i,j;h'_{\gamma}} \tilde{c}_{j,k;h'_{\eta'}} \psi'_{k,l} = \tilde{c}_{i,j';h'_{\eta}} \tilde{c}_{j',k';h'_{\gamma'}} \psi'_{k',l}.
\]
To see that this equation holds, first note that the only component of $g$ relevant for $\widehat{A}(g)_2$ is $-\psi' \circ d_{1,2}$. Recall that the terms $\tilde{c}$, and thus also the terms $\psi'$, are computed as in Example~\ref{HnCCoeffsEx}. A term like $\tilde{c}_{i,j;h'_{\gamma}} \tilde{c}_{j,k;h'_{\eta'}} \psi'_{k,l}$ corresponds to doing one step of $d_1$ by changing some crossing away from the local area (and thus higher in the ordering) from $0$ to $1$, then doing one step of $d_{1,2}$ by changing one of the local crossings from $0$ to $1$, and then finally doing one step of $\psi'$ by changing a different local crossing from $1$ to $0$. The corresponding term $\tilde{c}_{i,j';h'_{\eta}} \tilde{c}_{j',k';h'_{\gamma'}} \psi'_{k',l}$ does the $d_{1,2}$ and $-\psi'$ steps before changing the non-local crossing from $0$ to $1$. But, when changing the local crossings, the signs are the same for both terms because the local crossings occur at the beginning of the ordering. When changing the non-local crossing, the signs are also the same for both terms because doing $d_{1,2}$ and $-\psi$ on the local crossings does not increase or decrease the number of crossings with a $1$-resolution ($d_{1,2}$ increases this number by $1$, and then $-\psi'$ decreases it by $1$).

Thus, $\widehat{A}(g)_2(-,a + c) = 0$. By symmetry, we also have $\widehat{A}(g)_2(-,a+d) = 0$ and $\widehat{A}(g)_2(-,b+c) = 0$, so $\widehat{A}(g)$ descends to an $\mathcal{A}_{\infty}$ morphism of Type A structures over $\B \Gamma_n$.

For homotopy equivalences $\{f: M \to M_1, g: M_1 \to M, \psi: M \to M \}$ coming from Proposition~\ref{SubcxHtpyEquiv2}, the argument is similar enough that we will simply outline the differences with the above proof. Homotopy equivalences from Proposition~\ref{SubcxHtpyEquiv2} arise when doing the second step of an R2 move or the second or third step of an R3 move. For these equivalences, we only need to show that $\widehat{A}(f)_2$ descends from a map
\[
\widehat{A}(M) \otimes_{\I_{\beta}} \B \astrosun m(\B)^! \to \widehat{A}(M_1)
\]
to a map
\[
\widehat{A}(M) \otimes_{\I_{\beta}} \B \Gamma_n \to \widehat{A}(M_1),
\]
since we automatically have $\widehat{A}(g)_2 = 0$ by condition $\tilde{C}_{morphism}$ on $g$ and $\widehat{A}(\psi)_2 = 0$ by condition $\tilde{C}_{homotopy}$ on $\psi$.

The only terms of $f$ in the basis expansion $\{x_i \cdot h: i \in S, i \notin S\}$ of $M$ which are relevant for $\widehat{A}(f)_2$ are the terms with coefficients $-\tilde{\tau}_{j,k;h'}$; see the proof of Proposition~\ref{SubcxHtpyEquiv2}. These $\tau$ terms play a role analogous to the $-\psi' \circ d_{1,2}$ terms in the proof above for homotopy equivalences from Proposition~\ref{SubcxHtpyEquiv1}. Indeed, doing one of these $\tau$ terms amounts to doing one step of $\psi'$ by changing a local crossing from a $1$ to a $0$, and then doing one step of $d_M$ by changing a local crossing from a $0$ to a $1$. Thus, an argument analogous to the one above shows that $\widehat{A}(f)_2(-,a+c) = 0$, and by symmetry that $\widehat{A}(f)_2(-,a+d) = 0$ and $\widehat{A}(f)_2(-,b+c) = 0$. Hence $\widehat{A}(f)$ descends to an $\mathcal{A}_{\infty}$ morphism of Type A structures over $\B \Gamma_n$.

\end{proof}

Proposition~\ref{HomotopyEqDescendProp} gives us an alternate proof of Corollary 33 of Roberts \cite{RtypeA}.

\subsection{Equivalences of Type D structures}\label{TypeDEquivSect}

We first define morphisms and homotopies of Type D structures with sign conventions following Definition 37 of Roberts \cite{RtypeD}:
\begin{definition} Let $\B$ be a differential bigraded algebra with idempotent ring $\I$. Let $(\D, \delta)$ and $(\D', \delta')$ be Type D structures over $\B$. A morphism of Type D structures $F: \D \to \D'$ is a bigrading-preserving $\I$-linear map $F: \D \to \B \otimes_{\I} \D'$ satisfying the Type D morphism relation
\[
(\mu_1 \otimes \left|\id\right|) \circ F =  (\mu_2 \otimes \id) \circ (\id \otimes F) \circ \delta - (\mu_2 \otimes \id) \circ (\id \otimes \delta') \circ F.
\]

The composition of two morphisms of Type D structures $F: \D \to \D'$ and $G: \D' \to \D''$ is
\[
G \circ F := (\mu_2 \otimes \id) \circ (\id \otimes G) \otimes F,
\]
a grading-preserving $\I$-linear map from $\D$ to $\B \otimes_{\I} \D''$ which also satisfies the Type D morphism relation.
\end{definition}

\begin{definition} Let $F: \D \to \D'$, $G: \D \to \D'$ be morphisms of Type D structures over $\B$. A homotopy of morphisms of Type D structures between $F$ and $G$ is a bigrading-preserving $\I$-linear map $H: \D \to (\B \otimes_{\I} \D')[0,1]$ satisfying
\[
F - G = (\mu_2 \otimes \id) \circ (\id \otimes H) \circ \delta + (\mu_2 \otimes \id) \circ (\id \otimes \delta') \circ H + (\mu_1 \otimes \left|\id\right|) \circ H.
\]
If a homotopy exists between $F$ and $G$, then $F$ is said to be homotopic to $G$.

Two Type D structures $\D$ and $\D'$ are homotopy equivalent if there exist Type D structure morphisms $F: \D \to \D'$ and $G: \D' \to \D$, such that $G \circ F$ is homotopic to $\id_{\D}$ and $F \circ G$ is homotopic to $\id_{\D'}$.
\end{definition}

\begin{remark}\label{TypeDHtpyDescRem}
Suppose $\D$ and $\D'$ are homotopy equivalent Type D structures over $\B$, and $J$ is a bigrading-homogeneous ideal of $\B$ which is closed under the differential on $\B$. By Proposition~\ref{DandDDQuotient}, $\D$ and $\D'$ descend to Type D structures over $\B / J$. As such, they are still homotopy equivalent. Indeed, one may simply postcompose the algebra outputs of $F$, $G$, and $H$ with the projection map from $\B$ onto $\B / J$, and all the relevant conditions are still satisfied.
\end{remark}

Now let $N$ be a chain complex of projective graded left $H^n$-modules. In Section~\ref{RobertsTypeDStrSect}, the Type D structure $\D(N)$ over $\B \astrosun m(\B)^!$ was defined by applying the mirroring operation of Definition~\ref{TypeDMirroringDef} to $\widehat{A}(m(N)) \boxtimes {^{\B \astrosun m(\B)^!}}K^{m(\B \astrosun m(\B)^!)^{op}}$. One can check that the mirroring operation of Definition~\ref{TypeDMirroringDef} respects homotopy equivalences of Type D structures. Thus, to show that $\D([T]^{Kh})$ is a tangle invariant up to homotopy equivalence, it would suffice to show the following general result: if $\widehat{A}$ and $\widehat{A'}$ are Type A structures over a differential bigraded algebra $\B$ which are free as $\Z$-modules, $\DD$ is a Type DD bimodule over $\B$ and another differential bigraded algebra $\B'$, and $\widehat{A}$ and $\widehat{A'}$ are $\mathcal{A}_{\infty}$ homotopy equivalent, then $\widehat{A} \boxtimes \DD$ and $\widehat{A'} \boxtimes \DD$ are homotopy equivalent as Type D structures over $\B'$. Over $\Z/2\Z$, this is a standard property of the box tensor product. Here, we are working over $\Z$, but we will only need a simpler version of this result.

\begin{definition}\label{FboxtimesIdDef} Let $\B$ and $\B'$ be differential bigraded algebras over an idempotent ring $\I$. Let $\widehat{A}$ and $\widehat{A'}$ be differential bigraded right modules over $\B$, and let $(\DD,\delta_{DD})$ be a rank-one Type DD bimodule over $\B$ and $\B'$. Assume that $\widehat{A}$ and $\widehat{A'}$ are free as $\Z$-modules, with $\Z$-bases consisting of elements which are grading-homogeneous and have a unique right idempotent. 

Let $F: \widehat{A} \to \widehat{A'}$ be an $\mathcal{A}_{\infty}$ morphism with $F_n = 0$ for $n > 2$. Define a morphism of Type D structures $F \boxtimes \id_{DD}$ from $\widehat{A} \boxtimes \DD$ to $\widehat{A'} \boxtimes \DD$, or in other words a map
\[
F \boxtimes \id_{DD}: (\widehat{A} \boxtimes \DD) \to \B' \otimes_{\I} (\widehat{A'} \boxtimes \DD),
\]
by the formula
\[
F \boxtimes \id_{DD} := F_1 + \xi \circ (F_2 \otimes \left|\id\right|) \circ (\id \otimes \delta_{DD}),
\]
where we are identifying $\widehat{A} \boxtimes \DD$ with $\widehat{A}$ and $\widehat{A'} \boxtimes \DD$ with $\widehat{A'}$. Recall that 
\[
\xi: \widehat{A} \otimes_{\I} (\B')^{op} \to \B' \otimes_{\I} \widehat{A}
\]
was defined in Definition~\ref{XBoxWithDDDef}, and
\[
\xi: \widehat{A'} \otimes_{\I} (\B')^{op} \to \B' \otimes_{\I} \widehat{A'}
\]
is defined analogously. The map $F \boxtimes \id_{DD}$ is bigrading-preserving and respects the actions of $\I$ on $\widehat{A} \boxtimes \DD$ and $\widehat{A'} \boxtimes \DD$.
\end{definition}

\begin{proposition}\label{FboxIdIsTypeDMorph}
The map $F \boxtimes \id_{DD}$ defined in Definition~\ref{FboxtimesIdDef} is a morphism of Type D structures from $\widehat{A} \boxtimes \DD$ to $\widehat{A'} \boxtimes \DD$.
\end{proposition}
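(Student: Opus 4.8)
The strategy is to substitute the definition of $F \boxtimes \id_{DD}$ into the Type D morphism relation and reduce everything, by manipulations parallel to the proof that $(\widehat{A} \boxtimes \DD, \delta^{\boxtimes})$ is a Type D structure, to a sequence of identities that either come from the $\mathcal{A}_\infty$ morphism relations for $F$ or from the Type DD structure relations for $\DD$. Since $F_n = 0$ for $n > 2$, the only nontrivial $\mathcal{A}_\infty$ morphism conditions are the $n = 1$, $n = 2$, and $n = 3$ conditions recorded in Example~\ref{F1F2NonzeroExample}, and these are exactly the inputs I expect to need. The sign bookkeeping will be handled exactly as in the proof for $\delta^{\boxtimes}$: the key auxiliary identity is that $(m_1 \otimes \left|\id\right| \otimes \left|\id\right|) \circ (\id \otimes \delta_{DD}) = -(\id \otimes \delta_{DD}) \circ m_1$ because $\delta_{DD}(1)$ has homological degree $1$, together with the computation (already carried out in the proof of the previous proposition) comparing the signs produced by iterating $\xi \circ (m_2 \otimes \id) \circ (\id \otimes \delta_{DD})$ with those produced by $(\mu_2 \otimes \mu_2) \circ \sigma \circ (\id \otimes \delta_{DD} \otimes \id) \circ \delta_{DD}$.

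\textbf{Key steps, in order.} First I would write out the Type D morphism relation for $F \boxtimes \id_{DD}$, namely
\[
(\mu_1 \otimes \left|\id\right|) \circ (F \boxtimes \id_{DD}) = (\mu_2 \otimes \id) \circ (\id \otimes (F \boxtimes \id_{DD})) \circ \delta^{\boxtimes} - (\mu_2 \otimes \id) \circ (\id \otimes \delta'^{\boxtimes}) \circ (F \boxtimes \id_{DD}),
\]
where $\delta^{\boxtimes}$ and $\delta'^{\boxtimes}$ are the Type D operations on $\widehat{A} \boxtimes \DD$ and $\widehat{A'} \boxtimes \DD$ respectively. Substituting $F \boxtimes \id_{DD} = F_1 + \xi \circ (F_2 \otimes \left|\id\right|) \circ (\id \otimes \delta_{DD})$, $\delta^{\boxtimes} = m_1 + \xi \circ (m_2 \otimes \id) \circ (\id \otimes \delta_{DD})$, and likewise for $\delta'^{\boxtimes}$, each side expands into several terms. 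Second, I would sort the resulting terms by how many copies of $\delta_{DD}$ they contain: the terms with zero copies of $\delta_{DD}$ assemble into the $n = 1$ morphism condition $m'_1 \circ F_1 = F_1 \circ m_1$. Third, the terms with exactly one copy of $\delta_{DD}$ are the bulk of the work: after using the Leibniz rules for $m_1$, $m'_1$, and $\mu_1$ (to move differentials past $\xi$ and past $m_2$/$F_2$) and the identity $(m_1 \otimes \left|\id\right| \otimes \left|\id\right|)\circ(\id \otimes \delta_{DD}) = -(\id \otimes \delta_{DD})\circ m_1$, these should collapse to the $n = 2$ morphism condition (Example~\ref{F1F2NonzeroExample}) tensored with $\xi$ and $\delta_{DD}$. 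Fourth, the terms with two copies of $\delta_{DD}$ are handled by applying the Type DD structure relations to replace $(\mu_2 \otimes \mu_2)\circ\sigma\circ(\id \otimes \delta_{DD} \otimes \id)\circ\delta_{DD}$ with $-(\mu_1 \otimes \left|\id\right|)\circ\delta_{DD} - (\id \otimes \mu_1)\circ\delta_{DD}$, reducing them to terms with one $\delta_{DD}$; whatever survives should match using the $n = 3$ morphism condition $-m'_2 \circ (F_2 \otimes \left|\id\right|) = F_2 \circ (m_2 \otimes \id) - F_2 \circ (\id \otimes \mu_2)$.

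\textbf{Main obstacle.} The genuinely delicate part is the sign arithmetic in the middle two steps: each time I commute $\xi$ past a differential or past an algebra multiplication, or each time I invoke the Type DD relation (whose $\sigma$ introduces a $(-1)^{(\deg_h b_2)(\deg_h b'_4)}$), the signs have to line up exactly, and there is no room for a stray factor. I expect to reuse verbatim the sign computation already done in the proof of the preceding proposition (the ``direct computation, using the additivity of $\deg_h$ under algebra multiplication'' comparing the two expressions for the iterated $\delta_{DD}$ term), so the real content is organizing the expansion so that those known cancellations apply cleanly rather than discovering new sign identities. Once the terms are grouped by $\delta_{DD}$-degree and the three morphism conditions are available, each group closes, and the grading claims ($F \boxtimes \id_{DD}$ is bigrading-preserving, $\I$-linear) are immediate from the corresponding properties of $F_1$, $F_2$, $\xi$, and $\delta_{DD}$, which were noted already in Definition~\ref{FboxtimesIdDef}.
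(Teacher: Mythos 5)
Your proposal is essentially the paper's proof: expand both sides, feed the Type DD relation and the $n=1,2,3$ $\mathcal{A}_\infty$ morphism conditions from Example~\ref{F1F2NonzeroExample} into the resulting terms, and close the sign bookkeeping by the same direct computation used for the box-tensor differential. The one place your outline differs in emphasis is the direction of the DD-relation manipulation: you propose to reduce the two-$\delta_{DD}$ terms on the right-hand side down to one-$\delta_{DD}$ terms, whereas the paper instead applies the DD relation to the left-hand side (which initially has only a one-$\delta_{DD}$ term carrying the $\B'$-side $\mu_1$), producing two-$\delta_{DD}$ terms that are then matched verbatim against $RHS_4$ and $RHS_8$ by a sign calculation; when you carry out your version you will find that the $n=3$ condition must be invoked \emph{before} the DD relation can be applied to those terms (to detach $F_2$ from the $m_2$-contaminated argument), and that after doing so the two bookkeeping schemes coincide.
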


\begin{proof} We want to show that
\begin{equation}\label{TypeDRelsForFxId}
\begin{aligned}
(\mu_1 \otimes \left|\id\right|) \circ (F \boxtimes \id_{DD}) &=  (\mu_2 \otimes \id) \circ (\id \otimes (F \boxtimes \id_{DD})) \circ \delta \\
&- (\mu_2 \otimes \id) \circ (\id \otimes \delta') \circ (F \boxtimes \id_{DD}).
\end{aligned}
\end{equation}
The left side of equation~\ref{TypeDRelsForFxId} is
\[
(\mu_1 \otimes \left|\id\right|) \circ \xi \circ (F_2 \otimes \left|\id\right|) \circ (\id \otimes \delta_{DD}),
\]
which can be written as 
\[
- \xi \circ (F_2 \otimes \left|\id\right|) \circ (\id \otimes \id \otimes \mu_1) \circ (\id \otimes \delta_{DD}).
\]
Using the DD bimodule relations for $\delta_{DD}$, we may further rewrite this term as
\begin{align*}
&\xi \circ (F_2 \otimes \id) \circ (\id \otimes \mu_1 \otimes \id) \circ (\id \otimes \delta_{DD}) \\
&+ \xi \circ (F_2 \otimes \left|\id\right|) \circ (\id \otimes \mu_2 \otimes \mu_2) \circ (\id \otimes \sigma) \circ (\id \otimes \id \otimes \delta_{DD} \otimes \id) \circ (\id \otimes \delta_{DD}).
\end{align*}
Using the $n = 2$ and $n = 3$ $\mathcal{A}_{\infty}$ consistency conditions for $F$, the sum of these two terms is
\begin{align*}
&\xi \circ (F_1 \otimes \id) \circ (m_2 \otimes \id) \circ (\id \otimes \delta_{DD}) \\
&- \xi \circ (F_2 \otimes \id) \circ (\id \otimes \left|\id\right| \otimes \id) \circ (\id \otimes \delta_{DD}) \circ m_1 \\
&- \xi \circ (m'_1 \otimes \id) \circ (F_2 \otimes \id) \circ (\id \otimes \delta_{DD}) \\
&- \xi \circ (m'_2 \otimes \id) \circ (\id \otimes \delta_{DD}) \circ F_1 \\
&+ \xi \circ (F_2 \otimes \left|\id\right|) \circ (m_2 \otimes \id \otimes \id) \circ (\id \otimes \id \otimes \id \otimes \mu_2) \circ \sigma \\
&\qquad \circ (\id \otimes \id \otimes \delta_{DD} \otimes \id) \circ (\id \otimes \delta_{DD})\\
&+ (\left|\id\right| \otimes \id) \circ \xi \circ (m'_2 \otimes \id) \circ (F_2 \otimes \left|\id\right| \otimes \id) \circ (\id \otimes \id \otimes \id \otimes \mu_2) \circ \sigma \\
&\qquad \circ (\id \otimes \id \otimes \delta_{DD} \otimes \id) \circ (\id \otimes \delta_{DD}).
\end{align*}
We will refer to these six terms as $LHS_1$, $LHS_2$, $LHS_3$, $LHS_4$, $LHS_5$, and $LHS_6$.

The right side of equation~\ref{TypeDRelsForFxId} is
\begin{align*}
&F_1 \circ m_1 \\
&+ \xi \circ (F_2 \otimes \left|\id\right|) \circ (\id \otimes \delta_{DD}) \circ m_1 \\
&+ (\id \otimes F_1) \circ \xi \circ (m_2 \otimes \id) \circ (\id \otimes \delta_{DD}) \\
&+ (\mu_2 \otimes \id) \circ (\id \otimes \xi) \circ (\id \otimes F_2 \otimes \left|\id\right|) \circ (\id \otimes \id \otimes \delta_{DD}) \\
&\qquad \qquad \circ \xi \circ (m_2 \otimes \id) \circ (\id \otimes \delta_{DD}) \\
&- m'_1 \circ F_1 \\
&- \xi \circ (m'_2 \otimes \id) \circ (\id \otimes \delta_{DD}) \circ F_1 \\
&- (\id \otimes m'_1) \circ \xi \circ (F_2 \otimes \left|\id\right|) \circ (\id \otimes \delta_{DD}) \\
&- (\mu_2 \otimes \id) \circ (\id \otimes \xi) \circ (\id \otimes m'_2 \otimes \id) \circ (\id \otimes \id \otimes \delta_{DD}) \\
&\qquad \qquad \circ \xi \circ (F_2 \otimes \left|\id\right|) \circ (\id \otimes \delta_{DD}).
\end{align*}

We will refer to these eight terms as $RHS_1$-$RHS_8$. We have:
\begin{itemize}
\item $RHS_1 + RHS_5 = 0$ by the $n = 1$ consistency conditions for $F$;
\item $LHS_1 = RHS_3$ because $F_1$ is bigrading-preserving;
\item $LHS_2 = RHS_2$, since
\[
(\id \otimes \left|\id\right|) \circ \delta_{DD} = - (\left|\id\right| \otimes \id) \circ \delta_{DD};
\]
\item $LHS_3 = RHS_7$; and
\item $LHS_4 = RHS_6$.
\end{itemize}

It remains to show that $LHS_5 = RHS_4$ and that $LHS_6 = RHS_8$. These claims follows from direct computation: let $\delta_{DD}(1) = \sum_i b_i \otimes (b'_i)^{op}$. The term $LHS_5$, when applied to a generator $x$ of $\widehat{A}$, gives the sum
\begin{align*}
&\sum_{i,j} (-1)^{(\deg_h b'_i)(\deg_h b_j)} (-1)^{\deg_h b'_i + \deg_h b'_j} \\
& \qquad \cdot (-1)^{(\deg_h x + \deg_h b_i + \deg_h b_j - 1)(\deg_h b'_i + \deg_h b'_j)} b'_i b'_j \otimes F_2(xb_i, b_j) \\
&\sum_{i,j} (-1)^{(\deg_h x)(\deg_h b'_i + \deg_h b'_j) + (\deg_h b_i)(\deg_h b'_j)} b'_i b'_j \otimes F_2(xb_i, b_j);
\end{align*}
to see that the second sum is equal to the first, use the fact that $\deg_h b_i + \deg_h b'_i = 1$ and $\deg_h b_j + \deg_h b'_j = 1$. In particular, 
\[
(-1)^{(\deg_h b_i)(\deg_h b'_i)} = 1
\]
and 
\[
(-1)^{(\deg_h b_j)(\deg_h b'_j)} = 1.
\]
Applying the term $RHS_4$ to $x$ gives 
\begin{align*}
&\sum_{i,j} (-1)^{(\deg_h x + \deg_h b_i)(\deg_h b'_i)}(-1)^{\deg_h b'_j}(-1)^{(\deg_h x + \deg_h b_i + \deg_h b_j - 1)(\deg_h b'_j)} \\
& \qquad \cdot b'_i b'_j \otimes F_2(xb_i, b_j) \\
&\sum_{i,j} (-1)^{(\deg_h x)(\deg_h b'_i + \deg_h b'_j) + (\deg_h b_i)(\deg_h b'_j)} b'_i b'_j \otimes F_2(xb_i, b_j);
\end{align*}
thus, $LHS_5 = RHS_4$.

Similarly, applying the term $LHS_6$ to $x$ gives the sum 
\begin{align*}
&\sum_{i,j} (-1)^{(\deg_h b_j)(\deg_h b'_i)}(-1)^{\deg_h b_j}(-1)^{(\deg_h x + \deg_h b_i + \deg_h b_j - 1)(\deg_h b'_i + \deg_h b'_j)} \\
&\qquad \cdot (-1)^{\deg_h b'_i + \deg_h b'_j} b'_i b'_j \otimes F_2(xb_i, b_j) \\
&=  \sum_{i,j} -(-1)^{(\deg_h x)(\deg_h b'_i + \deg_h b'_j) + (\deg_h b'_i)(\deg_h b'_j)} b'_i b'_j \otimes F_2(xb_i, b_j)
\end{align*}
and applying the term $RHS_8$ to $x$ gives 
\begin{align*}
&- \sum_{i,j} (-1)^{\deg_h b'_i}(-1)^{(\deg_h x + \deg_h b_i - 1)(\deg_h b'_i)}(-1)^{(\deg_h x + \deg_h b_i + \deg_h b_j - 1)(\deg_h b'_j)} \\
&\qquad \cdot b'_i b'_j \otimes F_2(xb_i, b_j) \\
&= \sum_{i,j} -(-1)^{(\deg_h x)(\deg_h b'_i + \deg_h b'_j) + (\deg_h b'_i)(\deg_h b'_j)} b'_i b'_j \otimes F_2(xb_i, b_j).
\end{align*}
Thus, $LHS_6 = RHS_8$, and $F \boxtimes \id_{DD}$ is a valid morphism of Type D structures from $\widehat{A} \boxtimes \DD$ to $\widehat{A'} \boxtimes \DD$.
\end{proof}

\begin{proposition}\label{TypeDCompProp} If $F$ and $G$ are $\mathcal{A}_{\infty}$ morphisms from $\widehat{A}$ to $\widehat{A'}$ as in Definition~\ref{FboxtimesIdDef}, with $F_n, G_n = 0$ for $n > 2$ and either $F_2 = 0$ or $G_2 = 0$, then 
\[
(G \circ F) \boxtimes \id_{DD} = (G \boxtimes \id_{DD}) \circ (F \boxtimes \id_{DD}).
\]
\end{proposition}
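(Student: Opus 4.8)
The plan is to expand both sides explicitly and reduce to the two cases singled out in the hypothesis. First I would specialize the $\mathcal{A}_{\infty}$ composition formula to $F,G$ with $F_n = G_n = 0$ for $n > 2$: for $n = 1,2$ the signs $(-1)^{(i+1)(j+1)}$ are all $+1$ and $\left|\id\right|^2 = \id$, so $(G \circ F)_1 = G_1 \circ F_1$ and $(G \circ F)_2 = G_1 \circ F_2 + G_2 \circ (F_1 \otimes \id)$, while the only potentially nonzero higher term is $(G \circ F)_3 = -G_2 \circ (F_2 \otimes \left|\id\right|^{3 \otimes 1})$. Thus the hypothesis that $F_2 = 0$ or $G_2 = 0$ is exactly what guarantees $(G \circ F)_n = 0$ for $n \geq 3$, so that $(G \circ F) \boxtimes \id_{DD}$ is defined via Definition~\ref{FboxtimesIdDef} in the first place; I would state this explicitly at the outset.

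Next, using the composition law for Type D morphisms, I would write $(G \boxtimes \id_{DD}) \circ (F \boxtimes \id_{DD}) = (\mu_2 \otimes \id) \circ (\id \otimes (G \boxtimes \id_{DD})) \circ (F \boxtimes \id_{DD})$, substitute $F \boxtimes \id_{DD} = F_1 + \xi \circ (F_2 \otimes \left|\id\right|) \circ (\id \otimes \delta_{DD})$ and the analogous formula for $G$, and note that because $\delta_{DD}$ takes values in bidegree $(0,0)$ and $F_1,G_1$ land in the idempotent summand, the operation $(\mu_2 \otimes \id) \circ (\id \otimes -)$ applied to an $F_1$- or $G_1$-valued map reduces to ordinary composition. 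The product then splits into four pieces: a $G_1 F_1$ piece; an ``$F_1$ then $G_2$-part'' piece; a ``$F_2$-part then $G_1$'' piece; and a mixed ``$F_2$-part then $G_2$-part'' piece, which vanishes under the hypothesis just as $(G \circ F)_3$ does.

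Then I would handle the two cases. If $F_2 = 0$, the two surviving pieces are $G_1 \circ F_1$ and $\xi \circ (G_2 \otimes \left|\id\right|) \circ (\id \otimes \delta_{DD}) \circ F_1$; since $F_1$ acts only on the module tensor factor it commutes past $\id \otimes \delta_{DD}$ without any Koszul sign, so the second piece equals $\xi \circ ((G_2 \circ (F_1 \otimes \id)) \otimes \left|\id\right|) \circ (\id \otimes \delta_{DD})$, which is precisely the Definition~\ref{FboxtimesIdDef} formula for $(G \circ F) \boxtimes \id_{DD}$ with $(G \circ F)_2 = G_2 \circ (F_1 \otimes \id)$. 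If instead $G_2 = 0$, the surviving pieces are $G_1 \circ F_1$ and $(\id \otimes G_1) \circ \xi \circ (F_2 \otimes \left|\id\right|) \circ (\id \otimes \delta_{DD})$; here $G_1$ is homological-degree-preserving, so the identity $(\id \otimes G_1) \circ \xi = \xi \circ (G_1 \otimes \id)$ holds (the sign in $\xi$ depends only on homological degrees, which $G_1$ preserves), and the piece becomes $\xi \circ ((G_1 \circ F_2) \otimes \left|\id\right|) \circ (\id \otimes \delta_{DD})$, matching $(G \circ F)_2 = G_1 \circ F_2$.

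The only real work is the sign bookkeeping: confirming that moving $F_1$ across $\id \otimes \delta_{DD}$, moving $G_1$ across $\xi$, and turning $(\mu_2 \otimes \id) \circ (\id \otimes -)$ on an idempotent-valued map into honest composition all introduce no signs. These all follow from $F_1, G_1$ being bigrading-preserving and from $\DD$ being rank-one and concentrated in bidegree $(0,0)$, together with the signs in the $\mathcal{A}_{\infty}$ composition formula computed in the first step. Unlike Proposition~\ref{FboxIdIsTypeDMorph}, no structure relation needs to be invoked here --- we are merely verifying that two explicitly written maps agree.
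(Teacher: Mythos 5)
Your proposal is correct and follows essentially the same route as the paper: expand $(G \boxtimes \id_{DD}) \circ (F \boxtimes \id_{DD})$ via the Type D composition law, split into the two cases, and use that $F_1$ (respectively $G_1$) commutes past $\id \otimes \delta_{DD}$ (respectively $\xi$) because it is bigrading-preserving. Your opening remark that the hypothesis is exactly what kills $(G \circ F)_3 = -G_2 \circ (F_2 \otimes \left|\id\right|)$, and hence is what makes $(G \circ F) \boxtimes \id_{DD}$ defined at all via Definition~\ref{FboxtimesIdDef}, is a worthwhile clarification the paper leaves implicit; one small wording slip is that it is $\DD \cong \I$, not $\delta_{DD}$, that is concentrated in bidegree $(0,0)$ ($\delta_{DD}$ has degree $(0,+1)$), but that does not affect the argument.
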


\begin{proof} First, suppose $G_2 = 0$. Then $(G \circ F)_1 = G_1 \circ F_1$, and $(G \circ F)_2 = G_1 \circ F_2$. We have
\begin{align*}
(G \circ F) \boxtimes \id_{DD} &= (G \circ F)_1 + \xi \circ ((G \circ F)_2 \otimes \left|\id\right|) \circ (\id \otimes \delta_{DD}) \\
&= G_1 \circ F_1 + \xi \circ (G_1 \otimes \id) \circ (F_2 \otimes \left|\id\right|) \circ (\id \otimes \delta_{DD}).
\end{align*}
On the other hand,
\begin{align*}
&(G \boxtimes \id_{DD}) \circ (F \boxtimes \id_{DD}) \\
&= (\mu_2 \otimes \id) \circ (\id \otimes (G \boxtimes \id_{DD})) \circ (F \boxtimes \id_{DD}) \\
&= (\mu_2 \otimes \id) \circ (\id \otimes G_1) \circ (F_1 + \xi \circ (F_2 \otimes \left|\id\right|) \circ (\id \otimes \delta_{DD})) \\
&= G_1 \circ F_1 + (\id \otimes G_1) \circ \xi \circ (F_2 \otimes \left|\id\right|) \circ (\id \otimes \delta_{DD}).
\end{align*}
This expression equals $(G \circ F) \boxtimes \id_{DD}$ because $G_1$ is bigrading-preserving.

Now suppose instead that $F_2 = 0$. Then $(G \circ F)_1$ is still $G_1 \circ F_1$, and $(G \circ F)_2 = G_2 \circ (F_1 \otimes \id)$. We have
\begin{align*}
(G \circ F) \boxtimes \id_{DD} &= (G \circ F)_1 + \xi \circ ((G \circ F)_2 \otimes \left|\id\right|) \circ (\id \otimes \delta_{DD}) \\
&= G_1 \circ F_1 + \xi \circ (G_2 \otimes \left|\id\right|) \circ (F_1 \otimes \id \otimes \id) \circ (\id \otimes \delta_{DD}).
\end{align*}
On the other hand,
\begin{align*}
&(G \boxtimes \id_{DD}) \circ (F \boxtimes \id_{DD}) \\
&= (\mu_2 \otimes \id) \circ (\id \otimes (G \boxtimes \id_{DD})) \circ (F \boxtimes \id_{DD}) \\
&= (\mu_2 \otimes \id) \circ (\id \otimes (G_1 + \xi \circ (G_2 \otimes \left|\id\right|) \circ (\id \otimes \delta_{DD}))) \circ F_1 \\
&= G_1 \circ F_1 + \xi \circ (G_2 \otimes \left|\id\right|) \circ (\id \otimes \delta_{DD}) \circ F_1,
\end{align*}
which equals $(G \circ F) \boxtimes \id_{DD}$ because 
\[
(F_1 \otimes \id \otimes \id) \circ (\id \otimes \delta_{DD}) = (\id \otimes \delta_{DD}) \circ F_1.
\]
\end{proof}

\begin{proposition}\label{TypeDValidHtpyProp} As in Proposition~\ref{TypeDCompProp}, let $F$ and $G$ be $\mathcal{A}_{\infty}$ morphisms from $\widehat{A}$ to $\widehat{A'}$ with $F_n, G_n = 0$ for $n > 2$. Let $H$ be an $\mathcal{A}_{\infty}$ homotopy between $F$ and $G$ with $H_n = 0$ for $n > 1$. Define
\[
H \boxtimes \id_{DD} := H_1,
\]
where the algebra output of $H \boxtimes \id_{DD}$ is always $1$; then $H \boxtimes \id_{DD}$ is a homotopy of Type D morphisms between $F \boxtimes \id_{DD}$ and $G \boxtimes \id_{DD}$.
\end{proposition}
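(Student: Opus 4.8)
## Proof proposal

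The plan is to verify directly that $H \boxtimes \id_{DD} = H_1$ satisfies the defining relation for a homotopy of Type D morphisms between $F \boxtimes \id_{DD}$ and $G \boxtimes \id_{DD}$, namely
\[
(F \boxtimes \id_{DD}) - (G \boxtimes \id_{DD}) = (\mu_2 \otimes \id) \circ (\id \otimes (H \boxtimes \id_{DD})) \circ \delta^{\boxtimes} + (\mu_2 \otimes \id) \circ (\id \otimes \delta'^{\boxtimes}) \circ (H \boxtimes \id_{DD}) + (\mu_1 \otimes \left|\id\right|) \circ (H \boxtimes \id_{DD}).
\]
Since the algebra output of $H \boxtimes \id_{DD} = H_1$ is always $1$, the terms $(\mu_2 \otimes \id) \circ (\id \otimes H_1) \circ \delta^{\boxtimes}$ and $(\mu_2 \otimes \id) \circ (\id \otimes \delta'^{\boxtimes}) \circ H_1$ simplify substantially: in the first, $\mu_2$ just acts by the identity idempotent absorption on the $1$ coming out of $H_1$, so this term reduces to the ``$\widehat{A}'$-side'' part of $\delta^{\boxtimes}$ composed with $H_1$ after identifying $\widehat{A} \boxtimes \DD$ with $\widehat{A}$; in the second, we feed the output of $H_1$ into $\delta'^{\boxtimes}$. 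Likewise $(\mu_1 \otimes \left|\id\right|) \circ H_1 = 0$ because $H_1$ has algebra output $1$ and $\mu_1(1) = 0$.

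The key steps, in order, are: (1) expand $F \boxtimes \id_{DD}$ and $G \boxtimes \id_{DD}$ using Definition~\ref{FboxtimesIdDef}, so the left-hand side becomes $(F_1 - G_1) + \xi \circ ((F_2 - G_2) \otimes \left|\id\right|) \circ (\id \otimes \delta_{DD})$; (2) expand $\delta^{\boxtimes} = m_1 + \xi \circ (m_2 \otimes \id) \circ (\id \otimes \delta_{DD})$ from Definition~\ref{XBoxWithDDDef}, and similarly $\delta'^{\boxtimes}$, and substitute into the right-hand side; (3) use the $\mathcal{A}_{\infty}$ homotopy relations for $H$ from Example~\ref{SimplestHtpyExample} — the $n=1$ relation $F_1 - G_1 = m'_1 \circ H_1 + H_1 \circ m_1$ and the $n=2$ relation $F_2 - G_2 = -m'_2 \circ (H_1 \otimes \left|\id\right|) + H_1 \circ m_2$ — to rewrite $F_1 - G_1$ and $F_2 - G_2$; (4) match terms. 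The $m'_1 \circ H_1$ and $H_1 \circ m_1$ pieces should pair up with the $m_1$-parts of $\delta^{\boxtimes}$ and $\delta'^{\boxtimes}$; the $H_1 \circ m_2$ piece in the $F_2 - G_2$ substitution should match $(\id \otimes H_1)$ composed with the $m_2$-part of $\delta^{\boxtimes}$ (after the $\mu_2$ absorbs the $1$); and the $-m'_2 \circ (H_1 \otimes \left|\id\right|)$ piece should match $\delta'^{\boxtimes}$ composed with $H_1$.

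The main obstacle I expect is bookkeeping of signs coming from the $\xi$ maps and the $\left|\id\right|$ insertions — exactly as in the proof of Proposition~\ref{FboxIdIsTypeDMorph}, where the sign-chasing was the substantive content. Since all generators of $\DD \cong \I$ sit in bigrading $(0,0)$, the element $\delta_{DD}(1)$ has homological degree $1$, so whenever $\delta_{DD}$ is pushed past $H_1$ (which has homological degree $-1$, i.e.\ lowers homological grading by one as a morphism) or past $m_1$, one picks up the identity $(\id \otimes \left|\id\right|) \circ \delta_{DD} = -(\left|\id\right| \otimes \id) \circ \delta_{DD}$; these are the same sign manipulations already carried out in Proposition~\ref{FboxIdIsTypeDMorph} and should go through verbatim. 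A clean way to organize the argument is to note that $F \mapsto F \boxtimes \id_{DD}$ and $H \mapsto H \boxtimes \id_{DD}$ are instances of the same ``box with $\delta_{DD}$'' operation differing only in homological degree, so the computation is structurally identical to — indeed simpler than — the one already completed, and I would present it by pointing to that parallel and spelling out only the handful of terms that differ.
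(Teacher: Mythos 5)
Your proposal is correct and follows the same direct verification as the paper: expand $F\boxtimes\id_{DD}-G\boxtimes\id_{DD}$ and the two $\delta^{\boxtimes}$-terms, observe that $(\mu_1\otimes\left|\id\right|)\circ H_1=0$ because $H_1$ outputs $1$, substitute the $n=1$ and $n=2$ relations from Example~\ref{SimplestHtpyExample}, and match terms after the same $\xi$/$\left|\id\right|$ sign bookkeeping used in Proposition~\ref{FboxIdIsTypeDMorph}. Nothing essential differs from the paper's argument.
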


\begin{proof}
Let $\delta$ and $\delta'$ denote the Type D operations on $\widehat{A} \boxtimes \DD$ and $\widehat{A'} \boxtimes \DD$ respectively. We want to show that
\[
F \boxtimes \id_{DD} - G \boxtimes \id_{DD} = H_1 \circ \delta + \delta' \circ H_1;
\]
the other term in the Type D homotopy relations is zero for this special type of $H$. Expanding out the left side, we want to show that
\[
F_1 - G_1 + \xi \circ ((F_2 - G_2) \otimes \left|\id\right|) \circ (\id \otimes \delta_{DD}) = H_1 \circ \delta + \delta' \circ H_1.
\]
By Example~\ref{SimplestHtpyExample}, the $\mathcal{A}_{\infty}$ homotopy relations for $H$ give us the following two equations:
\[
F_1 - G_1 = m'_1 \circ H_1 + H_1 \circ m_1
\]
and
\[
F_2 - G_2 = -m'_2 \circ (H_1 \otimes \left|\id\right|) + H_1 \circ m_2.
\]
Thus, the left side of the Type D homotopy relation is
\begin{align*}
&m'_1 \circ h_1 + H_1 \circ m_1 + \xi \circ ((-m'_2 \circ (H_1 \otimes \left|\id\right|) + H_1 \circ m_2) \otimes \left|\id\right|) \circ (\id \otimes \delta_{DD}) \\
&=m'_1 \circ h_1 + H_1 \circ m_1 - \xi \circ (m'_2 \otimes \id) \circ (H_1 \otimes \left|\id\right| \otimes \left|\id\right|) \circ (\id \otimes \delta_{DD}) \\
&\qquad + \xi \otimes (H_1 \otimes \left|\id\right|) \circ (m_2 \otimes \id) \circ (\id \otimes \delta_{DD}) \\
&= m'_1 \circ h_1 + H_1 \circ m_1 + \xi \circ (m'_2 \otimes \id) \circ (H_1 \otimes \id \otimes \id) \circ (\id \otimes \delta_{DD}) \\
&\qquad + (\id \otimes H_1) \circ \xi \circ (m_2 \otimes \id) \circ (\id \otimes \delta_{DD}) \\
&= m'_1 \circ h_1 + H_1 \circ m_1 + \xi \circ (m'_2 \otimes \id) \circ (\id \otimes \delta_{DD}) \circ H_1 \\
&\qquad + (\id \otimes H_1) \circ \xi \circ (m_2 \otimes \id) \circ (\id \otimes \delta_{DD}).
\end{align*}

On the other hand, using the definition of $\delta$ and $\delta'$ in Definition~\ref{XBoxWithDDDef}, the right side of the Type D homotopy relation can be expanded out as
\begin{align*}
&H_1 \circ m_1 + (\id \otimes H_1) \circ \xi \circ (m_2 \otimes \id) \circ (\id \otimes \delta_{DD}) \\
&+ m'_1 \circ H_1 + \xi \circ (m'_2 \otimes \id) \circ (\id \otimes \delta_{DD}) \circ H_1,
\end{align*}
which is identical to the previous expression after rearranging terms. Thus, $H \boxtimes \id_{DD}$ is a valid Type D homotopy between $F \boxtimes \id_{DD}$ and $G \boxtimes \id_{DD}$.
\end{proof}

\begin{corollary}\label{TypeDGeneralHtpyCorr} Let $\B$ and $\B'$ be differential bigraded algebras over an idempotent ring $\I$. Let $\widehat{A}$ and $\widehat{A'}$ be differential bigraded right modules over $\B$, and let $(\DD,\delta_{DD})$ be a rank-one Type DD bimodule over $\B$ and $\B'$. Assume that $\widehat{A}$ and $\widehat{A'}$ are free as $\Z$-modules, with $\Z$-bases consisting of elements which are grading-homogeneous and have a unique right idempotent. 

Suppose there exist $\mathcal{A}_{\infty}$ morphisms $F: \widehat{A} \to \widehat{A'}$ and $G: \widehat{A'} \to \widehat{A}$ with $F_n = 0$ and $G_n = 0$ for $n > 2$, and such that either $F_2 = 0$ or $G_2 = 0$. Furthermore, suppose that $G \circ F$ is $\mathcal{A}_{\infty}$ homotopic to $\id_{\widehat{A}}$ via an $\mathcal{A}_{\infty}$ homotopy $H$ with $H_n = 0$ for $n > 1$, and $F \circ G$ is $\mathcal{A}_{\infty}$ homotopic to $\id_{\widehat{A'}}$ via another $\mathcal{A}_{\infty}$ homotopy $H'$ with $H'_n = 0$ for $n > 1$.

Then the Type D structures $\widehat{A} \boxtimes \DD$ and $\widehat{A'} \boxtimes \DD$ over $\B'$, defined in Definition~\ref{XBoxWithDDDef}, are homotopy equivalent.
\end{corollary}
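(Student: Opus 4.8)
The plan is to assemble Corollary~\ref{TypeDGeneralHtpyCorr} directly from the three preceding propositions, which between them already do all the real work. The statement asserts that $\widehat{A} \boxtimes \DD$ and $\widehat{A'} \boxtimes \DD$ are homotopy equivalent Type D structures over $\B'$, so I need to exhibit Type D morphisms between them together with the requisite homotopies. The natural candidates are $F \boxtimes \id_{DD}$ and $G \boxtimes \id_{DD}$, which are valid Type D morphisms by Proposition~\ref{FboxIdIsTypeDMorph} (here I use the hypothesis that $F_n = G_n = 0$ for $n > 2$, which is exactly what makes Definition~\ref{FboxtimesIdDef} applicable, along with the freeness and idempotent hypotheses on $\widehat{A}$ and $\widehat{A'}$, which are also needed for the box tensor products to be defined as in Definition~\ref{XBoxWithDDDef}).

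Next I would check the composition identities. By Proposition~\ref{TypeDCompProp}, since either $F_2 = 0$ or $G_2 = 0$, we have
\[
(G \circ F) \boxtimes \id_{DD} = (G \boxtimes \id_{DD}) \circ (F \boxtimes \id_{DD})
\]
and
\[
(F \circ G) \boxtimes \id_{DD} = (F \boxtimes \id_{DD}) \circ (G \boxtimes \id_{DD}).
\]
Here one has to be slightly careful: Proposition~\ref{TypeDCompProp} requires that one of the two morphisms being composed have vanishing quadratic part, and for the second identity the roles of $F$ and $G$ are swapped, so the hypothesis ``either $F_2 = 0$ or $G_2 = 0$'' covers both cases symmetrically. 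I would also note that $\id_{\widehat{A}} \boxtimes \id_{DD} = \id_{\widehat{A} \boxtimes \DD}$, which is immediate from Definition~\ref{FboxtimesIdDef} since the identity morphism of $\widehat{A}$ has trivial quadratic part.

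Then I would invoke Proposition~\ref{TypeDValidHtpyProp}: the $\mathcal{A}_{\infty}$ homotopy $H$ between $G \circ F$ and $\id_{\widehat{A}}$, with $H_n = 0$ for $n > 1$, yields a homotopy $H \boxtimes \id_{DD} = H_1$ of Type D morphisms between $(G \circ F) \boxtimes \id_{DD}$ and $\id_{\widehat{A}} \boxtimes \id_{DD}$; by the composition identity above this is a homotopy between $(G \boxtimes \id_{DD}) \circ (F \boxtimes \id_{DD})$ and $\id_{\widehat{A} \boxtimes \DD}$. Symmetrically, $H' \boxtimes \id_{DD} = H'_1$ is a homotopy between $(F \boxtimes \id_{DD}) \circ (G \boxtimes \id_{DD})$ and $\id_{\widehat{A'} \boxtimes \DD}$. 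This exhibits $F \boxtimes \id_{DD}$ and $G \boxtimes \id_{DD}$ as mutually inverse up to homotopy, so $\widehat{A} \boxtimes \DD$ and $\widehat{A'} \boxtimes \DD$ are homotopy equivalent Type D structures over $\B'$, as desired.

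There is essentially no main obstacle here — the corollary is a formal consequence, and all the analytic content (the sign bookkeeping in the $\boxtimes$ construction, the $\mathcal{A}_{\infty}$ consistency conditions, the DD bimodule relations) has already been discharged in Propositions~\ref{FboxIdIsTypeDMorph}, \ref{TypeDCompProp}, and \ref{TypeDValidHtpyProp}. The only point requiring a moment's attention is making sure the hypotheses line up: the corollary's assumption that $F_n = G_n = 0$ for $n > 2$, that one of $F_2, G_2$ vanishes, and that the homotopies $H, H'$ are concentrated in degree $1$, are precisely the hypotheses of the three propositions being cited, so the proof is just a matter of quoting them in the right order.
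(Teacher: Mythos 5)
Your proposal is correct and is exactly the paper's proof: the corollary follows by applying Propositions~\ref{FboxIdIsTypeDMorph}, \ref{TypeDCompProp}, and \ref{TypeDValidHtpyProp} in precisely the order and manner you describe, together with the observation that $\boxtimes \id_{DD}$ sends identity morphisms to identity morphisms. Your extra attention to verifying that the hypotheses of each proposition line up with the corollary's hypotheses (including the symmetric use of ``either $F_2 = 0$ or $G_2 = 0$'' for both composition identities, and that $G\circ F$ and $\id$ still satisfy the vanishing conditions needed by Proposition~\ref{TypeDValidHtpyProp}) is the right thing to check and is implicit in the paper's terse ``this follows from.''
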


\begin{proof} This follows from Proposition~\ref{FboxIdIsTypeDMorph}, Proposition~\ref{TypeDCompProp}, Proposition~\ref{TypeDValidHtpyProp}, together with the fact that the box tensor product with $\id_{DD}$ on morphisms sends identity morphisms to identity morphisms.
\end{proof}

\begin{corollary}\label{TypeDHtpyCorrFinal} If $T$ and $T'$ are oriented tangle diagrams in $\R_{\geq 0} \otimes \R$ which are related by a Reidemeister move, then $\D([T]^{Kh})$ and $\D([T']^{Kh})$ are homotopy equivalent as Type D structures over $\B \astrosun m(\B)^!$. Thus, they are also homotopy equivalent as Type D structures over the quotient algebra $\B \Gamma_n$.
\end{corollary}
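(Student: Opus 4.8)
The plan is to reduce the statement about Reidemeister moves on diagrams in $\R_{\geq 0} \times \R$ to the results already established for diagrams in $\R_{\leq 0} \times \R$, by tracking the invariance through the chain of mirroring and box-tensor operations used to define $\D(N)$. Recall from Definition~\ref{TypeDOverBomBDef} that
\[
\D([T]^{Kh}) = m\bigl(\widehat{A}(m([T]^{Kh})) \boxtimes {^{\B \astrosun m(\B)^!}}K^{m(\B \astrosun m(\B)^!)^{op}}\bigr),
\]
so there are three ingredients to control: (i) the mirroring operation $m(-)$ on complexes of $H^n$-modules from Definition~\ref{HnComplexMirrorDef}, which turns a left complex into a right complex; (ii) the passage from a complex of right $H^n$-modules to its Type A structure and then to a Type D structure over $m(\B \astrosun m(\B)^!)$ via $\boxtimes$ with the DD bimodule; and (iii) the final mirroring of Type D structures from Definition~\ref{TypeDMirroringDef}.

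First I would observe that a Reidemeister move on $T$ in $\R_{\geq 0} \times \R$ produces, under the diagram-mirroring that sends $\R_{\geq 0} \times \R$ to $\R_{\leq 0} \times \R$, a Reidemeister move on the mirrored diagram $m(T)$, and that $[m(T)]^{Kh} = m([T]^{Kh})$ in the sense of Definition~\ref{HnComplexMirrorDef} (the geometric content of both mirrors is the same reflection, as noted in Remark~\ref{TwoTypesOfMirrorRemark}). Khovanov's chain homotopy equivalences for $m(T) \simeq m(T')$ are of the explicit type analyzed in Proposition~\ref{SubcxHtpyEquiv1} and Proposition~\ref{SubcxHtpyEquiv2}; by the proof of Corollary~\ref{BomBHtpyEquivCorr}, the chain maps $f$, $g$ and homotopies $\psi$ involved satisfy $C_{morphism}$, $\tilde{C}_{morphism}$, and $\tilde{C}_{homotopy}$, with either $f$ or $g$ satisfying the restrictive condition $\tilde{C}_{morphism}$. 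Hence Corollary~\ref{GetInducedHtpyEquivCorr} gives an $\mathcal{A}_{\infty}$ homotopy equivalence $\widehat{A}(m([T]^{Kh})) \simeq \widehat{A}(m([T']^{Kh}))$ over $\B \astrosun m(\B)^!$, realized by morphisms $\widehat{A}(f)$, $\widehat{A}(g)$ with $\widehat{A}(f)_n = \widehat{A}(g)_n = 0$ for $n > 2$, one of which has vanishing degree-$2$ part, and by homotopies $\widehat{A}(\psi)$, $\widehat{A}(\psi')$ supported in degree $1$ — exactly the hypotheses of Corollary~\ref{TypeDGeneralHtpyCorr}.

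Next I would apply Corollary~\ref{TypeDGeneralHtpyCorr} with $\widehat{A} = \widehat{A}(m([T]^{Kh}))$, $\widehat{A'} = \widehat{A}(m([T']^{Kh}))$, $\B = \B \astrosun m(\B)^!$, $\B' = m(\B \astrosun m(\B)^!)$, and $\DD = {^{\B \astrosun m(\B)^!}}K^{m(\B \astrosun m(\B)^!)^{op}}$ (which is rank-one and over the correct pair of algebras, and the Type A structures are free over $\Z$ with the required bases since they are built from Khovanov complexes), to conclude that $\widehat{A}(m([T]^{Kh})) \boxtimes \DD$ and $\widehat{A}(m([T']^{Kh})) \boxtimes \DD$ are homotopy equivalent as Type D structures over $m(\B \astrosun m(\B)^!)$. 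Finally I would note that the Type D mirroring of Definition~\ref{TypeDMirroringDef} is functorial in an evident way — a Type D morphism $F: \D \to \D'$ mirrors to a Type D morphism $m(F): m(\D) \to m(\D')$ by composing algebra and module outputs with $mirr$, and this respects composition, identities, and homotopies — so applying $m(-)$ preserves the homotopy equivalence and yields $\D([T]^{Kh}) \simeq \D([T']^{Kh})$ over $\B \astrosun m(\B)^!$. The statement over $\B \Gamma_n$ then follows immediately from Remark~\ref{TypeDHtpyDescRem} (equivalently Proposition~\ref{DandDDQuotient}), since $\B \Gamma_n$ is a quotient of $\B \astrosun m(\B)^!$ by a differential-closed bigraded-homogeneous ideal.

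The only genuinely new step, and the one I expect to require the most care, is checking that $m(-)$ on Type D structures (Definition~\ref{TypeDMirroringDef}) is a functor that preserves homotopy equivalences — i.e. that mirroring a Type D morphism gives a Type D morphism, mirroring respects composition and identities, and mirroring a Type D homotopy gives a Type D homotopy. This is a routine but sign-sensitive verification, entirely parallel to how Definition~\ref{DDMirroringDef} mirrors DD bimodules; since $mirr$ only permutes idempotent labels and reflects circles, it does not affect any homological gradings and hence introduces no signs, so the Type D morphism and homotopy relations are preserved termwise. Everything else in the argument is a direct citation of results proved earlier in the paper.
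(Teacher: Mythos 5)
Your proof is correct and follows essentially the same route as the paper's: apply Corollary~\ref{GetInducedHtpyEquivCorr} (via Corollary~\ref{BomBHtpyEquivCorr} and the explicit Reidemeister analysis of Propositions~\ref{SubcxHtpyEquiv1} and~\ref{SubcxHtpyEquiv2}) to obtain $\mathcal{A}_{\infty}$ homotopy equivalences of Type A structures whose components satisfy the hypotheses of Corollary~\ref{TypeDGeneralHtpyCorr}, then box-tensor with the rank-one DD bimodule, then mirror, and finally descend to $\B\Gamma_n$ via Remark~\ref{TypeDHtpyDescRem}. The one point you spell out more explicitly than the paper — that the Type D mirroring of Definition~\ref{TypeDMirroringDef} is functorial and preserves morphisms, compositions, identities, and homotopies — is precisely what the paper dispatches with a single ``one can check'' sentence at the start of Section~\ref{TypeDEquivSect}, and your observation that $mirr$ only relabels idempotents without touching homological degrees (hence introduces no signs) is exactly the right reason this check is routine.
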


\begin{proof}
The first claim follows from Corollary~\ref{TypeDGeneralHtpyCorr} and the proof of Corollary~\ref{GetInducedHtpyEquivCorr}, in which the $\mathcal{A}_{\infty}$ morphisms $F = \widehat{A}(f)$ and $G = \widehat{A}(g)$ and the $\mathcal{A}_{\infty}$ homotopy $H = \widehat{A}(\psi)$ used to realize the $\mathcal{A}_{\infty}$ homotopy equivalences satisfy the conditions of Corollary~\ref{TypeDGeneralHtpyCorr}. The second claim follows from Remark~\ref{TypeDHtpyDescRem} above.
\end{proof}

Corollary~\ref{TypeDHtpyCorrFinal} gives us an alternate proof of Theorem 44 of Roberts~\cite{RtypeD}.

\bibliographystyle{plain}
\bibliography{biblio}

\end{document}